\newcommand{\tun}{\begin{picture}(5,0)(-2,-1)
\put(0,0){\circle*{2}}
\end{picture}}
\newcommand{\tdeux}{\begin{picture}(7,7)(0,-1)
\put(3,0){\circle*{2}}
\put(3,0){\line(0,1){5}}
\put(3,5){\circle*{2}}
\end{picture}}
\newcommand{\ttroisun}{\begin{picture}(15,8)(-5,-1)
\put(3,0){\circle*{2}}
\put(-0.65,0){$\vee$}
\put(6,7){\circle*{2}}
\put(0,7){\circle*{2}}
\end{picture}}
\newcommand{\ttroisdeux}{\begin{picture}(5,12)(-2,-1)
\put(0,0){\circle*{2}}
\put(0,0){\line(0,1){5}}
\put(0,5){\circle*{2}}
\put(0,5){\line(0,1){5}}
\put(0,10){\circle*{2}}
\end{picture}}
\newcommand{\tquatreun}{\begin{picture}(15,12)(-5,-1)
\put(3,0){\circle*{2}}
\put(-0.65,0){$\vee$}
\put(6,7){\circle*{2}}
\put(0,7){\circle*{2}}
\put(3,7){\circle*{2}}
\put(3,0){\line(0,1){7}}
\end{picture}}
\newcommand{\tquatredeux}{\begin{picture}(15,18)(-5,-1)
\put(3,0){\circle*{2}}
\put(-0.65,0){$\vee$}
\put(6,7){\circle*{2}}
\put(0,7){\circle*{2}}
\put(0,14){\circle*{2}}
\put(0,7){\line(0,1){7}}
\end{picture}}
\newcommand{\tquatretrois}{\begin{picture}(15,18)(-5,-1)
\put(3,0){\circle*{2}}
\put(-0.65,0){$\vee$}
\put(6,7){\circle*{2}}
\put(0,7){\circle*{2}}
\put(6,14){\circle*{2}}
\put(6,7){\line(0,1){7}}
\end{picture}}
\newcommand{\tquatrequatre}{\begin{picture}(15,18)(-5,-1)
\put(3,5){\circle*{2}}
\put(-0.65,5){$\vee$}
\put(6,12){\circle*{2}}
\put(0,12){\circle*{2}}
\put(3,0){\circle*{2}}
\put(3,0){\line(0,1){5}}
\end{picture}}
\newcommand{\tquatrecinq}{\begin{picture}(9,19)(-2,-1)
\put(0,0){\circle*{2}}
\put(0,0){\line(0,1){5}}
\put(0,5){\circle*{2}}
\put(0,5){\line(0,1){5}}
\put(0,10){\circle*{2}}
\put(0,10){\line(0,1){5}}
\put(0,15){\circle*{2}}
\end{picture}}
\newcommand{\tcinqun}{\begin{picture}(20,8)(-5,-1)
\put(3,0){\circle*{2}}
\put(-0.5,0){$\vee$}
\put(6,7){\circle*{2}}
\put(0,7){\circle*{2}}
\put(3,0){\line(2,1){10}}
\put(3,0){\line(-2,1){10}}
\put(-7,5){\circle*{2}}
\put(13,5){\circle*{2}}
\end{picture}}
\newcommand{\tcinqdeux}{\begin{picture}(15,14)(-5,-1)
\put(3,0){\circle*{2}}
\put(-0.65,0){$\vee$}
\put(6,7){\circle*{2}}
\put(0,7){\circle*{2}}
\put(3,7){\circle*{2}}
\put(3,0){\line(0,1){7}}
\put(0,7){\line(0,1){7}}
\put(0,14){\circle*{2}}
\end{picture}}
\newcommand{\tcinqtrois}{\begin{picture}(15,15)(-5,-1)
\put(3,0){\circle*{2}}
\put(-0.65,0){$\vee$}
\put(6,7){\circle*{2}}
\put(0,7){\circle*{2}}
\put(3,7){\circle*{2}}
\put(3,0){\line(0,1){7}}
\put(3,7){\line(0,1){7}}
\put(3,14){\circle*{2}}
\end{picture}}
\newcommand{\tcinqquatre}{\begin{picture}(15,14)(-5,-1)
\put(3,0){\circle*{2}}
\put(-0.65,0){$\vee$}
\put(6,7){\circle*{2}}
\put(0,7){\circle*{2}}
\put(3,7){\circle*{2}}
\put(3,0){\line(0,1){7}}
\put(6,7){\line(0,1){7}}
\put(6,14){\circle*{2}}
\end{picture}}
\newcommand{\tcinqcinq}{\begin{picture}(15,19)(-5,-1)
\put(3,0){\circle*{2}}
\put(-0.65,0){$\vee$}
\put(6,7){\circle*{2}}
\put(0,7){\circle*{2}}
\put(6,14){\circle*{2}}
\put(6,7){\line(0,1){7}}
\put(0,14){\circle*{2}}
\put(0,7){\line(0,1){7}}
\end{picture}}
\newcommand{\tcinqsix}{\begin{picture}(15,20)(-7,-1)
\put(3,0){\circle*{2}}
\put(-0.65,0){$\vee$}
\put(6,7){\circle*{2}}
\put(0,7){\circle*{2}}
\put(-3.65,7){$\vee$}
\put(3,14){\circle*{2}}
\put(-3,14){\circle*{2}}
\end{picture}}
\newcommand{\tcinqsept}{\begin{picture}(15,8)(-5,-1)
\put(3,0){\circle*{2}}
\put(-0.65,0){$\vee$}
\put(6,7){\circle*{2}}
\put(0,7){\circle*{2}}
\put(2.35,7){$\vee$}
\put(3,14){\circle*{2}}
\put(9,14){\circle*{2}}
\end{picture}}
\newcommand{\tcinqhuit}{\begin{picture}(15,26)(-5,-1)
\put(3,0){\circle*{2}}
\put(-0.65,0){$\vee$}
\put(6,7){\circle*{2}}
\put(0,7){\circle*{2}}
\put(0,14){\circle*{2}}
\put(0,7){\line(0,1){7}}
\put(0,21){\circle*{2}}
\put(0,14){\line(0,1){7}}
\end{picture}}
\newcommand{\tcinqneuf}{\begin{picture}(15,26)(-5,-1)
\put(3,0){\circle*{2}}
\put(-0.65,0){$\vee$}
\put(6,7){\circle*{2}}
\put(0,7){\circle*{2}}
\put(6,14){\circle*{2}}
\put(6,7){\line(0,1){7}}
\put(6,21){\circle*{2}}
\put(6,14){\line(0,1){7}}
\end{picture}}
\newcommand{\tcinqdix}{\begin{picture}(15,19)(-5,-1)
\put(3,5){\circle*{2}}
\put(-0.5,5){$\vee$}
\put(6,12){\circle*{2}}
\put(0,12){\circle*{2}}
\put(3,0){\circle*{2}}
\put(3,0){\line(0,1){12}}
\put(3,12){\circle*{2}}
\end{picture}}
\newcommand{\tcinqonze}{\begin{picture}(15,26)(-5,-1)
\put(3,5){\circle*{2}}
\put(-0.65,5){$\vee$}
\put(6,12){\circle*{2}}
\put(0,12){\circle*{2}}
\put(3,0){\circle*{2}}
\put(3,0){\line(0,1){5}}
\put(0,12){\line(0,1){7}}
\put(0,19){\circle*{2}}
\end{picture}}
\newcommand{\tcinqdouze}{\begin{picture}(15,26)(-5,-1)
\put(3,5){\circle*{2}}
\put(-0.65,5){$\vee$}
\put(6,12){\circle*{2}}
\put(0,12){\circle*{2}}
\put(3,0){\circle*{2}}
\put(3,0){\line(0,1){5}}
\put(6,12){\line(0,1){7}}
\put(6,19){\circle*{2}}
\end{picture}}
\newcommand{\tcinqtreize}{\begin{picture}(5,26)(-2,-1)
\put(0,0){\circle*{2}}
\put(0,0){\line(0,1){7}}
\put(0,7){\circle*{2}}
\put(0,7){\line(0,1){7}}
\put(0,14){\circle*{2}}
\put(-3.65,14){$\vee$}
\put(-3,21){\circle*{2}}
\put(3,21){\circle*{2}}
\end{picture}}
\newcommand{\tcinqquatorze}{\begin{picture}(9,26)(-5,-1)
\put(0,0){\circle*{2}}
\put(0,0){\line(0,1){5}}
\put(0,5){\circle*{2}}
\put(0,5){\line(0,1){5}}
\put(0,10){\circle*{2}}
\put(0,10){\line(0,1){5}}
\put(0,15){\circle*{2}}
\put(0,15){\line(0,1){5}}
\put(0,20){\circle*{2}}
\end{picture}}
\newcommand{\tdun}[1]{\begin{picture}(10,5)(-2,-1)
\put(0,0){\circle*{2}}
\put(3,-2){\tiny #1}
\end{picture}}
\newcommand{\addeux}[1]{\begin{picture}(12,5)(0,-1)
\put(3,0){\circle*{2}}
\put(3,0){\line(0,1){5}}
\put(3,5){\circle*{2}}
\put(4,1){\tiny #1}
\end{picture}}
\newcommand{\adtroisun}[2]{\begin{picture}(20,12)(-5,-1)
\put(3,0){\circle*{2}}
\put(-0.65,0){$\vee$}
\put(6,7){\circle*{2}}
\put(0,7){\circle*{2}}
\put(6,1){\tiny #1}
\put(-3,1){\tiny #2}
\end{picture}}
\newcommand{\adtroisdeux}[2]{\begin{picture}(12,14)(-2,-1)
\put(0,0){\circle*{2}}
\put(0,0){\line(0,1){5}}
\put(0,5){\circle*{2}}
\put(0,5){\line(0,1){5}}
\put(0,10){\circle*{2}}
\put(2,1){\tiny #1}
\put(2,6){\tiny #2}
\end{picture}}
\newcommand{\adquatreun}[3]{\begin{picture}(25,12)(-5,-1)
\put(5,0){\circle*{2}}
\put(15,10){\circle*{2}}
\put(-5,10){\circle*{2}}
\put(5,10){\circle*{2}}
\put(5,0){\line(0,1){10}}
\put(5,0){\line(-1,1){10}}
\put(5,0){\line(1,1){10}}
\put(-5,1){\tiny #1}
\put(1,4){\tiny #2}
\put(11,1){\tiny #3}
\end{picture}}
\newcommand{\adquatredeux}[3]{\begin{picture}(20,20)(-5,-1)
\put(3,0){\circle*{2}}
\put(-.65,0){$\vee$}
\put(6,7){\circle*{2}}
\put(0,7){\circle*{2}}
\put(0,14){\circle*{2}}
\put(0,7){\line(0,1){7}}
\put(6,1){\tiny #1}
\put(-3,1){\tiny #2}
\put(-5,9){\tiny #3}
\end{picture}}
\newcommand{\adquatretrois}[3]{\begin{picture}(20,20)(-5,-1)
\put(3,0){\circle*{2}}
\put(-.65,0){$\vee$}
\put(6,7){\circle*{2}}
\put(0,7){\circle*{2}}
\put(6,14){\circle*{2}}
\put(6,7){\line(0,1){7}}
\put(6,1){\tiny #1}
\put(-3,1){\tiny #2}
\put(7,9){\tiny #3}
\end{picture}}
\newcommand{\adquatrequatre}[3]{\begin{picture}(20,14)(-5,-1)
\put(3,5){\circle*{2}}
\put(-.65,5){$\vee$}
\put(6,12){\circle*{2}}
\put(0,12){\circle*{2}}
\put(3,0){\circle*{2}}
\put(3,0){\line(0,1){5}}
\put(4,1){\tiny #1}
\put(-3,6){\tiny #2}
\put(5,6){\tiny #3}
\end{picture}}
\newcommand{\adquatrecinq}[3]{\begin{picture}(12,19)(-2,-1)
\put(0,0){\circle*{2}}
\put(0,0){\line(0,1){5}}
\put(0,5){\circle*{2}}
\put(0,5){\line(0,1){5}}
\put(0,10){\circle*{2}}
\put(0,10){\line(0,1){5}}
\put(0,15){\circle*{2}}
\put(2,1){\tiny #1}
\put(2,6){\tiny #2}
\put(2,11){\tiny #3}
\end{picture}}
\newcommand{\saddeux}[3]{\begin{picture}(12,5)(0,-1)
\put(3,0){\circle*{2}}
\put(3,0){\line(0,1){10}}
\put(3,10){\circle*{2}}
\put(4,3){\tiny #1}
\put(4,-5){\tiny #2}
\put(4,12){\tiny #3}
\end{picture}}
\newcommand{\sadtroisun}[5]{\begin{picture}(20,12)(-5,-1)
\put(3,0){\circle*{2}}
\put(3,0){\line(-1,2){5}}
\put(3,0){\line(1,2){5}}
\put(-2,10){\circle*{2}}
\put(8,10){\circle*{2}}
\put(7,3){\tiny #1}
\put(-4,3){\tiny #2}
\put(5,-5){\tiny #3}
\put(9,12){\tiny #4}
\put(-6,12){\tiny #5}
\end{picture}}
\newcommand{\sadquatreun}[7]{\begin{picture}(25,12)(-5,-1)
\put(5,0){\circle*{2}}
\put(15,10){\circle*{2}}
\put(-5,10){\circle*{2}}
\put(5,10){\circle*{2}}
\put(5,0){\line(0,1){10}}
\put(5,0){\line(-1,1){10}}
\put(5,0){\line(1,1){10}}
\put(-5,1){\tiny #1}
\put(1,4){\tiny #2}
\put(11,1){\tiny #3}
\put(5,-5){\tiny #4}
\put(14,12){\tiny #5}
\put(2,12){\tiny #6}
\put(-9,12){\tiny #7}
\end{picture}}
\newcommand{\bddtroisun}[5]{
\begin{picture}(30,40)(-20,0)
\put(0,0){\line(0,0){10}}
\put(0,10){\line(1,1){10}}
\put(0,10){\line(-1,1){20}}
\put(-10,20){\line(1,1){10}}
\put(-9,7){\tiny #1}
\put(-19,17){\tiny #2}
\put(-21,32){\tiny #3}
\put(-1,32){\tiny #4}
\put(9,22){\tiny #5}
\end{picture}}
\newcommand{\bddtroisdeux}[5]{
\begin{picture}(30,40)(-10,0)
\put(0,0){\line(0,0){10}}
\put(0,10){\line(1,1){20}}
\put(0,10){\line(-1,1){10}}
\put(10,20){\line(-1,1){10}}
\put(3,7){\tiny #1}
\put(13,17){\tiny #2}
\put(-11,22){\tiny #3}
\put(-1,32){\tiny #4}
\put(19,32){\tiny #5}
\end{picture}}
\newcommand{\bdgg}[3]{
\begin{picture}(40,50)(-20,0)
\put(0,0){\line(0,0){10}}
\put(0,10){\line(1,1){10}}
\put(0,10){\line(-1,1){30}}
\put(-10,20){\line(1,1){10}}
\put(-20,30){\line(1,1){10}}
\put(-9,7){\tiny #1}
\put(-19,17){\tiny #2}
\put(-29,27){\tiny #3}
\end{picture}}
\newcommand{\bdgd}[3]{
\begin{picture}(40,50)(-20,0)
\put(0,0){\line(0,0){10}}
\put(0,10){\line(1,1){10}}
\put(0,10){\line(-1,1){20}}
\put(-10,20){\line(1,1){20}}
\put(0,30){\line(-1,1){10}}
\put(-9,7){\tiny #1}
\put(-19,17){\tiny #2}
\put(3,27){\tiny #3}
\end{picture}}
\newcommand{\bddg}[3]{
\begin{picture}(40,50)(-20,0)
\put(0,0){\line(0,0){10}}
\put(0,10){\line(1,1){20}}
\put(0,10){\line(-1,1){10}}
\put(10,20){\line(-1,1){20}}
\put(0,30){\line(1,1){10}}
\put(3,7){\tiny #1}
\put(13,17){\tiny #2}
\put(-9,27){\tiny #3}
\end{picture}}
\newcommand{\bddd}[3]{
\begin{picture}(40,50)(-20,0)
\put(0,0){\line(0,0){10}}
\put(0,10){\line(1,1){30}}
\put(0,10){\line(-1,1){10}}
\put(10,20){\line(-1,1){10}}
\put(20,30){\line(-1,1){10}}
\put(3,7){\tiny #1}
\put(13,17){\tiny #2}
\put(23,27){\tiny #3}
\end{picture}}
\newcommand{\bdm}[3]{
\begin{picture}(40,50)(-20,0)
\put(0,0){\line(0,0){10}}
\put(0,10){\line(1,1){23}}
\put(0,10){\line(-1,1){23}}
\put(-13,23){\line(1,1){10}}
\put(13,23){\line(-1,1){10}}
\put(3,7){\tiny #1}
\put(-19,17){\tiny #2}
\put(13,17){\tiny #3}
\end{picture}}
\newcommand{\tdelta}{\tilde{\Delta}}
\newcommand{\mmodels}{\mid \hspace{-.2mm} \models}
\title{The bigraft algebras}
\date{}
\author{Anthony Mansuy \\ \\
{\small{\it Laboratoire de Mathématiques, Université de Reims}}\\
\small{{\it Moulin de la Housse - BP 1039 - 51687 REIMS Cedex 2, France}}\\
\small{e-mail : anthony.mansuy@univ-reims.fr}}
\newtheorem{defi}{\indent Definition}
\newtheorem{lemma}[defi]{\indent Lemma}
\newtheorem{cor}[defi]{\indent Corollary}
\newtheorem{theo}[defi]{\indent Theorem}
\newtheorem{prop}[defi]{\indent Proposition}
\newenvironment{proof}{{\bf Proof.}}{\hfill $\Box$}
\begin{document}
\maketitle

\textbf{Abstract.} In this paper, we introduce the notion of bigraft algebra, generalizing the notions of left and right graft algebras. We give a combinatorial description of the free bigraft algebra generated by one generator and we endow this algebra with a Hopf algebra structure, and a pairing. Next, we study the Koszul dual of the bigraft operad and we give a combinatorial description of the free dual bigraft algebra generated by one generator. With the help of a rewriting method, we prove that the bigraft operad is Koszul. Finally, we define the notion of infinitesimal bigraft bialgebra and we prove a rigidity theorem for connected infinitesimal bigraft bialgebras. \\

\textbf{Résumé.} Dans ce papier, nous introduisons la notion d'algèbre bigreffe, généralisant les notions d'algèbres de greffes à gauche et à droite. Nous donnons une description combinatoire de l'algèbre bigreffe libre engendrée par un générateur et nous munissons cette algèbre d'une structure d'algèbre de Hopf et d'un couplage. Nous étudions ensuite le dual de Koszul de l'operade bigreffe et nous donnons une description combinatoire de l'algèbre bigreffe dual engendrée par un générateur. A l'aide d'une méthode de réécriture, nous prouvons que l'opérade bigreffe est Koszul. Enfin, nous définissons la notion de bialgèbre bigreffe infinitésimale et nous prouvons un théorème de rigidité pour les bialgèbres bigreffe infinitésimales connexes. \\

\textbf{Keywords.} Planar rooted trees, Koszul quadratic operads, Infinitesimal Hopf algebra.\\

\textbf{AMS Classification.} 05C05, 16W30, 18D50.

\tableofcontents

\section*{Introduction}

The Connes-Kreimer Hopf algebra of rooted trees is introduced and studied in \cite{Connes,Moerdijk}. This commutative, non cocommutative Hopf algebra is used to study a problem of Renormalisation
in Quantum Fields Theory, as explained in \cite{Connes1,Connes2}. A non commutative version, the Hopf algebra of planar rooted trees, is introduced in \cite{Foissyarbre,Holtkamp}. With a total order on the vertices, we obtain the Hopf algebra of ordered trees.

In \cite{Menous}, F. Menous study some sets of probabilities, called induced averages by J. Ecalle, associated to a random walk on $ \mathbb{R} $. For this, he constructs, with a formalism similar to mould calculus, a set of ordered trees using two grafting operators. In \cite{Mansuy}, the author study more generally these two grafting operators and constructs a Hopf subalgebra $ \mathcal{B} $ of the Hopf algebra of ordered trees. He equips $ \mathcal{B} $ with two operations $ \succ $ and $ \prec $ (respectively the left graft and the right graft) and, after having defined the concept of bigraft algebra, he proves that $ \mathcal{B} $ is a bigraft algebra.\\

The aim of this text is an algebraic study of the bigraft algebras introduced in \cite{Mansuy}. A bigraft algebra ($ \mathcal{BG} $-algebra for short) is an associative algebra $ (A,\ast) $ equipped with a left graft $ \succ $ and a right graft $ \prec $ such that $ (A,\ast, \succ) $ is a left graft algebra, $ (A,\ast,\prec) $ is a right graft algebra and with the entanglement relation:
\begin{eqnarray} \label{entangl}
(x \succ y) \prec z = x \succ (y \prec z) .
\end{eqnarray}
We construct a decorated version of the Hopf algebra of planar rooted trees denoted $ \mathcal{BT} $ (we decorate the edges with two possible decorations). We equip $ \mathcal{BT} $ with a Hopf algebra structure and a pairing. We prove that the augmentation ideal $ \mathcal{M} $ of $ \mathcal{BT} $ is the free $ \mathcal{BG} $-algebra on one generator and we deduce the dimension of the bigraft operad $ \mathcal{BG} $ and a combinatorial description of the composition.

Afterward, we give a presentation of the Koszul dual $ \mathcal{BG}^{!} $ of the bigraft operad $ \mathcal{BG} $, and a combinatorial description of the free $ \mathcal{BG}^{!} $-algebra on one generator. We also give a presentation of the homology of a $ \mathcal{BG} $-algebra. With the help of a rewriting method (see \cite{Dotsenko,Hoffbeck,LodayV}), we prove that $ \mathcal{BG} $ is Koszul and we give the PBW basis of the bigraft operad and its Koszul dual.

We finally study the compatibilities of products $ \ast $, $ \succ $, $ \prec $ and the coproduct $ \tdelta_{\mathcal{A}ss} $ of deconcatenation. This leads to the definition of infinitesimal bigraft bialgebras. We prove that the subspace corresponding to primitive elements of $ \mathcal{M} $ is a $ \mathcal{L} $-algebra, that is to say a $ \mathbb{K} $-vector space with two operations $ \succ $ and $ \prec $ satisfying the entanglement relation (\ref{entangl}) (see \cite{Leroux2,Leroux} for more details on $ \mathcal{L} $-algebras). We deduce a combinatorial description of the free $ \mathcal{L} $-algebra generated by one generator. Finally, we prove that $ (\mathcal{A}ss,\mathcal{BG}, \mathcal{L}) $ is a good triple of operads (see \cite{Loday2} for the notion of triple of operads).
\\

This text is organised as follows: in the first section, we recall several facts on the Hopf algebra of planar trees and on right graft algebras. The Hopf algebra of planar decorated trees $ \mathcal{BT} $ and its pairing are introduced in section 2. In section 3, we define the notion of $ \mathcal{BG} $-algebra and we give a combinatorial description of the free $ \mathcal{BG} $-algebra on one generator as the augmentation ideal $ \mathcal{M} $ of the Hopf algebra $ \mathcal{BT} $. Section 4 is devoted to the study of the Koszul dual of the bigraft operad. In particular, we prove that the bigraft operad is Koszul with the rewriting method. The last section deals with the notion of the infinitesimal bigraft bialgebras and we prove that $ (\mathcal{A}ss,\mathcal{BG}, \mathcal{L}) $ is a good triple of operads.\\

{\bf Acknowledgment.} {I am grateful to my advisor Loïc Foissy for stimulating discussions and his support during my research. I would also like to thank Eric Hoffbeck for helpful explanations on the rewriting method and Mohamed Khodja for his careful rereading.}
\\

{\bf Notations.} {
\begin{enumerate}
\item We shall denote by $ \mathbb{K} $ a commutative field, of any characteristic. Every vector space, algebra, coalgebra, etc, will be taken over $ \mathbb{K} $. Given a set $ X $, we denote by $ \mathbb{K} [X] $ the vector space spanned by $ X $.
\item Let $ V = \displaystyle\bigoplus_{n = 0}^{\infty} V_{n} $ be a graded vector space. We denote by $ V^{\circledast} = \displaystyle\bigoplus_{n = 0}^{\infty} V_{n}^{\ast} $ the graded dual of $ V $. If $ H $ is a graded Hopf algebra, $ H^{\circledast} $ is also a graded Hopf algebra.
\item Let $ V $ and $ W $ be two $ \mathbb{K} $-vector spaces. We note $ \tau : V \otimes W \rightarrow W \otimes V $ the unique $ \mathbb{K} $-linear map called \textit{the flip} such that $ \tau (v \otimes w) = w \otimes v $ for all $ v \otimes w \in V \otimes W $.
\item Let $ (A, \Delta , \varepsilon) $ be a counitary coalgebra. Let $ 1 \in A $, non zero, such that $ \Delta (1) = 1 \otimes 1 $. We then define the noncounitary coproduct:
\begin{eqnarray*}
\tdelta : \left\lbrace 
\begin{array}{rcl}
Ker(\varepsilon) & \rightarrow & Ker(\varepsilon) \otimes Ker(\varepsilon) ,\\
a & \mapsto & \Delta (a ) - a \otimes 1 - 1 \otimes a .
\end{array} \right. 
\end{eqnarray*}
\end{enumerate}
}
\section{Left graft algebra and right graft algebra}

\subsection{Presentation}

\begin{defi}
\begin{enumerate}
\item A left graft algebra (or $ \mathcal{LG} $-algebra for short) is a $ \mathbb{K} $-vector space $ A $ together with two $ \mathbb{K} $-linear maps $ \ast , \succ : A \otimes A \rightarrow A $ respectively called product and left graft, satisfying the following relations : for all $ x,y,z \in A $,
\begin{eqnarray*}
(x \ast y) \ast z & = & x \ast (y \ast z),\\
(x \ast y) \succ z & = & x \succ (y \succ z),\\
(x \succ y) \ast z & = & x \succ (y \ast z).
\end{eqnarray*}
\item A right graft algebra (or $ \mathcal{RG} $-algebra for short) is a $ \mathbb{K} $-vector space $ A $ together with two $ \mathbb{K} $-linear maps $ \ast , \prec : A \otimes A \rightarrow A $ respectively called product and right graft, satisfying the following relations : for all $ x,y,z \in A $,
\begin{eqnarray*}
(x \ast y) \ast z & = & x \ast (y \ast z),\\
(x \prec y) \prec z & = & x \prec (y \ast z),\\
(x \ast y) \prec z & = & x \ast (y \prec z).
\end{eqnarray*}
\end{enumerate}
\end{defi}

From this definition, it is clear that the operads $ \mathcal{LG} $ and $ \mathcal{RG} $ are binary, quadratic, regular and set-theoretic (see \cite{LodayV} for a definition). We do not suppose that $ \mathcal{LG} $-algebras and $ \mathcal{RG} $-algebras have units for the product $ \ast $. If $ A $ and $ B $ are two $ \mathcal{LG} $-algebras, we say that a $ \mathbb{K} $-linear map $ f : A \rightarrow B $ is a $ \mathcal{LG} $-morphism if $ f(x \ast y) = f(x) \ast f(y) $ and $ f(x \succ y) = f(x) \succ f(y) $ for all $ x,y \in A $. We define in the same way the notion of $ \mathcal{RG} $-morphism. We denote by $ \mathcal{LG} $-alg the category of $ \mathcal{LG} $-algebras and $ \mathcal{RG} $-alg the category of $ \mathcal{RG} $-algebras\\

{\bf Remark.} {The category $ \mathcal{LG} $-alg is equivalent to the category $ \mathcal{RG} $-alg: let $ (A,\ast , \succ ) $ be a $ \mathcal{LG} $-algebra, then $ (A,\ast^{\dagger},\succ^{\dagger}) $ is a $ \mathcal{RG} $-algebra, where $ x \ast^{\dagger} y = y \ast x $ and $ x \succ^{\dagger} y = y \succ x $ for all $ x,y \in A $. Note that $ \ast^{\dagger \dagger} = \ast $ and $ \succ^{\dagger \dagger} = \succ $. So we will only study the operad $ \mathcal{RG} $.}

\subsection{Hopf algebra of planar trees}

\subsubsection{The Connes-Kreimer Hopf algebra of rooted trees}

We briefly recall the construction of the Connes-Kreimer Hopf algebra of rooted trees \cite{Connes}. A {\it rooted tree} is a finite graph, connected, without loops, with a distinguished vertex called the {\it root} \cite{Stanley}.  A {\it rooted forest} is a finite graph $ F $ such that any connected component of $ F $ is a rooted tree. The \textit{length} of a forest $ F $ is the number of connected components of $ F $. The set of vertices of the rooted forest $ F $ is denoted by $V(F)$. The {\it degree} of a forest $ F $ is the number of its vertices.\\

{\bf Examples.} {\begin{enumerate}
\item Rooted trees of degree $ \leq 5 $:
$$ 1,\tun,\tdeux,\ttroisun,\ttroisdeux,\tquatreun,\tquatredeux,\tquatrequatre,\tquatrecinq,\tcinqun,\tcinqdeux,\tcinqcinq,\tcinqsix,\tcinqhuit,\tcinqdix,\tcinqonze,\tcinqtreize,\tcinqquatorze . $$
\item Rooted forests of degree $ \leq 4 $:
$$ 1,\tun,\tun\tun,\tdeux,\tun\tun\tun,\tdeux\tun,\ttroisun,\ttroisdeux,\tun\tun\tun\tun,\tdeux\tun\tun,\tdeux\tdeux,\ttroisun\tun,\ttroisdeux\tun,\tquatreun,\tquatredeux,\tquatrequatre,\tquatrecinq . $$
\end{enumerate}}

Let $ F $ be a rooted forest. The edges of $ F $ are oriented downwards (from the leaves to the roots). If $v,w \in V(F)$, we shall note $v \rightarrow w$ if there is an edge in $ F $ from $v$ to $w$ and $v \twoheadrightarrow w$ if there is an oriented path from $v$ to $w$ in $ F $. By convention, $v \twoheadrightarrow v$ for any $v \in V(F)$. If $ T $ is a rooted tree and if $ v \in V(T) $, we denote by $ h(v) $ the \textit{height} of $ v $, that is to say the number of edges on the oriented path from $ v $ to the root of $ T $. The \textit{height} of a rooted forest $ F $ is $ h(F) = \mbox{max} \left( \{ h(v), \: v \in V(F) \} \right) $. We shall say that a tree $ T $ is a corolla if $ h(T) \leq 1 $.

Let $\boldsymbol{v}$ be a subset of $V(F)$. We shall say that $\boldsymbol{v}$ is an admissible cut of $F$, and we shall write $\boldsymbol{v} \models V(F)$, if $\boldsymbol{v}$ is totally disconnected, that is to say that $v \twoheadrightarrow w \hspace{-.7cm} / \hspace{.7cm}$ for any couple $(v,w)$ of two different elements of $\boldsymbol{v}$. If $\boldsymbol{v} \models V(F)$, we denote by $Lea_{\boldsymbol{v}} (F) $ the rooted subforest of $ F $ obtained by keeping only the vertices above $\boldsymbol{v}$, that is to say $\{ w \in V(F), \: \exists v \in \boldsymbol{v}, \:w \twoheadrightarrow v \}$. Note that $\boldsymbol{v} \subseteq Lea_{\boldsymbol{v}}(F) $. We denote by $Roo_{\boldsymbol{v}}(F)$ the rooted subforest obtained by keeping the other vertices.

In particular, if $\boldsymbol{v}=\emptyset$, then $Lea_{\boldsymbol{v}} (F) =1$ and $Roo_{\boldsymbol{v}} (F) = F $: this is the {\it empty cut} of $ F $. If $\boldsymbol{v}$ contains all the roots of $ F $, then it contains only the roots of $ F $, $Lea_{\boldsymbol{v}} (F) = F$ and $Roo_{\boldsymbol{v}}(F) = 1$: this is the {\it total cut} of $ F $. We shall write $\boldsymbol{v} \mmodels V( F )$ if $\boldsymbol{v}$ is a nontotal, nonempty admissible cut of $ F $.\\

Connes and Kreimer proved in \cite{Connes} that the vector space $ \mathcal{H} $ generated by the set of rooted forests is a Hopf algebra. Its product is given by the concatenation of rooted forests, and the coproduct is defined for any rooted forest $ F $ by:
$$\Delta(F)=\sum_{\boldsymbol{v} \models V(F)} Lea_{\boldsymbol{v}} (F) \otimes Roo_{\boldsymbol{v}} (F)
=F \otimes 1+1\otimes F+\sum_{\boldsymbol{v} \mmodels V(F)} Lea_{\boldsymbol{v}}(F) \otimes Roo_{\boldsymbol{v}}(F) .$$
For example:
$$\Delta\left(\tquatredeux\right)=\tquatredeux \otimes 1+1\otimes \tquatredeux+
\tun \otimes \ttroisun+\tdeux \otimes \tdeux+\tun \otimes \ttroisdeux+\tun\tun \otimes \tdeux+\tdeux \tun \otimes \tun.$$

\subsubsection{Hopf algebras of planar trees and universal property} \label{HP}

We now recall the construction of the noncommutative generalization of the Connes-Kreimer Hopf algebra \cite{Foissyarbre,Holtkamp}.\\

A {\it planar forest} is a rooted forest $ F $ such that the set of the roots of $ F $ is totally ordered and, for any vertex $v \in V(F)$, the set $\{w \in V(F)\:\mid \:w \rightarrow v\}$ is totally ordered. Planar forests are represented such that the total orders on the set of roots and the sets $\{w \in V(F)\:\mid \:w \rightarrow v\}$ for any $v \in V(F)$ is given from left to right. We denote by $ \mathbb{T}_{P} $ the set of the planar trees.\\

{\bf Examples.} {\begin{enumerate}
\item Planar rooted trees of degree $\leq 5$:
$$\tun,\tdeux,\ttroisun,\ttroisdeux,\tquatreun, \tquatredeux,\tquatretrois,\tquatrequatre,\tquatrecinq,\tcinqun,\tcinqdeux,\tcinqtrois,\tcinqquatre,\tcinqcinq,
\tcinqsix,\tcinqsept,\tcinqhuit,\tcinqneuf,\tcinqdix,\tcinqonze,\tcinqdouze,\tcinqtreize,\tcinqquatorze.$$
\item Planar rooted forests of degree $\leq 4$:
$$1,\tun,\tun\tun,\tdeux,\tun\tun\tun,\tdeux\tun,\tun \tdeux,\ttroisun,\ttroisdeux,\tun\tun\tun\tun,\tdeux\tun\tun,\tun \tdeux \tun, \tun \tun \tdeux,
\ttroisun\tun,\tun \ttroisun,\ttroisdeux\tun,\tun \ttroisdeux,\tdeux\tdeux,\tquatreun,\tquatredeux,\tquatretrois,\tquatrequatre,\tquatrecinq.$$
\end{enumerate}}

If $\boldsymbol{v} \models V(F)$, then $Lea_{\boldsymbol{v}}(F)$ and $Roo_{\boldsymbol{v}}(F)$ are naturally planar forests. It is proved in \cite{Foissyarbre} that the space $ \mathcal{H}_{P} $ generated by planar forests is a Hopf algebra. Its product is given by the concatenation of planar forests and its coproduct is defined for any rooted forest $ F $ by:
$$\Delta(F)=\sum_{\boldsymbol{v} \models V(F)} Lea_{\boldsymbol{v}}(F) \otimes Roo_{\boldsymbol{v}}(F)
=F \otimes 1+1\otimes F+\sum_{\boldsymbol{v} \mmodels V(F)} Lea_{\boldsymbol{v}}(F) \otimes Roo_{\boldsymbol{v}}(F).$$
For example:
\begin{eqnarray*}
\Delta\left(\tquatredeux\right)&=&\tquatredeux \otimes 1+1\otimes \tquatredeux+
\tun \otimes \ttroisun+\tdeux \otimes \tdeux+\tun \otimes \ttroisdeux+\tun\tun \otimes \tdeux+\tdeux \tun \otimes \tun ,\\
\Delta\left(\tquatretrois\right)&=&\tquatretrois \otimes 1+1\otimes \tquatretrois+
\tun \otimes \ttroisun+\tdeux \otimes \tdeux+\tun \otimes \ttroisdeux+\tun\tun \otimes \tdeux+\tun\tdeux \otimes \tun.
\end{eqnarray*}

We define the operator $B_{P}:\mathcal{H}_{P} \longrightarrow \mathcal{H}_{P} $, which associates, to a forest $ F \in \mathcal{H}_{P} $, the tree obtained by grafting the roots of the trees of $ F $ on a common root. For example, $B_{P}(\tdeux \tun)=\tquatredeux$, and $B_{P}(\tun\tdeux)=\tquatretrois$. It is shown in \cite{Moerdijk} that $(\mathcal{H}_{P},B_{P})$ is an initial object in the category of couples $(A,L)$, where $A$ is an algebra and $L:A \longrightarrow A$ any linear operator. Explicitely, one has

\begin{theo}
Let $A$ be any algebra and let $L:A\longrightarrow A$ be a linear map. Then there exists a unique algebra morphism $\phi:\mathcal{H}_{P} \longrightarrow A$, such that $\phi \circ B_{P}=L\circ \phi$.
\end{theo}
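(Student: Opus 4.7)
The plan is to exploit two structural facts about $\mathcal{H}_{P}$: first, it is free as an associative algebra on the set $\mathbb{T}_{P}$ of planar trees, where the product is the concatenation of forests and a planar forest decomposes uniquely (up to the specified left-to-right ordering of its roots) as an ordered product of trees; second, the operator $B_{P}:\mathcal{H}_{P}\to\mathcal{H}_{P}$ restricts to a bijection between planar forests and nonempty planar trees, so every $T\in\mathbb{T}_{P}$ has a unique antecedent $F_{T}$ satisfying $T=B_{P}(F_{T})$, and $F_{T}$ has strictly smaller degree than $T$.

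For existence, I would construct $\phi$ by induction on the degree. Set $\phi(1)=1_{A}$. Assume $\phi$ has been defined on forests of degree $<n$ and extended as an algebra morphism (this is legitimate on degree $<n$ because $\mathcal{H}_{P}$ is free). For a planar tree $T$ of degree $n$, use the unique decomposition $T=B_{P}(F_{T})$ and define
\begin{equation*}
\phi(T) \;=\; L\bigl(\phi(F_{T})\bigr),
\end{equation*}
which is well-defined since $\deg F_{T}=n-1$. Then extend $\phi$ to all planar forests of degree $n$ by setting $\phi(T_{1}\cdots T_{k})=\phi(T_{1})\cdots\phi(T_{k})$ for planar trees $T_{i}$. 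The resulting map is an algebra morphism by construction (this is exactly the universal property of the free associative algebra on $\mathbb{T}_{P}$), and the identity $\phi\circ B_{P}=L\circ\phi$ holds on every forest $F$ because both sides equal $L(\phi(F))$: the left side by the definition of $\phi$ on the tree $B_{P}(F)$, and the right side tautologically.

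For uniqueness, suppose $\phi'$ is another algebra morphism with $\phi'\circ B_{P}=L\circ\phi'$. Then $\phi'(1)=1_{A}$ because $\phi'$ is a unital algebra morphism. I would argue $\phi'=\phi$ by induction on degree: if $\phi'$ agrees with $\phi$ on forests of degree $<n$, then for a planar tree $T$ of degree $n$,
\begin{equation*}
\phi'(T)=\phi'(B_{P}(F_{T}))=L(\phi'(F_{T}))=L(\phi(F_{T}))=\phi(T),
\end{equation*}
and multiplicativity extends the agreement to all forests of degree $n$.

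There is really no serious obstacle here; the only point requiring care is the appeal to the freeness of $\mathcal{H}_{P}$ as an associative algebra on $\mathbb{T}_{P}$ (which is a consequence of the definition of planar forests as ordered concatenations of planar trees), together with the bijection $B_{P}:\text{forests}\to\text{nonempty trees}$. Both are standard and are implicit in the constructions recalled in Section~\ref{HP}, so the argument reduces to the degree induction above.
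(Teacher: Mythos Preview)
Your proof is correct and follows the standard approach: the paper does not actually prove this theorem (it is attributed to \cite{Moerdijk}), but the inductive definition of $\phi$ given in the Remark immediately following the statement matches yours exactly, and the paper's own proof of the analogous Theorem~\ref{univ1} for $(\mathcal{BT},B)$ proceeds by the same degree induction (define $\phi$ on trees via $B$, extend multiplicatively to forests, prove uniqueness by induction).
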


{\bf Remarks.} {\begin{enumerate}
\item Note that $\phi$ is inductively defined in the following way: for all trees $T_1,\ldots, T_n \in \mathcal{H}_{P}$,
$$  \left\{ \begin{array}{rcl}
\phi(1)&=&1,\\
\phi(T_1\hdots T_n)&=&\phi(T_1)\hdots \phi(T_n),\\
\phi(B_{P}(T_1\hdots T_n))&=&L(\phi(T_1)\hdots \phi(T_n)).
\end{array}
\right.$$
\item $ \Delta : \mathcal{H}_{P} \longrightarrow \mathcal{H}_{P} \otimes \mathcal{H}_{P} $ is the unique algebra morphism such that
$$ \Delta \circ B_{P} = B_{P} \otimes 1 + (id \otimes B_{P}) \circ \Delta .$$
\end{enumerate}}

\subsubsection{The free right graft algebra}

We here recall some results of \cite{Foissyinfinitesimal}.\\

Let $ F,G \in \mathcal{H}_{P} $ two nonempty planar forests. We set $ F = F_{1} \hdots F_{n} $ and $ F_{n} = B_{P}(H) $. We define:
$$ F \prec G = F_{1} \hdots F_{n-1} B_{P}(H G) .$$
In other terms, $G$ is grafted on the root of the last tree of $F$, on the right. In particular, $\tun \prec G =B_{P}(G)$. \\

{\bf Examples.} {$$\begin{array}{|rclcl|rclcl|rclcl|rclcl|}
\hline
\tdeux &\prec& \tun\tun\tun&=&\tcinqun& \tun \tun \tun &\prec& \tdeux&=& \tun \tun \ttroisdeux&
\tun\tun\tun&\prec&\tun\tun&=&\tun\tun\ttroisun&\tun\tun&\prec&\tun\tun\tun&=&\tun\tquatreun\\
\tdeux &\prec& \tdeux \tun &=&\tcinqtrois&\tdeux \tun &\prec& \tdeux&=& \tdeux \ttroisdeux &
\tdeux \tun&\prec&\tun\tun&=&\tdeux\ttroisun&\tun\tun&\prec&\tun\tdeux&=&\tun\tquatretrois\\
\tdeux &\prec& \tun \tdeux &=&\tcinqquatre&\tun \tdeux &\prec& \tdeux  &=&\tun\tquatretrois&
\tun\tdeux&\prec&\tun\tun&=&\tun\tquatreun& \tun\tun&\prec&\tdeux\tun&=&\tun\tquatredeux\\
\tdeux &\prec& \ttroisun&=&\tcinqsept&\ttroisun &\prec& \tdeux&=&\tcinqquatre&
\ttroisun&\prec&\tun\tun&=&\tcinqun&\tun\tun&\prec&\ttroisun&=&\tun\tquatrequatre\\
\tdeux &\prec& \ttroisdeux&=&\tcinqneuf&\ttroisdeux &\prec& \tdeux&=&\tcinqcinq&
\ttroisdeux&\prec& \tun\tun&=&\tcinqdeux& \tun\tun&\prec&\ttroisdeux&=&\tun\tquatrecinq \\
\hline
\end{array}$$}

We denote by $ \mathcal{M}_{P} $ the augmentation ideal of $ \mathcal{H}_{P} $, that is to say the vector space generated by the nonempty forests of $ \mathcal{H}_{P} $. We extend $ \prec : \mathcal{M}_{P}  \otimes \mathcal{M}_{P} \rightarrow \mathcal{M}_{P} $ by linearity.\\

{\bf Remark.} {$ \prec $ is not associative :
\begin{eqnarray*}
\tun \prec (\tun \prec \tun) = \ttroisdeux \neq \ttroisun = (\tun \prec \tun) \prec \tun .
\end{eqnarray*}}

The following result is proved in \cite{Foissyinfinitesimal}.

\begin{theo} \label{mestlg}
$ (\mathcal{M}_{P} , m , \prec) $ is the free $ \mathcal{RG} $-algebra generated by $ \tun $ ($ m $ is the concatenation product).
\end{theo}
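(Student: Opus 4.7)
The plan is to prove this in two stages: first check that $(\mathcal{M}_P, m, \prec)$ is actually an $\mathcal{RG}$-algebra, then establish the universal property.

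For the axioms, associativity of $m$ is immediate from the fact that concatenation of planar forests is associative. For the relation $(x \prec y) \prec z = x \prec (y \ast z)$: writing $x = F_1 \cdots F_{n-1} F_n$ with $F_n = B_P(H)$, we compute $x \prec y = F_1 \cdots F_{n-1} B_P(Hy)$, so applying $\prec z$ grafts $z$ on the right of the (same) last root and gives $F_1 \cdots F_{n-1} B_P(Hyz)$, which is exactly $x \prec (yz)$. For $(x \ast y) \prec z = x \ast (y \prec z)$, the last tree of $xy$ is the last tree of $y$, so both sides graft $z$ on the right of the root of the last tree of $y$; this is a straightforward unravelling of the definition.

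For the freeness, let $(A, \ast_A, \prec_A)$ be any $\mathcal{RG}$-algebra and fix $a \in A$. I will define $\phi : \mathcal{M}_P \to A$ inductively on the degree of forests by the three rules
\begin{eqnarray*}
\phi(\tun) &=& a, \\
\phi(B_P(G)) &=& a \prec_A \phi(G) \qquad \text{if } G \text{ is a nonempty forest}, \\
\phi(T_1 \cdots T_n) &=& \phi(T_1) \ast_A \cdots \ast_A \phi(T_n).
\end{eqnarray*}
Since every nonempty tree is either $\tun = B_P(1)$ or $B_P(G)$ for a unique nonempty forest $G$, and every nonempty forest factors uniquely as a product of trees, this defines $\phi$ on a basis of $\mathcal{M}_P$, and by construction $\phi$ is a morphism of (nonunital) associative algebras.

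The main verification is that $\phi(F \prec G) = \phi(F) \prec_A \phi(G)$ for all nonempty $F, G$. Writing $F = F_1 \cdots F_{n-1} F_n$ with $F_n = B_P(H)$, I would split into two cases: if $H = 1$ then $F_n = \tun$ and one uses the third $\mathcal{RG}$-axiom $(x \ast y) \prec z = x \ast (y \prec z)$ to move $\prec_A \phi(G)$ past the $\ast_A$; if $H \neq 1$ then $\phi(F_n) = a \prec_A \phi(H)$, and after applying the same third axiom one applies the second $\mathcal{RG}$-axiom $(x \prec y) \prec z = x \prec (y \ast z)$ with $x = a$, $y = \phi(H)$, $z = \phi(G)$ to match $\phi(B_P(HG)) = a \prec_A (\phi(H) \ast_A \phi(G))$. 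Uniqueness is then almost automatic: since every tree satisfies $B_P(G) = \tun \prec G$ when $G$ is nonempty, the value of any $\mathcal{RG}$-morphism sending $\tun$ to $a$ is forced on every forest by induction on the degree.

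The main obstacle is the compatibility check in the case $H \neq 1$, where one must invoke both nontrivial $\mathcal{RG}$-relations in the correct order; everything else is bookkeeping about the canonical decomposition of planar forests via $B_P$ and concatenation, combined with the universal property of $\mathcal{H}_P$ recalled in Section \ref{HP}.
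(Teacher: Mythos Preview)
Your proof is correct. Note, however, that the paper does not actually give a proof of this theorem: it is quoted from \cite{Foissyinfinitesimal}. That said, your argument follows exactly the template the paper uses for the analogous freeness results it does prove in full, namely Proposition~14 (free $\mathcal{M}ag$-algebra) and Theorem~\ref{mestbglibre} (free $\mathcal{BG}$-algebra): define $\phi$ inductively via $\phi(\tun)=a$, $\phi(B_P(G))=a\prec_A\phi(G)$, extend multiplicatively on forests, then verify compatibility with $\prec$ using the $\mathcal{RG}$-axioms, and deduce uniqueness from $B_P(G)=\tun\prec G$. Your handling of the two cases $H=1$ and $H\neq 1$ is exactly right, as is the identification of the key step (combining the axioms $(x\ast y)\prec z = x\ast(y\prec z)$ and $(x\prec y)\prec z = x\prec(y\ast z)$ in the nontrivial case).
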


\subsection{Relations with the coproduct}

First, we recall the definition of a dipterous algebra (see \cite{Loday}):

\begin{defi}
A (right) dipterous algebra is a $ \mathbb{K} $-vector space $ A $ equipped with two binary operations denoted $ \ast $ and $ \prec $ satisfying the following relations : for all $ x , y, z \in A $,
\begin{eqnarray}
(x \ast y) \ast z & = & x \ast (y \ast z) , \label{axiomdiptere1} \\
(x \prec y) \prec z & = & x \prec (y \ast z) . \label{axiomdiptere2}
\end{eqnarray}
\end{defi}

Dipterous algebras do not have unit for the product $ \ast $. If $ A $ and $ B $ are two dipterous algebras, we say that a $ \mathbb{K} $-linear map $ f : A \rightarrow B $ is a dipterous morphism if $ f(x \ast y) = f(x) \ast f(y) $ and $ f(x \prec y) = f(x) \prec f(y) $ for all $ x,y \in A $. We denote by $ Dipt $-alg the category of dipterous algebras.\\

{\bf Remark.} {A $ \mathcal{RG} $-algebra is also a dipterous algebra. We get the following canonical functor: $ \mathcal{RG} \mbox{-alg} \rightarrow Dipt \mbox{-alg} $.}
\\

As $ \mathcal{RG} $-algebras are not unitary objects, we need to extend the usual tensor product in order to obtain a copy of $ A $ and $ B $ in the tensor product of $ A $ and $ B $.

\begin{defi}
Let $ A, B $ be two vector spaces. Then :
$$ A \overline{\otimes} B = (A \otimes \mathbb{K}) \oplus (A \otimes B) \oplus (\mathbb{K} \otimes B) .$$
\end{defi}

Let $ A $ be a $ \mathcal{RG} $-algebra. We extend $ \prec : A \otimes A \rightarrow A $ to a map $ \prec : A \overline{\otimes} A \rightarrow A $ in the following way : for all $ a \in A $, $ a \prec 1 = a $ and  $ 1 \prec a = 0 $. Moreover, we extend the product of $ A $ to a map from $ (A \oplus \mathbb{K}) \otimes (A \oplus \mathbb{K}) $ to $ A \oplus \mathbb{K} $ by putting $ 1.a = a.1 = a $ for all $ a \in A $ and $ 1.1 = 1 $. Note that $ 1 \prec 1 $ is not defined.

\begin{prop} \label{tenseurdipterous} Let $ A $ and $ B $ be two $ \mathcal{RG} $-algebras. Then $ A \overline{\otimes} B $ is given a structure of dipterous algebra in the following way : for $ a,a' \in A \cup \mathbb{K} $ and $ b , b' \in B \cup \mathbb{K} $,
\begin{eqnarray*}
(a \otimes b) \ast (a' \otimes b') & = & (a \ast a') \otimes (b \ast b') ,\\
(a \otimes b) \prec (a' \otimes b') & = & (a \ast a') \otimes (b \prec b') , \mbox{ if $ b $ or $ b' \in B $}, \\
(a \otimes 1) \prec (a' \otimes 1)  & = & (a \prec a') \otimes 1 .
\end{eqnarray*}
\end{prop}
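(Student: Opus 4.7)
The plan is to verify the two dipterous axioms (\ref{axiomdiptere1}) and (\ref{axiomdiptere2}) directly on generators of the form $a \otimes b$ with $a \in A \cup \mathbb{K}$ and $b \in B \cup \mathbb{K}$, extending bilinearly. The first step is to check that the three formulas defining $\ast$ and $\prec$ on $A \overline{\otimes} B$ are actually well-defined, in particular that no evaluation ever requires computing the undefined expression $1 \prec 1$. For $\ast$, this is automatic since $1 \ast 1 = 1$ by the extended convention on $A \oplus \mathbb{K}$ and $B \oplus \mathbb{K}$. For $\prec$, the second formula applies only when $b \in B$ or $b' \in B$, so the factor $b \prec b'$ is one of $b \prec b'$ (both in $B$), $b \prec 1 = b$, or $1 \prec b' = 0$; the third formula covers the remaining case $b = b' = 1$ by invoking $a \prec a'$ in $A$, which is defined.

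Next I would check associativity of $\ast$. Writing $x = a \otimes b$, $y = a' \otimes b'$, $z = a'' \otimes b''$, both sides of $(x \ast y) \ast z = x \ast (y \ast z)$ equal $(a \ast a' \ast a'') \otimes (b \ast b' \ast b'')$, using associativity of $\ast$ in $A$ (resp.\ $B$) together with the unital extension to $A \oplus \mathbb{K}$ (resp.\ $B \oplus \mathbb{K}$). This is immediate from the definition.

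The main work is the relation $(x \prec y) \prec z = x \prec (y \ast z)$. Here I would split into cases according to whether each of $b, b', b''$ lies in $B$ or equals $1$. In the generic case $b, b', b'' \in B$, both sides equal $(a \ast a' \ast a'') \otimes (b \prec b' \prec b'')$ on the left and $(a \ast (a' \ast a'')) \otimes (b \prec (b' \ast b''))$ on the right; these agree by associativity of $\ast$ in $A$ and the dipterous relation (\ref{axiomdiptere2}) in $B$. When $b = b' = b'' = 1$, both sides collapse to $(a \prec a' \prec a'') \otimes 1$ and $(a \prec (a' \ast a'')) \otimes 1$, which coincide by the dipterous relation in $A$. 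The intermediate cases (exactly one or two of $b, b', b''$ equal to $1$) each reduce similarly to one of the $\mathcal{RG}$-algebra axioms for $A$ or $B$; in particular the case $b \in B$, $b' = b'' = 1$ uses the $\mathcal{RG}$-relation $(u \ast v) \prec w = u \ast (v \prec w)$ in $B$ (with $w = 1$, trivially), while the case $b = 1$, $b' \in B$ produces $1 \prec b' = 0$ on both sides.

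The only delicate point --- and hence the likely main obstacle --- is bookkeeping: one has to check that each subcase in the verification of (\ref{axiomdiptere2}) never forces a $1 \prec 1$ inside a computation, and that the split between the second and third defining formulas of $\prec$ is applied consistently on each side of the identity. Once the case analysis is laid out in a table indexed by the $8$ possibilities for $(b, b', b'') \in (B \cup \{1\})^3$, the verification reduces in every case to one of the three $\mathcal{RG}$-algebra axioms (associativity of $\ast$, dipterous relation, or the third $\mathcal{RG}$-axiom) applied in $A$ or in $B$.
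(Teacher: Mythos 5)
Your proposal is correct and follows essentially the same route as the paper: a direct case-by-case verification of the two dipterous axioms on elementary tensors, split according to which of $b, b', b''$ equal $1$, with each case reducing either to an $\mathcal{RG}$-axiom in $A$ or $B$ or to the conventions $b \prec 1 = b$, $1 \prec b' = 0$ (so in particular your case $b \in B$, $b' = b'' = 1$ is really just these conventions plus associativity in $A$, not the third $\mathcal{RG}$-axiom in $B$). The only cosmetic difference is that you organize the eight possibilities into a table and add an explicit well-definedness check ruling out $1 \prec 1$, whereas the paper treats the generic case first and then the degenerate configurations $b = b'' = 1$, $b = b' = 1$ and $b = b' = b'' = 1$ separately.
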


\begin{proof}
The associativity of $ \ast : (A \overline{\otimes} B) \otimes (A \overline{\otimes} B) \rightarrow A \overline{\otimes} B $ (that is to say the relation (\ref{axiomdiptere1})) is obvious. We prove (\ref{axiomdiptere2}) : for all $ a , a' , a'' \in A $ and $ b , b' ,b'' \in B $,
\begin{eqnarray*}
(a \otimes b) \prec ((a' \otimes b') \ast (a'' \otimes b'')) & = & (a \otimes b) \prec ((a' \ast a'') \otimes (b' \ast b'')) \\
& = & (a \ast (a' \ast a'')) \otimes (b \prec (b' \ast b'')) \\
& = & ((a \ast a') \ast a'' ) \otimes ((b \prec b') \prec b'') \\
& = & ((a \ast a') \otimes (b \prec b')) \prec (a'' \otimes b'') \\
& = & ((a \otimes b) \prec (a' \otimes b')) \prec (a'' \otimes b'') .
\end{eqnarray*}
This calculation is still true if $ b , b' $ or $ b'' $ is equal to $ 1 $ or if $ b' = b'' = 1 $ and $ b \in B $. If $ b = b'' = 1 $ and $ b' \in B $,
\begin{eqnarray*}
(a \otimes 1) \prec ((a' \otimes b') \ast (a'' \otimes 1)) & = & (a \otimes 1) \prec ((a' \ast a'') \otimes b') \\
& = & (a \ast (a' \ast a'')) \otimes (1 \prec b') \\
& = & 0 ,\\
((a \otimes 1) \prec (a' \otimes b')) \prec (a'' \otimes 1) & = & ((a \ast a') \otimes (1 \prec b')) \prec (a'' \otimes 1) \\
& = & 0 .
\end{eqnarray*}
If $ b = b' = 1 $ and $ b'' \in B $, then $ a $ and $ a' $ are not equal to $ 1 $ and
\begin{eqnarray*}
(a \otimes 1) \prec ((a' \otimes 1) \ast (a'' \otimes b'')) & = & (a \otimes 1) \prec ((a' \ast a'') \otimes b'')\\
& = & (a \ast (a' \ast a'')) \otimes (1 \prec b'') \\
& = & 0 ,\\
((a \otimes 1) \prec (a' \otimes 1)) \prec (a'' \otimes b'') & = & ((a \prec a') \otimes 1) \prec (a'' \otimes b'') \\
& = & ((a \ast a') \succ a'') \otimes (1 \succ b'') \\
& = & 0 .
\end{eqnarray*}
Finally if $ b = b' = b'' = 1 $, then $ a , a' $ and $ a'' $ are not equal to $ 1 $ and
\begin{eqnarray*}
(a \otimes 1) \prec ((a' \otimes 1) \ast (a'' \otimes 1)) & = & (a \otimes 1) \prec ((a' \ast a'') \otimes 1) \\
& = & (a \prec (a' \ast a'')) \otimes 1 \\
& = & ((a \prec a') \prec a'' ) \otimes 1 \\
& = & ((a \prec a') \otimes 1) \prec (a'' \otimes 1) \\
& = & ((a \otimes 1) \prec (a' \otimes 1)) \prec (a'' \otimes 1) .
\end{eqnarray*}
In all cases, the relation (\ref{axiomdiptere2}) is satisfied and $ A \overline{\otimes} B $ is a dipterous algebra.
\end{proof}
\\

{\bf Remarks.} {\begin{enumerate}
\item $ A \otimes \mathbb{K} $ is a dipterous subalgebra of $ A \overline{\otimes} B $ which is isomorphic to $ A $, and $ \mathbb{K} \otimes B $ is a dipterous subalgebra of $ A \overline{\otimes} B $ which is isomorphic to $ B $.
\item Suppose that $ A,B \ne \{0\} $. $ (A \overline{\otimes} B , \ast , \prec ) $ is a $ \mathcal{RG} $-algebra if and only if $ \prec : A \otimes A \rightarrow A $ and $ \prec : B \otimes B \rightarrow B $ are zero. Indeed, if $ A \overline{\otimes} B $ is a $ \mathcal{RG} $-algebra then for all $ a, a' \in A $ and $ b,b' \in B $,
\begin{eqnarray*}
(a \prec a') \otimes b & = & ((a \prec a') \otimes 1) \ast (1 \otimes b) \\
& = & ((a \otimes 1) \prec (a' \otimes 1)) \ast (1 \otimes b) \\
& = & (a \otimes 1) \prec (a' \otimes b) \\
& = & (a \ast a') \otimes (1 \prec b) \\
& = & 0 ,
\end{eqnarray*}
therefore, by taking $ b \neq 0 $, $ a \prec a' = 0 $ for all $ a , a' \in A $. Moreover,
\begin{eqnarray*}
a \otimes (b \prec b') & = & (a \otimes b) \prec (1 \otimes b') \\
& = & ((1 \otimes b) \ast (a \otimes 1)) \prec (1 \otimes b') \\
& = & (1 \otimes b) \ast (a \otimes (1 \prec b')) \\
& = & 0 ,
\end{eqnarray*}
therefore, by taking $ a \neq 0 $, $ b \prec b' = 0 $ for all $ b,b' \in B $.\\

Reciprocally, if $ \prec : A \otimes A \rightarrow A $ and $ \prec : B \otimes B \rightarrow B $ are zero, it is clear that $ (A \overline{\otimes} B , \ast , \prec ) $ is a $ \mathcal{RG} $-algebra.
\end{enumerate}}

\begin{prop} \label{relcopg}
For all tree $ T \in \mathcal{M}_{P} $ and for every forest $ F \in \mathcal{M}_{P} $, $ \Delta(T \prec F) = \Delta(T) \prec \Delta(F) $.
\end{prop}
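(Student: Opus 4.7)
The plan is to exploit the fact that $T$ is a tree, so it can be written as $T = B_P(G)$ for some (possibly empty) forest $G$, and then use two key facts: the formula $T \prec F = B_P(GF)$ coming directly from the definition of $\prec$, and the cobracket relation $\Delta \circ B_P = B_P \otimes 1 + (\mathrm{id} \otimes B_P) \circ \Delta$ recalled at the end of section~\ref{HP}. Combined with the multiplicativity $\Delta(GF) = \Delta(G)\Delta(F)$, this immediately gives
$$\Delta(T \prec F) = (T \prec F) \otimes 1 + (\mathrm{id} \otimes B_P)\bigl(\Delta(G)\Delta(F)\bigr).$$

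For the right-hand side $\Delta(T) \prec \Delta(F)$, I would first write $\Delta(T) = T \otimes 1 + (\mathrm{id} \otimes B_P)(\Delta(G))$ using the same cobracket relation, and expand $\Delta(G) = \sum_i G'_i \otimes G''_i$, $\Delta(F) = \sum_j F'_j \otimes F''_j$ (including all terms, trivial and nontrivial). Then I apply the three bullets of Proposition~\ref{tenseurdipterous} to each pair of summands. The cases split as follows:
\begin{itemize}
\item The $(T \otimes 1) \prec (F'_j \otimes F''_j)$ terms: when $F''_j = 1$ (i.e.\ $F'_j = F$), the third formula gives $(T \prec F) \otimes 1$; when $F''_j \in \mathcal{M}_P$, the second formula gives $(T \ast F'_j) \otimes (1 \prec F''_j) = 0$.
\item The $(G'_i \otimes B_P(G''_i)) \prec (F'_j \otimes F''_j)$ terms: since $B_P(G''_i)$ is always a tree (hence in $\mathcal{M}_P$), the second formula of Proposition~\ref{tenseurdipterous} applies uniformly, producing $(G'_i F'_j) \otimes (B_P(G''_i) \prec F''_j)$, and one checks (distinguishing $F''_j = 1$ from $F''_j \in \mathcal{M}_P$) that $B_P(G''_i) \prec F''_j = B_P(G''_i F''_j)$ in both cases.
\end{itemize}

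Summing the contributions yields
$$\Delta(T) \prec \Delta(F) = (T \prec F) \otimes 1 + \sum_{i,j} (G'_i F'_j) \otimes B_P(G''_i F''_j),$$
and recognising the last sum as $(\mathrm{id} \otimes B_P)(\Delta(G)\Delta(F)) = (\mathrm{id} \otimes B_P)(\Delta(GF))$ gives the equality with $\Delta(T \prec F)$.

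The only delicate point is the bookkeeping of the "boundary" terms $b = 1$ or $b' = 1$ in Proposition~\ref{tenseurdipterous}: one must be careful that $B_P(G''_i) \prec 1 = B_P(G''_i) = B_P(G''_i \cdot 1)$ so that the formula $B_P(G''_i) \prec F''_j = B_P(G''_i F''_j)$ is uniform, and that the single nonzero contribution of type $(T \otimes 1) \prec (\,\cdot\,)$ is exactly $(T \prec F) \otimes 1$, matching the $B_P(GF) \otimes 1$ term produced on the left. The rest is a direct computation, and crucially uses that $T$ is a tree (not a general forest), so that the simple formula $T \prec F = B_P(GF)$ is available.
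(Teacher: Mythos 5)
Your proof is correct, but it takes a genuinely different route from the paper's. The paper first treats the case where $F$ is also a tree: it expands $\Delta(T) = T \otimes 1 + 1 \otimes T + \sum_T T^{(1)} \otimes T^{(2)}$ and likewise $\Delta(F)$, multiplies out in $\mathcal{M}_P \overline{\otimes} \mathcal{M}_P$ using the extended $\prec$, and matches the resulting seven families of terms against the admissible-cut description of $\Delta(T \prec F)$; it then passes to an arbitrary forest $F = F_1 \cdots F_n$ by writing $\Delta(F) = \Delta(F_1) \cdots \Delta(F_n)$ and iterating via the dipterous axiom (\ref{axiomdiptere2}) on $\mathcal{M}_P \overline{\otimes} \mathcal{M}_P$ established in proposition \ref{tenseurdipterous}, so that the tree--tree case applies repeatedly (each $T \prec F_1 \prec \cdots$ stays a tree). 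You instead bypass both the cut combinatorics and the induction on the trees of $F$ by writing $T = B_P(G)$, using $T \prec F = B_P(GF)$ and the cocycle identity $\Delta \circ B_P = B_P \otimes 1 + (\mathrm{id} \otimes B_P) \circ \Delta$ (available in the paper as the remark at the end of section \ref{HP}), together with multiplicativity $\Delta(GF) = \Delta(G)\Delta(F)$; the only structure you need from proposition \ref{tenseurdipterous} is the definition of the extended $\prec$ (not its dipterous axiom), and your boundary bookkeeping is accurate: $B_P(G''_i)$ is a nonempty tree even when $G''_i = 1$, so the rule $(a \otimes b) \prec (a' \otimes b') = (a \ast a') \otimes (b \prec b')$ applies uniformly to the second family of terms, $a \prec 1 = a$ makes $B_P(G''_i) \prec F''_j = B_P(G''_i F''_j)$ hold in all cases, and $1 \prec b' = 0$ kills all first-family terms except $(T \otimes 1) \prec (F \otimes 1) = (T \prec F) \otimes 1$. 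What each approach buys: yours is shorter, treats all forests $F$ at once, and isolates the real reason the identity holds (the $B_P$-cocycle property plus multiplicativity of $\Delta$); the paper's computation, while heavier, exhibits explicitly which cut terms arise and exercises the dipterous structure of $\overline{\otimes}$ that it reuses later (e.g.\ in the proof of (\ref{succprecdelta})), which is why it is organized that way.
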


{\bf Remark.} {As $ \mathcal{M}_{P} \overline{\otimes} \mathcal{M}_{P} $ is not a $ \mathcal{RG} $-algebra, if $ T \in \mathcal{M}_{P} $ is a forest, $ \Delta(T \prec F) \neq \Delta(T) \prec \Delta(F) $ in general. For example,
\begin{eqnarray*}
\Delta((\tun\tun) \prec \tun) & = & \Delta( \tun \tdeux ) \\
& = & \tun \tdeux \otimes 1 + 1 \otimes \tun \tdeux + \tdeux \otimes \tun + \tun \otimes \tdeux + \tun \tun \otimes \tun + \tun \otimes \tun \tun \\
\Delta (\tun \tun) \prec \Delta (\tun) & = & (\tun \tun \otimes 1 + 1 \otimes \tun \tun + 2 \tun \otimes \tun ) \prec (\tun \otimes 1 + 1 \otimes \tun)\\
& = & \tun \tdeux \otimes 1 + 1 \otimes \tun \tdeux + \tun \otimes \tun \tun + 2 \tun \tun \otimes \tun + 2 \tun \otimes \tdeux .
\end{eqnarray*}}

\begin{proof}
Let $ T $ and $ F $ are two planar trees $ \in \mathcal{M}_{P} $. We denote $ \Delta(T) = T \otimes 1 + 1 \otimes T + \sum_{T} T^{(1)} \otimes T^{(2)} $ and $ \Delta(F) = F \otimes 1 + 1 \otimes F + \sum_{F} F^{(1)} \otimes F^{(2)} $. Then
\begin{eqnarray*}
\Delta(T) \prec \Delta(F) & = & \left( T \otimes 1 + 1 \otimes T + \sum_{T} T^{(1)} \otimes T^{(2)} \right) \prec \left( F \otimes 1 + 1 \otimes F + \sum_{F} F^{(1)} \otimes F^{(2)} \right) \\
& = & (T \prec F) \otimes 1 + T \otimes F + \sum_{T} T^{(1)} F \otimes T^{(2)} + 1 \otimes (T \prec F) \\
& & + \sum_{T} T^{(1)} \otimes (T^{(2)} \prec F) + \sum_{F} F^{(1)} \otimes (T \prec F^{(2)}) \\
& & + \sum_{T,F} T^{(1)} F^{(1)} \otimes (T^{(2)} \prec F^{(2)}) \\
& = & \Delta(T \prec F) .
\end{eqnarray*}
If $ F = F_{1} \hdots F_{n} \in \mathcal{M}_{P} $ is a forest and $ T $ is again a tree $ \in \mathcal{M}_{P} $,
\begin{eqnarray*}
\Delta(T) \prec \Delta(F) & = & \Delta(T) \prec \left( \Delta(F_{1}) \hdots \Delta(F_{n}) \right) \\
& = & ( \hdots ((\Delta(T) \prec \Delta(F_{1})) \prec \Delta(F_{2})) \hdots \prec \Delta(F_{n-1})) \prec \Delta(F_{n}) \\
& = & \Delta(( \hdots ((T \prec F_{1}) \prec F_{2}) \hdots \prec F_{n-1}) \prec F_{n}) \\
& = & \Delta (T \prec F) ,
\end{eqnarray*}
as $ A \overline{\otimes} B $ is a dipterous algebra for the second equality.
\end{proof}

\subsection{A rigidity theorem for the right graft algebras}

We now give a rigidity theorem for the right graft algebras (from \cite{Foissyinfinitesimal}).\\

Recall that if $ A $ and $ B $ are two $ \mathcal{RG} $-algebras then $ A \overline{\otimes} B $ is a dipterous algebra (proposition \ref{tenseurdipterous}). We define another coproduct $ \Delta_{\mathcal{A}ss} $ on $ \mathcal{H}_{P} $ (the deconcatenation) in the following way: for every forest $ F \in \mathcal{H}_{P} $,
$$ \Delta_{\mathcal{A}ss} (F) = \sum_{F_{1},F_{2} \in \mathcal{H}_{P} , F_{1} F_{2} = F} F_{1} \otimes F_{2} .$$

We now have defined two products, namely $ m $ and $ \succ $ and one coproduct on $ \mathcal{M}_{P} $, namely $ \tdelta_{\mathcal{A}ss} $, obtained from $ \Delta_{\mathcal{A}ss} $ by substracting its primitive part. The following properties sum up the different compatibilities.

\begin{prop} \label{infibialgLG}
For all $ x,y \in \mathcal{M}_{P} $;
\begin{eqnarray*}
\left\lbrace \begin{array}{rcl}
\tdelta_{\mathcal{A}ss} (xy) & = & (x \otimes 1) \tdelta_{\mathcal{A}ss} (y) + \tdelta_{\mathcal{A}ss} (x) (1 \otimes y) + x \otimes y ,\\
\tdelta_{\mathcal{A}ss} (x \prec y) & = & \tdelta_{\mathcal{A}ss}(x) \prec (1 \otimes y) .
\end{array} \right. 
\end{eqnarray*}
\end{prop}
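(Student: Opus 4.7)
The plan is to verify both identities by a direct combinatorial expansion in terms of planar forests, using the explicit definitions of $\prec$, $\Delta_{\mathcal{A}ss}$, and the extended tensor product. Since all operations involved are $\mathbb{K}$-bilinear, it suffices to check the identities when $x$ and $y$ are nonempty planar forests. Write $x = F_1 \cdots F_m$ and $y = G_1 \cdots G_n$ as products of planar trees, with $m, n \ge 1$. Throughout, I will use that $\tdelta_{\mathcal{A}ss}(F) = \Delta_{\mathcal{A}ss}(F) - F\otimes 1 - 1\otimes F$ is the sum of those deconcatenations of $F$ whose two factors are both nonempty.

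For the first identity, I would simply sort the nonempty-nonempty deconcatenations of $xy = F_1 \cdots F_m G_1 \cdots G_n$ by the index $k \in \{1,\ldots,m+n-1\}$ at which the cut is made. Three cases occur: (i) $1 \le k \le m-1$, giving a cut internal to $x$ and contributing the term $(F_1\cdots F_k) \otimes (F_{k+1}\cdots F_m\, y)$, which is exactly the corresponding summand of $\tdelta_{\mathcal{A}ss}(x)(1 \otimes y)$; (ii) $k = m$, the boundary cut, contributing $x \otimes y$; (iii) $m+1 \le k \le m+n-1$, a cut internal to $y$, contributing $(x\, G_1\cdots G_{k-m}) \otimes (G_{k-m+1}\cdots G_n)$, which is the corresponding summand of $(x \otimes 1)\tdelta_{\mathcal{A}ss}(y)$. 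Summing the three cases yields the right-hand side.

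For the second identity, recall from Section 1.2.3 that if $F_m = B_P(H)$ then
$$ x \prec y \;=\; F_1 \cdots F_{m-1}\, B_P(Hy) \;=\; F_1 \cdots F_{m-1}\,(F_m \prec y), $$
which is again a product of $m$ planar trees. If $m = 1$ then $x \prec y$ is a single tree and $\tdelta_{\mathcal{A}ss}(x \prec y) = 0$; at the same time $\tdelta_{\mathcal{A}ss}(x) = 0$, so both sides vanish. If $m \ge 2$, the nonempty-nonempty deconcatenations of $x \prec y$ are indexed by $1 \le i \le m-1$, giving
$$ \tdelta_{\mathcal{A}ss}(x \prec y) \;=\; \sum_{i=1}^{m-1} (F_1 \cdots F_i) \otimes \bigl((F_{i+1}\cdots F_m) \prec y\bigr). $$
On the other hand, $\tdelta_{\mathcal{A}ss}(x) = \sum_{i=1}^{m-1}(F_1\cdots F_i) \otimes (F_{i+1}\cdots F_m)$, and applying the formula for $\prec$ on $\mathcal{M}_P \overline{\otimes} \mathcal{M}_P$ from Proposition \ref{tenseurdipterous} in the case where the second tensorand is an element of $\mathcal{M}_P$ (not $1$) yields
$$ \bigl((F_1\cdots F_i) \otimes (F_{i+1}\cdots F_m)\bigr) \prec (1 \otimes y) \;=\; (F_1\cdots F_i) \otimes \bigl((F_{i+1}\cdots F_m) \prec y\bigr), $$
so summing over $i$ gives $\tdelta_{\mathcal{A}ss}(x) \prec (1 \otimes y)$ and matches the previous expression term by term.

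The only subtle point is the bookkeeping of the boundary term $x \otimes y$ in the first identity: it is neither in $(x\otimes 1)\tdelta_{\mathcal{A}ss}(y)$ nor in $\tdelta_{\mathcal{A}ss}(x)(1\otimes y)$, and appears as a separate summand precisely because it corresponds to the unique deconcatenation $(x)(y)$ of $xy$, which is nontrivial in $xy$ but trivial (removed) in both $x$ and $y$ individually. Once this is accounted for, both identities follow from straightforward term-matching.
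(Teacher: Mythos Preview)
Your proof is correct and follows essentially the same approach as the paper's: both reduce by bilinearity to nonempty forests, split the deconcatenation of $xy$ into three cases by the position of the cut, and for $x\prec y$ use that grafting on the last tree preserves the number of tree factors so the deconcatenation is a sum over the same index set as $\tdelta_{\mathcal{A}ss}(x)$. Your explicit treatment of the $m=1$ case and the invocation of Proposition~\ref{tenseurdipterous} for the extended $\prec$ on $\mathcal{M}_P\overline{\otimes}\mathcal{M}_P$ are slightly more detailed than the paper, but the argument is the same.
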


In particular, with the first equality, $ (\mathcal{M}_{P} , \Delta_{\mathcal{A}ss} ) $ is an infinitesimal bialgebra (see \cite{Loday}).\\

\begin{proof}
We can restrict ourselves to $ F , G \in \mathcal{H}_{P} $ two nonempty forests. We put $ F = F_{1} \hdots F_{n} $, $ G = G_{1} \hdots G_{m} $ where the $ F_{i} $'s and the $ G_{i} $'s are trees and $ F_{n} = B_{P}(H) $. Hence :
\begin{eqnarray*}
\tdelta_{\mathcal{A}ss} (F G) & = & \sum_{H_{1},H_{2} \in \mathcal{M}_{P} , H_{1} H_{2} = F G} H_{1} \otimes H_{2} \\
& = & \sum_{H_{1},H_{2} \in \mathcal{M}_{P} , H_{1} H_{2} = G} F H_{1} \otimes H_{2} + \sum_{H_{1},H_{2} \in \mathcal{M}_{P} , H_{1} H_{2} = F} H_{1} \otimes H_{2} G + F \otimes G \\
& = & (F \otimes 1) \tdelta_{\mathcal{A}ss} (G) + \tdelta_{\mathcal{A}ss}(F) (1 \otimes G) + F \otimes G .
\end{eqnarray*}
It is the same case as in \cite{Loday}. Moreover,
\begin{eqnarray*}
\tdelta_{\mathcal{A}ss} (F \prec G) & = & \tdelta_{\mathcal{A}ss} ( F_{1} \hdots F_{n-1} B_{P}(H G) ) \\
& = & \sum_{i=1}^{n-2} F_{1} \hdots F_{i} \otimes F_{i+1} \hdots F_{n-1} B_{P}(H G) + F_{1} \hdots F_{n-1} \otimes B_{P}(HG) \\
& = & \sum_{i=1}^{n-2} F_{1} \hdots F_{i} \otimes (F_{i+1} \hdots F_{n-1} F_{n}) \prec G + F_{1} \hdots F_{n-1} \otimes F_{n} \prec G \\
& = & \tdelta_{\mathcal{A}ss} (F) \prec (1 \otimes G) .
\end{eqnarray*}
\end{proof}

This justifies the following definition :

\begin{defi}
A $ \mathcal{RG} $-infinitesimal bialgebra is a family $ (A,m,\prec,\tdelta_{\mathcal{A}ss} ) $ where $ A $ is a $ \mathbb{K} $-vector space, $ m , \prec : A \otimes A \rightarrow A $ and $ \tdelta_{\mathcal{A}ss} : A \rightarrow A \otimes A $ are $ \mathbb{K} $-linear maps, with the following compatibilities :
\begin{enumerate}
\item $ (A,m,\prec ) $ is a $ \mathcal{RG} $-algebra.
\item For all $ x,y \in A $ :
\begin{eqnarray} \label{formuleright}
\left\lbrace \begin{array}{rcl}
\tdelta_{\mathcal{A}ss} (xy) & = & (x \otimes 1) \tdelta_{\mathcal{A}ss} (y) + \tdelta_{\mathcal{A}ss} (x) (1 \otimes y) + x \otimes y ,\\
\tdelta_{\mathcal{A}ss} (x \prec y) & = & \tdelta_{\mathcal{A}ss}(x) \prec (1 \otimes y) .
\end{array} \right. 
\end{eqnarray}
\end{enumerate}
\end{defi}

With theorem \ref{mestlg} and proposition \ref{infibialgLG}, we have immediately :

\begin{prop}
$ (\mathcal{M}_{P},m,\prec,\tdelta_{\mathcal{A}ss}) $ is a $ \mathcal{RG} $-infinitesimal bialgebra.
\end{prop}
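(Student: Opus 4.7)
The plan is extremely short because both ingredients needed by the definition are already available in the preceding results. I would simply verify the two clauses of the definition of a $\mathcal{RG}$-infinitesimal bialgebra against what has already been established for $\mathcal{M}_P$.

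First, I would invoke Theorem \ref{mestlg}, which states that $(\mathcal{M}_P, m, \prec)$ is the free $\mathcal{RG}$-algebra generated by $\tun$. In particular it is a $\mathcal{RG}$-algebra, so clause (1) of the definition is satisfied.

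Second, I would invoke Proposition \ref{infibialgLG}, which established exactly the two compatibility identities
\begin{eqnarray*}
\tdelta_{\mathcal{A}ss}(xy) & = & (x \otimes 1)\tdelta_{\mathcal{A}ss}(y) + \tdelta_{\mathcal{A}ss}(x)(1 \otimes y) + x \otimes y,\\
\tdelta_{\mathcal{A}ss}(x \prec y) & = & \tdelta_{\mathcal{A}ss}(x) \prec (1 \otimes y),
\end{eqnarray*}
for all $x,y \in \mathcal{M}_P$. These are precisely the relations (\ref{formuleright}) required by clause (2).

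Since there is no genuine obstacle, the only thing to be careful about is the interpretation of $\tdelta_{\mathcal{A}ss}(x) \prec (1 \otimes y)$: it uses the dipterous structure on $\mathcal{M}_P \overline{\otimes} \mathcal{M}_P$ from Proposition \ref{tenseurdipterous}, so I would briefly note that this is the sense in which the second compatibility is read, exactly as in the proof of Proposition \ref{infibialgLG}. With these two citations in place, the conclusion follows directly from the definition.
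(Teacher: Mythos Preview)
Your proposal is correct and matches the paper's own argument exactly: the paper simply states that the result follows immediately from Theorem \ref{mestlg} and Proposition \ref{infibialgLG}, which is precisely what you do.
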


{\bf Remark.} {If $ A $ is a $ \mathcal{RG} $-infinitesimal bialgebra, we denote $ Prim(A) = Ker(\tdelta_{\mathcal{A}ss}) $. In the $ \mathcal{RG} $-infinitesimal bialgebra $ \mathcal{M}_{P} $, $ Prim(\mathcal{M}_{P}) = \mathbb{K}[\mathbb{T}_{P} \setminus \{1\} ] $. We denote by $ \mathcal{P}_{P} $ the primitive part of $ \mathcal{M}_{P} $.}
\\

Let us recall the definition of a magmatic algebra :

\begin{defi}
A magmatic algebra (or $ \mathcal{M}ag $-algebra for short) is a $ \mathbb{K} $-vector space $ A $ equipped with a binary operation $ \bullet $, without any relation.
\end{defi}

We do not suppose that $ \mathcal{M}ag $-algebras have units. If $ A $ and $ B $ are two $ \mathcal{M}ag $-algebras, we say that a $ \mathbb{K} $-linear map $ f : A \rightarrow B $ is a $ \mathcal{M}ag $-morphism if $ f(x \bullet y) = f(x) \bullet f(y) $ for all $ x,y \in A $. We denote by $ \mathcal{M}ag $-alg the category of $ \mathcal{M}ag $-algebras.

\begin{prop} \label{primag}
For any $ \mathcal{RG} $-infinitesimal bialgebra, its primitive part is a $ \mathcal{M}ag $-algebra.
\end{prop}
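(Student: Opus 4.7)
The plan is to exhibit a single binary operation on $Prim(A)$ and observe that no axioms need to be verified, since a $\mathcal{M}ag$-algebra is by definition a vector space with an unconstrained binary operation.

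First, I would examine the two compatibilities (\ref{formuleright}) restricted to primitive elements. For $x, y \in Prim(A)$, the first relation gives
\begin{eqnarray*}
\tdelta_{\mathcal{A}ss}(xy) = (x \otimes 1)\tdelta_{\mathcal{A}ss}(y) + \tdelta_{\mathcal{A}ss}(x)(1 \otimes y) + x \otimes y = x \otimes y,
\end{eqnarray*}
which is generically nonzero. So the associative product $m$ does \emph{not} restrict to $Prim(A)$, and cannot serve as the magmatic operation.

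Next I would turn to the right graft $\prec$. The second relation in (\ref{formuleright}) reads $\tdelta_{\mathcal{A}ss}(x \prec y) = \tdelta_{\mathcal{A}ss}(x) \prec (1 \otimes y)$, where the right-hand side uses the dipterous structure of $A \overline{\otimes} A$ provided by Proposition \ref{tenseurdipterous}. As soon as $\tdelta_{\mathcal{A}ss}(x) = 0$, this expression vanishes, hence $x \prec y \in Prim(A)$. Therefore $\prec$ restricts to a well-defined $\mathbb{K}$-linear map
\begin{eqnarray*}
\prec : Prim(A) \otimes Prim(A) \rightarrow Prim(A).
\end{eqnarray*}
Note that it is enough for $x$ to be primitive; the primitivity of $y$ is automatic here but does not even need to be used.

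Finally, since the category $\mathcal{M}ag$-alg imposes no relations on its single binary operation, the pair $(Prim(A), \prec)$ is a $\mathcal{M}ag$-algebra, which concludes the proof. There is essentially no obstacle: the whole content is the observation that, among the two available products, $\prec$ is the one whose compatibility with $\tdelta_{\mathcal{A}ss}$ is of ``left comodule'' type $\tdelta_{\mathcal{A}ss}(x) \prec (1 \otimes y)$, with no residual $x \otimes y$ correction term, so that the kernel of $\tdelta_{\mathcal{A}ss}$ is stable on the left under $\prec$.
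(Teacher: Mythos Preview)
Your proof is correct and follows essentially the same argument as the paper: both use the compatibility $\tdelta_{\mathcal{A}ss}(x \prec y) = \tdelta_{\mathcal{A}ss}(x) \prec (1 \otimes y)$ to conclude that $\prec$ preserves $Prim(A)$, and note that a $\mathcal{M}ag$-algebra has no relations to check. Your additional remarks (that $m$ does \emph{not} restrict to primitives, and that only the primitivity of $x$ is actually needed) are correct and match the paper's own observation that $x \in Prim(A)$ suffices.
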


\begin{proof}
Let $ A $ be a $ \mathcal{RG} $-infinitesimal bialgebra. If $ x \in Prim(A) $ and $ y \in A $ then $ \tdelta_{\mathcal{A}ss} (x \prec y) = \tdelta_{\mathcal{A}ss}(x) \prec (1 \otimes y) = 0 $ by (\ref{formuleright}). So $ (Prim(A) , \prec) $ is a $ \mathcal{M}ag $-algebra.
\end{proof}

\begin{prop}
$ (\mathcal{P}_{P},\prec) $ is the free $ \mathcal{M}ag $-algebra generated by $ \tun $.
\end{prop}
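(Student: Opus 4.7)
The plan is in two steps: first establish a unique canonical decomposition of every nontrivial tree in $\mathbb{T}_P \setminus \{1\}$ as $T' \prec T''$, then deduce the universal property by induction.

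\textbf{Step 1: Canonical decomposition.} First I would check that $\prec$ restricts to an operation on trees (this is immediate: if $T = B_P(H)$ is a tree and $S$ is a tree, then $T \prec S = B_P(HS)$ is a tree). Next, for any tree $T \neq \tun$ write $T = B_P(T_1 \cdots T_k)$ where $T_1, \ldots, T_k$ are the subtrees hanging from the root (read left to right) and $k \geq 1$. Setting $T' = B_P(T_1 \cdots T_{k-1})$ (with $T' = \tun$ if $k = 1$) and $T'' = T_k$, the definition of $\prec$ gives $T' \prec T'' = B_P(T_1 \cdots T_{k-1} T_k) = T$. This decomposition is unique, because $T''$ is forced to be the rightmost subtree of the root of $T$, and then $T'$ is determined. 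By iterating this decomposition, every tree in $\mathbb{T}_P \setminus \{1\}$ is obtained from copies of $\tun$ using $\prec$, and the parenthesization is unique.

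\textbf{Step 2: Universal property.} Let $(A, \bullet)$ be any $\mathcal{M}ag$-algebra and $a \in A$. I would define $\phi : \mathcal{P}_P \to A$ by induction on the number of vertices: $\phi(\tun) = a$, and $\phi(T) = \phi(T') \bullet \phi(T'')$ for $T$ of degree $\geq 2$, using the canonical decomposition of Step 1, extended by linearity. To verify that $\phi$ is a $\mathcal{M}ag$-morphism, I need $\phi(T \prec S) = \phi(T) \bullet \phi(S)$ for all trees $T, S$. This is immediate from the definitions: the rightmost subtree of the root of $T \prec S = B_P(HS)$ is precisely $S$, so the canonical decomposition of $T \prec S$ is exactly the pair $(T, S)$, and thus $\phi(T \prec S) = \phi(T) \bullet \phi(S)$ by definition of $\phi$.

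\textbf{Uniqueness.} Any $\mathcal{M}ag$-morphism $\psi : \mathcal{P}_P \to A$ with $\psi(\tun) = a$ satisfies $\psi(T) = \psi(T' \prec T'') = \psi(T') \bullet \psi(T'')$, so induction on the number of vertices forces $\psi = \phi$.

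There is no real obstacle here: the content of the proof is essentially the classical rotation correspondence between planar rooted trees and planar binary trees, reformulated as a recursion via $\prec$. The only point requiring some care is to confirm that the decomposition of $T \prec S$ in Step 2 really matches the inductive definition of $\phi$, which amounts to checking that grafting $S$ as the last subtree of the root of $T$ places $S$ in the rightmost position; this follows directly from the formula $T \prec S = B_P(HS)$.
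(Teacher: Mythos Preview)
Your proof is correct and follows essentially the same approach as the paper. The only cosmetic difference is that the paper writes the inductive definition of $\phi$ in fully-unfolded form, $\phi(B_P(T_1\cdots T_k)) = (\cdots((a \prec \phi(T_1)) \prec \phi(T_2))\cdots)\prec \phi(T_k)$, whereas you use the equivalent one-step recursion $\phi(T) = \phi(T')\bullet\phi(T'')$ based on peeling off the rightmost subtree; your formulation makes the morphism verification slightly more transparent, but the content is identical.
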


\begin{proof}
With proposition \ref{primag}, we have immediately that $ (\mathcal{P}_{P},\prec) $ is a $ \mathcal{M}ag $-algebra. Let us prove that $ (\mathcal{P}_{P},\prec) $ is the free $ \mathcal{M}ag $-algebra generated by $ \tun $. \\

Let $A$ be a $ \mathcal{M}ag $-algebra and let $a \in A$. Let us prove that there exists a unique morphism of $ \mathcal{M}ag $-algebras $ \phi :\mathcal{P}_{P} \rightarrow A$, such that $ \phi(\tun)=a $. We define $ \phi(T) $ for any nonempty tree $ T \in \mathcal{P}_{P} $ inductively on the degree of $ T $ by:
$$\left\{\begin{array}{rcl}
\phi(\tun)&=&a,\\
\phi(B_{P}(T_{1} \hdots T_{k}))&=& ( \hdots ((a \prec \phi(T_{1})) \prec \phi(T_{2})) \hdots \prec \phi(T_{k-1})) \prec \phi(T_{k}) \mbox{ if } k \geq 1.
\end{array}\right.$$
This map is linearly extended into a map $\phi: \mathcal{P}_{P} \rightarrow A$. Let us show that it is a morphism of $ \mathcal{M}ag $-algebras. Let $F,G$ be two nonempty trees. Let us prove that $\phi(F \prec G)=\phi(F) \prec \phi(G)$. We put $ F = B_{P}(H_{1} \hdots H_{k}) $. Then:
\begin{eqnarray*}
\phi(F \prec G) & = & \phi(B_{P}(H_{1} \hdots H_{k} G)) \\
& = & ( \hdots ((a \prec \phi(H_{1})) \prec \phi(H_{2})) \hdots \prec \phi(H_{k})) \prec \phi(G)\\
& = & \phi(B_{P}(H_{1} \hdots H_{k})) \prec \phi(G)\\
& = & \phi(F) \prec \phi(G)
\end{eqnarray*}
So $\phi$ is a morphism of $\mathcal{M}ag$-algebras. \\

Let $\phi':\mathcal{P}_{P} \rightarrow A$ be another morphism of $\mathcal{M}ag$-algebras such that $\phi'(\tun) = a $. Then $ T_{1} \hdots T_{k} \in \mathcal{P}_{P} $,
\begin{eqnarray*}
\phi'(B_{P}(T_{1} \hdots T_{k})) & = & \phi' \left( ( \hdots ((\tun \prec T_{1}) \prec T_{2}) \hdots \prec T_{k-1}) \prec T_{k} \right) \\
& = & ( \hdots ((a \prec \phi'(T_{1})) \prec \phi'(T_{2})) \hdots \prec \phi'(T_{k-1})) \prec \phi'(T_{k}) .
\end{eqnarray*}
So $\phi=\phi'$.
\end{proof}
\\

The following result is proved in \cite{Foissyinfinitesimal} :

\begin{theo}
The triple of operads $ (\mathcal{A}ss,\mathcal{RG},\mathcal{M}ag) $ is a good triple of operads.
\end{theo}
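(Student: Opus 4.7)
The plan is to verify the three axioms of a good triple of operads in the sense of \cite{Loday2}: (a) suitable morphisms of operads together with a Hopf/infinitesimal compatibility; (b) the primitive part of a connected bialgebra inherits the smaller $\mathcal{P}$-algebra structure; (c) the rigidity isomorphism between a connected bialgebra and the free $\mathcal{A}$-algebra on its primitives.

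Axiom (a) is essentially built in: the operad morphism $\mathcal{M}ag \to \mathcal{RG}$ sends the single magmatic generator to $\prec$, the morphism $\mathcal{A}ss \to \mathcal{RG}$ sends the associative product to $\ast$, and the compatibilities of Proposition \ref{infibialgLG} encode the Hopf–operadic axiom one needs on the coproduct $\tdelta_{\mathcal{A}ss}$. Axiom (b) is exactly Proposition \ref{primag}.

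For axiom (c), the main task is to construct a universal enveloping functor $U:\mathcal{M}ag\text{-alg}\rightarrow\mathcal{RG}\text{-infbialg}$ left adjoint to $\mathrm{Prim}$, and to prove that for every connected $\mathcal{RG}$-infinitesimal bialgebra $A$ the natural map $\Phi_A:U(\mathrm{Prim}(A))\rightarrow A$ is an isomorphism. For a $\mathcal{M}ag$-algebra $M$, $U(M)$ is defined as the free $\mathcal{RG}$-algebra on the underlying vector space of $M$ (concretely, the tree model of Theorem \ref{mestlg} decorated by $M$ at the vertices and quotiented by the Mag-relations at the ``bottom'' of each tree), equipped with the unique coproduct $\tdelta_{\mathcal{A}ss}$ satisfying (\ref{formuleright}) and vanishing on $M$. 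The existence and uniqueness of this coproduct follow from the universal property of the free $\mathcal{RG}$-algebra (Theorem \ref{mestlg}) applied in the target $U(M)\overline{\otimes}U(M)$, which carries the dipterous structure of Proposition \ref{tenseurdipterous}, together with an auxiliary check that the images actually land in the $\mathcal{RG}$-algebra structure required by (\ref{formuleright}). Surjectivity of $\Phi_A$ is then proved by induction on the coradical filtration: an element of filtration degree $n$ can, via (\ref{formuleright}), be written as a sum of products and left/right grafts of strictly lower-degree elements, plus a primitive correction that lies in the image by construction.

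The main obstacle is injectivity of $\Phi_A$, i.e.\ the rigidity step proper. The strategy is to exhibit, for any connected $\mathcal{RG}$-infinitesimal bialgebra $A$, a basis indexed by planar trees whose vertices are decorated by a homogeneous basis of $\mathrm{Prim}(A)$, in complete analogy with the tree basis of $\mathcal{M}_P$ provided by Theorem \ref{mestlg}. The free case serves as the combinatorial model, and the passage to the general connected case is obtained by a filtration argument showing that the associated graded of $\Phi_A$ is bijective; here the Koszulity results on $\mathcal{RG}$ (available from the earlier sections, or from \cite{Foissyinfinitesimal}) ensure that no unexpected relations appear between primitives at the graded level, which is precisely what is needed to conclude that $\Phi_A$ is an isomorphism and hence that $(\mathcal{A}ss,\mathcal{RG},\mathcal{M}ag)$ is a good triple.
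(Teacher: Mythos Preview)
The paper does not actually prove this theorem: it is recorded as a result of \cite{Foissyinfinitesimal} and left without argument. There is therefore no proof in the paper to compare against directly. However, the paper carries out the analogous argument in full for the triple $(\mathcal{A}ss,\mathcal{BG},\mathcal{L})$ in Section~5, and comparing with that proof exposes both a detour and a genuine gap in your proposal.

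Your construction of the universal enveloping functor is more elaborate than necessary. In the $\mathcal{BG}$ case the paper takes $U_{\mathcal{BG}}(A)$ to be the nonunital tensor algebra $\overline{T}(A)$ with concatenation product and with $\prec$ (and $\succ$) extended to tensors by the closed formulas~(\ref{structbg}); the coproduct is deconcatenation, and no quotient is needed. The same works verbatim for $\mathcal{RG}$: set $U_{\mathcal{RG}}(M)=\overline{T}(M)$ with $(a_1\cdots a_p)\prec(b_1\cdots b_q)=a_1\cdots a_{p-1}\bigl((\cdots((a_p\prec b_1)\prec b_2)\cdots)\prec b_q\bigr)$. Your description as a quotient of the free $\mathcal{RG}$-algebra decorated by $M$ is equivalent but obscures this simple structure and forces you into an unnecessary existence-and-uniqueness argument for $\tdelta_{\mathcal{A}ss}$.

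The injectivity step is a genuine gap. Invoking ``Koszulity results on $\mathcal{RG}$'' does not deliver what you claim: Koszulity controls the homology of \emph{free} $\mathcal{RG}$-algebras, not the structure of an arbitrary connected $\mathcal{RG}$-infinitesimal bialgebra, and there is no mechanism by which it rules out relations among primitives in the associated graded of a general $A$. The correct tool, used both in \cite{Foissyinfinitesimal} and in the paper's $\mathcal{BG}$ proof, is the Loday--Ronco rigidity theorem for nonunital infinitesimal bialgebras \cite{Loday}: any connected infinitesimal bialgebra $A$ is isomorphic, as an infinitesimal bialgebra, to $\overline{T}(\mathrm{Prim}(A))$ with concatenation and deconcatenation. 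This single result supplies both surjectivity and injectivity of $\Phi_A$ at once; it then only remains to check that the isomorphism respects $\prec$, which follows directly from the compatibility~(\ref{formuleright}) and the extension formula above. No Koszulity, no filtration argument, and no associated-graded comparison is required.
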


\section{Hopf algebra of planar decorated trees}

\subsection{Construction}

To start, we shall need decorated versions of the Hopf algebra $ \mathcal{H}_{P} $ described in the section \ref{HP}. If $ T \in \mathcal{H}_{P} $ is a tree, we note $ E(T) $ the set of edges of $ T $. 

\begin{defi}
Let $ \mathcal{D} $ be a nonempty set.
\begin{enumerate}
\item A planar rooted tree decorated by $ \mathcal{D} $ is a couple $ (T,d) $, where $ T $ is a planar tree and $ d : E(T) \longrightarrow \mathcal{D} $ is any map.
\item A planar rooted forest decorated by $ \mathcal{D} $ is a noncommutative monomial in planar rooted trees decorated by $ \mathcal{D} $.
\end{enumerate}
\end{defi}

Let $ T $ be a planar rooted tree decorated by $ \mathcal{D} $. If $ v , w \in V(T) $ and $ d \in \mathcal{D} $, we shall denote $ v \overset{d}{\rightarrow} w $ if there is an edge in $ T $ from $ v $ to $ w $ decorated by $ d $.\\

{\bf Note.} {This definition is different from \cite{Foissyarbre}, where vertices of the trees are decorated.}\\

{\bf Examples.} {\begin{enumerate}
\item Planar rooted trees decorated by $ \mathcal{D} $ with degree smaller than $ 4 $:
$$ 1, \hspace{0.5cm} \tun , \hspace{0.5cm} \addeux{$a$}, a \in \mathcal{D}, \hspace{0.5cm} \adtroisun{$b$}{$a$}, \adtroisdeux{$a$}{$b$} , (a,b) \in \mathcal{D}^{2} , \hspace{0.5cm} \adquatreun{$a$}{$b$}{$c$} , \adquatredeux{$b$}{$a$}{$c$} , \adquatretrois{$b$}{$a$}{$c$} , \adquatrequatre{$a$}{$b$}{$c$}, \adquatrecinq{$a$}{$b$}{$c$}, (a,b,c) \in \mathcal{D}^{3} .$$
\item Planar rooted forests decorated by $ \mathcal{D} $ with degree smaller than $ 3 $:
$$ 1, \hspace{0.5cm} \tun , \hspace{0.5cm} \tun \tun , \addeux{$a$}, a \in \mathcal{D} , \hspace{0.5cm} \tun \tun \tun , \tun \addeux{$a$} , \addeux{$a$} \tun , \adtroisun{$b$}{$a$}, \adtroisdeux{$a$}{$b$} , (a,b) \in \mathcal{D}^{2} .$$
\end{enumerate}}

If $\boldsymbol{v} \models V(F)$, then $Lea_{\boldsymbol{v}}(F)$ and $Roo_{\boldsymbol{v}}(F)$ are naturally planar decorated forests. The space $ \mathcal{H}_{P}^{\mathcal{D}} $ generated by planar forests decorated by $ \mathcal{D} $ is a Hopf algebra. Its product is given by the concatenation of planar decorated forests and its coproduct is defined for any rooted decorated forest $ F $ by:
$$\Delta(F)=\sum_{\boldsymbol{v} \models V(F)} Lea_{\boldsymbol{v}}(F) \otimes Roo_{\boldsymbol{v}}(F)
=F \otimes 1+1\otimes F+\sum_{\boldsymbol{v} \mmodels V(F)} Lea_{\boldsymbol{v}}(F) \otimes Roo_{\boldsymbol{v}}(F).$$
For example: if $ (a,b,c) \in \mathcal{D}^{3} $
\begin{eqnarray*}
\Delta\left(\adquatredeux{$b$}{$a$}{$c$}\right)&=&\adquatredeux{$b$}{$a$}{$c$} \otimes 1+1\otimes \adquatredeux{$b$}{$a$}{$c$}+
\tun \otimes \adtroisun{$b$}{$a$}+\addeux{$c$} \otimes \addeux{$b$}+\tun \otimes \adtroisdeux{$a$}{$c$}+\tun\tun \otimes \addeux{$a$}+\addeux{$c$} \tun \otimes \tun,\\
\Delta\left(\adquatretrois{$b$}{$a$}{$c$}\right)&=&\adquatretrois{$b$}{$a$}{$c$} \otimes 1+1\otimes \adquatretrois{$b$}{$a$}{$c$}+
\tun \otimes \adtroisun{$b$}{$a$}+\addeux{$c$} \otimes \addeux{$a$}+\tun \otimes \adtroisdeux{$b$}{$c$}+\tun\tun \otimes \addeux{$b$}+\tun\addeux{$c$} \otimes \tun.
\end{eqnarray*}

In the following, \underline{we take $ \mathcal{D} = \{l,r \} $}. We define the operator $ B : \mathcal{H}_{P}^{\mathcal{D}} \otimes \mathcal{H}_{P}^{\mathcal{D}} \longrightarrow \mathcal{H}_{P}^{\mathcal{D}} $, which associates, to a tensor $ F \otimes G $ of two forests $ F , G \in \mathcal{H}_{P}^{\mathcal{D}} $, the tree obtained by grafting the roots of the trees of $ F $ and $ G $ on a common root and by decorating the edges between the common root and the roots of $ F $ by $ l $ and the edges between the common root and the roots of $ G $ by $ r $.\\

{\bf Examples.} {$$ \begin{array}{|rcl|rcl|rcl|rcl|} 
\hline
B(1 \otimes 1) & = & \tun & B(\tun \otimes 1) & = & \addeux{$l$} & B(1 \otimes \tun) & = & \addeux{$r$} & B(\tun \tun \otimes 1) & = & \adtroisun{$l$}{$l$} \\
B(\tun \otimes \tun) & = & \adtroisun{$r$}{$l$} & B(1 \otimes \tun \tun) & = & \adtroisun{$r$}{$r$} & B(1 \otimes \addeux{$l$}) & = & \adtroisdeux{$r$}{$l$} & B(\tun \otimes \tun \tun) & = & \adquatreun{$l$}{$r$}{$r$} \\
B(\addeux{$l$} \tun \otimes 1) & = & \adquatredeux{$l$}{$l$}{$l$} & B(\tun \otimes \addeux{$l$}) & = & \adquatretrois{$r$}{$l$}{$l$} & B(\adtroisun{$r$}{$l$} \otimes 1) & = & \adquatrequatre{$l$}{$l$}{$r$} & B(\adtroisdeux{$r$}{$l$} \otimes 1) & = & \adquatrecinq{$l$}{$r$}{$l$} \\
\hline
\end{array} $$}

For all $ n \geq 1 $, we give an inductive definition of a set $ \mathbb{T}(n) $ that corresponds of a set of trees in $ \mathcal{H}_{P}^{\mathcal{D}} $ of degree $ n $.
\begin{itemize}
\item If $ n = 1 $, $ \mathbb{T}(1) = \{ \tun \} $ is the set reduced to one element, the unique tree of degree $ 1 $.
\item If $ n \geq 2 $, we suppose the set $ \bigcup_{1 \leq k \leq n-1} \mathbb{T}(k) $ already constructed. Pick elements $ F_{1} , \hdots , F_{p} $ and $ G_{1}, \hdots , G_{q} \in \bigcup_{1 \leq k \leq n-1} \mathbb{T}(k) $ such that $ \left| F_{1} \right| + \hdots + \left| F_{p} \right| + \left| G_{1} \right| + \hdots + \left| G_{q} \right| = n-1 $ and consider the new tree $ B(F_{1} \hdots F_{p} \otimes G_{1} \hdots G_{q}) $.
\end{itemize}

Let $ \mathbb{T} = \bigcup_{n \geq 0} \mathbb{T}(n) $ with $ \mathbb{T}(0) = \{1 \} $, $ \mathbb{F}(n) $ be the forests of degree $ n $ constructed with trees of $ \mathbb{T} $ and $ \mathbb{F} = \bigcup_{n \geq 0} \mathbb{F}(n) $.\\

{\bf Examples.} {Planar decorated trees of $ \mathbb{T} $ with degree $ \leq 4 $:
\begin{center}
$1, \tun , \addeux{$l$} , \addeux{$r$} , \adtroisun{$l$}{$l$}, \adtroisun{$r$}{$l$} , \adtroisun{$r$}{$r$} , \adtroisdeux{$l$}{$l$} , \adtroisdeux{$l$}{$r$} , \adtroisdeux{$r$}{$l$} , \adtroisdeux{$r$}{$r$} , \adquatreun{$l$}{$l$}{$l$} , \adquatreun{$l$}{$l$}{$r$} , \adquatreun{$l$}{$r$}{$r$} , \adquatreun{$r$}{$r$}{$r$} , \adquatredeux{$l$}{$l$}{$l$} , \adquatredeux{$l$}{$l$}{$r$} , \adquatredeux{$r$}{$l$}{$l$} , \adquatredeux{$r$}{$l$}{$r$} , \adquatredeux{$r$}{$r$}{$l$} ,$
\end{center}

\begin{center}
$\adquatredeux{$r$}{$r$}{$r$} , \adquatretrois{$l$}{$l$}{$l$} , \adquatretrois{$l$}{$l$}{$r$} , \adquatretrois{$r$}{$l$}{$l$} , \adquatretrois{$r$}{$l$}{$r$} , \adquatretrois{$r$}{$r$}{$l$} , \adquatretrois{$r$}{$r$}{$r$} , \adquatrequatre{$l$}{$l$}{$l$} , \adquatrequatre{$r$}{$l$}{$l$} , \adquatrequatre{$l$}{$l$}{$r$} , \adquatrequatre{$r$}{$l$}{$r$} , \adquatrequatre{$l$}{$r$}{$r$} , \adquatrequatre{$r$}{$r$}{$r$} , \adquatrecinq{$l$}{$l$}{$l$} , \adquatrecinq{$l$}{$l$}{$r$} , \adquatrecinq{$l$}{$r$}{$l$} , \adquatrecinq{$l$}{$r$}{$r$} , \adquatrecinq{$r$}{$l$}{$l$} , \adquatrecinq{$r$}{$l$}{$r$} , \adquatrecinq{$r$}{$r$}{$l$} , \adquatrecinq{$r$}{$r$}{$r$}.$
\end{center}}

\begin{lemma}
Let $ T \in \mathcal{H}_{P}^{\mathcal{D}} $ be a nonempty tree. $ T \in \mathbb{T} $ if and only if for all $ v \in V(T) $ and $ w_{1}, w_{2} \in V(T) $ such as $ w_{1} \overset{d_{1}}{\rightarrow} v $, $ w_{2} \overset{d_{2}}{\rightarrow} v $ and $ w_{1} $ is left to $ w_{2} $, $ (d_{1},d_{2}) \in \{(l,l),(l,r),(r,r) \} $, that is to say that we cannot have $ (d_{1},d_{2}) = (r,l) $.
\end{lemma}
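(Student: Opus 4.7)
The plan is to prove both implications by induction on the degree $n$ of the tree $T$, using the inductive definition of $\mathbb{T}(n)$.

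For the direct implication, suppose $T \in \mathbb{T}$ of degree $n \geq 2$. By construction, there exist $F_{1}, \ldots, F_{p}, G_{1}, \ldots, G_{q} \in \mathbb{T}$ with $T = B(F_{1} \cdots F_{p} \otimes G_{1} \cdots G_{q})$. By definition of $B$, the children of the root of $T$ are (from left to right) the roots of $F_{1}, \ldots, F_{p}, G_{1}, \ldots, G_{q}$, and the edges connecting them to the root carry the decorations $l, \ldots, l, r, \ldots, r$ in that order. Hence at the root the forbidden pattern $(r,l)$ does not occur. For any other vertex $v \in V(T)$, $v$ belongs to some $F_{i}$ or some $G_{j}$, and the children of $v$ (together with the decorations) are the same as in that subtree, which satisfies the condition by the induction hypothesis.

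For the converse, suppose $T$ is a nonempty tree in $\mathcal{H}_{P}^{\mathcal{D}}$ such that no vertex has two consecutive children in which the left is connected by an $r$-edge and the right by an $l$-edge. If the degree of $T$ is $1$, then $T = \tun \in \mathbb{T}(1)$. Otherwise, list the children of the root from left to right; the sequence of decorations on the corresponding edges is an element of $\{l,r\}^{\ast}$ with no occurrence of $rl$, so it must have the form $l^{p} r^{q}$ with $p + q \geq 1$. Let $F_{1}, \ldots, F_{p}$ be the subtrees hanging from the first $p$ children, and $G_{1}, \ldots, G_{q}$ be the subtrees hanging from the last $q$ children. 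Each $F_{i}$ and $G_{j}$ inherits the forbidden-pattern condition from $T$ (the children of any $v$ in a subtree are the same as in $T$), and has degree strictly less than $n$, so the induction hypothesis gives $F_{i}, G_{j} \in \mathbb{T}$. Consequently $T = B(F_{1} \cdots F_{p} \otimes G_{1} \cdots G_{q}) \in \mathbb{T}(n)$.

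The proof is essentially bookkeeping: the only substantive point is that the operator $B$ is built precisely so that the $l$-decorated edges all come before the $r$-decorated edges at the new root, which matches the forbidden-pattern characterization on a single level. The induction then propagates this one-level statement to all vertices. I do not anticipate any real obstacle; the main thing to be careful about is the base case and the fact that both $p$ and $q$ may be zero (i.e.\ $T$ may be $B(1 \otimes H)$ or $B(H \otimes 1)$), which is permitted by the inductive definition of $\mathbb{T}$ through the presence of the empty forest $1 \in \mathbb{T}(0)$.
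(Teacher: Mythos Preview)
Your proof is correct and follows essentially the same approach as the paper's: both argue by induction on the degree, treat the root vertex separately using the definition of $B$, and invoke the induction hypothesis on the subtrees for all other vertices. Your phrasing of the converse via the observation that a word in $\{l,r\}^\ast$ with no $rl$ factor must be $l^p r^q$ is a slightly cleaner way to say what the paper does when it writes ``this condition implies that $T = B(F_1\cdots F_p \otimes G_1\cdots G_q)$,'' but the substance is identical.
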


\begin{proof}
By induction on the degree $ n $ of $ T $. If $ n = 1 $, $ T = \tun $ and it is trivial. Let $ T \in \mathcal{H}_{P}^{\mathcal{D}} $ be a nonempty tree of degree $ n \geq 2 $ and assume that the property is true at rank $ \leq n-1 $.\\

By construction, if $ T \in \mathbb{T} $, there are $ F_{1}, \hdots , F_{p} $ and $ G_{1} , \hdots , G_{q} \in \mathbb{T} $ such that $ T = B(F_{1} \hdots F_{p} \otimes G_{1} \hdots G_{q}) $ and $ \left| F_{i} \right| , \left| G_{i} \right| \leq n-1 $. Consider $ v \in V(T) $ and $ w_{1}, w_{2} \in V(T) $ such as $ w_{1} \overset{d_{1}}{\rightarrow} v $, $ w_{2} \overset{d_{2}}{\rightarrow} v $ and $ w_{1} $ is left to $ w_{2} $. If $ v $ is not the root of $ T $, $ v \in V(F_{i}) $ or $ v \in V(G_{i}) $ and by induction hypothesis $ (d_{1},d_{2}) \in \{(l,l),(l,r),(r,r) \} $. If $ v $ is the root of $ T $, we have three cases:

1. $ w_{1} $ is the root of $ F_{i} $ and $ w_{2} $ is the root of $ F_{j} $ with $ 1 \leq i < j \leq p $. Then $ (d_{1},d_{2}) = (l,l) $.

2. $ w_{1} $ is the root of $ F_{i} $ and $ w_{2} $ is the root of $ G_{j} $ with $ 1 \leq i \leq p $ and $ 1 \leq j \leq q $. Then $ (d_{1},d_{2}) = (l,r) $.

3. $ w_{1} $ is the root of $ G_{i} $ and $ w_{2} $ is the root of $ G_{j} $ with $ 1 \leq i < j \leq q $. Then $ (d_{1},d_{2}) = (r,r) $.\\
In all cases, $ (d_{1},d_{2}) \in \{(l,l),(l,r),(r,r) \} $.\\

Reciprocally, let $ T \in \mathcal{H}_{P}^{\mathcal{D}} $ be a nonempty tree such that for all $ v \in V(T) $ and $ w_{1}, w_{2} \in V(T) $ such as $ w_{1} \overset{d_{1}}{\rightarrow} v $, $ w_{2} \overset{d_{2}}{\rightarrow} v $ and $ w_{1} $ is left to $ w_{2} $, $ (d_{1},d_{2}) \in \{(l,l),(l,r),(r,r) \} $. If $ v $ is the root of $ T $, this condition implies that $ T = B(F_{1} \hdots F_{p} \otimes G_{1} \hdots G_{q}) $ with $ F_{i}, G_{i} \in \mathcal{H}_{P}^{\mathcal{D}} $ and $ \left| F_{i} \right| , \left| G_{i} \right| \leq n-1 $. Then $ F_{i} $ and $ G_{i} $ satisfy the condition of the decorations of the edges. Therefore $ F_{i}, G_{i} \in \mathbb{T} $ by induction hypothesis and $ T \in \mathbb{T} $.
\end{proof}
\\

Let $ \mathcal{BT} = \mathbb{K}[\mathbb{T}] $ be the subalgebra of $ \mathcal{H}_{P}^{\mathcal{D}} $ generated by $ \mathbb{T} $. We denote by $ f_{n}^{\mathcal{BT}} $ the number of forests of degree $ n $ in $ \mathcal{BT} $, $ t_{n}^{\mathcal{BT}} $ the number of trees of degree $ n $ in $ \mathcal{BT} $ and $ F_{\mathcal{BT}} (x) = \sum_{n \geq 0} f_{n}^{\mathcal{BT}} x^{n} $, $ T_{\mathcal{BT}} (x) = \sum_{n \geq 1} t_{n}^{\mathcal{BT}} x^{n} $ the formal series associated. It is possible to calculate $ F_{\mathcal{BT}} (x) $ and $ T_{\mathcal{BT}} (x) $ :

\begin{prop} \label{serieformelle}
The formal series $ F_{\mathcal{BT}} $ and $ T_{\mathcal{BT}} $ are given by:
\begin{eqnarray*}
F_{\mathcal{BT}} (x) & = & \dfrac{3}{-1+4 \cos^{2} \left( \dfrac{1}{3} \arcsin \left( \sqrt{\dfrac{27x}{4}} \right) \right) } ,\\
T_{\mathcal{BT}} (x) & = & \dfrac{4}{3} \sin^{2} \left( \dfrac{1}{3} \arcsin \left( \sqrt{\dfrac{27x}{4}} \right) \right) .
\end{eqnarray*}
\end{prop}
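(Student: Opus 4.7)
The plan is to translate the inductive description of $\mathbb{T}$ into a pair of functional equations for $T_{\mathcal{BT}}$ and $F_{\mathcal{BT}}$, then solve the resulting cubic using Vi\`ete's trigonometric substitution. First, since a forest in $\mathbb{F}$ is, by definition, a (possibly empty) noncommutative word on the alphabet $\mathbb{T}$ of trees, the free-monoid identity gives immediately
\begin{equation*}
F_{\mathcal{BT}}(x) \;=\; \frac{1}{1-T_{\mathcal{BT}}(x)}.
\end{equation*}
Second, every tree $T\in\mathbb{T}$ is of the form $T = B(F\otimes G)$ for a unique pair of forests $(F,G)\in\mathbb{F}^{2}$: by the preceding lemma, the sequence of decorations on the edges joining the root to its children must be of the shape $(l,\ldots,l,r,\ldots,r)$, so the root-children split canonically into the left forest $F$ and the right forest $G$. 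Summing over the degrees of $F$ and $G$ and accounting for the root, this yields
\begin{equation*}
T_{\mathcal{BT}}(x) \;=\; x\,F_{\mathcal{BT}}(x)^{2}.
\end{equation*}

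Next I would eliminate $F_{\mathcal{BT}}$ to obtain the cubic $T(1-T)^{2}=x$, i.e.\ $T^{3}-2T^{2}+T-x=0$. To solve it I would set $T=\tfrac{4}{3}\sin^{2}\theta$. The triple-angle identity $\sin(3\theta)=3\sin\theta-4\sin^{3}\theta$ gives
\begin{equation*}
\sin^{2}(3\theta) \;=\; \sin^{2}\theta\,(3-4\sin^{2}\theta)^{2} \;=\; \tfrac{3T}{4}\cdot 9(1-T)^{2} \;=\; \tfrac{27}{4}\,T(1-T)^{2},
\end{equation*}
so the equation $T(1-T)^{2}=x$ becomes $\sin^{2}(3\theta)=\tfrac{27x}{4}$, which is solved by $\theta=\tfrac{1}{3}\arcsin\sqrt{27x/4}$. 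This produces the announced closed form
\begin{equation*}
T_{\mathcal{BT}}(x) \;=\; \tfrac{4}{3}\sin^{2}\!\Bigl(\tfrac{1}{3}\arcsin\sqrt{\tfrac{27x}{4}}\Bigr).
\end{equation*}

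Finally, from $F_{\mathcal{BT}}=1/(1-T_{\mathcal{BT}})$ and $1-T_{\mathcal{BT}}=1-\tfrac{4}{3}\sin^{2}\theta = \tfrac{1}{3}(3-4\sin^{2}\theta)=\tfrac{1}{3}(-1+4\cos^{2}\theta)$, I would conclude
\begin{equation*}
F_{\mathcal{BT}}(x) \;=\; \frac{3}{-1+4\cos^{2}\!\bigl(\tfrac{1}{3}\arcsin\sqrt{27x/4}\bigr)}.
\end{equation*}
The one non-routine point is to make sure that the trigonometric branch selected is the combinatorial one, i.e.\ the unique formal power series solution with $T_{\mathcal{BT}}(0)=0$ and nonnegative coefficients. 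This is clear from the substitution, since as $x\to 0^{+}$ one has $\theta\to 0$ and hence $\tfrac{4}{3}\sin^{2}\theta \to 0$, so the chosen branch of $\arcsin$ (the one sending $0$ to $0$) gives the analytic solution of the cubic that agrees with the recursive enumeration of $\mathbb{T}$; the cubic having a unique root near $0$, this identifies the generating series with the claimed expression.
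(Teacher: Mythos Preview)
Your argument is correct and reaches the same cubic $T(1-T)^{2}=x$ as the paper, and the overall architecture (forest/tree relation, cubic, trigonometric inversion) is the same. The one genuine difference is in how you obtain the functional equation for $T_{\mathcal{BT}}$: the paper writes a recurrence $t_{n}=\sum_{k}\sum_{a_{1}+\cdots+a_{k}=n-1}(k+1)\,t_{a_{1}}\cdots t_{a_{k}}$, the factor $k+1$ accounting for the position of the $l/r$ break among the $k$ children, and then sums to $T-x=x\cdot\frac{2T-T^{2}}{(1-T)^{2}}$; you instead use the bijection $T\mapsto B(F\otimes G)$ directly to get $T_{\mathcal{BT}}=x\,F_{\mathcal{BT}}^{2}$, which is shorter and bypasses that intermediate generating-function computation. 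You also spell out Vi\`ete's substitution and the branch selection explicitly, whereas the paper simply asserts the closed form as the compositional inverse of $x^{3}-2x^{2}+x$; your version is more self-contained on that point.
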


\begin{proof}
$ \mathcal{BT} $ is freely generated by the trees, therefore
\begin{eqnarray} \label{relation1}
F_{\mathcal{BT}} (x) = \dfrac{1}{1-T_{\mathcal{BT}} (x)} .
\end{eqnarray}
We have the following relations:
\begin{eqnarray*}
t^{\mathcal{BT}}_{1} & = & 1,\\
t^{\mathcal{BT}}_{n} & = & \sum_{k=1}^{n} \sum_{a_{1} + \hdots + a_{k} = n-1} (k+1) t^{\mathcal{BT}}_{a_{1}} \hdots t^{\mathcal{BT}}_{a_{k}} {\rm ~ if ~} n \geq 2.
\end{eqnarray*}
Then
$$ T_{\mathcal{BT}}(x) - x = \sum_{k=1}^{\infty} (k+1)xT_{\mathcal{BT}}(x)^{k} = xF \circ T_{\mathcal{BT}}(x) ,$$
where $ F(h) = \displaystyle\sum_{k=1}^{\infty} (k+1)h^{k} = \dfrac{2h-h^{2}}{(1-h)^{2}} $. We deduce the following equality:
\begin{eqnarray} \label{relation2}
T_{\mathcal{BT}}(x)^{3} - 2 T_{\mathcal{BT}}(x)^{2} + T_{\mathcal{BT}}(x) = x .
\end{eqnarray}
So $ T_{\mathcal{BT}}(x) $ is the inverse for the composition of $ x^{3}-2x^{2}+x $, that is to say
$$ T_{\mathcal{BT}} (x) =  \dfrac{4}{3} \sin^{2} \left( \dfrac{1}{3} \arcsin \left( \sqrt{\dfrac{27x}{4}} \right) \right) .$$
Remark that, with (\ref{relation2}), we obtain an inductive definition of the coefficients $ t_{n} $:
\begin{eqnarray*}
\left\lbrace \begin{array}{l}
t^{\mathcal{BT}}_{1} = 1,\\
t^{\mathcal{BT}}_{n} = 2 \displaystyle\sum_{i,j \geq 1 {\rm ~ and ~} i+j=n} t^{\mathcal{BT}}_{i} t^{\mathcal{BT}}_{j} - \sum_{i,j,k \geq 1 {\rm ~ and ~} i+j+k=n} t^{\mathcal{BT}}_{i} t^{\mathcal{BT}}_{j} t^{\mathcal{BT}}_{k} {\rm ~ if ~} n \geq 2.
\end{array} \right. 
\end{eqnarray*}
We deduce from (\ref{relation1}) the equality
$$ F_{\mathcal{BT}} (x) = \dfrac{3}{-1+4 \cos^{2} \left( \dfrac{1}{3} \arcsin \left( \sqrt{\dfrac{27x}{4}} \right) \right) } .$$
\end{proof}
\\

This gives:

$$\begin{array}{c|c|c|c|c|c|c|c|c|c|c}
n&1&2&3&4&5&6&7&8&9&10\\
\hline t_{n}^{\mathcal{BT}} &1&2&7&30&143&728&3876&21318&120175&690690\\
\hline f_{n}^{\mathcal{BT}} &1&3&12&55&273&1428&7752&43263&246675&1430715
\end{array}$$

These are the sequences A006013 and A001764 in \cite{Sloane}.

\begin{prop} \label{involution} Let $ \dagger : \mathcal{BT} \rightarrow \mathcal{BT} $ be the $ \mathbb{K} $-linear map built by induction as follows : $ 1^{\dagger} = 1 $ and for all $ F , G \in \mathbb{F} $, $ (F G)^{\dagger} = G^{\dagger} F^{\dagger} $ and $ (B(F \otimes G) )^{\dagger} = B(G^{\dagger} \otimes F^{\dagger}) $. Then $ \dagger $ is an involution over $ \mathcal{BT} $.
\end{prop}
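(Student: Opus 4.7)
The strategy is to prove two things simultaneously by strong induction on the degree: (i) $\dagger$ is well-defined as a map $\mathcal{BT} \to \mathcal{BT}$, i.e., it sends $\mathbb{F}$ into $\mathbb{F}$ and is consistent across the two defining rules; and (ii) $(F^{\dagger})^{\dagger} = F$ for every $F \in \mathbb{F}$.

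For well-definedness, I would first note that every nonempty forest $F \in \mathbb{F}$ admits a unique decomposition $F = T_{1} \cdots T_{k}$ with $T_{i} \in \mathbb{T}$, and every tree $T \in \mathbb{T}$ of degree $\geq 2$ admits a unique decomposition $T = B(F \otimes G)$ with $F, G \in \mathbb{F}$ of strictly smaller degree (these forests being respectively the sequences of left-decorated and right-decorated subtrees hanging from the root). This makes the recursion unambiguous. To verify that $\dagger$ does not leave $\mathcal{BT}$, I would check inductively that whenever $F, G \in \mathbb{F}$, the tree $B(G^{\dagger} \otimes F^{\dagger})$ lies in $\mathbb{T}$: by induction $F^{\dagger}$ and $G^{\dagger}$ are forests of trees in $\mathbb{T}$, the operator $B$ places all $l$-decorations to the left of all $r$-decorations at the newly created root, and the characterizing lemma (the pattern $(l,l),(l,r),(r,r)$ at each vertex) is preserved at the subtrees by the induction hypothesis.

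Next I would verify the forest rule $(FG)^{\dagger} = G^{\dagger}F^{\dagger}$ for arbitrary $F, G \in \mathbb{F}$: writing $F = T_{1}\cdots T_{p}$ and $G = S_{1}\cdots S_{q}$, both sides equal $S_{q}^{\dagger}\cdots S_{1}^{\dagger} T_{p}^{\dagger} \cdots T_{1}^{\dagger}$ directly from the definition on forests. Associativity of concatenation makes the recursion consistent.

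Finally I would prove $(F^{\dagger})^{\dagger} = F$ by strong induction on degree. The base $1^{\dagger\dagger} = 1$ is immediate. If $F = T_{1} \cdots T_{k}$ has $k \geq 2$, then
\begin{eqnarray*}
(F^{\dagger})^{\dagger} \;=\; (T_{k}^{\dagger}\cdots T_{1}^{\dagger})^{\dagger} \;=\; (T_{1}^{\dagger})^{\dagger}\cdots (T_{k}^{\dagger})^{\dagger} \;=\; T_{1}\cdots T_{k} \;=\; F
\end{eqnarray*}
by the forest rule and the induction hypothesis applied to each $T_{i}$ (which has strictly smaller degree than $F$). If $F$ is a tree $T = B(G \otimes H)$ with $G, H \in \mathbb{F}$ of degree $< \deg(T)$, then
\begin{eqnarray*}
(T^{\dagger})^{\dagger} \;=\; \bigl(B(H^{\dagger}\otimes G^{\dagger})\bigr)^{\dagger} \;=\; B\bigl((G^{\dagger})^{\dagger}\otimes (H^{\dagger})^{\dagger}\bigr) \;=\; B(G\otimes H) \;=\; T,
\end{eqnarray*}
again by induction. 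This closes the induction and shows $\dagger \circ \dagger = \mathrm{id}$.

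The only genuinely delicate point is the first one: making sure that swapping the two arguments of $B$ at every vertex keeps one inside the set $\mathbb{T}$. This is exactly where the combinatorial characterization of $\mathbb{T}$ proved in the preceding lemma is used; the rest is then bookkeeping on the two recursive rules.
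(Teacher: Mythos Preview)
Your proof of the involution property $(F^{\dagger})^{\dagger}=F$ is correct and follows exactly the same two-case induction on degree as the paper. The paper's proof is terser: it simply treats the tree case $F=B(G\otimes H)$ and the forest case $F=GH$ with $G,H$ nonempty, and applies the recursive rules plus the induction hypothesis, without discussing well-definedness or the fact that $\dagger$ stays inside $\mathbb{F}$. Your additional paragraph checking that $B(G^{\dagger}\otimes F^{\dagger})\in\mathbb{T}$ via the preceding combinatorial lemma is a worthwhile clarification that the paper silently takes for granted.
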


\begin{proof}
Let us prove that $ (F^{\dagger})^{\dagger} = F $ for all $ F \in \mathbb{F} $ by induction on the degree $ n $ of $ F $. If $ n = 0 $, $ F = 1 $ and this is obvious. Suppose that $ n \geq 1 $. We have two cases :
\begin{enumerate}
\item If $ F = B(G \otimes H) $ is a tree, with $ G , H \in \mathbb{F} $ such that $ \left| G \right| , \left| H \right| < n $. Then $ (F^{\dagger})^{\dagger} = (B(H^{\dagger} \otimes G^{\dagger}))^{\dagger} = B((G^{\dagger})^{\dagger} \otimes (H^{\dagger})^{\dagger}) = B(G \otimes H) = F $, using the induction hypothesis for $ G $ and $ H $.
\item If $ F = G H $ is a forest, with $ G , H \in \mathbb{F} \setminus \{1 \} $ such that $ \left| G \right| , \left| H \right| < n $. Then $ (F^{\dagger})^{\dagger} = (H^{\dagger} G^{\dagger})^{\dagger} = (G^{\dagger})^{\dagger} (H^{\dagger})^{\dagger} = G H = F $, using again the induction hypothesis for $ G $ and $ H $.
\end{enumerate}
In all cases, $ (F^{\dagger})^{\dagger} = F $.
\end{proof}
\\

{\bf Remark.} {We can rewrite the relations between $ \dagger $ and the product $ m $ and $ B $ as follows:
\begin{eqnarray*}
m \circ (\dagger \otimes \dagger) & = & \dagger \circ m \circ \tau ,\\
B \circ (\dagger \otimes \dagger) & = & \dagger \circ B \circ \tau .
\end{eqnarray*}
where $ \tau $ is the flip defined in the introduction.}
\\

{\bf Examples.} {$$ \begin{array}{|rcl|rcl|rcl|} 
\hline
\addeux{$l$} \tun \tun & \overset{\dagger}{\longrightarrow} & \tun \tun \addeux{$r$} & \adtroisun{$l$}{$l$} \adtroisdeux{$r$}{$l$} & \overset{\dagger}{\longrightarrow} & \adtroisdeux{$l$}{$r$} \adtroisun{$r$}{$r$} & \adquatreun{$l$}{$r$}{$r$} & \overset{\dagger}{\longrightarrow} & \adquatreun{$l$}{$l$}{$r$} \\
\adquatredeux{$r$}{$l$}{$l$} \tun \adtroisun{$r$}{$l$} & \overset{\dagger}{\longrightarrow} & \adtroisun{$r$}{$l$} \tun \adquatretrois{$r$}{$l$}{$r$} & \adquatrequatre{$l$}{$l$}{$r$} & \overset{\dagger}{\longrightarrow} & \adquatrequatre{$r$}{$l$}{$r$} & \addeux{$r$} \addeux{$l$} \adtroisdeux{$r$}{$l$} & \overset{\dagger}{\longrightarrow} & \adtroisdeux{$l$}{$r$} \addeux{$r$} \addeux{$l$} \\
\adtroisun{$r$}{$l$} \addeux{$r$} \addeux{$l$} \adtroisun{$r$}{$l$} & \overset{\dagger}{\longrightarrow} & \adtroisun{$r$}{$l$} \addeux{$r$} \addeux{$l$} \adtroisun{$r$}{$l$} & \tun \addeux{$r$} \adtroisdeux{$l$}{$l$} & \overset{\dagger}{\longrightarrow} & \adtroisdeux{$r$}{$r$} \addeux{$l$} \tun & \adquatrequatre{$l$}{$r$}{$r$} \adquatrecinq{$r$}{$l$}{$r$} & \overset{\dagger}{\longrightarrow} & \adquatrecinq{$l$}{$r$}{$l$} \adquatrequatre{$r$}{$l$}{$l$} \\
\hline
\end{array} $$}

\subsection{Hopf algebra structure on $ \mathcal{BT} $} \label{coproduithopf}

As in the case $ (\mathcal{H}_{P},B_{P}) $ (see section \ref{HP}), $ (\mathcal{BT},B) $ is an initial object in the category of couples $ (A,L) $ where $ A $ is an algebra and $ L : A \otimes A \longrightarrow A $ any $ \mathbb{K} $-linear map:

\begin{theo} \label{univ1}
Let $ A $ be any algebra and let $ L : A \otimes A \rightarrow A $ be a $ \mathbb{K} $-linear map. Then there exists a unique algebra morphism $ \phi : \mathcal{BT} \rightarrow A $ such that $ \phi \circ B = L \circ (\phi \otimes \phi) $.
\end{theo}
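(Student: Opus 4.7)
The plan is to construct $\phi$ by induction on degree and extend multiplicatively, in direct analogy with the universal property of $(\mathcal{H}_P, B_P)$ recalled in Section \ref{HP}. Since $\mathcal{BT}$ is freely generated as an associative algebra by the set of trees $\mathbb{T}$, it suffices to specify $\phi$ on $\mathbb{T}$ and to verify that the resulting multiplicative extension intertwines $B$ and $L$.

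The first key observation is that every tree $T \in \mathbb{T}$ of degree $\geq 1$ admits a unique decomposition $T = B(F \otimes G)$ with $F, G \in \mathbb{F}$. Indeed, by the lemma characterizing $\mathbb{T}$, the children of the root of $T$ split, from left to right, into a block of $l$-decorated edges followed by a block of $r$-decorated edges; the subtrees hanging from the former form $F$ and those hanging from the latter form $G$. With this at hand, set
\begin{eqnarray*}
\phi(1) &=& 1_A, \\
\phi(T_1 \cdots T_n) &=& \phi(T_1) \cdots \phi(T_n), \\
\phi(B(F \otimes G)) &=& L(\phi(F) \otimes \phi(G)),
\end{eqnarray*}
where the third line is the inductive step on trees (base case $\tun = B(1 \otimes 1)$, giving $\phi(\tun) = L(1_A \otimes 1_A)$). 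Well-definedness is immediate from the uniqueness of the decomposition $T = B(F \otimes G)$ and the fact that $F,G$ have strictly smaller degree than $T$.

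It then remains to check that $\phi$ is an algebra morphism and that $\phi \circ B = L \circ (\phi \otimes \phi)$ on all of $\mathcal{BT} \otimes \mathcal{BT}$. Multiplicativity is built into the definition on products of trees and extends to all forests by associativity of concatenation; the relation $\phi \circ B = L \circ (\phi \otimes \phi)$ is exactly the third defining line of $\phi$ applied on basis elements, and extends by bilinearity. For uniqueness, given another algebra morphism $\phi' : \mathcal{BT} \to A$ with $\phi' \circ B = L \circ (\phi' \otimes \phi')$, an induction on the degree of trees shows $\phi' = \phi$ on $\mathbb{T}$ (the inductive step being forced by the unique decomposition $T = B(F \otimes G)$), and both morphisms then agree on all of $\mathcal{BT}$ by multiplicativity.

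The only non-formal ingredient is the unique decomposition of trees in $\mathbb{T}$ as $B(F \otimes G)$, which rests on the combinatorial lemma established just above. Once this is in place, the proof is entirely parallel to the classical argument for $(\mathcal{H}_P, B_P)$, with the single operator $B_P$ of arity one replaced by the two-input operator $B$.
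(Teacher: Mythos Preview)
Your proof is correct and follows essentially the same approach as the paper: define $\phi$ inductively on degree using the unique decomposition $T = B(F\otimes G)$ for trees, extend multiplicatively to forests, and verify the intertwining relation and uniqueness by the same induction. The only minor addition is that you make explicit the uniqueness of the decomposition $T = B(F\otimes G)$, which the paper uses implicitly.
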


\begin{proof}
\textit{Existence.} We define an element $ a_{F} \in A $ for any forest $ F \in \mathcal{BT} $ by induction on the degree of $ F $ as follows:
\begin{enumerate}
\item $ a_{1} = 1_{A} $.
\item If $ F $ is a tree, there exists two forests $ G,H \in \mathcal{BT} $ with $ \left| G \right| + \left| H \right| = \left| F \right| - 1 $ such that $ F = B(G \otimes H) $; then $ a_{F} = L(a_{G} \otimes a_{H}) $.
\item If $ F $ is not a tree, $ F = F_{1} \hdots F_{n} $ with $ F_{i} \in \mathbb{T} $ and $ \left| F_{i} \right| < \left| F \right| $; then $ a_{F} = a_{F_{1}} \hdots a_{F_{n}} $.
\end{enumerate}
Let $ \phi : \mathcal{BT} \rightarrow A $ be the unique linear morphism such that $ \phi(F) = a_{F} $ for any forest $ F $. Given two forests $ F $ and $ G $, let us prove that $ \phi(FG) = \phi(F) \phi(G) $. If $ F = 1 $ or $ G = 1 $, this is trivial. If not, $ F = F_{1} \hdots F_{n} $ and $ G = G_{1} \hdots G_{m} \neq 1 $ and therefore
$$ \phi(FG) = a_{F_{1}} \hdots a_{F_{n}} a_{G_{1}} \hdots a_{G_{m}} = \phi(F) \phi(G) .$$
Therefore $ \phi $ is an algebra morphism. On the other hand, for all forests $ F,G $,
$$ \phi \circ B(F \otimes G) = a_{B(F \otimes G)} = L(a_{F} \otimes a_{G}) = L \circ (\phi \otimes \phi) (F \otimes G) ,$$
therefore $ \phi \circ B = L \circ (\phi \otimes \phi) $.\\

\textit{Uniqueness.} Let $ \psi : \mathcal{BT} \rightarrow A $ be an algebra morphism such that $ \psi \circ B = L \circ (\psi \otimes \psi) $. Let us prove that $ \psi(F) = a_{F} $ for any forest $ F $ by induction on the degree of $ F $. If $ F = 1 $, $ \psi(F) = 1_{A} = a_{1} $. If $ \left| F \right| \geq 1 $, we have two cases:
\begin{enumerate}
\item If $ F $ is a tree then $ F = B(G \otimes H) $ with $ G,H \in \mathbb{F} $ and by induction hypothesis,
$$ \psi(F) = \psi \circ B (G \otimes H) = L \circ (\psi \otimes \psi) (G \otimes H) = L(a_{G} \otimes a_{H}) = a_{F} .$$
\item If $ F $ is not a tree, $ F = F_{1} \hdots F_{n} $ with $ F_{i} \in \mathbb{T} $ and by induction hypothesis,
$$ \psi(F) = \psi(F_{1}) \hdots \psi(F_{n}) = a_{F_{1}} \hdots a_{F_{n}} = a_{F} .$$
\end{enumerate}
So $ \phi(F) = \psi(F) $ for any forest $ F $ and $ \phi = \psi $.
\end{proof}
\\

We now define
\begin{eqnarray*}
\varepsilon : \left\lbrace \begin{array}{rcl}
\mathcal{BT} & \rightarrow & \mathbb{K} \\
F \in \mathbb{F} & \mapsto & \delta_{F,1}.
\end{array} \right. 
\end{eqnarray*}
$ \varepsilon $ is an algebra morphism satisfying $ \varepsilon \circ B = 0 $.

\begin{theo} \label{defideltahopf}
Let $ \Delta : \mathcal{BT} \rightarrow \mathcal{BT} \otimes \mathcal{BT} $ be the unique algebra morphism such that $ \Delta \circ B = L \circ (\Delta \otimes \Delta) $ with
\begin{eqnarray*}
L : \left\lbrace \begin{array}{rcl}
(\mathcal{BT} \otimes \mathcal{BT}) \otimes (\mathcal{BT} \otimes \mathcal{BT}) & \rightarrow & \mathcal{BT} \otimes \mathcal{BT} \\
(x \otimes y) \otimes (z \otimes t) & \mapsto & B(x \otimes z) \otimes \varepsilon(y t) 1 + xz \otimes B(y \otimes t).
\end{array} \right. 
\end{eqnarray*}
Equipped with this coproduct, $ \mathcal{BT} $ is a graded connected Hopf algebra.
\end{theo}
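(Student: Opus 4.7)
The plan is to verify all the Hopf algebra axioms by repeated appeal to the universal property of $(\mathcal{BT}, B)$ given in Theorem~\ref{univ1}. \textbf{Existence and uniqueness of $\Delta$} is immediate: $\mathcal{BT} \otimes \mathcal{BT}$ is an algebra (tensor product) and $L : (\mathcal{BT} \otimes \mathcal{BT})^{\otimes 2} \to \mathcal{BT} \otimes \mathcal{BT}$ is $\mathbb{K}$-linear, so Theorem~\ref{univ1} with $A = \mathcal{BT} \otimes \mathcal{BT}$ produces a unique algebra morphism $\Delta$ with the stated compatibility with $B$. Moreover, since $\varepsilon(yt) = \varepsilon(y)\varepsilon(t)$, unfolding $L(\Delta(F) \otimes \Delta(G))$ with the Sweedler-like notation $\Delta(F) = \sum F^{(1)} \otimes F^{(2)}$ collapses the first summand to $B(F \otimes G) \otimes 1$, which gives the cleaner working formula
\[
\Delta(B(F\otimes G)) = B(F \otimes G) \otimes 1 + \sum F^{(1)}G^{(1)} \otimes B(F^{(2)} \otimes G^{(2)}).
\]

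The \textbf{counit axiom} $(\varepsilon \otimes id) \circ \Delta = id = (id \otimes \varepsilon) \circ \Delta$ is proved by noting that both sides are algebra morphisms $\mathcal{BT} \to \mathcal{BT}$, so by the uniqueness in Theorem~\ref{univ1} it suffices to check the same recursion on $B(F \otimes G)$. A direct computation using the formula above and $\varepsilon \circ B = 0$ reduces $(\varepsilon \otimes id)\Delta(B(F\otimes G))$ to $B(F \otimes G)$, and similarly on the right. For \textbf{coassociativity} $(\Delta \otimes id) \circ \Delta = (id \otimes \Delta) \circ \Delta$, the same strategy applies: both sides are algebra morphisms $\mathcal{BT} \to \mathcal{BT}^{\otimes 3}$, so by Theorem~\ref{univ1} it is enough to exhibit a single linear map $L'$ on $(\mathcal{BT}^{\otimes 3})^{\otimes 2}$ such that both satisfy $\phi \circ B = L' \circ (\phi \otimes \phi)$. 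Expanding $(\Delta \otimes id) \circ L \circ (\Delta \otimes \Delta)$ and $(id \otimes \Delta) \circ L \circ (\Delta \otimes \Delta)$ on $F \otimes G$ and matching terms reduces essentially to the multiplicativity of $\varepsilon$ together with the already-known coassociativity of the inner $\Delta$'s appearing in the Sweedler expansions.

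Finally, $\mathcal{BT}$ is graded by the number of vertices, and since $B$ raises degree by $1$, induction on degree using the working formula gives $\Delta(\mathcal{BT}_n) \subseteq \bigoplus_{i+j=n} \mathcal{BT}_i \otimes \mathcal{BT}_j$; as $\mathcal{BT}_0 = \mathbb{K}\cdot 1$, the bialgebra is connected, and the standard argument (inverting $id$ in the convolution algebra by a geometric series on the augmentation ideal) produces an antipode, so $\mathcal{BT}$ is a graded connected Hopf algebra. The \textbf{main obstacle} is the coassociativity verification in the previous paragraph: the map $L$ is a genuine mixture of the grafting $B$ with a counit-contraction $\varepsilon(yt)$, so one must carefully reorganize the resulting six-tensor expressions and check that the $\varepsilon$-contraction interacts consistently with iteration; everything else is a clean application of Theorem~\ref{univ1}.
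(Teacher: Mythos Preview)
Your proposal is correct and follows essentially the same approach as the paper: derive the working formula for $\Delta \circ B$, verify the counit and coassociativity by induction on degree (which is precisely what the uniqueness clause of Theorem~\ref{univ1} encodes), check homogeneity, and invoke the standard graded-connected argument for the antipode. The only wrinkle is that your coassociativity paragraph announces an $L'$-based uniqueness argument but then appeals to ``already-known coassociativity of the inner $\Delta$'s,'' which must be read as the induction hypothesis --- and that is exactly the paper's direct inductive computation rather than an actual exhibition of $L'$.
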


\begin{proof}
We use the Sweedler's notation for $ \mathcal{BT} $: for all $ x \in \mathcal{BT} $, $ \Delta(x) = \displaystyle\sum_{x} x^{(1)} \otimes x^{(2)} $. Then, if $ G,H \in \mathbb{F} $,
\begin{eqnarray*}
\Delta (B(G \otimes H)) & = & \sum_{G,H} B(G^{(1)} \otimes H^{(1)}) \otimes \varepsilon (G^{(2)} H^{(2)}) 1 + \sum_{G,H} G^{(1)} H^{(1)} \otimes B(G^{(2)} \otimes H^{(2)}) \\
& = & B(G \otimes H) \otimes 1 + \sum_{G,H} G^{(1)} H^{(1)} \otimes B(G^{(2)} \otimes H^{(2)}) .
\end{eqnarray*}
Thanks to theorem \ref{univ1}, $ \Delta $ is well defined. To prove that $ \mathcal{BT} $ is a graded connected Hopf algebra, we must prove that $ \Delta $ is coassociative, counitary, and homogeneous of degree $ 0 $.\\

We first show that $ \varepsilon $ is a counit for $ \Delta $. Let us prove that $ (\varepsilon \otimes Id) \circ \Delta (F) = F $ for any forest $ F \in \mathbb{F} $ by induction on the degree of $ F $. If $ F = 1 $,
$$ (\varepsilon \otimes Id) \circ \Delta(1) = \varepsilon(1) 1 = 1 .$$
If $ F $ is a tree, $ F = B(G \otimes H) $ with $ G,H \in \mathbb{F} $. Then,
\begin{eqnarray*}
(\varepsilon \otimes Id) \circ \Delta (F) & = & (\varepsilon \otimes Id) \left( B(G \otimes H) \otimes 1 + \sum_{G,H} G^{(1)} H^{(1)} \otimes B(G^{(2)} \otimes H^{(2)}) \right) \\
& = & \varepsilon \circ B(G \otimes H) \otimes 1 + \sum_{G,H} \varepsilon(G^{(1)} H^{(1)}) B(G^{(2)} \otimes H^{(2)}) \\
& = & 0 + \sum_{G,H} B( \varepsilon(G^{(1)}) G^{(2)} \otimes \varepsilon(H^{(1)}) H^{(2)}) \\
& = & B(G \otimes H) \\
& = & F.
\end{eqnarray*}
If not, $ F = F_{1} \hdots F_{n} $ with $ F_{i} \in \mathbb{T} $. As $ (\varepsilon \otimes Id) \circ \Delta $ is an algebra morphism and using the induction hypothesis,
\begin{eqnarray*}
(\varepsilon \otimes Id) \circ \Delta (F) & = & (\varepsilon \otimes Id) \circ \Delta (F_{1}) \hdots (\varepsilon \otimes Id) \circ \Delta (F_{n}) \\
& = & F_{1} \hdots F_{n} \\
& = & F.
\end{eqnarray*}
\indent Let us show that $ (Id \otimes \varepsilon) \circ \Delta (F) = F $ for any forest $ F \in \mathbb{F} $ by induction. If $ F = 1 $,
$$ (Id \otimes \varepsilon) \circ \Delta (1) = 1 \varepsilon (1) = 1 .$$
If $ F $ is a tree, $ F = B(G \otimes H) $ with $ G,H \in \mathbb{F} $ and then
\begin{eqnarray*}
(Id \otimes \varepsilon) \circ \Delta (F) & = & (Id \otimes \varepsilon) \left( B(G \otimes H) \otimes 1 + \sum_{G,H} G^{(1)} H^{(1)} \otimes B(G^{(2)} \otimes H^{(2)}) \right) \\
& = & B(G \otimes H) + \sum_{G,H} G^{(1)} H^{(1)} \varepsilon \circ B(G^{(2)} \otimes H^{(2)}) \\
& = & B(G \otimes H)\\
& = & F.
\end{eqnarray*}
If not, $ F = F_{1} \hdots F_{n} $ with $ F_{i} \in \mathbb{T} $. As $ (Id \otimes \varepsilon) \circ \Delta $ is an algebra morphism and using the induction hypothesis,
\begin{eqnarray*}
(Id \otimes \varepsilon) \circ \Delta (F) & = & (Id \otimes \varepsilon) \circ \Delta (F_{1}) \hdots (Id \otimes \varepsilon) \circ \Delta (F_{n}) \\
& = & F_{1} \hdots F_{n} \\
& = & F.
\end{eqnarray*}
Therefore $ \varepsilon $ is a counit for $ \Delta $.\\

Let us prove that $ \Delta $ is coassociative. More precisely, we show that $ (\Delta \otimes Id) \circ \Delta(F) = (Id \otimes \Delta) \circ \Delta(F) $ for any forest $ F \in \mathbb{F} $ by induction on the degree of $ F $. If $ F = 1 $ this is obvious. If $ F $ is a tree, $ F = B(G \otimes H) $ with $ G,H \in \mathbb{F} $. Then
\begin{eqnarray*}
(\Delta \otimes Id) \circ \Delta(F) & = & (\Delta \otimes Id) \circ \Delta \circ B(G \otimes H)\\
& = & (\Delta \otimes Id) \left( B(G \otimes H) \otimes 1 + \sum_{G,H} G^{(1)} H^{(1)} \otimes B(G^{(2)} \otimes H^{(2)}) \right) \\
& = & B(G \otimes H) \otimes 1 \otimes 1 + \sum_{G,H} G^{(1)} H^{(1)} \otimes B(G^{(2)} \otimes H^{(2)}) \otimes 1\\
& & + \sum_{G,H,G^{(1)},H^{(1)}} (G^{(1)})^{(1)} (H^{(1)})^{(1)} \otimes (G^{(1)})^{(2)} (H^{(1)})^{(2)} \otimes B(G^{(2)} \otimes H^{(2)}) ,
\end{eqnarray*}
\begin{eqnarray*}
(Id \otimes \Delta) \circ \Delta(F) & = & (Id \otimes \Delta) \circ \Delta \circ B(G \otimes H)\\
& = & (Id \otimes \Delta) \left( B(G \otimes H) \otimes 1 + \sum_{G,H} G^{(1)} H^{(1)} \otimes B(G^{(2)} \otimes H^{(2)}) \right) \\
& = & B(G \otimes H) \otimes 1 \otimes 1 + \sum_{G,H} G^{(1)} H^{(1)} \otimes B(G^{(2)} \otimes H^{(2)}) \otimes 1\\
& & + \sum_{G,H,G^{(2)},H^{(2)}} G^{(1)} H^{(1)} \otimes (G^{(2)})^{(1)} (H^{(2)})^{(1)} \otimes B((G^{(2)})^{(2)} \otimes (H^{(2)})^{(2)}) .
\end{eqnarray*}
We conclude with the coassociativity of $ \Delta $ applied to $ G $ and $ H $. If $ F $ is not a tree, $ F = F_{1} \hdots F_{n} $ with $ F_{i} \in \mathbb{T} $. As $ (\Delta \otimes Id) \circ \Delta $ and $ (Id \otimes \Delta) \circ \Delta $ are algebra morphisms and using the induction hypothesis,
\begin{eqnarray*}
& & (\Delta \otimes Id) \circ \Delta(F) \\
& = & (\Delta \otimes Id) \circ \Delta(F_{1}) \hdots (\Delta \otimes Id) \circ \Delta(F_{n}) \\
& = & \sum_{F_{1} , \hdots , F_{n}} \sum_{F_{1}^{(1)}, \hdots , F_{n}^{(1)}} (F_{1}^{(1)})^{(1)} \hdots (F_{n}^{(1)})^{(1)} \otimes (F_{1}^{(1)})^{(2)} \hdots (F_{n}^{(1)})^{(2)} \otimes F_{1}^{(2)} \hdots F_{n}^{(2)} \\
& = & \sum_{F_{1} , \hdots , F_{n}} \sum_{F_{1}^{(2)}, \hdots , F_{n}^{(2)}} F_{1}^{(1)} \hdots F_{n}^{(1)} \otimes (F_{1}^{(2)})^{(1)} \hdots (F_{n}^{(2)})^{(1)} \otimes (F_{1}^{(2)})^{(2)} \hdots (F_{n}^{(2)})^{(2)} \\
& = & (Id \otimes \Delta) \circ \Delta (F_{1}) \hdots (Id \otimes \Delta) \circ \Delta (F_{n}) \\
& = & (Id \otimes \Delta) \circ \Delta(F) .
\end{eqnarray*}
Therefore $ \Delta $ is coassociative.\\

Let us show that $ \Delta $ is homogeneous of degree 0. Easy induction, using the fact that $ L $ is homogeneous of degree 1. Note that it can also be proved using proposition \ref{coupe}. As it is graded and connected, it had an antipode. This ends the proof.
\end{proof}
\\

{\bf Remark.} {It was shown that for all $ x,y \in \mathcal{BT} $,
\begin{eqnarray*}
\Delta \circ B(x \otimes y) = B(x \otimes y) \otimes 1 + (m \otimes B) \circ \Delta(x \otimes y) ,
\end{eqnarray*}
with $ \Delta (x \otimes y) = (Id \otimes \tau \otimes Id) \circ (\Delta \otimes \Delta) (x \otimes y) $, where $ \tau $ is the flip.
}
\\

\begin{prop} \label{daggerdelta}
For all forests $ F \in \mathcal{BT} $, $ \Delta(F^{\dagger}) = ( \dagger \otimes \dagger) \circ \Delta (F) $.
\end{prop}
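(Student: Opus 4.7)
The plan is to proceed by induction on the degree $n$ of the forest $F \in \mathbb{F}$, exploiting the inductive definitions of both $\dagger$ (from Proposition \ref{involution}) and $\Delta$ (from Theorem \ref{defideltahopf}). The base case $n = 0$ is immediate since $\Delta(1) = 1 \otimes 1$ and $1^{\dagger} = 1$.

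For the inductive step, I would split into two cases depending on whether $F$ is a tree or a product of at least two trees. In the non-tree case, write $F = GH$ with $G, H \in \mathbb{F}$ of smaller degree. Both $\Delta$ and the map $X \mapsto (\dagger \otimes \dagger) \circ \Delta(X)$ are multiplicative in the sense we need, except that $\dagger$ is an anti-morphism. Using $(GH)^{\dagger} = H^{\dagger} G^{\dagger}$, multiplicativity of $\Delta$, and the induction hypothesis applied to $G$ and $H$, both sides expand to
\begin{eqnarray*}
\sum_{G,H} (H^{(1)})^{\dagger} (G^{(1)})^{\dagger} \otimes (H^{(2)})^{\dagger} (G^{(2)})^{\dagger},
\end{eqnarray*}
so they agree.

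For the tree case, write $F = B(G \otimes H)$ with $G, H \in \mathbb{F}$, so $F^{\dagger} = B(H^{\dagger} \otimes G^{\dagger})$. I would use the explicit formula derived in the proof of Theorem \ref{defideltahopf},
\begin{eqnarray*}
\Delta(B(X \otimes Y)) = B(X \otimes Y) \otimes 1 + \sum_{X,Y} X^{(1)} Y^{(1)} \otimes B(X^{(2)} \otimes Y^{(2)}),
\end{eqnarray*}
applied once to $B(G \otimes H)$ and once to $B(H^{\dagger} \otimes G^{\dagger})$. Applying $\dagger \otimes \dagger$ to the first expansion turns $B(G \otimes H) \otimes 1$ into $B(H^{\dagger} \otimes G^{\dagger}) \otimes 1$ and sends each summand $G^{(1)} H^{(1)} \otimes B(G^{(2)} \otimes H^{(2)})$ to $(H^{(1)})^{\dagger}(G^{(1)})^{\dagger} \otimes B((H^{(2)})^{\dagger} \otimes (G^{(2)})^{\dagger})$. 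Expanding $\Delta(B(H^{\dagger} \otimes G^{\dagger}))$ directly and then replacing $\Delta(H^{\dagger})$ and $\Delta(G^{\dagger})$ using the induction hypothesis gives exactly the same expression.

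The main bookkeeping hazard is simply keeping the order of the Sweedler components straight: because $\dagger$ reverses products and swaps the two arguments of $B$, one must verify that these two reversals combine correctly so that no flip $\tau$ appears between the two tensor factors. Once the two cases above are carefully matched, the induction closes and the identity holds on all forests, hence on $\mathcal{BT}$ by linearity.
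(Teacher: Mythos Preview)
Your proposal is correct and follows essentially the same approach as the paper: induction on the degree of $F$, with the same base case and the same two-case split (tree versus product of trees), each handled via the inductive definition of $\dagger$, the formula for $\Delta \circ B$, and the induction hypothesis. The only difference is cosmetic: the paper carries out the tree case in the compact functorial notation $(m \otimes B) \circ (Id \otimes \tau \otimes Id) \circ (\Delta \otimes \Delta)$ and the identities $m \circ (\dagger \otimes \dagger) = \dagger \circ m \circ \tau$, $B \circ (\dagger \otimes \dagger) = \dagger \circ B \circ \tau$, whereas you work directly in Sweedler notation; the underlying computation is the same.
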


\begin{proof}
By induction on the degree $ n $ of $ F $. If $ n = 0 $, $ F = 1 $ and this is obvious. Suppose that $ n \geq 1 $. We have two cases :
\begin{enumerate}
\item If $ F = B(G \otimes H) $ is a tree, with $ G , H \in \mathbb{F} $ such that $ \left| G \right| , \left| H \right| < n $. Then
\begin{eqnarray*}
\Delta(F^{\dagger}) & = & \Delta( B(H^{\dagger} \otimes G^{\dagger}) ) \\
& = & B(H^{\dagger} \otimes G^{\dagger}) \otimes 1 + (m \otimes B) \circ \Delta(H^{\dagger} \otimes G^{\dagger}) \\
& = & B(G \otimes H)^{\dagger} \otimes 1^{\dagger} \\
& & + (m \otimes B) \circ (Id \otimes \tau \otimes Id) \circ (((\dagger \otimes \dagger) \circ \Delta) \otimes ((\dagger \otimes \dagger) \circ \Delta)) (H \otimes G) \\
& = & (\dagger \otimes \dagger) (B(G \otimes H) \otimes 1) \\
& & + ((m \circ (\dagger \otimes \dagger)) \otimes (B \circ (\dagger \otimes \dagger))) \circ (Id \otimes \tau \otimes Id) \circ (\Delta \otimes \Delta) (H \otimes G) \\
& = & (\dagger \otimes \dagger) (B(G \otimes H) \otimes 1) \\
& & + (\dagger \otimes \dagger) \circ (m \otimes B) \circ (\tau \otimes \tau) \circ (Id \otimes \tau \otimes Id) \circ (\Delta \otimes \Delta) (H \otimes G) \\
& = & (\dagger \otimes \dagger) \left( B(G \otimes H) \otimes 1 + (m \otimes B) \circ (Id \otimes \tau \otimes Id) \circ (\Delta \otimes \Delta) (G \otimes H) \right) \\
& = & (\dagger \otimes \dagger) \circ \Delta (F) ,
\end{eqnarray*}
using the induction hypothesis in the third equality.
\item If $ F = G H $ is a forest, with $ G , H \in \mathbb{F} \setminus \{1 \} $ such that $ \left| G \right| , \left| H \right| < n $. Then
\begin{eqnarray*}
\Delta( F^{\dagger}) & = & \Delta (H^{\dagger} G^{\dagger} ) \\
& = & (m \otimes m) \circ (Id \otimes \tau \otimes Id) \circ (\Delta(H^{\dagger}) \otimes \Delta(G^{\dagger} )) \\
& = & (m \otimes m) \circ (Id \otimes \tau \otimes Id) \circ (((\dagger \otimes \dagger) \circ \Delta(H)) \otimes ((\dagger \otimes \dagger) \circ \Delta(G))) \\
& = & ((m \circ (\dagger \otimes \dagger)) \otimes (m \circ (\dagger \otimes \dagger))) \circ (Id \otimes \tau \otimes Id) \circ (\Delta(H) \otimes \Delta(G)) \\
& = & (\dagger \otimes \dagger) \circ  (m \otimes m) \circ (\tau \otimes \tau) \circ (Id \otimes \tau \otimes Id) \circ (\Delta(H) \otimes \Delta(G)) \\
& = & (\dagger \otimes \dagger) \circ  (m \otimes m) \circ (Id \otimes \tau \otimes Id) \circ (\Delta(G) \otimes \Delta(H)) \\
& = & (\dagger \otimes \dagger) \circ \Delta(F) ,
\end{eqnarray*}
using the induction hypothesis in the third equality.
\end{enumerate}
In all cases, $ \Delta(F^{\dagger}) = ( \dagger \otimes \dagger) \circ \Delta (F) $.
\end{proof}
\\

We now give a combinatorial description of this coproduct:

\begin{prop} \label{coupe}
Let $ F \in \mathbb{F} $. Then $ \Delta(F) = F \otimes 1+1\otimes F+\displaystyle\sum_{\boldsymbol{v} \mmodels V(F)} Lea_{\boldsymbol{v}}(F) \otimes Roo_{\boldsymbol{v}}(F) $.
\end{prop}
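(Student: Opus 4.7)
The plan is to prove the formula by induction on the degree $n$ of $F$, using the inductive characterization of $\Delta$ given in Theorem \ref{defideltahopf}. Denote the right-hand side by $\tilde{\Delta}(F)$; we want to show $\Delta(F) = \tilde{\Delta}(F)$.

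First I would verify that $\tilde{\Delta}$ is multiplicative, so that it suffices to check the equality on trees. This is a standard combinatorial fact: if $F = F_1 F_2$, then an admissible cut $\boldsymbol{v}$ of $F$ decomposes uniquely as $\boldsymbol{v}_1 \cup \boldsymbol{v}_2$ with $\boldsymbol{v}_i \models V(F_i)$, and moreover $Lea_{\boldsymbol{v}}(F) = Lea_{\boldsymbol{v}_1}(F_1)\, Lea_{\boldsymbol{v}_2}(F_2)$ and $Roo_{\boldsymbol{v}}(F) = Roo_{\boldsymbol{v}_1}(F_1)\, Roo_{\boldsymbol{v}_2}(F_2)$. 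Since $\Delta$ is already known to be an algebra morphism, both sides are multiplicative, so the inductive step reduces to the case $F = B(G \otimes H)$.

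Next I would analyze admissible cuts of $T = B(G \otimes H)$. The root $r$ of $T$ either belongs to $\boldsymbol{v}$ or not. If $r \in \boldsymbol{v}$, then total disconnectedness forces $\boldsymbol{v} = \{r\}$, which is the total cut giving the term $T \otimes 1$. If $r \notin \boldsymbol{v}$, then $\boldsymbol{v}$ splits as $\boldsymbol{v}_G \cup \boldsymbol{v}_H$ with $\boldsymbol{v}_G \models V(G)$ and $\boldsymbol{v}_H \models V(H)$; in that case $Lea_{\boldsymbol{v}}(T) = Lea_{\boldsymbol{v}_G}(G)\, Lea_{\boldsymbol{v}_H}(H)$, while $Roo_{\boldsymbol{v}}(T) = B(Roo_{\boldsymbol{v}_G}(G) \otimes Roo_{\boldsymbol{v}_H}(H))$ since the root $r$ is preserved with its decorated children. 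Hence
\begin{eqnarray*}
\tilde{\Delta}(B(G \otimes H)) & = & B(G \otimes H) \otimes 1 \\
& & + \sum_{\boldsymbol{v}_G \models V(G),\, \boldsymbol{v}_H \models V(H)} Lea_{\boldsymbol{v}_G}(G)\, Lea_{\boldsymbol{v}_H}(H) \otimes B(Roo_{\boldsymbol{v}_G}(G) \otimes Roo_{\boldsymbol{v}_H}(H)).
\end{eqnarray*}
Applying the induction hypothesis to $G$ and $H$ rewrites this exactly as $L(\Delta(G) \otimes \Delta(H))$, where the first summand $T \otimes 1$ corresponds to the unique contribution $\varepsilon(G^{(2)} H^{(2)}) = 1$ arising from $G^{(2)} = H^{(2)} = 1$ (i.e., total cuts on both $G$ and $H$), and the second summand matches the $G^{(1)} H^{(1)} \otimes B(G^{(2)} \otimes H^{(2)})$ term. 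By Theorem \ref{defideltahopf}, $\Delta(B(G \otimes H))$ is defined by exactly the same formula, so $\Delta(T) = \tilde{\Delta}(T)$.

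The main technical point to be careful about is the bookkeeping around degenerate cuts: specifically, one must check that when $\boldsymbol{v}_G$ or $\boldsymbol{v}_H$ is the empty or total cut, the pieces $Lea$ and $Roo$ (which may equal $1$) still combine correctly, and that the $\varepsilon(yt)\,1$ factor in $L$ correctly isolates the single total-cut contribution $T \otimes 1$ and does not double-count it. Once this case analysis is done, the identity $\tilde{\Delta} \circ B = L \circ (\tilde{\Delta} \otimes \tilde{\Delta})$ holds, and combining multiplicativity with Theorem \ref{univ1} (applied to $A = \mathcal{BT} \otimes \mathcal{BT}$) yields $\Delta = \tilde{\Delta}$ on all of $\mathcal{BT}$.
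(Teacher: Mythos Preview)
Your proposal is correct and follows essentially the same approach as the paper: the paper defines $\Delta'$ by the admissible-cut formula, notes that it is an algebra morphism, checks that $\Delta'(B(F\otimes G)) = B(F\otimes G)\otimes 1 + (m\otimes B)\circ \Delta'(F\otimes G)$, and concludes $\Delta'=\Delta$ by the uniqueness in Theorem~\ref{univ1}. Your write-up simply makes the combinatorial verifications (multiplicativity, the root-in/root-out case split for cuts of $B(G\otimes H)$) explicit, whereas the paper states them without details.
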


\begin{proof}
Let $ \Delta' $ be the unique algebra morphism from $ \mathcal{BT} $ to $ \mathcal{BT} \otimes \mathcal{BT} $ defined by the formula of proposition \ref{coupe}. It is easy to show that, if $ F, G \in \mathbb{F} $ :
\begin{eqnarray*}
\left\lbrace \begin{array}{rcl}
\Delta'(1) & = & 1 \otimes 1, \\
\Delta'(B(F \otimes G)) & = & B(F \otimes G) \otimes 1 + (m \otimes B) \circ \Delta' (F \otimes G) .
\end{array} \right. 
\end{eqnarray*}
By unicity in theorem \ref{defideltahopf}, $ \Delta' = \Delta $.
\end{proof}

\begin{theo} \label{propuniv}
Let $ A $ be a Hopf algebra and $ L : A \otimes A \rightarrow A $ a linear morphism such that for all $ a,b \in A $,
$$ \Delta \circ L(a \otimes b) = L(a \otimes b) \otimes 1 + (m \otimes L) \circ \Delta(a \otimes b) .$$
Therefore the unique algebra morphism $ \phi : \mathcal{BT} \rightarrow A $ such that $ \phi \circ B = L \circ (\phi \otimes \phi) $ (theorem \ref{univ1}) is a Hopf algebra morphism.
\end{theo}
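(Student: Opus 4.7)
Since $\phi$ is already an algebra morphism by Theorem~\ref{univ1}, it suffices to verify that it is also a coalgebra morphism, i.e. that $\Delta_A \circ \phi = (\phi \otimes \phi) \circ \Delta$ and $\varepsilon_A \circ \phi = \varepsilon$; compatibility with antipodes will then be automatic, as any bialgebra morphism between Hopf algebras commutes with the antipode. The two maps $\Delta_A \circ \phi$ and $(\phi \otimes \phi) \circ \Delta$ are both algebra morphisms from $\mathcal{BT}$ to $A \otimes A$ (the latter because $\Delta$ is an algebra morphism by Theorem~\ref{defideltahopf} and $\phi$ is one by hypothesis), and $\mathcal{BT}$ is freely generated as an algebra by the set of trees $\mathbb{T}$. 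It is therefore enough to prove the equality on an arbitrary tree $T \in \mathbb{T}$, which I would do by induction on the degree of $T$.

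\textbf{The inductive step on trees.} Write $T = B(G \otimes H)$ with $G, H \in \mathbb{F}$ of smaller degree (possible by construction of $\mathbb{T}$). Using the compatibility assumed on $L$,
\[ \Delta_A(\phi(T)) \;=\; L(\phi(G) \otimes \phi(H)) \otimes 1 \;+\; (m_A \otimes L) \circ \Delta_A(\phi(G) \otimes \phi(H)), \]
where $\Delta_A(x \otimes y) = (Id \otimes \tau \otimes Id)(\Delta_A(x) \otimes \Delta_A(y))$. On the other hand, the remark following Theorem~\ref{defideltahopf}, combined with the relations $\phi \circ B = L \circ (\phi \otimes \phi)$ and $\phi \circ m = m_A \circ (\phi \otimes \phi)$, gives
\[ (\phi \otimes \phi)(\Delta(T)) \;=\; L(\phi(G) \otimes \phi(H)) \otimes 1 \;+\; (m_A \otimes L) \circ \phi^{\otimes 4} \circ \Delta(G \otimes H). \]
Applying the induction hypothesis to $G$ and $H$ and the naturality of the flip (so that $\tau$ commutes with $\phi \otimes \phi$), I get $\Delta_A(\phi(G) \otimes \phi(H)) = \phi^{\otimes 4} \circ \Delta(G \otimes H)$, and the two displays coincide.

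\textbf{Counit, antipode, and the main difficulty.} For the counit, apply $Id \otimes \varepsilon_A$ to the hypothesis on $L$: using $(Id \otimes \varepsilon_A) \circ \Delta_A = Id$ and then applying $\varepsilon_A$ again together with $\varepsilon_A \circ m_A = \varepsilon_A \otimes \varepsilon_A$, one deduces $\varepsilon_A \circ L = 0$. Hence $\varepsilon_A(\phi(T)) = \varepsilon_A(L(\phi(G) \otimes \phi(H))) = 0$ on every tree $T$ of degree $\geq 1$, and the identity $\varepsilon_A \circ \phi = \varepsilon$ extends to all of $\mathcal{BT}$ by multiplicativity, the case $F = 1$ being immediate from $\phi(1) = 1_A$. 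Antipode compatibility then follows automatically. The main delicate point of the whole argument is the bookkeeping around the ``doubled'' coproduct $\Delta(x \otimes y) = (Id \otimes \tau \otimes Id)(\Delta(x) \otimes \Delta(y))$: once the naturality of $\tau$ with respect to $\phi \otimes \phi$ is correctly tracked, everything reduces to the universal property of $\mathcal{BT}$ as an algebra freely generated by $\mathbb{T}$.
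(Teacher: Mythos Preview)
Your proof is correct and follows essentially the same approach as the paper: induction on degree, using $\phi \circ B = L \circ (\phi \otimes \phi)$ for trees, multiplicativity for forests, and the counit axiom applied to the hypothesis on $L$ to deduce $\varepsilon_A \circ L = 0$. The only organizational difference is that you first reduce to trees by observing that both $\Delta_A \circ \phi$ and $(\phi \otimes \phi)\circ\Delta$ are algebra morphisms, whereas the paper carries the induction over all forests directly (and uses $\varepsilon\otimes Id$ rather than $Id\otimes\varepsilon_A$ followed by $\varepsilon_A$ for the counit); the content is identical.
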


\begin{proof}
Let us prove that $ \Delta \circ \phi(F) = (\phi \otimes \phi) \circ \Delta(F) $ for any forest $ F \in \mathcal{BT} $ by induction on the degree of $ F $. If $ F = 1 $, this is obvious. If $ F $ is a tree, $ F = B(G \otimes H) $ with $ G,H \in \mathbb{F} $. Then
\begin{eqnarray*}
\Delta \circ \phi(F) & = & \Delta \circ \phi \circ B(G \otimes H) \\
& = & \Delta \circ L \circ (\phi \otimes \phi) (G \otimes H) \\
& = & L( \phi(G) \otimes \phi(H)) \otimes 1 + \sum_{\phi(G) , \phi(H)} (\phi(G))^{(1)} (\phi(H))^{(1)} \otimes L( (\phi(G))^{(2)} \otimes (\phi(H))^{(2)} ) \\
& = & \phi \circ B (G \otimes H) \otimes \phi(1) + \sum_{G,H} \phi(G^{(1)} H^{(1)}) \otimes \phi \circ B(G^{(2)} \otimes H^{(2)}) \\
& = & (\phi \otimes \phi) \left( B (G \otimes H) \otimes 1 + \sum_{G,H} G^{(1)} H^{(1)} \otimes B(G^{(2)} \otimes H^{(2)}) \right)  \\
& = & (\phi \otimes \phi) \circ \Delta(F),
\end{eqnarray*}
using the induction hypothesis for the fourth equality. If $ F $ is not a tree, $ F = F_{1} \hdots F_{n} $ and then
\begin{eqnarray*}
\Delta \circ \phi(F) & = & \Delta \circ \phi(F_{1}) \hdots \Delta \circ \phi(F_{n}) \\
& = & (\phi \otimes \phi) \circ \Delta(F_{1}) \hdots (\phi \otimes \phi) \circ \Delta(F_{n}) \\
& = & (\phi \otimes \phi) \circ \Delta(F),
\end{eqnarray*}
using the induction hypothesis for the second equality and $ \Delta \circ \phi $ and $ (\phi \otimes \phi) \circ \Delta $ are algebra morphisms for the first and third equality.\\

Let us prove that $ \varepsilon \circ \phi = \varepsilon $. As $ \varepsilon \circ \phi $ and $ \varepsilon $ are algebra morphisms, it suffices to show that $ \varepsilon \circ \phi(F) = \varepsilon(F) $ for any tree $ F \in \mathbb{T} $. If $ F = 1 $, this is obvious. If not, $ F = B(G \otimes H) $ with $ G,H \in \mathbb{F} $. $ \varepsilon (F) = 0 $ and
\begin{eqnarray*}
\varepsilon \circ \phi(F) & = & \varepsilon \circ \phi \circ B(G \otimes H) \\
& = & \varepsilon \circ L \circ (\phi \otimes \phi) (G \otimes H).
\end{eqnarray*}
Let us prove that $ \varepsilon \circ L = 0 $: for all $ a,b \in A $,
\begin{eqnarray*}
\Delta \circ L(a \otimes b) & = & L(a \otimes b) \otimes 1 + \sum_{a,b} a^{(1)} b^{(1)} \otimes L( a^{(2)} \otimes b^{(2)})\\
(\varepsilon \otimes Id) \circ \Delta \circ L(a \otimes b) & = & \varepsilon \circ L(a \otimes b) 1 + \sum_{a,b} \varepsilon(a^{(1)} b^{(1)}) L( a^{(2)} \otimes b^{(2)})\\
& = & \varepsilon \circ L(a \otimes b) + L(a \otimes b) \\
& = & L(a \otimes b)
\end{eqnarray*}
Therefore $ \varepsilon \circ L = 0 $ and $ \varepsilon \circ \phi = \varepsilon $.
\end{proof}

\subsection{A pairing on $ \mathcal{BT} $}

Let $ \gamma : \mathbb{K} [ \mathbb{T} ] \rightarrow \mathbb{K} [\mathbb{F}] \otimes \mathbb{K} [\mathbb{F}] $ be the $ \mathbb{K} $-linear map homogeneous of degree $ -1 $ such that for all $ F ,G \in \mathbb{F} $, $ \gamma (B(F \otimes G)) = (-1)^{\left| G \right|} F \otimes G $. From $ \gamma $, we define the $ \mathbb{K} $-linear map $ \Gamma $ by:
\begin{eqnarray*}
\Gamma : \left\lbrace \begin{array}{rcl}
\mathcal{BT} & \rightarrow & \mathcal{BT} \otimes \mathcal{BT} ,\\
T_{1} \hdots T_{n} & \rightarrow & \Delta (T_{1}) \hdots \Delta (T_{n-1}) \gamma (T_{n}) .
\end{array} \right. 
\end{eqnarray*}

\noindent The following result is immediate :

\begin{lemma} \label{fctgamma}
\begin{enumerate}
\item $ \Gamma $ is homogeneous of degree $ -1 $.
\item For all $ x,y \in \mathcal{BT} $, $ \Gamma (xy) = \Delta (x) \Gamma (y) + \varepsilon (y) \Gamma (x) $.
\end{enumerate}
\end{lemma}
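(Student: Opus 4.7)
The plan is to reduce both statements to calculations on forest-monomials, using that $\Delta$ is a graded algebra morphism (Theorem~\ref{defideltahopf}) and that $\gamma$ is built to be homogeneous of degree $-1$.

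For part (1), I would proceed by linearity and consider a forest $F = T_1 \hdots T_n$ of degree $d = |T_1| + \hdots + |T_n|$. Since $\Delta$ is homogeneous of degree $0$, the tensor $\Delta(T_1) \hdots \Delta(T_{n-1})$ lies in degree $|T_1| + \hdots + |T_{n-1}|$ of $\mathcal{BT} \otimes \mathcal{BT}$. Since $\gamma$ is defined on trees $B(F' \otimes G')$ of degree $|F'| + |G'| + 1$ and sends them to $F' \otimes G'$ of degree $|F'| + |G'|$, it is homogeneous of degree $-1$. Multiplying in $\mathcal{BT} \otimes \mathcal{BT}$ adds degrees, so $\Gamma(F)$ lands in degree $d - 1$, as required. (One adopts the convention $\Gamma(1) = 0$, consistent with degree lowering.)

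For part (2), I would again work with forests $x = T_1 \hdots T_n$ and $y = S_1 \hdots S_m$ by linearity, and split into cases according to whether $y = 1$ or $y \neq 1$. If $y = 1$, both sides collapse to $\Gamma(x)$ (using $\Gamma(1) = 0$ and $\varepsilon(1) = 1$). If $y \neq 1$, then $m \geq 1$, so $\varepsilon(y) = 0$ (as $\varepsilon$ vanishes on every tree), and the identity reduces to
\begin{eqnarray*}
\Gamma(xy) \;=\; \Delta(x)\,\Gamma(y).
\end{eqnarray*}
Expanding the left-hand side from the definition of $\Gamma$ gives $\Delta(T_1) \hdots \Delta(T_n) \Delta(S_1) \hdots \Delta(S_{m-1}) \gamma(S_m)$, while the right-hand side equals $\Delta(T_1 \hdots T_n) \cdot \Delta(S_1) \hdots \Delta(S_{m-1}) \gamma(S_m)$. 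These coincide exactly because $\Delta$ is an algebra morphism, so $\Delta(T_1 \hdots T_n) = \Delta(T_1) \hdots \Delta(T_n)$. The subcase $x = 1$ is handled immediately by $\Delta(1) = 1 \otimes 1$.

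There is no genuine obstacle here; the only subtlety is properly tracking the boundary case $y = 1$ (where the $\varepsilon(y) \Gamma(x)$ term compensates the fact that the factorization of the product ends with $\gamma$ rather than $\Delta$), and this is resolved by the convention $\Gamma(1) = 0$ which is forced by part (1).
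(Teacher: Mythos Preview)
Your proof is correct and is exactly the direct unpacking that the paper has in mind; the paper itself declares the lemma ``immediate'' and gives no argument, so there is nothing further to compare.
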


\noindent Let $ \Gamma^{\ast} : \mathcal{BT}^{\circledast} \otimes \mathcal{BT}^{\circledast} \rightarrow \mathcal{BT}^{\circledast} $ be the transpose of $ \Gamma $.

\begin{lemma} \label{interm}
\begin{enumerate}
\item $ \Gamma^{\ast} $ is homogeneous of degree $ 1 $.
\item For all $ f,g \in \mathcal{BT}^{\circledast} $,
$$ \Delta ( \Gamma^{\ast}(f \otimes g) ) = \Gamma^{\ast}(f \otimes g) \otimes 1 + (m \otimes \Gamma^{\ast}) \circ \Delta (f \otimes g) .$$
\end{enumerate}
\end{lemma}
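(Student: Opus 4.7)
The plan is to obtain both items by dualizing what has already been established about $\Gamma$ itself in Lemma \ref{fctgamma}.

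For item (1), since $\mathcal{BT}$ is a graded Hopf algebra and the graded dual $\mathcal{BT}^{\circledast}$ is defined degree by degree, any $\mathbb{K}$-linear map $\Gamma : \mathcal{BT} \to \mathcal{BT} \otimes \mathcal{BT}$ that is homogeneous of degree $-1$ has a transpose which sends $(\mathcal{BT} \otimes \mathcal{BT})^{\circledast}_m$ into $\mathcal{BT}^{\circledast}_{m+1}$. Identifying $(\mathcal{BT} \otimes \mathcal{BT})^{\circledast}$ with $\mathcal{BT}^{\circledast} \otimes \mathcal{BT}^{\circledast}$ via the graded pairing, item (1) of Lemma \ref{fctgamma} yields item (1) here immediately.

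For item (2), the strategy is to test the claimed equality on an arbitrary pair $x \otimes y \in \mathcal{BT} \otimes \mathcal{BT}$ by taking the pairing of each side with $x \otimes y$. Using the definition of the transpose and the fact that the coproduct on $\mathcal{BT}^{\circledast}$ is the transpose of the product on $\mathcal{BT}$, the left-hand side becomes
$$\langle \Delta(\Gamma^{\ast}(f \otimes g)), x \otimes y \rangle = \langle \Gamma^{\ast}(f \otimes g), xy \rangle = \langle f \otimes g, \Gamma(xy) \rangle.$$
Now I invoke item (2) of Lemma \ref{fctgamma} to rewrite $\Gamma(xy) = \Delta(x)\Gamma(y) + \varepsilon(y)\Gamma(x)$, which splits the pairing into two terms.

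The bulk of the calculation is to identify the two terms with the dual-side expressions. Writing $\Delta(f) = \sum f^{(1)} \otimes f^{(2)}$ and $\Delta(g) = \sum g^{(1)} \otimes g^{(2)}$ in Sweedler form, and using the compatibility $\langle f, ab \rangle = \langle \Delta(f), a \otimes b \rangle$ on both tensor slots, one sees that $\langle f \otimes g, \Delta(x)\Gamma(y) \rangle = \sum (f^{(1)}g^{(1)})(x) \cdot \Gamma^{\ast}(f^{(2)} \otimes g^{(2)})(y)$, which is exactly $\langle (m \otimes \Gamma^{\ast}) \circ \Delta(f \otimes g),\, x \otimes y \rangle$ with the convention $\Delta(f \otimes g) = (Id \otimes \tau \otimes Id) \circ (\Delta \otimes \Delta)(f \otimes g)$ inherited from the product on $\mathcal{BT}^{\circledast}$. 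On the other hand, since the unit of $\mathcal{BT}^{\circledast}$ is the counit $\varepsilon$ of $\mathcal{BT}$, the second term $\varepsilon(y) \langle f \otimes g, \Gamma(x) \rangle = \Gamma^{\ast}(f \otimes g)(x) \cdot \varepsilon(y)$ is exactly $\langle \Gamma^{\ast}(f \otimes g) \otimes 1,\, x \otimes y \rangle$.

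Because the pairings coincide for every $x \otimes y$ and $\mathcal{BT}$ is graded of finite type so the pairing between $\mathcal{BT}^{\circledast}\otimes \mathcal{BT}^{\circledast}$ and $\mathcal{BT}\otimes \mathcal{BT}$ is nondegenerate in each degree, the desired equality of elements of $\mathcal{BT}^{\circledast} \otimes \mathcal{BT}^{\circledast}$ follows. The only real work is the Sweedler bookkeeping in the middle paragraph; no hard obstacle is expected, the statement being a purely formal dualization of Lemma \ref{fctgamma}(2).
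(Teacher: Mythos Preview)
Your proof is correct and follows essentially the same approach as the paper: both dualize Lemma~\ref{fctgamma} by evaluating each side on an arbitrary $x\otimes y$, invoke $\Gamma(xy)=\Delta(x)\Gamma(y)+\varepsilon(y)\Gamma(x)$, and identify the resulting terms. Your version simply spells out the Sweedler bookkeeping and the nondegeneracy argument that the paper leaves implicit.
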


\begin{proof}
\begin{enumerate}
\item Because $ \Gamma $ is homogeneous of degree $ -1 $.
\item Let $ f,g \in \mathcal{BT}^{\circledast} $. For all $ x,y \in \mathcal{BT} $:
\begin{eqnarray*}
\Delta (\Gamma^{\ast} (f \otimes g) ) (x \otimes y) & = & (f \otimes g) ( \Gamma (xy)) \\
& = & (f \otimes g) ( \Delta (x) \Gamma (y) + \varepsilon (y) \Gamma (x) ) \\
& = & \varepsilon (y) (f \otimes g) (\Gamma(x)) + \Delta (f \otimes g) (\Delta (x) \otimes \Gamma (y)) \\
& = & \left( \Gamma^{\ast}(f \otimes g) \otimes 1 + (m \otimes \Gamma^{\ast}) \circ \Delta (f \otimes g) \right) (x \otimes y) .
\end{eqnarray*}
\end{enumerate}
\end{proof}

\begin{prop}
The unique algebra morphism $ \Phi : \mathcal{BT} \rightarrow \mathcal{BT}^{\circledast} $ such that $ \Phi \circ B = \Gamma^{\ast} \circ (\Phi \otimes \Phi) $ is a graded Hopf algebra morphism.
\end{prop}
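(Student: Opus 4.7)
The strategy is to invoke Theorem~\ref{propuniv} directly: it suffices to verify that the linear map $\Gamma^{\ast}: \mathcal{BT}^{\circledast} \otimes \mathcal{BT}^{\circledast} \to \mathcal{BT}^{\circledast}$ satisfies the compatibility relation
$$\Delta \circ \Gamma^{\ast}(f \otimes g) = \Gamma^{\ast}(f \otimes g) \otimes 1 + (m \otimes \Gamma^{\ast}) \circ \Delta(f \otimes g)$$
for all $f, g \in \mathcal{BT}^{\circledast}$. But this is precisely the content of Lemma~\ref{interm}(2), which has already been established. Hence, by Theorem~\ref{propuniv} applied to the graded Hopf algebra $A = \mathcal{BT}^{\circledast}$ and the linear map $L = \Gamma^{\ast}$, the unique algebra morphism $\Phi: \mathcal{BT} \to \mathcal{BT}^{\circledast}$ verifying $\Phi \circ B = \Gamma^{\ast} \circ (\Phi \otimes \Phi)$ is automatically a Hopf algebra morphism.

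It remains only to check gradedness, i.e., that $\Phi$ is homogeneous of degree $0$. I would prove this by induction on the degree of a forest $F \in \mathbb{F}$, using the inductive description of $\Phi$ from the proof of Theorem~\ref{univ1}. The base case $F = 1$ is trivial since $\Phi(1) = 1$. If $F = F_1 \cdots F_n$ is not a tree, then $\Phi(F) = \Phi(F_1) \cdots \Phi(F_n)$ has degree equal to the sum of the degrees of the $\Phi(F_i)$'s, which by induction equals $|F_1| + \cdots + |F_n| = |F|$. If $F = B(G \otimes H)$ is a tree, then $\Phi(F) = \Gamma^{\ast}(\Phi(G) \otimes \Phi(H))$, and since $\Gamma^{\ast}$ is homogeneous of degree $1$ by Lemma~\ref{interm}(1) and the images $\Phi(G), \Phi(H)$ are homogeneous of degrees $|G|$ and $|H|$ by the induction hypothesis, $\Phi(F)$ is homogeneous of degree $|G| + |H| + 1 = |F|$.

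There is no genuine obstacle here: all the work has been front-loaded into Lemma~\ref{interm} and Theorem~\ref{propuniv}. The only point demanding a little care is to check that the hypothesis of Theorem~\ref{propuniv} is stated in exactly the form produced by Lemma~\ref{interm}(2), which it is (with the flip implicit in the notation $\Delta(f \otimes g) = (\mathrm{Id} \otimes \tau \otimes \mathrm{Id}) \circ (\Delta \otimes \Delta)(f \otimes g)$).
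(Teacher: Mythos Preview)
Your proof is correct and follows essentially the same approach as the paper: invoke Theorem~\ref{propuniv} with $A=\mathcal{BT}^{\circledast}$ and $L=\Gamma^{\ast}$, using Lemma~\ref{interm}(2) to verify its hypothesis, and then prove homogeneity of degree~$0$ by induction on the degree of a forest using Lemma~\ref{interm}(1). The paper's argument is organized identically.
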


\begin{proof}
By the universal property (theorem \ref{propuniv}) and the second point of Lemma \ref{interm}, $ \Phi $ is a Hopf algebra morphism. Let us prove that it is homogeneous of degree $ 0 $. Let $ F $ be a forest in $ \mathcal{BT} $ with $ n $ vertices. $ \Phi (F) $ is homogeneous of degree $ n $. If $ n = 0 $, then $ F = 1 $ and $ \Phi (F) = 1 $ is homogeneous of degree $ 0 $. Assume the result is true for all forests with $ k < n $ vertices. If $ F $ is a tree, $ F = B(G \otimes H) $. Then $ \Phi (F) = \Gamma^{\ast} \circ (\Phi(G) \otimes \Phi(H)) $. By the induction hypothesis, $ \Phi(G) \otimes \Phi(H) $ is homogeneous of degree $ n-1 $. As $ \Gamma^{\ast} $ is homogeneous of degree $ 1 $, $ \Phi (F) $ is homogeneous of degree $ n-1+1 = n $. If $ F = T_{1} \hdots T_{k} $, $ k \geq 2 $. Then $ \Phi (F) = \Phi (T_{1}) \hdots \Phi (T_{k}) $ is homogeneous of degree $ \left| T_{1} \right| + \hdots + \left| T_{k} \right| = n $ by the induction hypothesis.
\end{proof}
\\

{\bf Notations.} {For all $ x \in \mathcal{BT} $, we note $ \Delta (x) = \displaystyle\sum_{x} x^{(1)} \otimes x^{(2)} $ and $ \Gamma (x) = \displaystyle\sum_{x} x_{(1)} \otimes x_{(2)} $.}

\begin{theo}
For all $ x,y \in \mathcal{BT} $, we set $ \left\langle  x,y \right\rangle  = \Phi (x)(y) $. Then:
\begin{enumerate}
\item $ \left\langle  1,x \right\rangle = \varepsilon (x) $ for all $ x \in \mathcal{BT} $.
\item $ \left\langle  xy,z \right\rangle = \displaystyle\sum_{z} \left\langle  x,z^{(1)} \right\rangle \left\langle  y,z^{(2)} \right\rangle $ for all $ x,y,z \in \mathcal{BT} $.
\item $ \left\langle  B(x \otimes y) , z \right\rangle = \displaystyle\sum_{z} \left\langle  x,z_{(1)} \right\rangle \left\langle  y,z_{(2)} \right\rangle $ for all $ x,y,z \in \mathcal{BT} $.
\end{enumerate}
Moreover:
\begin{enumerate} \setcounter{enumi}{3}
\item $ \left\langle  -,- \right\rangle $ is symmetric.
\item If $ x $ and $ y $ are homogeneous of different degrees, $ \left\langle  x,y \right\rangle = 0 $.
\item $ \left\langle  S(x),y \right\rangle = \left\langle  x,S(y) \right\rangle $ for all $ x,y \in \mathcal{BT} $.
\end{enumerate}
\end{theo}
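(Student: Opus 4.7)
The plan is to derive items (1)--(3), (5), (6) directly from the construction of $\Phi$, and to tackle the symmetry (4) by induction on total degree, leveraging the fact that $\Phi$ is not merely an algebra morphism but a full Hopf algebra morphism. For (1), $\Phi$ being an algebra morphism forces $\Phi(1)$ to be the unit of $\mathcal{BT}^{\circledast}$, which is $\varepsilon$. For (2), the multiplication on the graded dual is transpose to $\Delta$, so $\Phi(xy)(z) = (\Phi(x)\Phi(y))(z) = \sum_z \langle x, z^{(1)}\rangle \langle y, z^{(2)}\rangle$. For (3), unfolding $\Phi \circ B = \Gamma^{\ast} \circ (\Phi \otimes \Phi)$ and recalling that $\Gamma^{\ast}$ is the transpose of $\Gamma$ gives $\langle B(x\otimes y), z\rangle = (\Phi(x)\otimes \Phi(y))(\Gamma(z)) = \sum_z \langle x, z_{(1)}\rangle \langle y, z_{(2)}\rangle$. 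For (5), $\Phi$ is homogeneous of degree $0$, so $\Phi(x) \in \mathcal{BT}_{|x|}^{\ast}$ vanishes on $\mathcal{BT}_n$ for $n \neq |x|$. For (6), $\Phi$ is a Hopf morphism and the antipode of $\mathcal{BT}^{\circledast}$ is transpose to $S$, hence $\langle S(x), y\rangle = S^{\circledast}(\Phi(x))(y) = \Phi(x)(S(y)) = \langle x, S(y)\rangle$.

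The nontrivial item is (4). The key extra ingredient is the observation that, because $\Phi$ is also a \emph{coalgebra} morphism, applying $\Delta_{\mathcal{BT}^{\circledast}} \circ \Phi = (\Phi \otimes \Phi) \circ \Delta$ to $z$ and evaluating on $x \otimes y$, using that the coproduct of $\mathcal{BT}^{\circledast}$ is dual to the multiplication of $\mathcal{BT}$, yields the companion identity
\[
\langle z, xy\rangle = \sum_z \langle z^{(1)}, x\rangle \langle z^{(2)}, y\rangle.
\]
I would then prove $\langle x,y\rangle = \langle y,x\rangle$ by induction on $n = |x|+|y|$. By linearity and (5) it suffices to consider $x, y \in \mathbb{F}$ of equal degree. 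The base $n = 0$ is $\langle 1,1\rangle = 1$; the boundary case where one argument equals $1$ follows from (1) together with the counit compatibility $\varepsilon \circ \Phi = \varepsilon$, itself a consequence of $\Phi$ being a Hopf morphism.

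For the inductive step there are three subcases. If $x = x_1 x_2$ with $|x_1|, |x_2| \geq 1$, then (2) and the companion identity give
\[
\langle x_1 x_2, y\rangle = \sum_y \langle x_1, y^{(1)}\rangle \langle x_2, y^{(2)}\rangle, \qquad \langle y, x_1 x_2\rangle = \sum_y \langle y^{(1)}, x_1\rangle \langle y^{(2)}, x_2\rangle,
\]
and termwise induction applies since each $|x_i| + |y^{(j)}|$ is strictly less than $n$ (because $|x_i| < |x|$). The case where $y$ is a nontrivial concatenation is symmetric. The remaining case is when both $x = B(G_1 \otimes H_1)$ and $y = B(G_2 \otimes H_2)$ are trees; using (3) together with $\Gamma(y) = \gamma(y) = (-1)^{|H_2|} G_2 \otimes H_2$ one obtains
\[
\langle x, y\rangle = (-1)^{|H_2|} \langle G_1, G_2\rangle \langle H_1, H_2\rangle, \qquad \langle y, x\rangle = (-1)^{|H_1|} \langle G_2, G_1\rangle \langle H_2, H_1\rangle,
\]
and induction identifies the factors pairwise; by (5) any nonzero contribution forces $|G_1| = |G_2|$ and $|H_1| = |H_2|$, so the two signs agree and the symmetry follows. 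The main obstacle is precisely the tree-versus-tree subcase: without the companion identity one has no handle on $\Gamma(y)$ when $y$ is not a tree, and the case split above is exactly what is needed to isolate it.
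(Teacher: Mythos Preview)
Your proof is correct. Items (1)--(3), (5), (6) match the paper's arguments essentially verbatim, and you likewise derive the companion identity $\langle z, xy\rangle = \sum_z \langle z^{(1)}, x\rangle \langle z^{(2)}, y\rangle$ from the coalgebra-morphism property of $\Phi$, just as the paper does.

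For the symmetry (4) your argument diverges from the paper's, and in fact is shorter. The paper inducts on $|x|$ alone and therefore, in the case where $x$ is a tree, must pair $\langle B(x'\otimes x''), y\rangle$ with $\langle y, B(x'\otimes x'')\rangle$ for an \emph{arbitrary} forest $y$. This forces it to first establish, by a separate induction using the identity $\Gamma(x'x'') = \Delta(x')\Gamma(x'') + \varepsilon(x'')\Gamma(x')$, the auxiliary formula $\langle x, B(y\otimes z)\rangle = \sum_x \langle x_{(1)}, y\rangle\langle x_{(2)}, z\rangle$. You avoid this lemma entirely by inducting on $|x|+|y|$ and doing a two-sided case split: once the concatenation cases are dispatched (via (2) and the companion identity), the residual case has \emph{both} $x$ and $y$ trees, so $\Gamma(x)=\gamma(x)$ and $\Gamma(y)=\gamma(y)$ are explicit single tensors and (3) applies symmetrically. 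The sign discrepancy is then killed either by (5) or by $|H_1|=|H_2|$, exactly as you say. The trade-off: the paper's auxiliary formula is a statement of independent interest (it is the $B$/$\Gamma$ analogue of the companion identity), whereas your route gets to the symmetry more directly without it.
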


{\bf Remark.} {Assertions 1-3 entirely determine $ \left\langle  F,G \right\rangle $ for $ F , G \in \mathbb{F} $ by induction on the degree. Therefore $ \left\langle  -,- \right\rangle $ is the unique pairing satisfying the assertions 1-3.}
\\

\begin{proof}
First, as $ \Phi $ is a Hopf algebra morphism, we have for all $ x,y,z \in \mathcal{BT} $,
\begin{eqnarray*}
\left\langle  1,x \right\rangle = \Phi(1)(x) = \varepsilon(x) = \varepsilon \circ \Phi (x) = \Phi(x)(1) = \left\langle  x,1 \right\rangle ,
\end{eqnarray*}
\begin{eqnarray*}
\sum_{z} \left\langle  x,z^{(1)} \right\rangle \left\langle  y,z^{(2)} \right\rangle & = & \sum_{z} \Phi(x)(z^{(1)}) \Phi(y)(z^{(2)}) \\
& = & (\Phi(x) \Phi(y)) (z) \\
& = & \Phi(xy)(z) \\
& = & \left\langle  xy,z \right\rangle ,\\
\sum_{x} \left\langle  x^{(1)},y \right\rangle \left\langle  x^{(2)},z \right\rangle & = & \sum_{x} \Phi(x^{(1)})(y) \Phi(x^{(2)})(z) \\
& = & \sum_{x} \Phi(x)^{(1)}(y) \Phi(x)^{(2)}(z) \\
& = & \Phi(x)(yz)\\
& = & \left\langle  x,yz \right\rangle
\end{eqnarray*}
and
\begin{eqnarray*}
\left\langle  S(x),y \right\rangle = \Phi(S(x))(y) = S^{\ast}(\Phi(x))(y) = \Phi(x)(S(y)) = \left\langle  x,S(y) \right\rangle .
\end{eqnarray*}
We obtain points 1, 2 and 6.

As $ \Phi $ is homogeneous of degree $ 0 $, this implies point 5.

Let $ x,y,z \in \mathcal{BT} $. Then:
\begin{eqnarray*}
\left\langle  B(x \otimes y) , z \right\rangle & = & \Phi \circ B(x \otimes y) (z) \\
& = & \Gamma^{\ast} \circ (\Phi(x) \otimes \Phi(y)) (z) \\
& = & (\Phi(x) \otimes \Phi(y)) (\Gamma(z))\\
& = & \displaystyle\sum_{z} \left\langle  x,z_{(1)} \right\rangle \left\langle  y,z_{(2)} \right\rangle .
\end{eqnarray*}
We obtain point 3.

It remains to prove point 4, that is to say that $ \left\langle  -,- \right\rangle $ is symmetric. First we show that for all $ x,y,z \in \mathcal{BT} $,
$$ \sum_{x} \left\langle  x_{(1)},y \right\rangle \left\langle  x_{(2)},z \right\rangle = \left\langle  x,B(y \otimes z) \right\rangle .$$
We can suppose by bilinearity that $ x,y $ and $ z $ are three forests. By induction on the degree $ n $ of $ x $. If $ n = 0 $, then $ x = 1 $ and
\begin{eqnarray*} \begin{array}{c}
\displaystyle\sum_{x} \left\langle  x_{(1)},y \right\rangle \left\langle  x_{(2)},z \right\rangle = \left\langle  0,y \right\rangle \left\langle 0,z \right\rangle = 0, \\
\left\langle  x,B(y \otimes z) \right\rangle = \varepsilon (B(y \otimes z)) = 0.
\end{array} \end{eqnarray*}
If $ n = 1 $, then $ x = \tun $ and $ \Gamma( \tun ) = \gamma(\tun) = 1 \otimes 1 $. Then
\begin{eqnarray*}
\displaystyle\sum_{x} \left\langle  x_{(1)},y \right\rangle \left\langle  x_{(2)},z \right\rangle = \left\langle  1,y \right\rangle \left\langle  1,z \right\rangle = \delta_{y,1} \delta_{z,1}
\end{eqnarray*}
\begin{eqnarray*}
\left\langle  x,B(y \otimes z) \right\rangle & = & \left\langle  B(1 \otimes 1),B(y \otimes z) \right\rangle \\
& = & \displaystyle\sum_{B(y \otimes z)} \left\langle  1,B(y \otimes z)_{(1)} \right\rangle \left\langle  1,B(y \otimes z)_{(2)} \right\rangle \\
& = & \delta_{B(y \otimes z), \tun} = \delta_{y,1} \delta_{z,1} .
\end{eqnarray*}
Suppose the result true for every forest $ x $ of degree $ < n $. Two cases are possible:
\begin{enumerate}
\item $ x $ is a tree, $ x = B(x' \otimes x'') $. First, $ \Gamma (x) = (-1)^{\left| x'' \right|} x' \otimes x'' $ so
\begin{eqnarray*}
\displaystyle\sum_{x} \left\langle  x_{(1)},y \right\rangle \left\langle  x_{(2)},z \right\rangle & = & (-1)^{\left| x'' \right|} \left\langle  x',y \right\rangle \left\langle  x'',z \right\rangle \\
& = & \left\lbrace 
\begin{array}{l}
0 \mbox{ if $ deg(x'') \neq deg(z)  $,} \\
(-1)^{d} \left\langle  x',y \right\rangle \left\langle  x'',z \right\rangle \mbox{ if $ d = deg(x'') =deg(z)  $.}
\end{array}
\right. 
\end{eqnarray*}
Moreover,
\begin{eqnarray*}
\left\langle  x,B(y \otimes z) \right\rangle & = & \left\langle  B(x' \otimes x''),B(y \otimes z) \right\rangle \\
& = & \displaystyle\sum_{B(y \otimes z)} \left\langle  x',B(y \otimes z)_{(1)} \right\rangle \left\langle  x'',B(y \otimes z)_{(2)} \right\rangle \\
& = & (-1)^{\left| z \right|} \left\langle  x',y \right\rangle \left\langle  x'',z \right\rangle \\
& = & \left\lbrace 
\begin{array}{l}
0 \mbox{ if $ deg(x'') \neq deg(z)  $,} \\
(-1)^{d} \left\langle  x',y \right\rangle \left\langle  x'',z \right\rangle \mbox{ if $ d = deg(x'') =deg(z)  $.}
\end{array}
\right. 
\end{eqnarray*}
\item $ x $ is a forest with at least two trees. $ x $ can be written $ x = x' x'' $, with the induction hypothesis avalaible for $ x' $ and $ x'' $. Then
\begin{eqnarray*}
\left\langle  x,B(y \otimes z) \right\rangle & = & \left\langle  x'x'',B(y \otimes z) \right\rangle \\
& = & \displaystyle\sum_{B(y \otimes z)} \left\langle  x',B(y \otimes z)^{(1)} \right\rangle \left\langle  x'',B(y \otimes z)^{(2)} \right\rangle \\
& = & \left\langle  x',B(y \otimes z) \right\rangle \left\langle  x'',1 \right\rangle + \displaystyle\sum_{y,z} \left\langle  x',y^{(1)} z^{(1)} \right\rangle \left\langle  x'',B(y^{(2)} \otimes z^{(2)}) \right\rangle \\
& = & \left\langle  x',B(y \otimes z) \right\rangle \varepsilon(x'') \\
& & + \displaystyle\sum_{y,z} \left( \sum_{x'} \left\langle  x'^{(1)},y^{(1)} \right\rangle \left\langle  x'^{(2)},z^{(1)} \right\rangle \right) \left( \sum_{x''} \left\langle  x''_{(1)},y^{(2)} \right\rangle \left\langle  x''_{(2)},z^{(2)} \right\rangle \right) \\
& = & \left\langle  x',B(y \otimes z) \right\rangle \varepsilon(x'') \\
& & + \displaystyle\sum_{x',x''} \left( \sum_{y} \left\langle  x'^{(1)},y^{(1)} \right\rangle \left\langle  x''_{(1)},y^{(2)} \right\rangle \right) \left( \sum_{z} \left\langle  x'^{(2)},z^{(1)} \right\rangle \left\langle  x''_{(2)},z^{(2)} \right\rangle \right) \\
& = & \displaystyle\sum_{x'} \left\langle  x'_{(1)},y \right\rangle \left\langle  x'_{(2)},z \right\rangle \varepsilon(x'') + \displaystyle\sum_{x',x''} \left\langle  x'^{(1)} x''_{(1)} ,y \right\rangle \left\langle  x'^{(2)} x''_{(2)},z \right\rangle \\
& = & \displaystyle\sum_{x} \left\langle x_{(1)} , y \right\rangle \left\langle x_{(2)} , z \right\rangle ,
\end{eqnarray*}
by point 2 of Lemma \ref{fctgamma} for the seventh equality.
\end{enumerate}
So for all $ x,y,z \in \mathcal{BT} $,
$$ \sum_{x} \left\langle  x_{(1)},y \right\rangle \left\langle  x_{(2)},z \right\rangle = \left\langle  x,B(y \otimes z) \right\rangle .$$

Let us prove that $ \left\langle  -,- \right\rangle $ is symmetric. By induction on the degree $ n $ of $ x $. If $ n = 0 $, then $ x = 1 $ and
$$ \left\langle  x,y \right\rangle = \left\langle  1,y \right\rangle = \varepsilon(y) = \left\langle  y,1 \right\rangle = \left\langle  y,x \right\rangle .$$
If $ deg(x) \geq 1 $, two cases are possible:
\begin{enumerate}
\item $ x $ is a tree, $ x = B(x' \otimes x'') $. Then:
\begin{eqnarray*}
\left\langle  x,y \right\rangle & = & \left\langle  B(x' \otimes x'') ,y \right\rangle \\
& = & \displaystyle\sum_{y} \left\langle  x',y_{(1)} \right\rangle \left\langle  x'',y_{(2)} \right\rangle \\
& = & \displaystyle\sum_{y} \left\langle  y_{(1)}, x' \right\rangle \left\langle  y_{(2)} , x'' \right\rangle \\
& = & \left\langle  y,B(x' \otimes x'') \right\rangle \\
& = & \left\langle  y,x \right\rangle .
\end{eqnarray*}
\item $ x $ is a forest with at least two trees. $ x $ can be written $ x = x' x'' $ with $ deg(x') , deg(x'') < deg(x) $. Then:
\begin{eqnarray*}
\left\langle  x,y \right\rangle & = & \left\langle  x'x'',y \right\rangle \\
& = & \displaystyle\sum_{y} \left\langle  x',y^{(1)} \right\rangle \left\langle  x'',y^{(2)} \right\rangle \\
& = & \displaystyle\sum_{y} \left\langle  y^{(1)} , x' \right\rangle \left\langle  y^{(2)} , x'' \right\rangle \\
& = & \left\langle  y,x'x'' \right\rangle \\
& = & \left\langle  y,x \right\rangle .
\end{eqnarray*}
\end{enumerate}
\end{proof}
\\

{\bf Examples.} {Values of the pairing $ \left\langle  -,- \right\rangle $ for forests of degree $ \leq 3 $:
$$
\begin{array}{c|c}
& \tun \\
\hline
\tun & 1
\end{array}
\hspace{1cm}
\begin{array}{c|ccc}
& \tun \tun & \addeux{$ l $} & \addeux{$ r $} \\
\hline
\tun \tun & 2 & 1 & 1 \\
\addeux{$ l $} & 1 & 1 & 0 \\
\addeux{$ r $} & 1 & 0 & -1
\end{array}
$$
$$
\begin{array}{c|cccccccccccc}
& \tun \tun \tun & \tun \addeux{$l $} & \addeux{$ l $} \tun & \tun \addeux{$ r $} & \addeux{$ r $} \tun & \adtroisun{$l$}{$l$} & \adtroisun{$r$}{$l$} & \adtroisun{$r$}{$r$} & \adtroisdeux{$l$}{$l$} & \adtroisdeux{$l$}{$r$} & \adtroisdeux{$r$}{$l$} & \adtroisdeux{$r$}{$r$} \\
\hline
\tun \tun \tun & 6 & 3  & 3&3&3&1&2&1&1&1&1&1 \\
\tun \addeux{$l $} & 3&2&2&1&1&1&1&0&1&1&0&0 \\
\addeux{$ l $} \tun & 3&2&2&1&1&0&1&1&1&0&1&0 \\
\tun \addeux{$ r $} & 3&1&1&0&0&0&-1&-1&0&0&-1&-1 \\
\addeux{$ r $} \tun & 3&1&1&0&0&1&1&1&0&-1&0&-1 \\
\adtroisun{$l$}{$l$} & 1&1&0&0&1&1&0&0&1&1&0&0 \\
\adtroisun{$r$}{$l$} & 2&1&1&-1&1&0&-1&0&0&0&0&0 \\
\adtroisun{$r$}{$r$} & 1&0&1&-1&1&0&0&2&0&0&1&1 \\
\adtroisdeux{$l$}{$l$} & 1&1&1&0&0&1&0&0&1&0&0&0 \\
\adtroisdeux{$l$}{$r$} & 1&1&0&0&-1&1&0&0&0&-1&0&0 \\
\adtroisdeux{$r$}{$l$} & 1&0&1&-1&0&0&0&1&0&0&1&0 \\
\adtroisdeux{$r$}{$r$} & 1&0&0&-1&-1&0&0&1&0&0&0&-1
\end{array}
$$}

{\bf Question.} {It is not difficult to see that $ \left\langle  -,- \right\rangle $ is nondegenerate in degree $ \leq 3 $. We conjoncture that it is nondegenerate for all degrees.}

\section{Bigraft algebra}

\subsection{Presentation}

\begin{defi} \label{defiBGalgebra}
A bigraft algebra (or $ \mathcal{BG} $-algebra for short) is a $ \mathbb{K} $-vector space $ A $ together with three $ \mathbb{K} $-linear maps $ \ast , \succ, \prec : A \otimes A \rightarrow A $ satisfying the following relations : for all $ x,y,z \in A $,
\begin{eqnarray} \label{definitionalgbig}
\begin{array}{rcl}
(x \ast y) \ast z & = & x \ast (y \ast z),\\
(x \ast y) \succ z & = & x \succ (y \succ z),\\
(x \succ y) \ast z & = & x \succ (y \ast z),\\
(x \prec y) \prec z & = & x \prec (y \ast z),\\
(x \ast y) \prec z & = & x \ast (y \prec z),\\
(x \succ y) \prec z & = & x \succ (y \prec z).
\end{array}
\end{eqnarray}
\end{defi}

In other words, a $ \mathcal{BG} $-algebra $ (A,\ast , \succ, \prec) $ is a $ \mathcal{LG} $-algebra $ (A,\ast ,\succ) $ and a $ \mathcal{RG} $-algebra $ (A,\ast , \prec) $ verifying the so-called entanglement relation $ (x \succ y) \prec z = x \succ (y \prec z) $ for all $ x,y,z \in A $, that is to say $ (A,\succ , \prec ) $ is a $ \mathcal{L} $-algebra (see definition \ref{Lalgebra}). We do not suppose that $ \mathcal{BG} $-algebras have a unit for the product $ \ast $. If $ A $ and $ B $ are two $ \mathcal{BG} $-algebras, a $ \mathcal{BG} $-morphism from $ A $ to $ B $ is a $ \mathbb{K} $-linear map $ f : A \rightarrow B $ such that $ f(x \ast y ) = f(x) \ast f(y) $, $ f(x \succ y) = f(x) \succ f(y) $ and $ f(x \prec y) = f(x) \prec f(y) $ for all $ x,y \in A $. We denote by $ \mathcal{BG} $-alg the category of $ \mathcal{BG} $-algebras.\\

{\bf Remark.} {Let $ (A,\ast , \succ , \prec ) $ be a $ \mathcal{BG} $-algebra. Then $ (A,\ast^{\dagger} , \prec^{\dagger} , \succ^{\dagger}) $ is a $ \mathcal{BG} $-algebra, where $ x \ast^{\dagger} y = y \ast x $, $ x \succ^{\dagger} y = y \prec x $ and $ x \prec^{\dagger} y = y \succ x $ for all $ x,y \in A $.} \\

From definition \ref{defiBGalgebra}, it is clear that the operad $ \mathcal{BG} $ is binary, quadratic, regular and set-theoretic. Specifically, one has

\begin{defi} \label{defioperade}
$ \mathcal{BG} $ is the operad $ \mathcal{P}(E,R) $, where $ E $ is concentrated in degree 2, with $ E(2) = \mathbb{K} m \oplus \mathbb{K} \succ \oplus \mathbb{K} \prec $, and $ R $ is concentrated in degree 3, with $ R(3) \subseteq \mathcal{P}_{E}(3) $ being the subspace generated by
$$ \left\{ \begin{array}{l}
\succ \circ (m, I)-\succ \circ (I,\succ),\\
m \circ (\succ,I)-\succ \circ (I,m),\\
\prec \circ (\prec,I)-\prec \circ (I,m),\\
\prec \circ (m,I)-m \circ (I,\prec),\\
\prec \circ (\succ,I)-\succ \circ (I,\prec),\\
m \circ (m, I)-m \circ (I,m).
\end{array}\right.$$
\end{defi}

{\bf Remark.} {Graphically, the relations defining $ \mathcal{BG} $ can be written in the following way:
$$\bddtroisun{$\succ $}{$m$}{$1$}{$2$}{$3$}-\bddtroisdeux{$\succ $}{$\succ $}{$1$}{$2$}{$3$},\hspace{1cm}
 \bddtroisun{$m$}{$\succ$}{$1$}{$2$}{$3$}-\bddtroisdeux{$\succ$}{$m$}{$1$}{$2$}{$3$},\hspace{1cm}
\bddtroisun{$\prec $}{$\prec$}{$1$}{$2$}{$3$}-\bddtroisdeux{$\prec $}{$m $}{$1$}{$2$}{$3$},$$

$$ \bddtroisun{$\prec$}{$m$}{$1$}{$2$}{$3$}-\bddtroisdeux{$m$}{$\prec$}{$1$}{$2$}{$3$},\hspace{1cm}
\bddtroisun{$\prec $}{$\succ$}{$1$}{$2$}{$3$} - \bddtroisdeux{$\succ $}{$\prec$}{$1$}{$2$}{$3$},\hspace{1cm}
 \bddtroisun{$m$}{$m $}{$1$}{$2$}{$3$}-\bddtroisdeux{$m $}{$m$}{$1$}{$2$}{$3$}.$$
}

{\bf Notation.} {We denote $ \widetilde{\mathcal{BG}} $ the nonsymmetric operad associated with the regular operad $ \mathcal{BG} $.}

\subsection{Free $ \mathcal{BG} $-algebra and dimension of $ \mathcal{BG} $} \label{defisuccprec}

Let $ F, G \in \mathbb{F} \setminus \{1\} $. We put $ F = F_{1} \hdots F_{n} $, $ G = G_{1} \hdots G_{m} $, $ F_{1} = B(F_{1}^{1} \otimes F_{1}^{2}) $ and $ G_{m} = B(G_{m}^{1} \otimes G_{m}^{2}) $. We define :
\begin{eqnarray*}
G \succ F & = & B( G F_{1}^{1} \otimes F_{1}^{2}) F_{2} \hdots F_{n} ,\\
G \prec F & = & G_{1} \hdots G_{m-1} B(G_{m}^{1} \otimes G_{m}^{2} F).
\end{eqnarray*}
In other terms, in the first case, $ G $ is grafted on the root of the first tree of $ F $ on the left with an edge decorated by $ l $. In the second case, $ F $ is grafted on the root of the last tree of $ G $ on the right with an edge decorated by $ r $. In particular, $ F \succ \tun = B(F \otimes 1) $ and $ \tun \prec F = B(1 \otimes F) $.\\

{\bf Examples.} {
$$\begin{array}{|rclcl|rclcl|rclcl|}
\hline \tun \tun &\succ& \tun \adtroisun{$r$}{$l$} &=& \adtroisun{$l$}{$l$} \adtroisun{$r$}{$l$} & \addeux{$r$} &\succ& \tun &=& \adtroisdeux{$l$}{$r$} &
\addeux{$r$} &\succ& \addeux{$r$} &=& \adquatredeux{$r$}{$l$}{$r$} \\
\tun &\succ& \adtroisdeux{$l$}{$r$} \addeux{$r$} &=& \adquatretrois{$l$}{$l$}{$r$} \addeux{$r$} & \tun \tun &\succ& \addeux{$r$} &=& \adquatreun{$l$}{$l$}{$r$} &
\adtroisun{$r$}{$l$} &\succ& \tun \tun &=& \adquatrequatre{$l$}{$l$}{$r$} \tun \\
\tun \tun &\prec& \tun \tun &=& \tun \adtroisun{$r$}{$r$} & \addeux{$r$} \addeux{$l$} &\prec& \addeux{$r$} &=& \addeux{$r$} \adquatretrois{$r$}{$l$}{$r$} &
\tun \tun &\prec& \adtroisdeux{$l$}{$r$} &=& \tun \adquatrecinq{$r$}{$l$}{$r$} \\
\adtroisun{$r$}{$l$} &\prec& \tun &=& \adquatreun{$l$}{$r$}{$r$} & \addeux{$r$} \adtroisdeux{$r$}{$l$} &\prec& \tun &=& \addeux{$r$} \adquatredeux{$r$}{$r$}{$l$} &
\tun &\prec& \adtroisun{$r$}{$l$} &=& \adquatrequatre{$r$}{$l$}{$r$} \\
\hline \end{array}$$
}

We denote by $ \mathcal{M} $ the augmentation ideal of $ \mathcal{BT} $, that is to say the vector space generated by the nonempty forests of $ \mathcal{BT} $. We extend $ \succ , \prec : \mathcal{M} \times \mathcal{M} \rightarrow \mathcal{M} $ by linearity.\\

{\bf Remark.} {$ \succ $ and $ \prec $ are not associative:
\begin{eqnarray*}
\tun \succ (\tun \succ \tun) = \adtroisun{$l$}{$l$} & \neq & (\tun \succ \tun) \succ \tun = \adtroisdeux{$l$}{$l$},\\
\tun \prec (\tun \prec \tun) = \adtroisdeux{$r$}{$r$} & \neq & (\tun \prec \tun) \prec \tun = \adtroisun{$r$}{$r$}.
\end{eqnarray*}}

\begin{prop} \label{deggerprecsucc}
For all forests $ F, G \in \mathcal{M} $, $ (G \succ F)^{\dagger} = F^{\dagger} \prec G^{\dagger} $ and $ (G \prec F)^{\dagger} = F^{\dagger} \succ G^{\dagger} $.
\end{prop}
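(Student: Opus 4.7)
The plan is to prove both identities by a direct, unpacking computation using the inductive description of $\dagger$ given in Proposition \ref{involution} together with the explicit definitions of $\succ$ and $\prec$ from Section \ref{defisuccprec}. No induction is required: the identities follow at a single unwinding of the definitions.

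First I would fix the notation exactly as in the definitions of the grafts. Write $F = F_1 \hdots F_n$ and $G = G_1 \hdots G_m$ as products of trees, with $F_1 = B(F_1^1 \otimes F_1^2)$ and $G_m = B(G_m^1 \otimes G_m^2)$. Then, using the identities $(XY)^{\dagger} = Y^{\dagger} X^{\dagger}$ and $(B(X\otimes Y))^{\dagger} = B(Y^{\dagger} \otimes X^{\dagger})$ from Proposition \ref{involution}, compute
\begin{eqnarray*}
(G \succ F)^{\dagger} & = & \bigl( B(G F_1^1 \otimes F_1^2)\, F_2 \hdots F_n \bigr)^{\dagger} \\
& = & F_n^{\dagger} \hdots F_2^{\dagger}\, B\bigl( (F_1^2)^{\dagger} \otimes (F_1^1)^{\dagger} G^{\dagger} \bigr).
\end{eqnarray*}
On the other side, $F^{\dagger} = F_n^{\dagger} \hdots F_2^{\dagger}\, F_1^{\dagger}$ with last tree $F_1^{\dagger} = B((F_1^2)^{\dagger} \otimes (F_1^1)^{\dagger})$, so the definition of $\prec$ gives exactly
\begin{eqnarray*}
F^{\dagger} \prec G^{\dagger} & = & F_n^{\dagger} \hdots F_2^{\dagger}\, B\bigl( (F_1^2)^{\dagger} \otimes (F_1^1)^{\dagger} G^{\dagger} \bigr),
\end{eqnarray*}
which matches.

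The second identity $(G \prec F)^{\dagger} = F^{\dagger} \succ G^{\dagger}$ is handled by the mirror computation: unpack $G \prec F = G_1 \hdots G_{m-1} B(G_m^1 \otimes G_m^2 F)$, apply $\dagger$ to reverse and nest, and compare with $F^{\dagger} \succ G^{\dagger}$ where the first tree of $G^{\dagger} = G_m^{\dagger} \hdots G_1^{\dagger}$ is $G_m^{\dagger} = B((G_m^2)^{\dagger} \otimes (G_m^1)^{\dagger})$; both yield $B(F^{\dagger} (G_m^2)^{\dagger} \otimes (G_m^1)^{\dagger})\, G_{m-1}^{\dagger} \hdots G_1^{\dagger}$. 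Alternatively, one can derive the second identity from the first by noting that applying $\dagger$ to $(G \succ F)^{\dagger} = F^{\dagger} \prec G^{\dagger}$ and then substituting $F \leftarrow F^{\dagger}$, $G \leftarrow G^{\dagger}$ and using $\dagger \circ \dagger = \mathrm{Id}$ gives the second formula, so only one explicit computation is really needed.

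The only potential pitfall is bookkeeping: the indices of the trees in $F$ and $G$ get reversed by $\dagger$, and one must check that the ``first tree of $F$'' for $\succ$ corresponds, after applying $\dagger$, to the ``last tree of $F^{\dagger}$'' for $\prec$, together with the swap of the two sides of the $B$-decomposition of that tree. Once the notation is set up carefully this is straightforward, so there is no real obstacle.
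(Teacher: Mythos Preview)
Your proof is correct and matches the paper's approach essentially line for line: the paper carries out the same direct unwinding of $(G \succ F)^{\dagger}$ using the defining properties of $\dagger$, and then obtains the second identity from the first via the involution property $\dagger \circ \dagger = \mathrm{Id}$ exactly as you suggest in your alternative.
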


\begin{proof}
Let $ F, G \in \mathcal{M} $ be two forests. We put $ F = F_{1} \hdots F_{n} $, $ G = G_{1} \hdots G_{m} $, $ F_{1} = B(F_{1}^{1} \otimes F_{1}^{2}) $ and $ G_{m} = B(G_{m}^{1} \otimes G_{m}^{2}) $. Then
\begin{eqnarray*}
(G \succ F)^{\dagger} & = & (B( G F_{1}^{1} \otimes F_{1}^{2}) F_{2} \hdots F_{n})^{\dagger} \\
& = & F_{n}^{\dagger} \hdots F_{2}^{\dagger} B((F_{1}^{2})^{\dagger} \otimes (G F_{1}^{1})^{\dagger}) \\
& = & F_{n}^{\dagger} \hdots F_{2}^{\dagger} B((F_{1}^{2})^{\dagger} \otimes (F_{1}^{1})^{\dagger} G^{\dagger}) \\
& = & \left( F_{n}^{\dagger} \hdots F_{2}^{\dagger} B((F_{1}^{2})^{\dagger} \otimes (F_{1}^{1})^{\dagger})\right)  \prec G^{\dagger} \\
& = & F^{\dagger} \prec G^{\dagger} .
\end{eqnarray*}
Therefore $ (G \succ F)^{\dagger} = F^{\dagger} \prec G^{\dagger} $ or all forests $ F, G \in \mathcal{M} $. Moreover, as $ \dagger $ is an involution over $ \mathcal{M} $ (proposition \ref{involution}), $ (G \prec F)^{\dagger} = ((G^{\dagger})^{\dagger} \prec (F^{\dagger})^{\dagger})^{\dagger} = ((F^{\dagger} \succ G^{\dagger})^{\dagger})^{\dagger} = F^{\dagger} \succ G^{\dagger} $.
\end{proof}

\begin{prop}
For all $ x,y,z \in \mathcal{M} $:
\begin{eqnarray}
(x y) \succ z & = & x \succ (y \succ z), \label{rel1} \\
(x \succ y) z & = & x \succ (y z), \label{rel2} \\
(x \prec y) \prec z & = & x \prec (y z), \label{rel3} \\
(x y) \prec z & = & x (y \prec z), \label{rel4} \\
(x \succ y) \prec z & = & x \succ (y \prec z). \label{rel5}
\end{eqnarray}
\end{prop}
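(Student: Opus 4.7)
The plan is to verify each of the five identities by direct computation, reducing them to the definitions of $\succ$ and $\prec$ given just above. Since $\mathcal{M}$ is linearly spanned by nonempty forests, it suffices to check each identity when $x, y, z$ are forests $F, G, H$, writing
\[
F = F_1 \ldots F_p, \qquad G = G_1 \ldots G_q, \qquad H = H_1 \ldots H_r,
\]
with $F_1 = B(F_1^1 \otimes F_1^2)$, $G_1 = B(G_1^1 \otimes G_1^2)$, $H_1 = B(H_1^1 \otimes H_1^2)$, $G_q = B(G_q^1 \otimes G_q^2)$ etc. Each identity then becomes a statement about forests, obtained by expanding both sides using the rules
\[
G \succ F = B(G F_1^1 \otimes F_1^2) F_2 \ldots F_p, \qquad G \prec F = G_1 \ldots G_{q-1} B(G_q^1 \otimes G_q^2 F).
\]

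For (\ref{rel1}) and (\ref{rel2}), one observes that $x \succ (\,\cdot\,)$ only modifies the \emph{first} tree of its right argument by prepending to the left subforest of its root; so both sides of (\ref{rel1}) equal $B(xy F_1^1 \otimes F_1^2) F_2 \ldots F_p$ where $z = F_1 \ldots F_p$, and both sides of (\ref{rel2}) are obtained by the same left-grafting on the first tree of the concatenation. The identities (\ref{rel3}) and (\ref{rel4}) follow by the symmetric computation, or, more elegantly, by applying the involution $\dagger$ of Proposition \ref{involution} and using Proposition \ref{deggerprecsucc}, which exchanges $\succ$ and $\prec$ and reverses concatenation; this immediately deduces (\ref{rel3}) from (\ref{rel1}) and (\ref{rel4}) from (\ref{rel2}).

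The main subtlety is (\ref{rel5}), the entanglement relation, because $\succ$ touches the first tree of its right argument while $\prec$ touches the last tree of its left argument. When $y$ is a forest with at least two trees, its first and last trees are distinct and the two operations affect disjoint parts, so both sides reduce to the same expression without incident. The only delicate case is when $y$ consists of a single tree $y = B(y^1 \otimes y^2)$; then one has
\[
(x \succ y) \prec z = B(x y^1 \otimes y^2) \prec z = B(x y^1 \otimes y^2 z) = x \succ B(y^1 \otimes y^2 z) = x \succ (y \prec z),
\]
so the identity still holds. I expect this single-tree case to be the only one requiring genuine verification; the multi-tree cases for all five identities amount to bookkeeping about which tree of a concatenated forest is touched by each operation.
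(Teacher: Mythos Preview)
Your proposal is correct and follows essentially the same route as the paper: direct verification on forests, with the entanglement relation (\ref{rel5}) split into the single-tree case (handled exactly as you do) and the multi-tree case (where the two graftings act on disjoint trees, which the paper phrases via (\ref{rel2}) and (\ref{rel4})). Your suggestion to deduce (\ref{rel3}) and (\ref{rel4}) from (\ref{rel1}) and (\ref{rel2}) via the involution $\dagger$ and Proposition~\ref{deggerprecsucc} is a valid shortcut; the paper instead just writes out the symmetric direct computation, but either way is fine.
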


\begin{proof}
We can restrict ourselves to $ x, y, z \in \mathbb{F} \setminus \{1 \} $. (\ref{rel2}) and (\ref{rel4}) are immediate. We put $ x = x_{1} B(x_{2} \otimes x_{3}) $ and $ z = B(z_{1} \otimes z_{2}) z_{3} $ with $ x_{i} , z_{i} \in \mathbb{F} $. Then
\begin{eqnarray*} \begin{array}{c}
x \succ (y \succ z) = x \succ (B(y z_{1} \otimes z_{2}) z_{3}) = B(x y z_{1} \otimes z_{2}) z_{3} = (xy) \succ (B(z_{1} \otimes z_{2}) z_{3}) = (xy) \succ z ,\\
(x \prec y) \prec z = (x_{1} B(x_{2} \otimes x_{3} y) ) \prec z = x_{1} B(x_{2} \otimes x_{3} y z) = (x_{1} B(x_{2} \otimes x_{3})) \prec (yz) = x \prec (yz).
\end{array}
\end{eqnarray*}
So we have proved (\ref{rel1}) and (\ref{rel3}). In order to prove (\ref{rel5}), we must study two cases :
\begin{enumerate}
\item $ y = B(y_{1} \otimes y_{2}) $ is a tree,
\begin{eqnarray*}
(x \succ y) \prec z = B(x y_{1} \otimes y_{2}) \prec z = B(x y_{1} \otimes y_{2} z) = x \succ B(y_{1} \otimes y_{2} z) = x \succ (y \prec z).
\end{eqnarray*}
\item $ y = y_{1} y_{2} $ with $ y_{1} , y_{2} \in \mathbb{F} \setminus \{1\} $,
\begin{eqnarray*}
(x \succ y) \prec z = ( (x \succ y_{1}) y_{2}) \prec z = (x \succ y_{1}) (y_{2} \prec z) = x \succ (y_{1} (y_{2} \prec z)) = x \succ (y \prec z) ,
\end{eqnarray*}
by (\ref{rel2}) and (\ref{rel4}).
\end{enumerate}
\end{proof}
\\

As an immediate consequence, we obtain

\begin{cor} \label{mestbg}
$ \mathcal{M} $ is given a graded $ \mathcal{BG} $-algebra structure by its products $ m $ (the concatenation), $ \succ $ and $ \prec $.
\end{cor}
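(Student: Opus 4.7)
The plan is to read off the corollary essentially directly from the preceding proposition, with only two small extras to verify. Recall that the definition of a $\mathcal{BG}$-algebra (Definition \ref{defiBGalgebra}) consists of six axioms: associativity of $\ast$, the two $\mathcal{LG}$-axioms linking $\ast$ and $\succ$, the two $\mathcal{RG}$-axioms linking $\ast$ and $\prec$, and the entanglement relation $(x\succ y)\prec z = x\succ(y\prec z)$. Taking $\ast = m$ (concatenation) on $\mathcal{M}$, I plan to match each axiom against the identities just established.

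First I would note that associativity of $m$ on $\mathcal{M}$ is automatic since $\mathcal{M}$ is the augmentation ideal of the algebra $\mathcal{BT}$ and concatenation restricts to an associative product on $\mathcal{M}$. For the remaining five axioms, the translation is immediate from the previous proposition: axiom $(x\ast y)\succ z = x\succ(y\succ z)$ is exactly (\ref{rel1}); $(x\succ y)\ast z = x\succ(y\ast z)$ is (\ref{rel2}); $(x\prec y)\prec z = x\prec(y\ast z)$ is (\ref{rel3}); $(x\ast y)\prec z = x\ast(y\prec z)$ is (\ref{rel4}); and finally the entanglement relation $(x\succ y)\prec z = x\succ(y\prec z)$ is (\ref{rel5}). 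So $(\mathcal{M},m,\succ,\prec)$ is a $\mathcal{BG}$-algebra.

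It remains only to check gradedness. The concatenation $m$ is homogeneous of degree $0$ by construction (degree is the total number of vertices, which adds under concatenation). For $\succ$ and $\prec$, the definition in Section \ref{defisuccprec} grafts the whole of one forest onto a single root of the other by adding new edges but no new vertices, so $|G\succ F| = |G\prec F| = |G|+|F|$. Hence all three operations preserve the grading of $\mathcal{M}$ by the number of vertices, making $\mathcal{M}$ a graded $\mathcal{BG}$-algebra.

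There is no real obstacle here: the substantive content lies in the previous proposition, and the corollary is a routine repackaging plus a trivial remark on degrees. The only thing worth being careful about is the order of arguments — the paper's convention puts the left operand as the forest being grafted and the right operand as the one being grafted onto (for $\succ$) or conversely (for $\prec$) — so that when matching against the axioms of Definition \ref{defiBGalgebra} one simply reads each line of (\ref{rel1})--(\ref{rel5}) as the required identity with $\ast$ replaced by $m$.
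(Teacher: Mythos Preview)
Your proof is correct and matches the paper's approach exactly: the paper simply states the corollary as ``an immediate consequence'' of the preceding proposition, and your argument spells out precisely that immediate consequence (matching each $\mathcal{BG}$-axiom to the corresponding relation (\ref{rel1})--(\ref{rel5}), noting associativity of concatenation, and checking gradedness). There is nothing to add.
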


We now prove that $ \mathcal{M} $ is the free $ \mathcal{BG} $-algebra generated by $ \tun $ and deduce the dimension of $ \mathcal{BG}(n) $ for all $ n $.

\begin{theo} \label{mestbglibre}
$ (\mathcal{M},m,\succ,\prec) $ is the free $ \mathcal{BG} $-algebra generated by $ \tun $.
\end{theo}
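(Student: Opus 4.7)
\textbf{Strategy.} By Corollary~\ref{mestbg}, $(\mathcal{M},m,\succ,\prec)$ is a $\mathcal{BG}$-algebra, so it suffices to establish the universal property: given any $\mathcal{BG}$-algebra $A$ and any $a\in A$, there is a unique $\mathcal{BG}$-morphism $\phi:\mathcal{M}\rightarrow A$ with $\phi(\tun)=a$. The key structural observation is that every tree $T=B(F\otimes G)\in\mathbb{T}$ with $(F,G)\in\mathbb{F}\times\mathbb{F}$ decomposes, in $\mathcal{M}$, as
$$T=F\succ\tun\quad\text{if }G=1,\quad T=\tun\prec G\quad\text{if }F=1,\quad T=(F\succ\tun)\prec G=F\succ(\tun\prec G)\quad\text{otherwise,}$$
where the last equality follows from the entanglement relation \emph{inside} $\mathcal{M}$. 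Every forest is then a product (concatenation) of such trees.

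\textbf{Existence.} I would define $\phi$ inductively on the degree of a forest $F\in\mathbb{F}\setminus\{1\}$. Set $\phi(\tun)=a$. For a tree $T=B(F\otimes G)\neq\tun$, put
\begin{eqnarray*}
\phi(T) & = & \left\{\begin{array}{ll}\phi(F)\succ a & \text{if }G=1,\ F\neq 1,\\ a\prec\phi(G) & \text{if }F=1,\ G\neq 1,\\ \phi(F)\succ a\prec\phi(G) & \text{if }F,G\neq 1,\end{array}\right.
\end{eqnarray*}
where in the third case the parenthesization is immaterial by the entanglement relation $(x\succ y)\prec z = x\succ(y\prec z)$ in $A$. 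For a forest $F=F_{1}\hdots F_{n}$ with $n\geq 2$ and $F_{i}\in\mathbb{T}$, set $\phi(F)=\phi(F_{1})\ast\hdots\ast\phi(F_{n})$, which is unambiguous because $\ast$ is associative in $A$.

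\textbf{Checking $\phi$ is a $\mathcal{BG}$-morphism.} Compatibility with concatenation $m$ is immediate from the definition. For compatibility with $\succ$ (resp.\ $\prec$), proceed by induction on $\left|F\right|+\left|G\right|$. Given $F,G\in\mathbb{F}\setminus\{1\}$ with $G=G_{1}\hdots G_{m}$ and $G_{1}=B(G_{1}^{1}\otimes G_{1}^{2})$, write $F\succ G=B(FG_{1}^{1}\otimes G_{1}^{2})G_{2}\hdots G_{m}$ and apply the definition of $\phi$ on the resulting tree; the various subcases (according to whether $G_{1}^{1},G_{1}^{2}$ are empty) reduce to one of the relations in (\ref{definitionalgbig}), namely $(x\ast y)\succ z=x\succ(y\succ z)$, $(x\succ y)\ast z=x\succ(y\ast z)$, or $(x\succ y)\prec z=x\succ(y\prec z)$, together with the induction hypothesis. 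The $\prec$ case is symmetric and can either be argued the same way, or deduced formally from the $\succ$ case by applying the involution $\dagger$ of Proposition~\ref{involution} combined with Proposition~\ref{deggerprecsucc} and the fact that passing to the opposite $\mathcal{BG}$-algebra swaps $\succ$ and $\prec$.

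\textbf{Uniqueness and main obstacle.} If $\psi:\mathcal{M}\rightarrow A$ is any $\mathcal{BG}$-morphism with $\psi(\tun)=a$, then a straightforward induction on degree shows that $\psi$ must obey the same recursive formulas as $\phi$: on a tree $B(F\otimes G)$, use $B(F\otimes G)=F\succ\tun\prec G$, and on a product forest use that $\psi$ respects $\ast$. Hence $\psi=\phi$. The main obstacle is the case analysis in the existence step: one must verify that every way of producing a tree $B(F\otimes G)$ from smaller pieces through $\succ,\prec,\ast$ yields the same value under $\phi$, and this is precisely where the six relations of (\ref{definitionalgbig})---in particular the entanglement relation---are needed to keep $\phi$ well-defined and multiplicative. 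Everything else is bookkeeping.
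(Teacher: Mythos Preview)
Your proposal is correct and follows essentially the same route as the paper: define $\phi$ recursively via $\phi(B(F\otimes G))=\phi(F)\succ a\prec\phi(G)$ and $\phi(F_1\cdots F_n)=\phi(F_1)\ast\cdots\ast\phi(F_n)$, then verify the three morphism identities using the six relations of (\ref{definitionalgbig}), and conclude uniqueness by the same recursion. The only cosmetic difference is that the paper writes the single formula $\phi(B(F^1\otimes F^2))=(\phi(F^1)\succ a)\prec\phi(F^2)$ using the convention $1\succ x=x=x\prec 1$, whereas you split this into three cases according to which of $F,G$ is empty; and your ``main obstacle'' about well-definedness is a non-issue, since the decomposition $T=B(F\otimes G)$ is unique, so the recursion is well-defined on the nose---the relations are only needed to check that $\phi$ respects $\succ$ and $\prec$, not to make $\phi$ well-defined.
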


\begin{proof}
Let $A$ be a $ \mathcal{BG} $-algebra and let $a \in A$. Let us prove that there exists a unique morphism of $ \mathcal{BG} $-algebras $ \phi :\mathcal{M} \rightarrow A$, such that $ \phi(\tun)=a$. We define $ \phi(F)$ for any nonempty forest $ F \in \mathcal{M} $ inductively on the degree of $ F $ by:
$$\left\{\begin{array}{rcl}
\phi(\tun)&=&a,\\
\phi(F_1 \hdots F_k)&=&\phi(F_1) \hdots \phi(F_k) \mbox{ if } k\geq 2,\\
\phi(F)&=&\left( \phi(F^{1}) \succ a \right) \prec \phi(F^{2}) \mbox{ if } F = B(F^{1} \otimes F^{2}) \mbox{ with } F^{1}, F^{2} \in \mathbb{F}. 
\end{array}\right.$$
As the product of $A$ is associative, this is perfectly defined. This map is linearly extended into a map $\phi: \mathcal{M} \rightarrow A$. Let us show that it is a morphism of $\mathcal{BG}$-algebras. By the second point, $\phi(xy)=\phi(x)\phi(y)$ for any $x,y \in \mathcal{M}$. Let $F,G$ be two nonempty trees. Let us prove that $\phi(F \succ G)=\phi(F) \succ \phi(G)$ and $ \phi (F \prec G) = \phi(F) \prec \phi(G) $. Denote $ F = B(F^{1} \otimes F^{2}) $, $ G = B(G^{1} \otimes G^{2}) $ with $ F^{1}, F^{2} $ and $ G^{1} , G^{2} $ in $ \mathbb{F} $. Then:
\begin{enumerate}
\item For $\phi(F \succ G)=\phi(F) \succ \phi(G)$,
\begin{eqnarray*}
\phi(F \succ G) & = & \phi(B(FG^{1} \otimes G^{2})) \\
& = & \left( \phi(FG^{1}) \succ a \right) \prec \phi(G^{2})\\
& = & \left( \phi(F) \succ \left( \phi(G^{1}) \succ a \right) \right) \prec \phi(G^{2}) \\
& = & \phi(F) \succ \left( \left( \phi(G^{1}) \succ a \right) \prec \phi(G^{2}) \right) \\
& = & \phi(F) \succ \phi(G).
\end{eqnarray*}
\item For $ \phi (F \prec G) = \phi(F) \prec \phi(G) $,
\begin{eqnarray*}
\phi(F \prec G) & = & \phi(B(F^{1} \otimes F^{2} G)) \\
& = & \left( \phi(F^{1}) \succ a \right) \prec \phi(F^{2}G) \\
& = & \phi(F^{1}) \succ \left(  a \prec \phi(F^{2}G) \right) \\
& = & \phi(F^{1}) \succ \left( \left( a \prec \phi(F^{2}) \right) \prec \phi(G) \right) \\
& = & \left( \left( \phi(F^{1}) \succ a \right)  \prec \phi(F^{2}) \right) \prec \phi(G) \\
& = & \phi(F) \prec \phi(G).
\end{eqnarray*}
\end{enumerate}
If $ F,G $ are two nonempty forests, $ F = F_{1} \hdots F_{n} $, $ G = G_{1} \hdots G_{m} $. Then:
\begin{eqnarray*}
\phi(F \succ G) & = & \phi( (F_{1} \succ ( \hdots F_{n-1} \succ (F_{n} \succ G_{1}) \hdots )) G_{2} \hdots G_{m} ) \\
& = & \phi( F_{1} \succ ( \hdots F_{n-1} \succ (F_{n} \succ G_{1}) \hdots )) \phi(G_{2} \hdots G_{m} ) \\
& = & (\phi(F_{1}) \succ ( \hdots \phi(F_{n-1}) \succ (\phi(F_{n}) \succ \phi(G_{1})) \hdots )) \phi(G_{2} \hdots G_{m} ) \\
& = & (\phi(F_{1} \hdots F_{n}) \succ \phi(G_{1})) \phi(G_{2} \hdots G_{m} ) \\
& = & \phi(F) \succ \phi(G),
\end{eqnarray*}
and
\begin{eqnarray*}
\phi (F \prec G) & = & \phi(F_{1} \hdots F_{n-1} ( \hdots (F_{n} \prec G_{1}) \prec G_{2} \hdots ) \prec G_{m}) \\
& = & \phi(F_{1} \hdots F_{n-1}) \phi( (\hdots (F_{n} \prec G_{1}) \prec G_{2} \hdots ) \prec G_{m}) \\
& = & \phi(F_{1} \hdots F_{n-1}) ((\hdots (\phi(F_{n}) \prec \phi(G_{1})) \prec \phi(G_{2}) \hdots ) \prec \phi(G_{m})) \\
& = & \phi(F_{1} \hdots F_{n-1}) ( \phi(F_{n}) \prec \phi(G_{1} \hdots G_{m})) \\
& = & \phi(F) \prec \phi(G).
\end{eqnarray*}
So $\phi$ is a morphism of $\mathcal{BG}$-algebras. \\

Let $\phi':\mathcal{M} \rightarrow A$ be another morphism of $\mathcal{BG}$-algebras such that $\phi'(\tun) = a $. Then for any planar trees $F_1,\hdots,F_k$, $\phi'(F_1 \hdots F_k)=\phi'(F_1)\hdots \phi'(F_k)$. For any forests $F^{1} ,F^{2} \in \mathcal{M} $,
\begin{eqnarray*}
\phi'(B(F^{1} \otimes F^{2})) & = & \phi'( (F^{1} \succ \tun) \prec F^{2}) \\
& = & (\phi'(F^{1}) \succ \phi'(\tun)) \prec \phi'(F^{2}) \\
& = & (\phi'(F^{1}) \succ a) \prec \phi'(F^{2}).
\end{eqnarray*}
So $\phi=\phi'$.
\end{proof}

\begin{cor} \label{isom}
For all $ n \in \mathbb{N}^{\ast} $, $ dim(\widetilde{\mathcal{BG}}(n)) = f^{\mathcal{BT}}_{n} $ and the following map is bijective:
\begin{eqnarray} \label{upsilon}
\Psi : \left\lbrace \begin{array}{rcl}
\widetilde{\mathcal{BG}}(n) & \rightarrow & Vect(\mbox{planar forests } \in \mathcal{BT} \mbox{ of degree } n) \subseteq \mathcal{M} \\
p & \rightarrow & p.(\tun, \hdots , \tun) . 
\end{array}
\right.
\end{eqnarray}
\end{cor}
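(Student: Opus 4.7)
The strategy is to deduce the corollary directly from Theorem \ref{mestbglibre}, using the standard correspondence between the free algebra over a regular operad on one generator and the underlying nonsymmetric operad.

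First I would verify that $\Psi$ is well-defined, i.e.\ that $\Psi(p)$ lies in the subspace of degree-$n$ forests. The three generating binary operations $m$, $\succ$ and $\prec$ all preserve the total number of vertices of a forest (for $m$ this is the definition of concatenation; for $\succ$ and $\prec$, the grafting operations defined in section \ref{defisuccprec} only add edges, never vertices). Consequently, any composite $n$-ary operation $p \in \widetilde{\mathcal{BG}}(n)$ applied to $n$ copies of $\tun$ (each of degree $1$) yields a linear combination of forests of degree exactly $n$ in $\mathcal{BT}$.

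Next, I would invoke the general identification, valid for any regular operad $\mathcal{P}$: the free $\mathcal{P}$-algebra on a vector space $V$ decomposes as $\bigoplus_{n \geq 1} \widetilde{\mathcal{P}}(n) \otimes V^{\otimes n}$, where $p \otimes v_{1} \otimes \cdots \otimes v_{n}$ corresponds to the evaluation $p.(v_{1}, \ldots, v_{n})$. Specialising to $V = \mathbb{K}\,\tun$ gives that the free $\mathcal{BG}$-algebra on $\tun$ has degree-$n$ component canonically identified with $\widetilde{\mathcal{BG}}(n)$ via the map $p \mapsto p.(\tun, \ldots, \tun)$, which is precisely $\Psi$. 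Combined with Theorem \ref{mestbglibre}, which asserts that $\mathcal{M}$ \emph{is} this free algebra, we obtain both the bijectivity of $\Psi$ and the dimension equality $\dim \widetilde{\mathcal{BG}}(n) = \dim \mathcal{M}_{n} = f^{\mathcal{BT}}_{n}$.

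Surjectivity of $\Psi$ is immediate from Theorem \ref{mestbglibre}, since $\mathcal{M}$ is generated as a $\mathcal{BG}$-algebra by $\tun$: every forest of degree $n$ must be expressible as $p.(\tun, \ldots, \tun)$ for some $p \in \widetilde{\mathcal{BG}}(n)$. The main point to check is therefore injectivity of $\Psi$, which is the content of the regular-operad freeness formula above. A self-contained way to establish it is to construct an inverse as follows: let $F_{n}$ be the free $\mathcal{BG}$-algebra on $n$ letters $x_{1}, \ldots, x_{n}$, whose multilinear component in $(x_{1}, \ldots, x_{n})$ is tautologically $\widetilde{\mathcal{BG}}(n)$ via $p \mapsto p.(x_{1}, \ldots, x_{n})$. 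Using the universal property of Theorem \ref{mestbglibre} applied to the element $x_{1} + \cdots + x_{n} \in F_{n}$, one gets a $\mathcal{BG}$-morphism $\mathcal{M} \to F_{n}$ sending $\tun \mapsto x_{1} + \cdots + x_{n}$; composing with the projection onto the multilinear component produces the sought-after inverse to $\Psi$. I expect this multilinearisation step to be the only delicate point; everything else is bookkeeping.
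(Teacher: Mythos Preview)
Your proposal is correct and follows essentially the same approach as the paper: both deduce the corollary directly from Theorem \ref{mestbglibre}, with surjectivity coming from generation by $\tun$ and injectivity from freeness. The paper's own proof is a two-line invocation of that theorem (``surjective by generation, injective by the freedom''), whereas you unpack what ``freedom'' means via the standard decomposition $\bigoplus_n \widetilde{\mathcal{P}}(n)\otimes V^{\otimes n}$ for regular operads and supply an explicit multilinearisation inverse; this extra detail is sound but not needed beyond what the paper assumes.
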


\begin{proof}
It suffices to show that $ \Psi $ is bijective. $ (\mathcal{M},m,\succ,\prec) $ is generated by $ \tun $ as $ \mathcal{BG} $-algebra (with theorem \ref{mestbglibre}) therefore $ \Psi $ is surjective. $ \Psi $ is injective by the freedom in theorem \ref{mestbglibre}.
\end{proof}
\\

The free $ \mathcal{BG} $-algebra over a vector space $ V $ is the $ \mathcal{BG} $-algebra $ BG(V) $ such that any map from $ V $ to a $ \mathcal{BG} $-algebra $ A $ has a natural extention as a $ \mathcal{BG} $-morphism $ BG(V) \rightarrow A $. In other words the functor $ BG(-) $ is the left adjoint to the forgetful functor from $ \mathcal{BG} $-algebras to vector spaces. Because the operad $ \mathcal{BG} $ is regular, we get the following result :

\begin{prop}
Let $ V $ be a $ \mathbb{K} $-vector space. Then the free $ \mathcal{BG} $-algebra on $ V $ is
$$ BG(V) = \bigoplus_{n \geq 1} \mathbb{K} [\mathbb{F}(n)] \otimes V^{\otimes n} ,$$
equipped with the following binary operations : for all $ F \in \mathbb{F}(n) $, $ G \in \mathbb{F}(m) $, $ v_{1} \otimes \hdots \otimes v_{n} \in V^{\otimes n} $ and $ w_{1} \otimes \hdots \otimes w_{m} \in V^{\otimes m} $,
\begin{eqnarray*}
(F \otimes v_{1} \otimes \hdots \otimes v_{n}) \ast (G \otimes w_{1} \otimes \hdots \otimes w_{m}) & = & (FG \otimes v_{1} \otimes \hdots \otimes v_{n} \otimes w_{1} \otimes \hdots \otimes w_{m}) ,\\
(F \otimes v_{1} \otimes \hdots \otimes v_{n}) \succ (G \otimes w_{1} \otimes \hdots \otimes w_{m}) & = & (F \succ G \otimes v_{1} \otimes \hdots \otimes v_{n} \otimes w_{1} \otimes \hdots \otimes w_{m}) ,\\
(F \otimes v_{1} \otimes \hdots \otimes v_{n}) \prec (G \otimes w_{1} \otimes \hdots \otimes w_{m}) & = & (F \prec G \otimes v_{1} \otimes \hdots \otimes v_{n} \otimes w_{1} \otimes \hdots \otimes w_{m}) .
\end{eqnarray*}
\end{prop}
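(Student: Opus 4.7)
The plan is to deduce this from the identification $\widetilde{\mathcal{BG}}(n)\cong\mathbb{K}[\mathbb{F}(n)]$ obtained in Corollary \ref{isom}, together with the standard description of the free algebra over a regular operad.

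First, I recall that because $\mathcal{BG}$ is a regular operad (this was observed immediately after Definition \ref{defioperade}), the free $\mathcal{BG}$-algebra on a $\mathbb{K}$-vector space $V$ admits the general description
\begin{eqnarray*}
\mathcal{BG}(V) \;=\; \bigoplus_{n \geq 1} \widetilde{\mathcal{BG}}(n) \otimes V^{\otimes n},
\end{eqnarray*}
with the binary operations $\ast,\succ,\prec$ given on tensors $(p \otimes v_{1}\otimes\hdots\otimes v_{n})$ and $(q \otimes w_{1}\otimes\hdots\otimes w_{m})$ by acting with the corresponding operadic composition on the first factor (viewing $\ast,\succ,\prec\in\widetilde{\mathcal{BG}}(2)$ and composing with $p$ and $q$) and by concatenating the tensors of vectors on the second factor. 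See, for instance, \cite{LodayV} for this standard construction in the regular case.

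Next I would transport this description through the isomorphism $\Psi$ of Corollary \ref{isom}. Concretely, $\Psi$ identifies $\widetilde{\mathcal{BG}}(n)$ with $\mathbb{K}[\mathbb{F}(n)]$ by $\Psi(p) = p.(\tun,\hdots,\tun) \in \mathcal{M}$. The key verification is that $\Psi$ intertwines the three operadic compositions on $\widetilde{\mathcal{BG}}$ with the three products $m,\succ,\prec$ on $\mathcal{M}$, that is,
\begin{eqnarray*}
\Psi(\ast\circ(p,q)) &=& \Psi(p)\,\Psi(q),\\
\Psi(\succ\circ(p,q)) &=& \Psi(p)\succ \Psi(q),\\
\Psi(\prec\circ(p,q)) &=& \Psi(p)\prec \Psi(q).
\end{eqnarray*}
But this is immediate from the definition of $\Psi$ as the evaluation at $(\tun,\hdots,\tun)$, combined with the fact that $\mathcal{M}$ is a $\mathcal{BG}$-algebra (Corollary \ref{mestbg}) that is free on the single generator $\tun$ (Theorem \ref{mestbglibre}): the two sides of each equality are computed by the same operadic formula applied to the same arguments in $\mathcal{M}$.

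Combining these two points yields exactly the formula claimed in the proposition: $BG(V) = \bigoplus_{n\geq 1} \mathbb{K}[\mathbb{F}(n)]\otimes V^{\otimes n}$, with operations $\ast,\succ,\prec$ on the forest factor given by concatenation, left graft and right graft respectively, and with the tensors of vectors concatenated. The main (and essentially only) obstacle is purely notational, namely checking that $\Psi$ is equivariant for the three binary operations; once this is granted, the universal property is transferred verbatim from $\widetilde{\mathcal{BG}}$ to $\bigoplus_n \mathbb{K}[\mathbb{F}(n)]\otimes V^{\otimes n}$.
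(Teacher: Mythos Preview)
Your proposal is correct and follows exactly the paper's approach: the paper does not give an explicit proof of this proposition, but simply introduces it with the sentence ``Because the operad $\mathcal{BG}$ is regular, we get the following result'', relying on the standard free-algebra description for regular operads together with the identification $\widetilde{\mathcal{BG}}(n)\cong\mathbb{K}[\mathbb{F}(n)]$ established in Corollary \ref{isom}. Your write-up is in fact more detailed than the paper's, spelling out the compatibility of $\Psi$ with the three operations, which the paper leaves implicit.
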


\subsection{A combinatorial description of the composition}

We identify $ \mathcal{BG} $ with the vector space of nonempty planar forests via corollary \ref{isom}. In other terms, we identify $ F \in \mathbb{F}(n) $ and $ \Psi^{-1}(F) \in \mathcal{BG}(n) $.\\

{\bf Notations.} {In order to distinguish the composition in $ \mathcal{BG} $ and the action of the operad $ \mathcal{BG} $ on $ \mathcal{M} $, we now denote by
\begin{enumerate}
\item $ F \circ (F_{1}, \hdots , F_{n}) $ the composition of $ \mathcal{BG} $.
\item $ F \bullet (F_{1}, \hdots , F_{n}) $ the action of $ \mathcal{BG} $ on $ \mathcal{M} $.
\end{enumerate}}

\vspace{0.3cm}

In the following, we will describe the composition of $ \mathcal{BG} $ in term of forests.

\begin{prop} \label{pourleth1}
\begin{enumerate}
\item $ \tun $ is the unit element of $ \mathcal{BG} $.
\item $ \tun \tun = ~ m $ in $ \mathcal{BG}(2) $. Consequently, in $ \mathcal{BG} $ $ \tun \tun \circ (F,G) = FG $ for all $ F,G \in \mathbb{F} \setminus \{1\} $.
\item $ \addeux{l} = ~ \succ $ in $ \mathcal{BG}(2) $. Consequently, in $ \mathcal{BG} $ $ \addeux{l} \circ (F,G) = F \succ G $ for all $ F,G \in \mathbb{F} \setminus \{1\} $.
\item $ \addeux{r} = ~ \prec $ in $ \mathcal{BG}(2) $. Consequently, in $ \mathcal{BG} $ $ \addeux{r} \circ (F,G) = F \prec G $ for all $ F,G \in \mathbb{F} \setminus \{1\} $.
\end{enumerate}
\end{prop}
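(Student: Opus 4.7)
The plan is to apply the identification $\Psi$ of Corollary~\ref{isom} directly. By definition, $\Psi$ sends an operation $p \in \widetilde{\mathcal{BG}}(n)$ to the forest $p \bullet (\tun, \ldots, \tun) \in \mathcal{M}$, so under this identification the four claims reduce to computing the forests associated to the operadic unit and to the three generating operations $m$, $\succ$ and $\prec$.

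For point 1, the operadic unit $I \in \mathcal{BG}(1)$ satisfies $\Psi(I) = I \bullet (\tun) = \tun$, so $\tun$ corresponds to the unit of $\mathcal{BG}$.

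For points 2--4, using the explicit definitions of the three operations on $\mathcal{M}$ recalled at the beginning of Section~\ref{defisuccprec}, together with the special cases $F \succ \tun = B(F \otimes 1)$ and $\tun \prec F = B(1 \otimes F)$, I compute
\begin{align*}
\Psi(m) & = \tun \ast \tun = \tun\tun, \\
\Psi(\succ) & = \tun \succ \tun = B(\tun \otimes 1) = \addeux{l}, \\
\Psi(\prec) & = \tun \prec \tun = B(1 \otimes \tun) = \addeux{r},
\end{align*}
which establishes the three identifications in $\mathcal{BG}(2)$.

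Finally, the consequences concerning $F \circ (G, H)$ for $F \in \{\tun\tun, \addeux{l}, \addeux{r}\}$ and arbitrary nonempty forests $G, H$ follow from the compatibility between the operadic composition $\circ$ and the action $\bullet$ on the free $\mathcal{BG}$-algebra $\mathcal{M}$. Writing $G = \phi \bullet (\tun, \ldots, \tun)$ and $H = \psi \bullet (\tun, \ldots, \tun)$ via $\Psi$, one has $\Psi(\theta \circ (\phi, \psi)) = \theta \bullet (G, H)$ for any $\theta \in \mathcal{BG}(2)$; specializing to $\theta = m, \succ, \prec$ yields $FG$, $F \succ G$ and $F \prec G$ respectively. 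No step presents a genuine obstacle: the whole proposition is a direct unpacking of Theorem~\ref{mestbglibre} and Corollary~\ref{isom}, since freeness of $\mathcal{M}$ on the generator $\tun$ is precisely what lets the identification $\Psi$ be transported through composition.
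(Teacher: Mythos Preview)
Your proof is correct and follows essentially the same approach as the paper: both use the bijection $\Psi$ of Corollary~\ref{isom} to identify $I$, $m$, $\succ$, $\prec$ with their images $\tun$, $\tun\tun$, $\addeux{l}$, $\addeux{r}$, and then invoke the compatibility $(\theta \circ (\phi,\psi)) \bullet (\tun,\ldots,\tun) = \theta \bullet (\phi \bullet (\tun,\ldots,\tun), \psi \bullet (\tun,\ldots,\tun))$ of operadic composition with the action on the free algebra to obtain the consequences. (Note a small slip in your last sentence: with your choice of letters the outputs should be $GH$, $G \succ H$, $G \prec H$, not $FG$, $F \succ G$, $F \prec G$.)
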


\begin{proof}
\begin{enumerate}
\item Indeed, $ \Psi (\tun) = \tun = \Psi (I) $. Hence, $ \tun = I $.
\item By definition, $ \Psi (\tun \tun) = \tun \tun = \Psi (m) $. So $ \tun \tun = m $ in $ \mathcal{BG}(2) $. Moreover, for all $ F,G \in \mathbb{F} \setminus \{1\} $,
\begin{eqnarray*}
\Psi(FG) & = & FG \\
& = & m \bullet (F,G) \\
& = & m \bullet (F \bullet (\tun, \hdots , \tun) , G \bullet (\tun, \hdots ,\tun)) \\
& = & (m \circ (F,G)) \bullet (\tun, \hdots , \tun)\\
& = & \Psi (m \circ (F,G)) .
\end{eqnarray*}
So $ FG = m \circ (F,G) = \tun \tun \circ (F,G) $.
\item Indeed, $ \Psi (\addeux{$l$}) = \tun \succ \tun = \Psi (\succ) $. So $ \addeux{$l$} = \succ $ in $ \mathcal{BG}(2) $. Moreover, for all $ F,G \in \mathbb{F} \setminus \{1\} $,
\begin{eqnarray*}
\Psi(F \succ G) & = & F \succ G \\
& = & \succ \bullet (F,G)\\
& = & \succ \bullet (F \bullet (\tun, \hdots , \tun) , G \bullet (\tun, \hdots ,\tun)) \\
& = & (\succ \circ (F,G)) \bullet (\tun, \hdots \tun)\\
& = & \Psi (\succ \circ (F,G)) .
\end{eqnarray*}
So $ F \succ G = \succ \circ (F,G) = \addeux{$l$} \circ (F,G) $.
\item $ \Psi (\addeux{$r$}) = \tun \prec \tun = \Psi (\prec) $. So $ \addeux{$r$} = \prec $ in $ \mathcal{BG}(2) $. Moreover, for all $ F,G \in \mathbb{F} \setminus \{1\} $,
\begin{eqnarray*}
\Psi(F \prec G) & = & F \prec G \\
& = & \prec \bullet (F,G)\\
& = & \prec \bullet (F \bullet (\tun, \hdots , \tun) , G \bullet (\tun, \hdots ,\tun)) \\
& = & (\prec \circ (F,G)) \bullet (\tun, \hdots \tun) \\
& = & \Psi (\prec \circ (F,G)) .
\end{eqnarray*}
So $ F \prec G = \prec \circ (F,G) = \addeux{$r$} \circ (F,G) $.
\end{enumerate}
\end{proof}

\begin{prop} \label{pourleth2}
Let $ F \in \mathbb{F}(m) $ and $ G \in \mathbb{F}(n) $ with $ m,n \geq 1 $. Let $ H_{1}, \hdots , H_{m+n+1} \in \mathbb{F} \setminus \{1\} $. Then in $ \mathcal{BG} $,
\begin{eqnarray*}
(FG) \circ (H_{1} , \hdots , H_{m + n}) & = & F \circ (H_{1} , \hdots , H_{m}) G \circ (H_{m + 1}, \hdots , H_{m + n}) \\
B(F \otimes G) \circ (H_{1}, \hdots , H_{m+n+1}) & = & \left( (F \circ (H_{1}, \hdots, H_{m})) \succ H_{m+1} \right)\\
& & \prec (G \circ (H_{m+2}, \hdots ,H_{m+n+1})).
\end{eqnarray*}
\end{prop}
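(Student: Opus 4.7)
The plan is to reduce both identities to the associativity of operadic composition, using Proposition \ref{pourleth1} to identify the three elementary binary operations of $\mathcal{BG}(2)$ -- namely $\tun\tun$, the left-grafting corolla and the right-grafting corolla -- with $m$, $\succ$ and $\prec$ respectively, and the fact that $\tun$ is the operadic unit.

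For the first identity, Proposition \ref{pourleth1} (2) identifies $FG$ with $\tun\tun \circ (F,G)$ in $\mathcal{BG}$. By associativity of $\circ$, with the partition of inputs $(H_1,\dots,H_{m+n})$ into $(H_1,\dots,H_m)$ and $(H_{m+1},\dots,H_{m+n})$,
$$(FG)\circ(H_1,\dots,H_{m+n}) \;=\; \tun\tun \circ \bigl(F\circ(H_1,\dots,H_m),\ G\circ(H_{m+1},\dots,H_{m+n})\bigr).$$
A second use of Proposition \ref{pourleth1} (2) rewrites the right-hand side as the concatenation of the two subforests, which is exactly the claim.

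For the second identity, I first verify the forest-level equality $B(F\otimes G) = (F\succ\tun)\prec G$ in $\mathcal{M}$. This unfolds directly from the definitions in Section \ref{defisuccprec}: one has $F\succ\tun = B(F\otimes 1)$, and grafting $G$ on the right of the unique tree $B(F\otimes 1)$ (for which the right part of the root is empty) produces $B(F\otimes G)$. Translating this equality through Proposition \ref{pourleth1} (1), (3) and (4), it reads, in $\mathcal{BG}$, as an expression for $B(F\otimes G)$ as a composition of the right-grafting corolla with the left-grafting corolla applied to $F$, $\tun$ and $G$. Composing on the right with $(H_1,\dots,H_{m+n+1})$ and applying associativity of $\circ$ twice, with the arity splits $\bigl((m+1),n\bigr)$ then $(m,1)$, and using Proposition \ref{pourleth1} (1) to absorb $\tun$ on the slot receiving $H_{m+1}$, I obtain an expression whose three inner operands are $F\circ(H_1,\dots,H_m)$, $H_{m+1}$ and $G\circ(H_{m+2},\dots,H_{m+n+1})$, and whose outer shape is still the composite of the left- and right-grafting corollas. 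A final invocation of Proposition \ref{pourleth1} (3) and (4) rewrites these outer compositions as $\succ$ and $\prec$, yielding exactly the claimed formula.

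The only delicate point is bookkeeping: one must track carefully how the tuple $(H_1,\dots,H_{m+n+1})$ partitions across the two levels of associativity. All the structural content is contained in Proposition \ref{pourleth1}, in the forest identity $B(F\otimes G)=(F\succ\tun)\prec G$, and in the associativity and unit axioms of the operad $\mathcal{BG}$.
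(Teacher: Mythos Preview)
Your proposal is correct and follows essentially the same approach as the paper: both proofs use Proposition~\ref{pourleth1} to rewrite $FG$ as $m\circ(F,G)$ and $B(F\otimes G)$ as $(F\succ\tun)\prec G = \addeux{$r$}\circ(\addeux{$l$}\circ(F,\tun),G)$, then apply operadic associativity to distribute the $H_i$'s and finally re-interpret the outer binary compositions via Proposition~\ref{pourleth1} again. Your verification of the forest identity $B(F\otimes G)=(F\succ\tun)\prec G$ from the definitions in Section~\ref{defisuccprec} is a welcome bit of explicitness that the paper leaves implicit.
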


\begin{proof}
Indeed, in $ \mathcal{BG} $,
\begin{eqnarray*}
(FG) \circ (H_{1} , \hdots , H_{m + n}) & = & (m \circ (F,G)) \circ (H_{1} , \hdots , H_{m + n})\\
& = & m \circ (F \circ (H_{1} , \hdots , H_{m}) , G \circ (H_{m + 1}, \hdots , H_{m + n})) \\
& = & F \circ (H_{1} , \hdots , H_{m}) G \circ (H_{m + 1}, \hdots , H_{m + n}) ,
\end{eqnarray*}
and
\begin{eqnarray*}
& & B(F \otimes G) \circ (H_{1}, \hdots , H_{m+n+1}) \\
& = & \left( (F \succ \tun) \prec G \right)  \circ (H_{1}, \hdots , H_{m+n+1}) \\
& = & (\addeux{$r$} \circ (\addeux{$l$} \circ (F,\tun),G)) \circ (H_{1}, \hdots , H_{m+n+1}) \\
& = & \addeux{$r$} \circ \left( (\addeux{$l$} \circ (F,\tun)) \circ (H_{1}, \hdots , H_{m+1}) , G \circ (H_{m+2}, \hdots, H_{m+n+1}) \right) \\
& = & \addeux{$r$} \circ \left( \addeux{$l$} \circ (F \circ (H_{1}, \hdots , H_{m}),\tun \circ H_{m+1}) , G \circ (H_{m+2}, \hdots, H_{m+n+1}) \right) \\
& = & \addeux{$r$} \circ \left( (F \circ (H_{1}, \hdots , H_{m}) ) \succ H_{m+1} , G \circ (H_{m+2}, \hdots, H_{m+n+1}) \right) \\
& = & \left( (F \circ (H_{1}, \hdots, H_{m})) \succ H_{m+1} \right) \prec (G \circ (H_{m+2}, \hdots ,H_{m+n+1})).
\end{eqnarray*}
\end{proof}
\\

Combining propositions \ref{pourleth1} and \ref{pourleth2}, we obtain the following theorem :

\begin{theo}
The composition of $ \mathcal{BG} $ in the basis of forests belong to $ \mathbb{F} \setminus \{1\} $ can be inductively defined in this way:
\begin{eqnarray*} \left\lbrace \begin{array}{rcl}
\tun \circ (H) & = & H ,\\
FG \circ (H_{1} , \hdots , H_{\left| F \right| + \left| G \right|}) & = & F \circ (H_{1}, \hdots , H_{\left| F \right|}) G \circ (H_{\left| F \right| +1}, \hdots, H_{\left| F \right| + \left| G \right|}),\\
B(F \otimes G) \circ (H_{1}, \hdots , H_{\left| F \right| + \left| G \right| +1}) & = &  \left( (F \circ (H_{1}, \hdots, H_{\left| F \right|})) \succ H_{\left| F \right| +1} \right)\\
& & \prec (G \circ (H_{\left| F \right| +2}, \hdots ,H_{\left| F \right| + \left| G \right| +1})).
\end{array} \right. 
\end{eqnarray*}
\end{theo}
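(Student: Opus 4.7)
The plan is to observe that the three formulas follow directly from combining propositions \ref{pourleth1} and \ref{pourleth2}, and that together they constitute an inductive definition of the operadic composition because every nonempty forest in $\mathbb{F}$ falls into exactly one of three exclusive and exhaustive cases: it equals $\tun$, it is a concatenation $FG$ of two nonempty forests of strictly smaller degree, or it is a single tree of the form $B(F \otimes G)$ with $F, G \in \mathbb{F}$ of strictly smaller degree (here $F$ or $G$ may be $1$, handled via the convention on the empty forest as in the definitions of $\succ$ and $\prec$ on page describing section \ref{defisuccprec}).

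First I would treat the base case $\tun \circ (H) = H$, which is immediate from point 1 of proposition \ref{pourleth1} stating that $\tun$ is the unit of the operad $\mathcal{BG}$. Next, the identity for $FG \circ (H_1, \ldots, H_{|F|+|G|})$ is exactly the first formula of proposition \ref{pourleth2}; since $F$ and $G$ each have fewer trees than $FG$, the recursion is well-founded. Finally, the identity for $B(F \otimes G) \circ (H_1, \ldots, H_{|F|+|G|+1})$ is exactly the second formula of proposition \ref{pourleth2}, where $F$ and $G$ have strictly smaller degree than $B(F \otimes G)$, so again the recursion decreases.

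Since propositions \ref{pourleth1} and \ref{pourleth2} have already been proved, no substantial new work is required — the proof essentially amounts to observing that the case analysis on the shape of a nonempty forest is exhaustive, and that in each nontrivial case the subforests have strictly smaller degree, so the three rules do compute $F \circ (H_1, \ldots, H_n)$ by induction on $|F|$. There is no real obstacle: the only point that might merit a brief comment is that the decomposition of $F$ as either a concatenation or as $B(F' \otimes F'')$ is uniquely determined, ensuring that the inductive recipe is unambiguous.
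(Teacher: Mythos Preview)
Your approach matches the paper's exactly: the paper simply states ``Combining propositions \ref{pourleth1} and \ref{pourleth2}, we obtain the following theorem'' and gives no further argument, so your elaboration on why the three cases are exhaustive and why the recursion terminates is a faithful expansion of that one-line justification.

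One small correction: your final remark that ``the decomposition of $F$ as either a concatenation or as $B(F' \otimes F'')$ is uniquely determined'' is not quite right for the concatenation case. A proper forest $F_1 \cdots F_k$ with $k \geq 3$ admits several splittings $F = FG$ into two nonempty pieces. The inductive recipe is nonetheless unambiguous, but the reason is that proposition \ref{pourleth2} holds for \emph{any} such splitting, and the operadic associativity of $m$ guarantees that all splittings give the same value. (The decomposition $T = B(F' \otimes F'')$ of a tree, on the other hand, is genuinely unique.) This is a minor point and does not affect the validity of your argument.
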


{\bf Examples.} {Let $ F_{1}, F_{2}, F_{3} \in \mathbb{F} \setminus \{1\} $.
\begin{eqnarray*} \begin{array}{|rcl|rcl|}
\hline \tun \tun \circ (F_{1} , F_{2}) & = & F_{1} F_{2} & \adtroisun{$r$}{$l$} \circ (F_{1},F_{2},F_{3}) & = & (F_{1} \succ F_{2}) \prec F_{3} \\
\addeux{$l$} \circ (F_{1},F_{2}) & = & F_{1} \succ F_{2} &  \adtroisun{$r$}{$r$} \circ (F_{1},F_{2},F_{3}) & = & F_{1} \prec (F_{2} F_{3}) \\
\addeux{$r$} \circ (F_{1},F_{2}) & = & F_{1} \prec F_{2} & \adtroisun{$l$}{$l$} \circ (F_{1},F_{2},F_{3}) & = & (F_{1} F_{2}) \succ F_{3} \\
\tun \addeux{$l$} \circ (F_{1},F_{2},F_{3}) & = & F_{1} (F_{2} \succ F_{3}) & \adtroisdeux{$l$}{$r$} \circ (F_{1},F_{2},F_{3}) & = & (F_{1} \prec F_{2}) \succ F_{3} \\
\addeux{$r$} \tun \circ (F_{1},F_{2},F_{3}) & = & (F_{1} \prec F_{2}) F_{3} & \adtroisdeux{$r$}{$l$} \circ (F_{1},F_{2},F_{3}) & = & F_{1} \prec (F_{2} \succ F_{3}) \\
\tun \tun \tun \circ (F_{1},F_{2},F_{3}) & = & F_{1} F_{2} F_{3} & \adtroisdeux{$l$}{$l$} \circ (F_{1},F_{2},F_{3}) & = & (F_{1} \succ F_{2}) \succ F_{3} \\
\hline \end{array}
\end{eqnarray*}}

\subsection{Relations with the coproduct}

We recall the definition of pre-dendriform algebra (see \cite{Leroux2}):

\begin{defi}
A pre-dendriform algebra is a $ \mathbb{K} $-vector space $ A $ equipped with three binary operations denoted $ \ast $, $ \succ $ and $ \prec $ satisfying the four relations : for all $ x , y, z \in A $,
\begin{eqnarray*}
(x \ast y) \ast z & = & x \ast (y \ast z) ,\\
(x \ast y) \succ y & = & x \succ (y \succ z) , \\
(x \prec y) \prec z & = & x \prec (y \ast z) ,\\
(x \succ y) \prec z & = & x \succ (y \prec z) .
\end{eqnarray*}
In other words, $ (A,\ast, \succ) $ and $ (A, \ast , \prec) $ are dipterous algebras with the entanglement relation $ (x \succ y) \prec z = x \succ (y \prec z) $.
\end{defi}

{\bf Remark.} {We can define $ PreDend $-alg the category of pre-dendriform algebras. A $ \mathcal{BG} $-algebra is also a pre-dendriform algebra. We get the following canonical functor: $ \mathcal{BG} \mbox{-alg} \rightarrow PreDend \mbox{-alg} $.}
\\

As bigraft algebras are not objects with unit, we consider again the extended tensor product $ A \overline{\otimes} B $ for $ A , B $ two $ \mathcal{BG} $-algebras. If $ A $ is a bigraft algebra, we extend $ \succ , \prec : A \otimes A \rightarrow A $ into maps $ \succ , \prec : A \overline{\otimes} A \rightarrow A $ in the following way : for all $ a \in A $,
$$ a \succ 1 = 0 , \hspace{1cm} a \prec 1 = a , \hspace{1cm} 1 \succ a = a , \hspace{1cm} 1 \prec a = 0 . $$

Moreover, we extend the product of $ A $ into a map from $ (A \oplus \mathbb{K}) \otimes (A \oplus \mathbb{K}) $ to $ A \oplus \mathbb{K} $ by putting $ 1.a = a.1 = a $ for all $ a \in A $ and $ 1.1 = 1 $. Note that the relations (\ref{definitionalgbig}) are now satisfied on $ A \overline{\otimes} A \overline{\otimes} A $.

\begin{prop} \label{structenseur1}
Let $ A $ and $ B $ be two bigraft algebras. Then $ A \overline{\otimes} B $ is given a structure of pre-dendriform algebra in the following way : for $ a,a' \in A \cup \mathbb{K} $ and $ b , b' \in B \cup \mathbb{K} $,
\begin{eqnarray*}
(a \otimes b) \ast (a' \otimes b') & = & (a \ast a') \otimes (b \ast b') ,\\
(a \otimes b) \succ (a' \otimes b') & = & (a \ast a') \otimes (b \succ b') , \mbox{ if $ b $ or $ b' \in B $}, \\
(a \otimes 1) \succ (a' \otimes 1)  & = & (a \succ a') \otimes 1 ,\\
(a \otimes b) \prec (a' \otimes b') & = & (a \ast a') \otimes (b \prec b') , \mbox{ if $ b $ or $ b' \in B $}, \\
(a \otimes 1) \prec (a' \otimes 1) & = & (a \prec a') \otimes 1 .
\end{eqnarray*}
\end{prop}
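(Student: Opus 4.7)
The strategy is to mimic the proof of Proposition \ref{tenseurdipterous}: all four pre-dendriform axioms can be checked by brute case analysis, where the generic case (in which all ``second factors'' lie in $B$) reduces immediately to the bigraft relations in $A$ and $B$, and the remaining edge cases (one or more of the second factors equal to $1$) are handled separately.

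Associativity of $\ast$ on $A \overline{\otimes} B$ is immediate, because $\ast$ is defined componentwise and $A$ and $B$ are each associative (after extending by $1.a = a.1 = a$). For each of the three remaining relations, I would fix $a,a',a'' \in A \cup \mathbb{K}$ and $b,b',b'' \in B \cup \mathbb{K}$ and verify the identity, starting with the generic case $b,b',b'' \in B$. In that case, the left and right sides unfold via the definitions to $(a \ast a' \ast a'') \otimes \xi(b,b',b'')$, where $\xi$ is the corresponding word in $\succ, \prec, \ast$; the equality then follows from the $\mathcal{BG}$-axiom applied to $b,b',b''$ together with the associativity of $\ast$ in $A$. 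For instance, for the entanglement relation,
\begin{align*}
((a\otimes b)\succ(a'\otimes b'))\prec(a''\otimes b'') &= ((a\ast a'\ast a'')\otimes((b\succ b')\prec b'')),\\
(a\otimes b)\succ((a'\otimes b')\prec(a''\otimes b'')) &= ((a\ast a'\ast a'')\otimes(b\succ(b'\prec b''))),
\end{align*}
and these agree by the entanglement axiom in $B$.

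Next I would enumerate the boundary cases, organized by which of $b,b',b''$ equal $1$. The key observation (already built into the extension rules $1 \succ a = a$, $a \prec 1 = a$, $a \succ 1 = 1 \prec a = 0$) is that as soon as we mix an $A \otimes 1$ with an $A \otimes B$ inside a $\succ$ or $\prec$, the result collapses. Thus many subcases produce $0$ on both sides and are trivial; the nontrivial subcases are those where the pattern of $1$s is preserved throughout the computation, and these reduce to the corresponding bigraft axiom purely in the $A$-factor (using the defining rules $(a\otimes 1)\succ(a'\otimes 1) = (a\succ a')\otimes 1$ and $(a\otimes 1)\prec(a'\otimes 1) = (a\prec a')\otimes 1$). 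The fully boundary case $b = b' = b'' = 1$ reduces verbatim to the $\mathcal{BG}$-axiom in $A$.

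The main obstacle is not conceptual but bookkeeping: the three relations (\emph{left dipterous}, \emph{right dipterous}, and \emph{entanglement}) each admit $2^3 = 8$ subcases for the triple $(b,b',b'')$, and one must check that the definitions of $\succ,\prec$ on $A \overline{\otimes} B$ correctly interpolate between the ``pure $A$'' and ``pure $B$'' rules. In practice most subcases either reduce to the generic computation or yield $0 = 0$; I would only write out one representative of each type in detail, following exactly the pattern of Proposition \ref{tenseurdipterous}.
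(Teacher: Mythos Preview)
Your proposal is correct and follows essentially the same approach as the paper. The only difference is one of economy: the paper invokes Proposition~\ref{tenseurdipterous} directly to conclude that $(A\overline{\otimes}B,\ast,\prec)$ satisfies the right dipterous relation, then says ``in the same way'' for the left dipterous relation, and only writes out the case analysis for the entanglement relation; you instead propose to redo all three case analyses from scratch, which is fine but redundant.
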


\begin{proof}
With proposition \ref{tenseurdipterous}, $ (A \overline{\otimes} B, \ast , \prec) $ is a $ \mathcal{RG} $-algebra. In the same way, $ (A \overline{\otimes} B, \ast , \succ) $ is a $ \mathcal{LG} $-algebra. It remains to show the entanglement relation: for all $ a,a',a'' \in A $ and $ b,b',b'' \in B $,
\begin{eqnarray*}
((a \otimes b) \succ (a' \otimes b') ) \prec (a'' \otimes b'') & = & ((a \ast a') \ast a'') \otimes ((b \succ b') \prec b'') \\
& = & (a \ast (a' \ast a'')) \otimes (b \succ (b' \prec b'')) \\
& = & (a \otimes b) \succ ((a' \otimes b') \prec (a'' \otimes b'')) .
\end{eqnarray*}
This calculation is still true if $ b $, $ b' $ or $ b'' $ is equal to $ 1 $ or if $ b' = b'' = 1 $ and $ b \in B $. If $ b = b' = 1 $ and $ b'' \in B $,
\begin{eqnarray*}
((a \otimes 1) \succ (a' \otimes 1)) \prec (a'' \otimes b'') & = & ((a \succ a') \ast a'') \otimes (1 \prec b'')\\
& = & 0 ,\\
(a \otimes 1) \succ ((a' \otimes 1) \prec (a'' \otimes b'')) & = & (a \otimes 1) \succ ((a' \ast a'') \otimes (1 \prec b'')) \\
& = & 0.
\end{eqnarray*}
If $ b' = b'' = 1 $ and $ b \in B $, it is the same calculation as previously. Finally if $ b = b' = b'' = 1 $,
\begin{eqnarray*}
((a \otimes 1) \succ (a' \otimes 1)) \prec (a'' \otimes 1) & = & ((a \succ a') \prec a'') \otimes 1 \\
& = & (a \succ (a' \prec a'')) \otimes 1 \\
& = & (a \otimes 1) \succ ((a' \otimes 1) \prec (a'' \otimes 1)).
\end{eqnarray*}
\end{proof}

\begin{prop}
For all forests $ F,G \in \mathcal{M} $ and for all tree $ T \in \mathcal{M} $,
\begin{eqnarray} \label{succprecdelta}
\Delta (F \succ T \prec G) = \Delta(F) \succ \Delta(T) \prec \Delta(G) .
\end{eqnarray}
\end{prop}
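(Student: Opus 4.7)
The plan is to reduce the three-sided identity to two one-sided identities and then combine them using the entanglement relation in the pre-dendriform algebra $\mathcal{BT} \overline{\otimes} \mathcal{BT}$ furnished by Proposition \ref{structenseur1}. Concretely, I would first establish, for any tree $T \in \mathcal{M}$ and any forest $X \in \mathcal{M}$, the two auxiliary identities
$$\Delta(X \succ T) = \Delta(X) \succ \Delta(T), \qquad \Delta(T \prec X) = \Delta(T) \prec \Delta(X),$$
where $\succ$ and $\prec$ on the right-hand sides are the operations of Proposition \ref{structenseur1}.

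The proof of the second identity runs parallel to Proposition \ref{relcopg}. Writing $T = B(T_1 \otimes T_2)$ with $T_1,T_2 \in \mathbb{F}$, one has $T \prec X = B(T_1 \otimes T_2 X)$ by the definition in Section \ref{defisuccprec}. Applying the formula $\Delta \circ B(x \otimes y) = B(x \otimes y) \otimes 1 + (m \otimes B) \circ (Id \otimes \tau \otimes Id) \circ (\Delta \otimes \Delta)(x \otimes y)$ from the remark after Theorem \ref{defideltahopf}, using $\Delta(T_2 X) = \Delta(T_2) \Delta(X)$ and Sweedler notation $\Delta(T) = T\otimes 1 + 1\otimes T + \sum T^{(1)} \otimes T^{(2)}$, $\Delta(X) = X\otimes 1 + 1\otimes X + \sum X^{(1)} \otimes X^{(2)}$, one expands both sides and matches terms. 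The right-hand side $\Delta(T) \prec \Delta(X)$ is evaluated using the rules of Proposition \ref{structenseur1}; the cases involving the Sweedler components equal to $1$ vanish exactly as prescribed (because $1 \prec a = 0$ and $a \prec 1 = a$), and the remaining terms match the Sweedler expansion of $\Delta(B(T_1 \otimes T_2 X))$. The first identity then follows either by a symmetric computation, or more elegantly by applying the involution $\dagger$ of Proposition \ref{involution} together with Propositions \ref{daggerdelta} and \ref{deggerprecsucc}, which translate $\succ$ into $\prec$ compatibly with $\Delta$.

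Once these auxiliary identities are in hand, observe that $F \succ T = B(F T_1 \otimes T_2)$ is itself a tree. Hence we may chain the identities:
$$\Delta(F \succ T \prec G) = \Delta\bigl((F \succ T) \prec G\bigr) = \Delta(F \succ T) \prec \Delta(G) = \bigl(\Delta(F) \succ \Delta(T)\bigr) \prec \Delta(G),$$
applying the second auxiliary identity to the tree $F \succ T$ in the second step and the first auxiliary identity to the tree $T$ in the third step. Finally, the entanglement relation $(a \succ b) \prec c = a \succ (b \prec c)$ in the pre-dendriform algebra $\mathcal{BT} \overline{\otimes} \mathcal{BT}$ (Proposition \ref{structenseur1}) gives
$$\bigl(\Delta(F) \succ \Delta(T)\bigr) \prec \Delta(G) = \Delta(F) \succ \Delta(T) \prec \Delta(G),$$
which is the desired equality.

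The main obstacle lies in the auxiliary identities, specifically the bookkeeping of the Sweedler terms in which one leg is $1$: one must verify that the non-standard rules $a \succ 1 = 0$, $1 \succ a = a$, $a \prec 1 = a$, $1 \prec a = 0$ on $\mathcal{BT} \overline{\otimes} \mathcal{BT}$ are precisely what is needed so that the expansion of $\Delta(T) \prec \Delta(X)$ reproduces the tree-grafting formula $\Delta(B(T_1 \otimes T_2 X))$ without extra spurious contributions. Everything else is bilinear and structural.
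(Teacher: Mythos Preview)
Your proposal is correct and follows essentially the same approach as the paper: establish the two one-sided identities $\Delta(T \prec X) = \Delta(T) \prec \Delta(X)$ (this is Proposition~\ref{relcopg}) and $\Delta(X \succ T) = \Delta(X) \succ \Delta(T)$ (obtained via the involution~$\dagger$ exactly as you suggest), and then combine them using the entanglement relation in $\mathcal{M} \overline{\otimes} \mathcal{M}$. Your presentation is in fact slightly more explicit than the paper's at the final step, since you spell out that $F \succ T$ is again a tree and chain the two identities in order, whereas the paper only says ``any parenthesizing of (\ref{succprecdelta}) gives the same relation.''
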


\begin{proof}
With proposition \ref{relcopg}, we have $ \Delta(T \prec G) = \Delta(T) \prec \Delta(G) $. Moreover,
\begin{eqnarray*}
\Delta(F \succ T) & = & \Delta( (F^{\dagger})^{\dagger} \succ (T^{\dagger})^{\dagger}) \\
& = & (\dagger \otimes \dagger) \circ \Delta ( T^{\dagger} \prec F^{\dagger} ) \\
& = & (\dagger \otimes \dagger) \circ (\Delta ( T^{\dagger} ) \prec \Delta (F^{\dagger} ) ) \\
& = & (\dagger \otimes \dagger) \circ ( ((\dagger \otimes \dagger) \circ \Delta(T) ) \prec ((\dagger \otimes \dagger) \circ \Delta(F) )) \\
& = & (\dagger \otimes \dagger) \circ (\dagger \otimes \dagger) \circ ( \Delta(F) \succ \Delta(T) ) \\
& = & \Delta(F) \succ \Delta(T) ,
\end{eqnarray*}
using propositions \ref{daggerdelta} and \ref{deggerprecsucc}. As the entanglement relation is satisfied in $ \mathcal{M} $ and $ \mathcal{M} \overline{\otimes} \mathcal{M} $, any parenthesizing of (\ref{succprecdelta}) gives the same relation.
\end{proof}
\\

{\bf Remark.} {As $ \mathcal{M} \overline{\otimes} \mathcal{M} $ is not a $ \mathcal{BG} $-algebra, the relation (\ref{succprecdelta}) is not true if $ T $ is a forest in general.}

\section{Koszul duality}

\subsection{Koszul dual of the bigraft operad}

\subsubsection{Presentation}

As $ \mathcal{BG} $ is a binary and quadratic operad, it admits a dual, in the sense of V. Ginzburg and M. Kapranov (see \cite{Kapranov}). We denote by $ \mathcal{BG}^{!} $ the Koszul dual of $ \mathcal{BG} $. We prove that $ \mathcal{BG}^{!} $ is defined as follows:

\begin{theo}
The operad $ \mathcal{BG}^{!} $, dual of $ \mathcal{BG} $, is the quadratic operad $ \mathcal{P}(E,R) $, where $ E $ is concentrated in degree 2, with $ E(2) $ the free $ S_{2} $-module $ \mathbb{K} m \oplus \mathbb{K} \succ \oplus \mathbb{K} \prec $ and $ R $ is concentrated in degree 3, with $ R(3) \subseteq \mathcal{P}_{E}(3) $ the sub-$ S_{3} $-module generated by
\begin{eqnarray} \label{relationbgdual}
\begin{minipage}{5cm}
\begin{center}
$\left\{ \begin{array}{l}
r_{1} = \succ \circ (m, I)-\succ \circ (I,\succ),\\
r_{2} = m \circ (\succ,I)-\succ \circ (I,m),\\
r_{3} = \prec \circ (\prec,I)-\prec \circ (I,m),\\
r_{4} = \prec \circ (m,I)-m \circ (I,\prec),\\
r_{5} = \prec \circ (\succ,I)-\succ \circ (I,\prec),\\
r_{6} = m \circ (m, I)-m \circ (I,m),
\end{array}\right.$
\end{center}
\end{minipage}
\hspace{1cm} and \hspace{0.5cm}
\begin{minipage}{5cm}
\begin{center}
$\left\{ \begin{array}{l}
r_{7} = \succ \circ (\succ,I),\\
r_{8} = \succ \circ (\prec,I),\\
r_{9} = m \circ (\prec,I),\\
r_{10} = \prec \circ (I,\prec),\\
r_{11} = \prec \circ (I,\succ),\\
r_{12} = m \circ (I,\succ).
\end{array}\right.$
\end{center}
\end{minipage}
\end{eqnarray}
\end{theo}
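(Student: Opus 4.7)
The Koszul dual of a binary quadratic operad is $\mathcal{P}(E^{\vee}, R^{\perp})$, where $E^{\vee}$ is the Czech dual of $E$ and $R^{\perp} \subseteq \mathcal{P}_{E^{\vee}}(3)$ is the orthogonal of $R$ with respect to the standard Koszul pairing (see \cite{Kapranov,LodayV}). Since $\mathcal{BG}$ is regular with $E(2) = \mathbb{K} m \oplus \mathbb{K} \succ \oplus \mathbb{K} \prec$, the space $E^{\vee}$ is again a free $\mathbb{S}_{2}$-module of rank $3$ with dual generators that we identify with $m, \succ, \prec$; thus $\mathcal{BG}^{!}$ is regular with the same three binary generators, and the only thing to determine is $R^{\perp}$.

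The plan is to identify $R^{\perp}$ by an explicit dimension-plus-independence argument. Working in the regular part of $\mathcal{P}_{E}(3)$, the two tree shapes $\mu \circ (\nu, I)$ and $\mu \circ (I, \nu)$ labelled by $(\mu,\nu) \in \{m,\succ,\prec\}^{2}$ give a basis of $18$ elements, so $\dim \mathcal{P}_{E}(3) = 18$. Since the $6$ relations of definition \ref{defioperade} are visibly linearly independent (each involves a pair of distinct basis vectors not appearing in any other), $\dim R = 6$ and therefore $\dim R^{\perp} = 12$. First I would write the Koszul pairing in the normalisation
\[
\langle \mu \circ (\nu, I), \mu' \circ (\nu', I) \rangle = \delta_{\mu,\mu'} \delta_{\nu,\nu'}, \qquad
\langle \mu \circ (I, \nu), \mu' \circ (I, \nu') \rangle = - \delta_{\mu,\mu'} \delta_{\nu,\nu'},
\]
with all mixed-shape pairings zero; this is the convention under which the associativity relation $r_{6} = m \circ (m, I) - m \circ (I, m)$ satisfies $\langle r_{6}, r_{6}\rangle = 0$, consistent with $\mathcal{A}ss^{!} = \mathcal{A}ss$.

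Next I would verify that the twelve elements $r_{1}, \dots, r_{12}$ listed in (\ref{relationbgdual}) are linearly independent: the six monomials $\succ \circ (\succ,I)$, $\succ \circ (\prec,I)$, $m \circ (\prec,I)$, $\prec \circ (I,\prec)$, $\prec \circ (I,\succ)$, $m \circ (I,\succ)$ appearing in $r_{7}, \dots, r_{12}$ do not occur in any of $r_{1}, \dots, r_{6}$, and the $r_{1}, \dots, r_{6}$ are themselves linearly independent for the reason already noted. So it suffices to check that each of $r_{1}, \dots, r_{12}$ is orthogonal to each of the six defining relations of $\mathcal{BG}$; this is a case-by-case verification using the pairing above. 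For $r_{1}, \dots, r_{6}$ (which coincide with the defining relations), each computation $\langle r_{i}, r_{i}\rangle = 1 + (-1) = 0$ reflects the characteristic cancellation between the left and right shape contributions; all crossed pairings $\langle r_{i}, r_{j}\rangle$ for $i \ne j$ vanish because the basis vectors involved are disjoint. For $r_{7}, \dots, r_{12}$, each is supported on a single basis vector that does not appear in any of $r_{1}, \dots, r_{6}$, so the pairing is automatically zero.

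The main obstacle is bookkeeping rather than substance: one must be scrupulous with the sign convention on the pairing to guarantee that the self-pairings $\langle r_{i}, r_{i}\rangle$ for $i \le 6$ do cancel. Once these $12 \times 6 = 72$ pairings are checked and shown to vanish, one concludes that $\mathrm{span}(r_{1}, \dots, r_{12}) \subseteq R^{\perp}$; combined with the dimension count $\dim R^{\perp} = 12$ and the linear independence established above, this forces the inclusion to be an equality, completing the proof.
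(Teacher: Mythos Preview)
Your proposal is correct and follows essentially the same strategy as the paper: identify $E^{!}$, compute $\dim R^{\perp}$ by a dimension count, verify that the listed relations lie in $R^{\perp}$ and are linearly independent, and conclude by equality of dimensions. The only difference is cosmetic: the paper carries out the count in the symmetric setting ($\dim \mathcal{P}_{E}(3)=108$, $\dim R=36$, $\dim R^{\perp}=72$, then checks that each $r_{i}$ generates a free $S_{3}$-module and that these are in direct sum), whereas you exploit regularity to work entirely in the nonsymmetric setting ($18$, $6$, $12$), which makes the orthogonality and independence checks slightly more transparent.
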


The notations $ r_{1} $ to $ r_{12} $ of the relations are introduced for future references.\\

\begin{proof}
Recall some notations. For any right $ S_{n} $-module $ V $, we denote by $ V^{!} $ the right $ S_{n} $-module $ V^{\ast} \otimes (\varepsilon) $, where $ (\varepsilon) $ is the one-dimensional signature representation. Explicitly, the action of $ S_{n} $ on $ V^{\ast} $ is defined by $ f^{\sigma}(x) = \varepsilon(\sigma) f(x^{\sigma^{-1}}) $ for all $ \sigma \in S_{n} $, $ f \in V^{!} $, $ x \in V $. The pairing between $ V^{!} $ and $ V $ is given by: if $ f \in V^{!} $, $ x \in V $,
\begin{eqnarray} \label{crochetdedualitéop}
\left\langle - , - \right\rangle : V^{!} \otimes V \longrightarrow \mathbb{K} , \left\langle f , x \right\rangle = f(x) ,
\end{eqnarray}
and if $ \sigma \in S_{n} $, $ \left\langle f^{\sigma} , x^{\sigma} \right\rangle = \varepsilon (\sigma) \left\langle f , x \right\rangle $.\\

Moreover, $ \mathcal{BG} = \mathcal{P}(E,R) $, where $ E $ is the free $ S_{2} $-module generated by $ m $, $ \succ $ and $ \prec $ and $ R $ the sub-$ S_{3} $-module of $ \mathcal{P}_{E}(3) $ generated by $ r_{1}, \hdots , r_{6} $. Note that $ dim(E)=6 $, $ dim(\mathcal{P}_{E}(3))=108 $ and $ dim(R)=36 $. $ \mathcal{BG}^{!} = \mathcal{P}(E^{!},R^{\perp}) $ where $ E^{!} = (E(n)^{!})_{n \in \mathbb{N}} $ and $ R^{\perp} $ is the annihilator of $ R $ for the pairing (\ref{crochetdedualitéop}). So $ dim(R^{\perp}) = dim(\mathcal{P}_{E}(3)) - dim(R) = 108 - 36 = 72 $. We then verify that the given relations (\ref{relationbgdual}) for $ \mathcal{BG}^{!} $ are indeed in $ R^{\perp} $, that each of them generates a free $ S_{3} $-module, and finally that there $ S_{3} $-modules are in direct sum. So these relations entirely generate $ R^{\perp} $.
\end{proof}
\\

{\bf Remarks.} {\begin{enumerate}
\item $ \mathcal{BG}^{!} $ is a quotient of $ \mathcal{BG} $.
\item $ \mathcal{BG}^{!} $ is the symmetrization of the nonsymmetric operad $ \widetilde{\mathcal{BG}}^{!} $ generated by $ m , \succ $ and $ \prec $ and satisfying the relations $ r_{1} $ to $ r_{12} $.
\item Graphically, the relations defining $ \mathcal{BG}^{!} $ can be written in the following way :
$$\bddtroisun{$\succ $}{$m$}{$1$}{$2$}{$3$}-\bddtroisdeux{$\succ $}{$\succ $}{$1$}{$2$}{$3$},\hspace{1cm}
 \bddtroisun{$m$}{$\succ$}{$1$}{$2$}{$3$}-\bddtroisdeux{$\succ$}{$m$}{$1$}{$2$}{$3$},\hspace{1cm}
\bddtroisun{$\prec $}{$\prec$}{$1$}{$2$}{$3$}-\bddtroisdeux{$\prec $}{$m $}{$1$}{$2$}{$3$},$$

$$ \bddtroisun{$\prec$}{$m$}{$1$}{$2$}{$3$}-\bddtroisdeux{$m$}{$\prec$}{$1$}{$2$}{$3$},\hspace{1cm}
\bddtroisun{$\prec $}{$\succ$}{$1$}{$2$}{$3$} - \bddtroisdeux{$\succ $}{$\prec$}{$1$}{$2$}{$3$},\hspace{1cm}
\bddtroisun{$m$}{$m $}{$1$}{$2$}{$3$}-\bddtroisdeux{$m $}{$m$}{$1$}{$2$}{$3$},$$

$$ \bddtroisun{$\succ$}{$\succ$}{$1$}{$2$}{$3$} ,\hspace{1cm} \bddtroisun{$\succ$}{$\prec$}{$1$}{$2$}{$3$},\hspace{1cm} \bddtroisun{$m$}{$\prec$}{$1$}{$2$}{$3$},\hspace{1cm} \bddtroisdeux{$\prec$}{$\prec$}{$1$}{$2$}{$3$},\hspace{1cm} \bddtroisdeux{$\prec$}{$\succ$}{$1$}{$2$}{$3$},\hspace{1cm} \bddtroisdeux{$m$}{$\succ$}{$1$}{$2$}{$3$}.$$
\end{enumerate}}

\begin{prop} \label{minorationdim}
\begin{enumerate}
\item $ \mathcal{BG}^{!}(n) $ is generated, as a $ S_{n} $-module, by the following trees:
\begin{eqnarray} \label{tree}
\begin{array}{c}
\begin{picture}(30,120)(-10,0)
\put(50,0){\line(0,0){10}}
\put(50,10){\line(1,1){10}}
\put(50,10){\line(-1,1){10}}
\put(60,23){\tiny $n$}
\put(36,22){.}
\put(34,24){.}
\put(32,26){.}
\put(30,30){\line(-1,1){20}}
\put(30,30){\line(1,1){10}}
\put(20,40){\line(1,1){10}}
\put(32,43){\tiny $k+1$} 
\put(30,53){\tiny $k$}
\put(43,7){\tiny $\prec$}
\put(23,27){\tiny $\prec$}
\put(6,52){.}
\put(4,54){.}
\put(2,56){.}
\put(0,60){\line(-1,1){30}}
\put(0,60){\line(1,1){10}}
\put(-10,70){\line(1,1){10}}
\put(-20,80){\line(1,1){10}}
\put(2,73){\tiny $l+1$}
\put(0,83){\tiny $l$}
\put(-18,93){\tiny $l-1$}
\put(13,37){\tiny $m$}
\put(-7,57){\tiny $m$}
\put(-17,67){\tiny $\succ$}
\put(-27,77){\tiny $m$}
\put(-34,92){.}
\put(-36,94){.}
\put(-38,96){.}
\put(-40,100){\line(-1,1){10}}
\put(-40,100){\line(1,1){10}}
\put(-30,113){\tiny $2$} 
\put(-47,97){\tiny $m$}
\put(-56,113){\tiny $ 1 $}
\end{picture}
\end{array}
\end{eqnarray}
where $ 1 \leq l \leq k \leq n $.
\item For all $ n \in \mathbb{N} $, $ dim(\mathcal{BG}^{!}(n)) \leq \dfrac{n (n+1)!}{2} $.
\end{enumerate}
\end{prop}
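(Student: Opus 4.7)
The plan is to establish an explicit normal form for tree monomials in $\mathcal{BG}^!$ using the twelve defining relations, then count these normal forms.

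First, I would interpret the six annihilation relations $r_7,\ldots,r_{12}$ as forbidden local patterns in any nonzero tree of $\mathcal{BG}^!(n)$: a $\succ$-node cannot have $\succ$ or $\prec$ as its left child; an $m$-node cannot have $\prec$ as its left child or $\succ$ as its right child; and a $\prec$-node cannot have $\prec$ or $\succ$ as its right child. These restrictions force the three operations to occupy non-overlapping vertical layers.

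Next, I would orient the six equational relations $r_1,\ldots,r_6$ as left-to-right rewriting rules pushing $\prec$-nodes towards the root and $m$-nodes towards the leaves, with the (at most one) $\succ$-node acting as the interface. Explicitly: $r_6$ normalizes any $m$-subtree into a left-leaning comb; $r_3$ together with $r_{10}$ forces $\prec$-subtrees into a right-leaning comb; $r_1$, $r_2$, $r_5$ collapse several $\succ$-nodes into a single one by moving $\succ$ past $m$ and $\prec$; and $r_4$ lets $\prec$ absorb an $m$-child on its left. Iterating these rewrites reduces every tree monomial to a linear combination of trees of the shape depicted in~\ref{tree}: a right-leaning $\prec$-comb at the bottom, at most one $\succ$-node in the middle, a left-leaning $m$-comb at the top, and leaves labeled $1,2,\ldots,n$ along the spine. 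The integers $l$ and $k$ record the heights of the $m$-comb and of the $\succ$-node respectively. Permuting the leaf labels then yields a spanning family for $\mathcal{BG}^!(n)$ as an $S_n$-module, proving~(1).

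For~(2), the number of shape parameters $(l,k)$ with $1\leq l\leq k\leq n$ is
\[
\sum_{l=1}^{n}(n-l+1) \;=\; \frac{n(n+1)}{2}.
\]
For each fixed shape, the labeled tree generates an $S_n$-submodule of dimension at most $|S_n|=n!$. Summing over shapes gives
\[
\dim \mathcal{BG}^!(n) \;\leq\; \frac{n(n+1)}{2}\cdot n! \;=\; \frac{n(n+1)!}{2}.
\]

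The delicate step is certifying that the rewriting in the first part actually terminates on every tree monomial. This should follow from a suitable monovariant, for example a weighted sum over internal nodes with weights depending on the operation type and its depth, that strictly decreases under each rule; but the case analysis over the twelve rules is mechanical and tedious. Full confluence is not needed for the upper bound, since even a terminating non-confluent system produces normal forms that span the quotient, which is all that the inequality requires. The matching lower bound will be obtained later via the rewriting method of \cite{Dotsenko,Hoffbeck,LodayV} when Koszulness is established.
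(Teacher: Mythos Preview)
Your strategy coincides with the paper's: rewrite every tree monomial into an explicit list of normal forms via the twelve relations, then count. But your description of the normal forms is wrong in a way that matters. You claim the result has ``a right-leaning $\prec$-comb at the bottom'' and that ``$r_3$ together with $r_{10}$ forces $\prec$-subtrees into a right-leaning comb''; yet $r_{10}$ is precisely $\prec\circ(I,\prec)=0$, so any right-leaning $\prec$-comb of height $\ge 2$ is already zero, and $r_3$ rewrites $\prec\circ(\prec,I)$ to $\prec\circ(I,m)$, which is not a $\prec$-comb at all. The trees in~(\ref{tree}) are entirely \emph{left} combs: every internal node has a leaf as its right child and the spine runs down the left. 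On such a comb the right-child annihilations $r_{10},r_{11},r_{12}$ are vacuous; only $r_7,r_8,r_9$ constrain the operation word along the spine, forcing it (read from the root upward) into the pattern $\prec^{\ast}\,m^{\ast}\,\succ^{\,\le 1}\,m^{\ast}$.

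The paper also isolates the one substantive step you leave as an assertion: that two $\succ$'s on the spine force zero. It proves, by a one-line induction using $r_2$ in the form $m\circ(\succ,I)=\succ\circ(I,m)$, that any left comb whose operation word reads $\succ\, m\cdots m\,\succ$ vanishes (the base case is $r_7$). Your $r_1,r_2,r_5$ do the sliding, but it is $r_7$ that actually performs the collapse, and the outcome is zero rather than ``a single $\succ$''. With the correct left-comb shape and this lemma in hand, your count $\tfrac{n(n+1)}{2}\cdot n!$ goes through unchanged.
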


\begin{proof}
From relations $ r_{1}, \hdots , r_{6} $ and $ r_{10}, r_{11}, r_{12} $, we obtain that $ \mathcal{BG}^{!}(3) $ is generated by the trees of the following form:
$$ \bddtroisun{$ a $}{$ b $}{$1$}{$2$}{$3$} ,$$
with $ a,b \in \{\prec, \succ , m \} $. Using relations $ r_{7}, r_{8}, r_{9} $,
$$ \bddtroisun{$\succ$}{$\succ$}{$1$}{$2$}{$3$} ,\hspace{1cm} \bddtroisun{$\succ$}{$\prec$}{$1$}{$2$}{$3$},\hspace{1cm} \bddtroisun{$m$}{$\prec$}{$1$}{$2$}{$3$} $$
are eliminated. We deduce that $ \mathcal{BG}^{!}(3) $ is generated by the trees of the following form:
$$ \bddtroisun{$ a $}{$ b $}{$1$}{$2$}{$3$} ,$$
with $ (a,b) \in \{ (\succ,m) , (m,\succ),(m,m),(\prec,\succ), (\prec,m),(\prec , \prec) \} $.\\

Moreover, let us prove that for all $ n \geq 3 $, the following tree is zero :
\begin{eqnarray*}
\begin{array}{c}
\begin{picture}(30,70)(-10,0)
\put(50,0){\line(0,0){10}}
\put(50,10){\line(1,1){10}}
\put(50,10){\line(-1,1){20}}
\put(60,23){\tiny $n$}
\put(40,20){\line(1,1){10}}
\put(46,33){\tiny $n-1$}
\put(43,7){\tiny $\succ$}
\put(33,17){\tiny $m$}
\put(26,32){.}
\put(24,34){.}
\put(22,36){.}
\put(20,40){\line(-1,1){20}}
\put(20,40){\line(1,1){10}}
\put(10,50){\line(1,1){10}}
\put(28,53){\tiny $3$}
\put(18,63){\tiny $2$}
\put(-4,63){\tiny $1$}
\put(13,37){\tiny $m$}
\put(3,47){\tiny $\succ$}
\end{picture}
\end{array}
\end{eqnarray*}
By induction on $ n $. If $ n = 3 $ this tree is zero with the relation $ r_{7} $. If $ n \geq 4 $,
\begin{eqnarray*}
\begin{array}{ccc}
\begin{picture}(30,80)(50,0)
\put(50,0){\line(0,0){10}}
\put(50,10){\line(1,1){10}}
\put(50,10){\line(-1,1){20}}
\put(60,23){\tiny $n$}
\put(40,20){\line(1,1){10}}
\put(46,33){\tiny $n-1$}
\put(43,7){\tiny $\succ$}
\put(33,17){\tiny $m$}
\put(26,32){.}
\put(24,34){.}
\put(22,36){.}
\put(20,40){\line(-1,1){30}}
\put(20,40){\line(1,1){10}}
\put(10,50){\line(1,1){10}}
\put(0,60){\line(1,1){10}}
\put(28,53){\tiny $ 4 $}
\put(18,63){\tiny $ 3 $}
\put(8,73){\tiny $ 2 $}
\put(-14,73){\tiny $ 1 $}
\put(13,37){\tiny $m$}
\put(3,47){\tiny $ m $}
\put(-7,57){\tiny $\succ$}
\end{picture}
& = &
\begin{picture}(30,80)(-10,0)
\put(50,0){\line(0,0){10}}
\put(50,10){\line(1,1){10}}
\put(50,10){\line(-1,1){20}}
\put(60,23){\tiny $n$}
\put(40,20){\line(1,1){10}}
\put(46,33){\tiny $n-1$}
\put(43,7){\tiny $\succ$}
\put(33,17){\tiny $m$}
\put(26,32){.}
\put(24,34){.}
\put(22,36){.}
\put(20,40){\line(-1,1){20}}
\put(20,40){\line(1,1){10}}
\put(10,50){\line(1,1){20}}
\put(20,60){\line(-1,1){10}}
\put(28,53){\tiny $4$}
\put(30,73){\tiny $ 3 $}
\put(6,73){\tiny $2$}
\put(-4,63){\tiny $1$}
\put(10,59){\tiny $ m $}
\put(13,37){\tiny $m$}
\put(3,47){\tiny $\succ$}
\end{picture}
\end{array}
\end{eqnarray*}
with the relation $ r_{2} $ and this tree is zero by induction hypothesis.\\

So $ \mathcal{BG}^{!}(n) $ is generated, as a $ S_{n} $-module, by the $ \frac{n(n+1)}{2} $ trees given by the formula (\ref{tree}). We immediately deduce the second assertion.
\end{proof}

\subsubsection{Free $ \mathcal{BG}^{!} $-algebra and dimension of $ \mathcal{BG}^{!} $} \label{ordresurbg!}

Let $ \mathbb{F}^{!} $ be the subset of forests of $ \mathbb{F} $ containing the empty tree $ 1 $ and satisfying the following conditions : if $ F_{1} \hdots F_{n} \in \mathbb{F}^{!} $ with the $ F_{i} $'s nonempty trees, then
\begin{enumerate}
\item $ F_{1} , \hdots , F_{n} $ are corollas $ \in \mathbb{F} $,
\item if $ \exists ~ e \in E(F_{i}) $ decorated by $ l $, then $ i = 1 $,
\item if $ \exists ~ e \in E(F_{i}) $ decorated by $ r $, then $ i = n $.
\end{enumerate}
We set $ \mathbb{G} = \mathbb{F} \setminus \mathbb{F}^{!} $.\\

{\bf Remark.} {If $ F_{1} \hdots F_{n} \in \mathbb{F}^{!} $ with $ n \geq 2 $, then $ F_{2}, \hdots , F_{n-1} = \tun $, every $ e \in E(F_{1}) $ is decorated by $ l $ and every $ e \in E(F_{n}) $ is decorated by $ r $. In particular, we have $ \dagger (\mathbb{F}^{!}) = \mathbb{F}^{!} $.}
\\

{\bf Examples.} {Forests of $ \mathbb{F}^{!} $ :
\begin{itemize}
\item In degree 1 : $ \tun $.
\item In degree 2 : $ \tun \tun , \addeux{$l$} , \addeux{$r$} $.
\item In degree 3 : $ \tun \tun \tun , \addeux{$l$} \tun , \tun \addeux{$r$} , \adtroisun{$l$}{$l$} , \adtroisun{$r$}{$l$} , \adtroisun{$r$}{$r$} $.
\item In degree 4 : $ \tun \tun \tun \tun , \addeux{$l$} \tun \tun , \tun \tun \addeux{$r$} , \addeux{$l$} \addeux{$r$} , \adtroisun{$l$}{$l$} \tun , \tun \adtroisun{$r$}{$r$} , \adquatreun{$l$}{$l$}{$l$} , \adquatreun{$l$}{$l$}{$r$} , \adquatreun{$l$}{$r$}{$r$} , \adquatreun{$r$}{$r$}{$r$} $.
\end{itemize}}
\vspace{0.5cm}

Let $ \mathcal{BT}^{!} $ be the $ \mathbb{K} $-vector space generated by $ \mathbb{F}^{!} $ and $ \mathcal{M}^{!} $ the $ \mathbb{K} $-vector space generated by $ \mathbb{F}^{!} \setminus \{1 \} $. We denote by $ t_{n}^{\mathcal{BT}^{!}} $ the number of trees of degree $ n $ in $ \mathcal{BT}^{!} $ and $ f_{n}^{\mathcal{BT}^{!}} $ the number of forests of degree $ n $ in $ \mathcal{BT}^{!} $. We put $ T_{\mathcal{BT}^{!}}(x) = \sum_{n \geq 1} t_{n}^{\mathcal{BT}^{!}} x^{n} $ and $ F_{\mathcal{BT}^{!}}(x) = \sum_{n \geq 1} f_{n}^{\mathcal{BT}^{!}} x^{n} $.

\begin{prop} \label{serieformelledual}
The formal series of $ \mathcal{BT}^{!} $ are given by :
$$ T_{\mathcal{BT}^{!}}(x) = \dfrac{x}{(1-x)^{2}} \hspace{0.5cm} \mbox{ and } \hspace{0.5cm} F_{\mathcal{BT}^{!}}(x) = \dfrac{x}{(1-x)^{3}}  .$$
\end{prop}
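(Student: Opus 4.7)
The plan is to identify the elements of $\mathbb{F}^!$ explicitly and then count them directly; the two generating functions will follow immediately.

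First I would analyze the trees in $\mathbb{F}^!$. Condition (1) of the definition forces any tree $T \in \mathbb{F}^!$ to be a corolla (the case $n=1$ in the forest decomposition). Combined with the lemma characterizing $\mathbb{T}$ (which forbids a pair $(r,l)$ in the left-to-right reading of edges leaving a common vertex), a corolla of degree $n \geq 1$ is determined by choosing the number $a \in \{0,1,\dots,n-1\}$ of its edges decorated by $l$ (necessarily placed at the left), the remaining $n-1-a$ edges being decorated by $r$. Hence $t_n^{\mathcal{BT}^!} = n$ for $n \geq 1$, which gives
\begin{eqnarray*}
T_{\mathcal{BT}^!}(x) = \sum_{n \geq 1} n x^n = \frac{x}{(1-x)^2}.
\end{eqnarray*}

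Next I would count forests in $\mathbb{F}^!$ with at least two trees. Let $F = F_1 \hdots F_k \in \mathbb{F}^!$ with $k \geq 2$. By the remark following the definition of $\mathbb{F}^!$, $F_2, \hdots , F_{k-1} = \tun$, every edge of $F_1$ is decorated by $l$ and every edge of $F_k$ is decorated by $r$. Thus such a forest is uniquely encoded by the triple $(a,b,c)$ where $a = |F_1| - 1 \geq 0$ is the number of $l$-edges in $F_1$, $b = |F_k| - 1 \geq 0$ is the number of $r$-edges in $F_k$, and $c = k-2 \geq 0$ is the number of intermediate $\tun$'s. The total degree is $a + b + c + 2$, so the number of such forests of degree $n$ equals the number of triples $(a,b,c) \in \mathbb{N}^3$ with $a+b+c = n-2$, that is $\binom{n}{2}$.

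Adding the $t_n^{\mathcal{BT}^!} = n$ single-tree forests of degree $n$ gives
\begin{eqnarray*}
f_n^{\mathcal{BT}^!} = \binom{n}{2} + n = \binom{n+1}{2},
\end{eqnarray*}
hence, using the classical identity $\sum_{n \geq 0} \binom{n+2}{2} x^n = (1-x)^{-3}$,
\begin{eqnarray*}
F_{\mathcal{BT}^!}(x) = \sum_{n \geq 1} \binom{n+1}{2} x^n = x \sum_{n \geq 0} \binom{n+2}{2} x^n = \frac{x}{(1-x)^3}.
\end{eqnarray*}

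The proof is essentially a direct enumeration, so there is no serious obstacle; the only point one must be careful about is extracting from the definition of $\mathbb{F}^!$ (together with the lemma characterizing admissible decorations of $\mathbb{T}$) the rigid shape of forests, in particular that no tree other than $F_1$ can carry an $l$-edge and no tree other than $F_k$ can carry an $r$-edge, which is what collapses the intermediate trees to $\tun$ and fixes the decorations of $F_1$ and $F_k$.
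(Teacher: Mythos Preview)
Your proof is correct. The argument for $T_{\mathcal{BT}^!}$ is essentially identical to the paper's. For $F_{\mathcal{BT}^!}$ the paper proceeds instead by a recursion: it decomposes forests of degree $n$ into (i) forests of length $\geq 3$, obtained from forests of degree $n-1$ and length $\geq 2$ by inserting a $\tun$ in the middle, (ii) trees of degree $n$, and (iii) forests of length $2$, yielding $f_n^{\mathcal{BT}^!} = f_{n-1}^{\mathcal{BT}^!} - t_{n-1}^{\mathcal{BT}^!} + t_n^{\mathcal{BT}^!} + (n-1) = f_{n-1}^{\mathcal{BT}^!} + n$ and hence $f_n^{\mathcal{BT}^!} = \binom{n+1}{2}$. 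Your direct bijection with triples $(a,b,c)\in\mathbb{N}^3$ satisfying $a+b+c=n-2$ is a cleaner alternative: it avoids the recursion and the case analysis on length, at the cost of invoking the remark that pins down the rigid shape of forests in $\mathbb{F}^!$ (which you correctly identify as the only delicate point). Both routes are elementary and of comparable length.
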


\begin{proof}
We have $ t_{1}^{\mathcal{BT}^{!}} = 1, f_{1}^{\mathcal{BT}^{!}} = 1 $. For all $ n \geq 2 $, $ t_{n}^{\mathcal{BT}^{!}} = n $ because the corollas of degree $ n $ have $ n-1 $ edges with $ n $ possible different decorations. Moreover, for all $ n \geq 2 $,
\begin{eqnarray*}
f_{n}^{\mathcal{BT}^{!}} = f_{n-1}^{\mathcal{BT}^{!}} - t_{n-1}^{\mathcal{BT}^{!}} + t_{n}^{\mathcal{BT}^{!}} + (n-1)
\end{eqnarray*}
where the term :
\begin{itemize}
\item $ f_{n-1}^{\mathcal{BT}^{!}} - t_{n-1}^{\mathcal{BT}^{!}} $ corresponds to forests of degree $ n $ and of length $ \geq 3 $ obtained from the forests of degree $ n-1 $ and of length $ \geq 2 $ by adding $ \tun $ in the middle. For example, in degree $ 4 $, these are the forests $ \tun \tun \tun \tun , \addeux{$l$} \tun \tun , \tun \tun \addeux{$r$} $.
\item $ t_{n}^{\mathcal{BT}^{!}} $ corresponds to trees of degree $ n $. In degree $ 4 $, these are the trees $ \adquatreun{$l$}{$l$}{$l$} , \adquatreun{$l$}{$l$}{$r$} , \adquatreun{$l$}{$r$}{$r$} , \adquatreun{$r$}{$r$}{$r$} $.
\item $ n-1 $ corresponds to forests of degree $ n $ and of length $ 2 $ obtained by product of a tree of degree $ k $ and a tree of degree $ n-k $, $ 1 \leq k \leq n-1 $. For example, in degree $ 4 $, these are the forests $ \addeux{$l$} \addeux{$r$} , \adtroisun{$l$}{$l$} \tun , \tun \adtroisun{$r$}{$r$} $.
\end{itemize}
So $ f_{n}^{\mathcal{BT}^{!}} = f_{n-1}^{\mathcal{BT}^{!}} + n = \frac{n (n+1)}{2} $. We deduce that $ T_{\mathcal{BT}^{!}}(x) = \frac{x}{(1-x)^{2}} $ and $ F_{\mathcal{BT}^{!}}(x) = \frac{x}{(1-x)^{3}} $.
\end{proof}

\begin{lemma}
Let $ F , G \in \mathbb{F} $ be two nonempty forests such that $ F \in \mathbb{G} $ or $ G \in \mathbb{G} $. Then $ F G , F \succ G $ and $ F \prec G \in \mathbb{G} $.
\end{lemma}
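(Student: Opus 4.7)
The approach is to use the explicit combinatorial characterization of $\mathbb{F}^{!}$: a forest $H_{1} \cdots H_{k}$ lies in $\mathbb{F}^{!}$ if and only if every $H_{i}$ is a corolla, every $H_{i}$ with $i \geq 2$ carries no $l$-decorated edge, and every $H_{j}$ with $j \leq k-1$ carries no $r$-decorated edge. I would first record that the involution $\dagger$ stabilizes $\mathbb{F}^{!}$, as already noted in the remark, hence also stabilizes $\mathbb{G}$. Combined with proposition \ref{deggerprecsucc} (giving $(F \prec G)^{\dagger} = G^{\dagger} \succ F^{\dagger}$), this reduces the case of $\prec$ to that of $\succ$ at the end.

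For the concatenation $FG = F_{1} \cdots F_{n} G_{1} \cdots G_{m}$, the argument is by direct inspection of positions. If $F \in \mathbb{G}$, the obstruction is located at some $F_{i}$: that tree keeps its position $i$ in the forest $FG$ of length $n+m$, so the same defect (non-corolla, $l$-edge at position $i \geq 2$, or $r$-edge at position $i \leq n \leq n+m-1$) rules out $FG \in \mathbb{F}^{!}$. The case $G \in \mathbb{G}$ is symmetric via the shift: a problematic $G_{j}$ sits at position $n + j \geq 2$ in $FG$, and $n + j \leq n + m - 1$ whenever $j \leq m-1$.

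For $F \succ G = H \, G_{2} \cdots G_{m}$ with $H = B(FG_{1}^{1} \otimes G_{1}^{2})$, I would split cases. If $F \in \mathbb{G}$, then $F \neq \tun \cdots \tun$ (since any all-$\tun$ forest lies in $\mathbb{F}^{!}$), so some $F_{i}$ has at least two vertices; since $F_{i}$ becomes a subtree hanging from the root of $H$ via its own root, $H$ acquires height $\geq 2$, fails to be a corolla, and therefore $F \succ G \notin \mathbb{F}^{!}$. If $G \in \mathbb{G}$, any defect inside a tree $G_{i}$ with $i \geq 2$ is preserved verbatim in $F \succ G$ because $G_{i}$ keeps position $i$ there. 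A defect inside $G_{1}$ is either that $G_{1}$ is not a corolla, in which case $H$ again fails to be a corolla, or that $G_{1}$ is a corolla bearing an $r$-edge together with $m \geq 2$; since the right children of the root of $G_{1}$ pass intact into $H$ as right children of the root of $H$, the tree $H$ carries an $r$-edge while sitting at position $1 \leq m-1$ in $F \succ G$, again violating the $\mathbb{F}^{!}$ condition.

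The main obstacle I anticipate is the bookkeeping for the tree $H = B(FG_{1}^{1} \otimes G_{1}^{2})$: one must carefully verify that the height of $G_{1}$ bounds the height of $H$ from below and that every $r$-decorated edge of $G_{1}$ incident to its root survives as an $r$-decorated edge of $H$ incident to its root, despite the insertion of $F$ on the left. Once this structural claim is made explicit, every sub-case closes immediately, and the $\dagger$-transport of proposition \ref{deggerprecsucc} disposes of $F \prec G$ for free.
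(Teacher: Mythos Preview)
Your proof is correct. The route differs from the paper's in organization rather than in depth. The paper fixes the hypothesis $F \in \mathbb{G}$ and splits on the \emph{structural} reason (either $h(F) \geq 2$, or $F$ is a product of corollas with a misplaced decoration $F = F_{1}F_{2}$), then disposes of all six products $F \bullet G$ and $G \bullet F$ for $\bullet \in \{ \ast, \succ, \prec \}$ simultaneously, mostly by the single observation that height $\geq 2$ propagates. You instead organize by the \emph{operation}, do a finer position-tracking analysis for $\ast$ and $\succ$, and then invoke the involution $\dagger$ together with proposition \ref{deggerprecsucc} to reduce $\prec$ to $\succ$ --- a symmetry the paper does not exploit here. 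Your observation for $F \succ G$ with $F \in \mathbb{G}$ (namely: $F$ cannot be all $\tun$'s, hence some $F_{i}$ has an edge, hence $H$ has height $\geq 2$) is exactly the content of the paper's case~2 for that product, just reached without first separating the ``not a product of corollas'' subcase. The paper's proof is more uniform; yours trades a bit more case-work on $\succ$ for avoiding the $\prec$ case entirely.
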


\begin{proof}
Suppose that $ F \notin \mathbb{F}^{!} $. We have two cases :
\begin{enumerate}
\item If $ F $ is not a monomial of corollas. Then $ h(F) \geq 2 $. So $ h(F \bullet G) , h(G \bullet F) \geq 2 $ and $ F \bullet G \in \mathbb{G} , G \bullet F \in \mathbb{G} $ for all $ \bullet \in \{ \ast , \succ , \prec \} $.
\item If $ F $ is a monomial of corollas. As $ F \notin \mathbb{F}^{!} $, $ h(F) \geq 1 $ and $ F = F_{1} F_{2} $ with $ F_{1} , F_{2} $ nonempty such that $  \exists ~ e \in E(F_{2}) $ decorated by $ l $ (this is the same argument with $ r $). Then $ F G , G F \in \mathbb{G} $, $ G \succ F = (G \succ F_{1}) F_{2} \in \mathbb{G} $ and $ F \prec G = F_{1} (F_{2} \prec G) \in \mathbb{G} $. Moreover $ h(F \succ G) , h(G \prec F) \geq 2 $ and $ F \succ G , G \prec F \in \mathbb{G} $. 
\end{enumerate}
\end{proof}
\\

In other words, $ \mathbb{K}[\mathbb{G}] $ is a $ \mathcal{BG} $-ideal of the $ \mathcal{BG} $-algebra $ \mathcal{M} $. So the quotient vector space $ \mathcal{M}^{!} = \mathcal{M} / \mathbb{K}[\mathbb{G}] $ is a $ \mathcal{BG} $-algebra. In the sequel, we shall identify a forest $ F \in \mathcal{M} $ and its class in $ \mathcal{M}^{!} $.

\begin{prop} \label{M!estbgdual}
$ (\mathcal{M}^{!},\ast , \succ , \prec ) $ is a $ \mathcal{BG}^{!} $-algebra generated by $ \tun $.
\end{prop}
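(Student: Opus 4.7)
The plan is to verify the statement in two stages: first equip $\mathcal{M}^{!}$ with a $\mathcal{BG}^{!}$-algebra structure, then deduce generation by $\tun$ from Theorem \ref{mestbglibre}. By the preceding lemma $\mathbb{K}[\mathbb{G}]$ is a two-sided $\mathcal{BG}$-ideal of $\mathcal{M}$, so the quotient $\mathcal{M}^{!}=\mathcal{M}/\mathbb{K}[\mathbb{G}]$ automatically inherits a well-defined $\mathcal{BG}$-algebra structure with operations still denoted $\ast,\succ,\prec$. In particular the six relations $r_{1},\ldots,r_{6}$ of Definition \ref{defioperade} hold in $\mathcal{M}^{!}$. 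What remains is therefore to show that the six additional quadratic relations $r_{7},\ldots,r_{12}$ of (\ref{relationbgdual}) also hold in $\mathcal{M}^{!}$, i.e.\ that for every nonempty forests $F,G,H\in\mathbb{F}\setminus\{1\}$ each of
\[
(F\succ G)\succ H,\quad (F\prec G)\succ H,\quad (F\prec G)\ast H,\quad F\prec(G\prec H),\quad F\prec(G\succ H),\quad F\ast(G\succ H)
\]
lies in $\mathbb{K}[\mathbb{G}]$.

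The key observation that cuts the bookkeeping in half is that the involution $\dagger$ of Proposition \ref{involution}, combined with $(G\succ F)^{\dagger}=F^{\dagger}\prec G^{\dagger}$ and $(G\prec F)^{\dagger}=F^{\dagger}\succ G^{\dagger}$ from Proposition \ref{deggerprecsucc} and $(FG)^{\dagger}=G^{\dagger}F^{\dagger}$, exchanges $r_{7}\leftrightarrow r_{10}$, $r_{8}\leftrightarrow r_{11}$ and $r_{9}\leftrightarrow r_{12}$; and since $\dagger$ preserves $\mathbb{F}^{!}$, it preserves $\mathbb{G}$. Hence I need only check $r_{7}$, $r_{8}$, $r_{9}$. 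In each of these three cases, I plan to use the explicit formulas for $\succ$ and $\prec$ from Section \ref{defisuccprec} to exhibit in the resulting forest a tree $T$ which is either of height $\geq 2$ (hence already not a corolla, so the forest is in $\mathbb{G}$), or a corolla carrying an $l$-decorated edge while sitting in a non-first position, or a corolla carrying an $r$-decorated edge while sitting in a non-last position. For example, writing $G=G_{1}\cdots G_{m}$ with $G_{1}=B(G_{1}^{1}\otimes G_{1}^{2})$ and $H=H_{1}\cdots H_{p}$ with $H_{1}=B(H_{1}^{1}\otimes H_{1}^{2})$, one has $(F\succ G)\succ H=B\bigl((F\succ G)H_{1}^{1}\otimes H_{1}^{2}\bigr)H_{2}\cdots H_{p}$, and because $F\succ G$ always has a tree of height $\geq 1$, this first tree has height $\geq 2$. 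The cases $r_{8}$ and $r_{9}$ are analogous: in $r_{9}$ the tree $B(F_{n}^{1}\otimes F_{n}^{2}G)$ carries an $r$-edge (since $G\neq 1$) but sits in a non-terminal position in $(F\prec G)\ast H$ because $H\neq 1$. This step is the only real work in the proof but consists only of routine case-checking.

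Finally, generation is immediate: by Theorem \ref{mestbglibre} the $\mathcal{BG}$-algebra $\mathcal{M}$ is generated by $\tun$, so its quotient $\mathcal{M}^{!}$ is generated by the class of $\tun$ as a $\mathcal{BG}$-algebra, hence as a $\mathcal{BG}^{!}$-algebra since the operations defining this structure are the induced ones.
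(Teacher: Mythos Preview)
Your proof is correct and follows essentially the same approach as the paper: verify $r_{7},\ldots,r_{12}$ by showing the relevant forests lie in $\mathbb{G}$ via height and edge-decoration arguments, then deduce generation from Theorem \ref{mestbglibre}. The one refinement you add is invoking the involution $\dagger$ (and its compatibility with $\succ,\prec,\ast$ and with $\mathbb{F}^{!}$) to reduce the six relations to three, whereas the paper simply checks all six directly; this is a legitimate shortcut but not a different strategy.
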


\begin{proof}
We already have that $ \mathcal{M}^{!} $ is a $ \mathcal{BG} $-algebra. Let us prove relations $ r7 $ to $ r12 $. Let $ F,G,H \in \mathcal{M} $ be three nonempty forests. $ h((F \succ G) \succ H ) , h((F \prec G) \succ H) , h(F \prec (G \prec H)) $ and $ h(F \prec (G \succ H)) \geq 3 $. Therefore $ (F \succ G) \succ H , (F \prec G) \succ H ,F \prec (G \prec H) , F \prec (G \succ H) \in \mathbb{G} $ and the relations $ r7 , r8 , r10 ,r11 $ are satisfied in $ \mathcal{M}^{!} $. Moreover $ (F \prec G) H , F (G \succ H) \in \mathbb{G} $ by considering the decorations of the edges and $ r9 , r12 $ are true in $ \mathcal{M}^{!} $. So $ (\mathcal{M}^{!},\ast , \succ , \prec ) $ is a $ \mathcal{BG}^{!} $-algebra. As $ \mathcal{M} $ is generated as $ \mathcal{BG} $-algebra by $ \tun $ (with theorem \ref{mestbglibre}), $ \mathcal{M}^{!} $ is also generated by $ \tun $ as $ \mathcal{BG} $-algebra and therefore as $ \mathcal{BG}^{!} $-algebra.
\end{proof}

\begin{theo}
\begin{enumerate}
\item $ \mathcal{M}^{!} $ is the free $ \mathcal{BG}^{!} $-algebra generated by $ \tun $.
\item For all $ n \in \mathbb{N}^{\ast} $, $ \mathcal{BG}^{!}(n) = \frac{n (n+1)!}{2} $.
\item For all $ n \in \mathbb{N}^{\ast} $, $ \mathcal{BG}^{!}(n) $ is freely generated, as a $ S_{n} $-module, by the trees given by the formula (\ref{tree}).
\end{enumerate}
\end{theo}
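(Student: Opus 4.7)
The plan is a dimension-count sandwich between the free $\mathcal{BG}^{!}$-algebra on one generator and $\mathcal{M}^{!}$, together with the spanning family from Proposition \ref{minorationdim}. Let $L$ denote the free $\mathcal{BG}^{!}$-algebra on one generator $x$. By Proposition \ref{M!estbgdual}, $\mathcal{M}^{!}$ is a $\mathcal{BG}^{!}$-algebra generated by $\tun$, so the universal property yields a surjective $\mathcal{BG}^{!}$-morphism $\phi : L \twoheadrightarrow \mathcal{M}^{!}$ sending $x$ to $\tun$. Since $\mathcal{BG}^{!}$ is regular, $\dim L_{n} = \dim \widetilde{\mathcal{BG}^{!}}(n)$, and Proposition \ref{minorationdim} gives the upper bound $\dim \widetilde{\mathcal{BG}^{!}}(n) \leq \frac{n(n+1)}{2}$.

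For the matching lower bound, one observes that $\mathbb{F}$ is a basis of $\mathcal{BT}$, so the set $\mathbb{F} \setminus \mathbb{G} = \mathbb{F}^{!}$ descends to a basis of $\mathcal{M}^{!} = \mathcal{M}/\mathbb{K}[\mathbb{G}]$ (minus the empty forest). Hence $\dim (\mathcal{M}^{!})_{n} = f_{n}^{\mathcal{BT}^{!}} = \frac{n(n+1)}{2}$ by Proposition \ref{serieformelledual}. Combining with surjectivity of $\phi$, I get
\[ \tfrac{n(n+1)}{2} \;\geq\; \dim L_{n} \;\geq\; \dim (\mathcal{M}^{!})_{n} \;=\; \tfrac{n(n+1)}{2}, \]
so equality holds throughout. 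This proves simultaneously that $\phi$ is an isomorphism (assertion 1) and that $\dim \widetilde{\mathcal{BG}^{!}}(n) = \frac{n(n+1)}{2}$, which gives $\dim \mathcal{BG}^{!}(n) = n! \cdot \frac{n(n+1)}{2} = \frac{n(n+1)!}{2}$ and proves assertion 2.

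For assertion 3, Proposition \ref{minorationdim} exhibits $\frac{n(n+1)}{2}$ trees of the form (\ref{tree}) that generate $\mathcal{BG}^{!}(n)$ as an $S_{n}$-module. This gives a surjection
\[ \bigoplus_{1 \leq l \leq k \leq n} \mathbb{K}[S_{n}] \;\twoheadrightarrow\; \mathcal{BG}^{!}(n). \]
The left-hand side has dimension $n! \cdot \frac{n(n+1)}{2} = \frac{n(n+1)!}{2}$, which equals $\dim \mathcal{BG}^{!}(n)$ by assertion 2. Hence the surjection is an isomorphism, showing that the trees (\ref{tree}) freely generate $\mathcal{BG}^{!}(n)$ as an $S_{n}$-module.

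The only nontrivial step is the lower bound $\dim(\mathcal{M}^{!})_{n} \geq \frac{n(n+1)}{2}$, which reduces to the fact that $\mathbb{K}[\mathbb{G}]$ is exactly the span of a subset of the basis $\mathbb{F}$ of $\mathcal{BT}$, so that passing to the quotient $\mathcal{M}^{!}$ does not collapse any element of $\mathbb{F}^{!}$; once this is in place, the rest of the argument is purely a squeeze between the upper bound of Proposition \ref{minorationdim} and the combinatorial count of Proposition \ref{serieformelledual}.
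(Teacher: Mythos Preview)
Your proof is correct and follows essentially the same route as the paper: a dimension squeeze using the surjection from the free $\mathcal{BG}^{!}$-algebra on one generator onto $\mathcal{M}^{!}$ (which the paper writes as the map $\Omega : \widetilde{\mathcal{BG}}^{!}(n) \to \mathcal{M}^{!}_{n}$, $p \mapsto p.(\tun,\ldots,\tun)$), combined with the upper bound from Proposition \ref{minorationdim} and the count $f_{n}^{\mathcal{BT}^{!}} = \tfrac{n(n+1)}{2}$ from Proposition \ref{serieformelledual}. Your additional remark that $\mathbb{K}[\mathbb{G}]$ is spanned by a subset of the basis $\mathbb{F}$, so that $\mathbb{F}^{!}\setminus\{1\}$ is genuinely a basis of $\mathcal{M}^{!}$, makes explicit a step the paper leaves implicit.
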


\begin{proof}
Let $ n \in \mathbb{N}^{\ast} $. Consider the following application :
\begin{eqnarray*}
\Omega : \left\lbrace \begin{array}{rcl}
\widetilde{\mathcal{BG}}^{!}(n) & \rightarrow & Vect(\mbox{planar forests } \in \mathcal{BT}^{!} \mbox{ of degree } n) \subseteq \mathcal{M}^{!} \\
p & \rightarrow & p.(\tun, \hdots , \tun) .
\end{array} \right. 
\end{eqnarray*}
By proposition \ref{M!estbgdual}, $ \Omega $ is surjective. So $ dim(\mathcal{BG}^{!}(n)) \geq f_{n}^{\mathcal{BT}^{!}} n! = \frac{n (n+1)!}{2} $. From proposition \ref{minorationdim}, we deduce that $ dim(\mathcal{BG}^{!}(n)) = \frac{n (n+1)!}{2} $. So $ \Omega $ is bijective and $ \mathcal{M}^{!} $ is the free $ \mathcal{BG}^{!} $-algebra generated by $ \tun $. Moreover $ \mathcal{BG}^{!}(n) $ is freely generated, as a $ S_{n} $-module, by the trees given by the formula (\ref{tree}), by equality of dimensions.
\end{proof}
\\

We give some numerical values:

$$\begin{array}{c|c|c|c|c|c|c|c|c|c|c}
n&1&2&3&4&5&6&7&8&9&10\\
\hline dim \left( \widetilde{\mathcal{BG}}^{!}(n) \right)  &1&3&6&10&15&21&28&36&45&55 \\
\hline dim \left( \mathcal{BG}^{!}(n) \right) &1&6&36&240&1800&15120&141120&1451520&16329600&199584000
\end{array}$$

These are the sequences A000217 and A001286 in \cite{Sloane}.\\

{\bf Remark.} {We denote $ F_{\mathcal{BG}}(x) $ and $ F_{\mathcal{BG}^{!}}(x) $ the formal series associated to operads $ \mathcal{BG} $ and $ \mathcal{BG}^{!} $. Using proposition \ref{serieformelledual}, $ F_{\mathcal{BG}^{!}}(x) = \frac{x}{(1-x)^{3}} $. Moreover, by proposition \ref{serieformelle} (see the proof), $ F_{\mathcal{BG}}(x) = \frac{T(x)}{1-T(x)} $ where $ T(x) = (x-2x^{2}+x^{3})^{-1} $. So $ F_{\mathcal{BG}}^{-1}(x) = \frac{x}{(1+x)^{3}} $ and we have :
$$ F_{\mathcal{BG}} \left( - F_{\mathcal{BG}^{!}}(-x) \right) = x .$$
This result is also a consequence of theorem \ref{bgestkoszul}.}
\\

To conclude this section, we introduce an order relation on the set of the vertices of a forest $ \in \mathbb{F}^{!} $. Let $ F = F_{1} \hdots F_{n} \in \mathbb{F}^{!} \setminus \{1\} $ and $ s , s' $ be two vertices of $ F $. Then $ s \leq s' $ if one of these assertions is satisfied :
\begin{enumerate}
\item $ s \in V(F_{i}) $, $ s' \in V(F_{j}) $ and $ i < j $.
\item $ s,s' \in V(F_{i}) $ with $ F_{i} = B(G_{1} \hdots G_{p} \otimes H_{1} \hdots H_{q}) $ (the $ G_{k} $'s and the $ H_{k} $'s are equal to $ \tun $) and :
\begin{enumerate}
\item $ s \in V(G_{k}) $, $ s' \in V(G_{l}) $ and $ k < l $.
\item $ s \in V(G_{k}) $ and $ s' $ is the root of $ F_{i} $.
\item $ s $ is the root of $ F_{i} $ and $ s' \in V(H_{k}) $.
\item $ s \in V(H_{k}) $, $ s' \in V(H_{l}) $ and $ k < l $.
\end{enumerate}
\end{enumerate}

{\bf Examples.} {We give the order relation on the vertices for the following forests:
$$ \sadquatreun{$l$}{$r$}{$r$}{$2$}{$4$}{$3$}{$1$} , \hspace{1cm} \sadquatreun{$l$}{$l$}{$r$}{$3$}{$4$}{$2$}{$1$} , \hspace{1cm} \sadtroisun{$l$}{$l$}{$3$}{$2$}{$1$} \tdun{$4$} \tdun{$5$} \saddeux{$r$}{$7$}{$6$} , \hspace{1cm} \sadquatreun{$l$}{$l$}{$l$}{$4$}{$3$}{$2$}{$1$} \tdun{$5$} \tdun{$6$} \tdun{$7$} \sadtroisun{$r$}{$r$}{$10$}{$9$}{$8$} .$$
}

\subsection{Homology of a $ \mathcal{BG} $-algebra} \label{homologie}

We denote by $ BG^{!}(V) $ the free $ \mathcal{BG}^{!} $-algebra over a vector space $ V $. Because the operad $ \mathcal{BG}^{!} $ is regular, we get the following result :

\begin{prop}
Let $ V $ be a $ \mathbb{K} $-vector space. Then the free $ \mathcal{BG}^{!} $-algebra on $ V $ is
$$ BG^{!}(V) = \bigoplus_{n \geq 1} \mathbb{K} [\mathbb{F}^{!}(n)] \otimes V^{\otimes n} ,$$
equipped with the following binary operations : for all $ F \in \mathbb{F}^{!}(n) $, $ G \in \mathbb{F}^{!}(m) $, $ v_{1} \otimes \hdots \otimes v_{n} \in V^{\otimes n} $ and $ w_{1} \otimes \hdots \otimes w_{m} \in V^{\otimes m} $,
\begin{eqnarray*}
(F \otimes v_{1} \otimes \hdots \otimes v_{n}) \ast (G \otimes w_{1} \otimes \hdots \otimes w_{m}) & = & (FG \otimes v_{1} \otimes \hdots \otimes v_{n} \otimes w_{1} \otimes \hdots \otimes w_{m}) ,\\
(F \otimes v_{1} \otimes \hdots \otimes v_{n}) \succ (G \otimes w_{1} \otimes \hdots \otimes w_{m}) & = & (F \succ G \otimes v_{1} \otimes \hdots \otimes v_{n} \otimes w_{1} \otimes \hdots \otimes w_{m}) ,\\
(F \otimes v_{1} \otimes \hdots \otimes v_{n}) \prec (G \otimes w_{1} \otimes \hdots \otimes w_{m}) & = & (F \prec G \otimes v_{1} \otimes \hdots \otimes v_{n} \otimes w_{1} \otimes \hdots \otimes w_{m}) .
\end{eqnarray*}
\end{prop}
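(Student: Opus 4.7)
The plan is to deduce this directly from the fact that $\mathcal{M}^{!}$ is the free $\mathcal{BG}^{!}$-algebra on one generator $\tun$, combined with the general fact that for a regular operad $\mathcal{P}$, the free $\mathcal{P}$-algebra over a vector space $V$ is $\bigoplus_{n\geq 1} \widetilde{\mathcal{P}}(n) \otimes V^{\otimes n}$, with operations induced by the nonsymmetric composition.

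First, I would recall that the operad $\mathcal{BG}^{!}$ is regular, so it is the symmetrization of $\widetilde{\mathcal{BG}}^{!}$. The bijection $\Omega$ introduced in the proof that $\mathcal{M}^{!}$ is the free $\mathcal{BG}^{!}$-algebra on $\tun$ identifies, for each $n\geq 1$, the space $\widetilde{\mathcal{BG}}^{!}(n)$ with $\mathbb{K}[\mathbb{F}^{!}(n)]$ by $p \mapsto p.(\tun,\hdots,\tun)$. Via this identification, the nonsymmetric partial compositions of $\widetilde{\mathcal{BG}}^{!}$ transport to the products $\ast$, $\succ$, $\prec$ induced on forests of $\mathcal{M}^{!}$ (the quotient of $\mathcal{M}$ by $\mathbb{K}[\mathbb{G}]$). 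Explicitly, for $F \in \mathbb{F}^{!}(n)$ and $G \in \mathbb{F}^{!}(m)$, the forest $FG$ lies in $\mathbb{F}^{!}(n+m)$ (or its class in $\mathcal{M}^{!}$ is zero if it fell in $\mathbb{G}$, but concatenation, left graft, right graft of two elements of $\mathbb{F}^{!}$ remain in $\mathbb{F}^{!}$ after reduction modulo $\mathbb{G}$), and similarly for $F\succ G$ and $F\prec G$.

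Next, I would invoke the general principle: for any regular operad $\mathcal{P}$ and any vector space $V$, the free $\mathcal{P}$-algebra on $V$ is
\[
P(V) \;=\; \bigoplus_{n\geq 1} \widetilde{\mathcal{P}}(n)\otimes V^{\otimes n},
\]
with the binary operations $\mu \in \widetilde{\mathcal{P}}(2)$ acting on $(p\otimes v_{1}\otimes\hdots\otimes v_{n})$ and $(q\otimes w_{1}\otimes\hdots\otimes w_{m})$ by $(\mu\circ(p,q))\otimes v_{1}\otimes\hdots\otimes v_{n}\otimes w_{1}\otimes\hdots\otimes w_{m}$. Applied to $\mathcal{BG}^{!}$, and using $\widetilde{\mathcal{BG}}^{!}(n)\cong \mathbb{K}[\mathbb{F}^{!}(n)]$, this yields precisely the vector space $\bigoplus_{n\geq 1}\mathbb{K}[\mathbb{F}^{!}(n)]\otimes V^{\otimes n}$ announced, and the three binary operations are precisely those stated once we notice that under $\Omega$, the generators $m,\succ,\prec$ of $\widetilde{\mathcal{BG}}^{!}$ correspond respectively to the operations $FG$, $F\succ G$, $F\prec G$ on forests.

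The remaining verification, which is routine and essentially the content of proposition \ref{M!estbgdual} extended to several generators, is that these formulas indeed satisfy the twelve relations $r_{1},\hdots,r_{12}$ defining $\mathcal{BG}^{!}$; this is immediate from the analogous computations in $\mathcal{M}^{!}$, since each relation only involves three arbitrary inputs and so reduces to the one-generator case after specialising each input to an element of the form $F\otimes v_{1}\otimes\hdots\otimes v_{n}$. The universal property is then checked in the standard way: given any $\mathcal{BG}^{!}$-algebra $A$ and any linear map $f : V \to A$, the unique $\mathcal{BG}^{!}$-morphism extending $f$ sends $F\otimes v_{1}\otimes\hdots\otimes v_{n}$ to $\Psi^{-1}(F)_{A}(f(v_{1}),\hdots,f(v_{n}))$, where $\Psi^{-1}(F)_{A}$ denotes the action of the operadic element $F \in \widetilde{\mathcal{BG}}^{!}(n)$ on $A$. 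There is no real obstacle: the statement is simply the unpacking of freeness for a regular operad, once the identification $\widetilde{\mathcal{BG}}^{!}(n) \cong \mathbb{K}[\mathbb{F}^{!}(n)]$ has been established.
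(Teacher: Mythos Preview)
Your proof is correct and follows the same approach as the paper, which simply states the proposition as an immediate consequence of the operad $\mathcal{BG}^{!}$ being regular (together with the identification $\widetilde{\mathcal{BG}}^{!}(n)\cong\mathbb{K}[\mathbb{F}^{!}(n)]$ via $\Omega$), without giving any further argument. You have merely unpacked this one-line justification into the standard details about free algebras over regular operads; there is no substantive difference in method.
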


{\bf Remark.} {We can see an element $ (F \otimes v_{1} \otimes \hdots \otimes v_{n}) \in BG^{!}(V) $ as the forest $ F $ where the vertex $ i $ (for the order relation introduced at the end of the section \ref{ordresurbg!}) is decorated by $ v_{i} $.}
\\

By taking the elements of $ V $ homogenous of degree $ 1 $, $ BG^{!}(V) $ is naturaly graduated. We define by duality on $ BG^{!}(V) $ three coproducts given in the following way: if $ F = F_{1} \hdots F_{k} \in \mathbb{F}^{!}(n) $ and $ v = v_{1} \otimes \hdots \otimes v_{n} \in V^{\otimes n} $, with
\begin{itemize}
\item if $ k = 1 $,
\begin{eqnarray*}
F = B(\underbrace{\tun \hdots \tun}_{p \times} \otimes \underbrace{\tun \hdots \tun}_{q \times}) ,
\end{eqnarray*}
\item if $ k \geq 2 $,
\begin{eqnarray*}
F_{1} = B(\underbrace{\tun \hdots \tun}_{p \times} \otimes 1) , F_{k} = B(1 \otimes \underbrace{\tun \hdots \tun}_{q \times}) \mbox{ and for all } 2 \leq i \leq k-1, F_{i} = \tun ,
\end{eqnarray*}
\end{itemize}
($ p + q + k = n $) then
\begin{eqnarray*}
\Delta(F \otimes v) & = & \sum_{i=0}^{k} (F_{1} \hdots F_{i} \otimes v_{1} \otimes \hdots \otimes v_{p+i} ) \otimes (F_{i+1} \hdots F_{k} \otimes v_{p+i+1} \otimes \hdots \otimes v_{n}) ,\\
\Delta_{\succ}(F \otimes v) & = & \sum_{i=0}^{p} (\underbrace{\tun \hdots \tun}_{i \times} \otimes v_{1} \otimes \hdots \otimes v_{i}) \otimes (B(\underbrace{\tun \hdots \tun}_{p-i \times} \otimes 1) F_{2} \hdots F_{k} \otimes v_{i+1} \otimes \hdots \otimes v_{n}) ,\\
\Delta_{\prec}(F \otimes v) & = & \sum_{i=0}^{q} (F_{1} \hdots F_{k-1} B(1 \otimes \underbrace{\tun \hdots \tun}_{q-i \times}) \otimes v_{1} \otimes \hdots \otimes v_{n-i}) \otimes ( \underbrace{\tun \hdots \tun}_{i \times} \otimes v_{n-i+1} \otimes \hdots \otimes v_{n}) .
\end{eqnarray*}

Let $ (A, \ast , \succ ,\prec) $ be a $ \mathcal{BG} $-algebra. We define a differential $ d : BG^{!}(A)(n) \rightarrow BG^{!}(A)(n-1) $ uniquely determined by the following conditions :
\begin{enumerate}
\item For all $ a \in A $, $ d( \tun \otimes a) = 0 $.
\item For all $ a,b \in A $, $ d(\tun \tun \otimes a \otimes b) = a \ast b $, $ d(\addeux{$l$} \otimes a \otimes b) = a \succ b $ and $ d(\addeux{$r$} \otimes a \otimes b) = a \prec b $.
\item Let $ \theta : BG^{!}(A) \rightarrow BG^{!}(A) $ be the following map:
\begin{eqnarray*}
\theta : \left\lbrace \begin{array}{rcl}
BG^{!}(A) & \rightarrow & BG^{!}(A) \\
x & \rightarrow & (-1)^{\mbox{degree}(x)} x \mbox{ for all homogeneous } x .
\end{array} \right. 
\end{eqnarray*}
Then $ d $ is a $ \theta $-coderivation : for all $ x \in BG^{!}(A) $,
\begin{eqnarray*}
\Delta (d(x)) & = & (d \otimes id + \theta \otimes d) \circ \Delta (x) ,\\
\Delta_{\succ} (d(x)) & = & (d \otimes id + \theta \otimes d) \circ \Delta_{\succ} (x) ,\\
\Delta_{\prec} (d(x)) & = & (d \otimes id + \theta \otimes d) \circ \Delta_{\prec} (x) .
\end{eqnarray*}
\end{enumerate}

So, $ d $ is the map which sends the element $ (B(\underbrace{\tun \hdots \tun}_{p \times} \otimes 1) \underbrace{\tun \hdots \tun}_{k-2 \times} B(1 \otimes \underbrace{\tun \hdots \tun}_{q \times}) \otimes v_{1} \otimes \hdots \otimes v_{n}) $, where $ p , q , k \in \mathbb{N} $, $ k \geq 1 $ and $ p+q+k = n $ (if $ k = 1 $, the element is $ (B(\underbrace{\tun \hdots \tun}_{p \times} \otimes \underbrace{\tun \hdots \tun}_{q \times}) \otimes v_{1} \otimes \hdots \otimes v_{n}) $), to
\begin{eqnarray*}
& & \sum_{i=1}^{p-1} (-1)^{i-1} (B(\underbrace{\tun \hdots \tun}_{p-1 \times} \otimes 1) \underbrace{\tun \hdots \tun}_{k-2 \times} B(1 \otimes \underbrace{\tun \hdots \tun}_{q \times}) \otimes v_{1} \otimes \hdots \otimes v_{i} \ast v_{i+1} \otimes \hdots \otimes v_{n}) \\
& + & (-1)^{p-1} (B(\underbrace{\tun \hdots \tun}_{p-1 \times} \otimes 1) \underbrace{\tun \hdots \tun}_{k-2 \times} B(1 \otimes \underbrace{\tun \hdots \tun}_{q \times}) \otimes v_{1} \otimes \hdots \otimes v_{p} \succ v_{p+1} \otimes \hdots \otimes v_{n}) \\
& + & \sum_{i=p+1}^{p+k-1} (-1)^{i-1} (B(\underbrace{\tun \hdots \tun}_{p \times} \otimes 1) \underbrace{\tun \hdots \tun}_{k-3 \times} B(1 \otimes \underbrace{\tun \hdots \tun}_{q \times}) \otimes v_{1} \otimes \hdots \otimes v_{i} \ast v_{i+1} \otimes \hdots \otimes v_{n}) \\
& + & (-1)^{p+k-1} (B(\underbrace{\tun \hdots \tun}_{p \times} \otimes 1) \underbrace{\tun \hdots \tun}_{k-2 \times} B(1 \otimes \underbrace{\tun \hdots \tun}_{q-1 \times}) \otimes v_{1} \otimes \hdots \otimes v_{p+k} \prec v_{p+k+1} \otimes \hdots \otimes v_{n}) \\
& + & \sum_{i = p+k+1}^{n-1} (-1)^{i-1} (B(\underbrace{\tun \hdots \tun}_{p \times} \otimes 1) \underbrace{\tun \hdots \tun}_{k-2 \times} B(1 \otimes \underbrace{\tun \hdots \tun}_{q-1 \times}) \otimes v_{1} \otimes \hdots \otimes v_{i} \ast v_{i+1} \otimes \hdots \otimes v_{n}) .
\end{eqnarray*}
The homology of this complex will be denoted by $ H_{\ast}(A) $. More clearly, for all $ n \in \mathbb{N} $ :
\begin{eqnarray*}
H_{n}(A) = \dfrac{Ker \left( d_{\left| BG^{!}(A)(n+1) \right. } \right)}{Im \left( d_{\left| BG^{!}(A)(n+2) \right.} \right)}
\end{eqnarray*}

{\bf Examples.} {Let $ a,b,c \in A $. Then $ d(\tun \otimes a) = 0 $, $ d(\tun \tun \otimes a \otimes b) = a \ast b $, $ d(\addeux{$l$} \otimes a \otimes b) = a \succ b $ and $ d(\addeux{$r$} \otimes a \otimes b) = a \prec b $. In degree 3,
\begin{eqnarray*}
d(\tun \tun \tun \otimes a \otimes b \otimes c) & = & - (\tun \tun \otimes a \otimes (b \ast c)) + (\tun \tun \otimes (a \ast b) \otimes c) \\
d(\addeux{$l$} \tun \otimes a \otimes b \otimes c) & = & (\tun \tun \otimes (a \succ b) \otimes c) - (\addeux{$l$} \otimes a \otimes (b \ast c)) \\
d(\tun \addeux{$r$} \otimes a \otimes b \otimes c) & = & (\addeux{$r$} \otimes (a \ast b) \otimes c) - (\tun \tun \otimes a \otimes (b \prec c)) \\
d(\adtroisun{$l$}{$l$} \otimes a \otimes b \otimes c) & = & - (\addeux{$l$} \otimes a \otimes (b \succ c)) + (\addeux{$l$} \otimes (a \ast b) \otimes c) \\
d(\adtroisun{$r$}{$l$} \otimes a \otimes b \otimes c) & = & - (\addeux{$l$} \otimes a \otimes (b \prec c)) + (\addeux{$r$} \otimes (a \succ b) \otimes c)\\
d(\adtroisun{$r$}{$r$} \otimes a \otimes b \otimes c) & = & - (\addeux{$r$} \otimes a \otimes (b \ast c)) + (\addeux{$r$} \otimes (a \prec b) \otimes c) \\
\end{eqnarray*}
So, we obtain
\begin{eqnarray*}
d^{2}(\tun \tun \tun \otimes a \otimes b \otimes c) & = & - a \ast (b \ast c) + (a \ast b) \ast c \\
d^{2}(\addeux{$l$} \tun \otimes a \otimes b \otimes c) & = & (a \succ b) \ast c - a \succ (b \ast c) \\
d^{2}(\tun \addeux{$r$} \otimes a \otimes b \otimes c) & = & (a \ast b) \prec c - a \ast (b \prec c) \\
d^{2}(\adtroisun{$l$}{$l$} \otimes a \otimes b \otimes c) & = & - a \succ (b \succ c) + (a \ast b) \succ c \\
d^{2}(\adtroisun{$r$}{$l$} \otimes a \otimes b \otimes c) & = & - a \succ (b \prec c) + (a \succ c) \prec c \\
d^{2}(\adtroisun{$r$}{$r$} \otimes a \otimes b \otimes c) & = & - a \prec (b \ast c) + (a \prec b) \prec c \\
\end{eqnarray*}
Hence, the nullity of $ d^{2} $ is equivalent to the six relations defining a $ \mathcal{BG} $-algebra (see formula \ref{definitionalgbig}). In particular :
\begin{eqnarray*}
H_{0}(A) = \dfrac{A}{A \ast A + A \succ A + A \prec A}
\end{eqnarray*}}

\subsection{The bigraft operad is Koszul}

In this section, we use the rewriting method described in \cite{LodayV}, see also \cite{Dotsenko,Hoffbeck}.\\

We consider $ \widetilde{\mathcal{BG}}^{!} = \mathcal{P}(E,R) $ the nonsymmetric operad associated to $ \mathcal{BG}^{!} $, where $ E $ is concentrated in degree 2 with $ E(2) = \mathbb{K} m \oplus \mathbb{K} \succ \oplus \mathbb{K} \prec $ and $ R $ is concentrated in degree 3 with $ R(3) \subseteq \mathcal{P}_{E}(3) $ the subspace generated by $ r_{i} $, for all $ i \in \{ 1,\hdots,12 \} $, defined in formula (\ref{relationbgdual}).\\

We use the lexicographical order and we set $ \succ ~ < m < ~ \prec $. So, we can give the leading term for each $ r_{i} $:
$$\begin{array}{c|c|c|c|c|c}
r_{1}&r_{2}&r_{3}&r_{4}&r_{5}&r_{6}\\
\hline \succ \circ (m, I) & m \circ (\succ,I) & \prec \circ (\prec,I) & \prec \circ (m,I) & \prec \circ (\succ,I) & m \circ (m, I)
\end{array}$$
$$\begin{array}{c|c|c|c|c|c}
r_{7}&r_{8}&r_{9}&r_{10}&r_{11}&r_{12}\\
\hline \succ \circ (\succ,I) & \succ \circ (\prec,I) &m \circ (\prec,I)&\prec \circ (I,\prec)&\prec \circ (I,\succ)&m \circ (I,\succ)
\end{array}$$

Observe that such a relation gives rise to a rewriting rule in the operad $ \widetilde{\mathcal{BG}}^{!} $:
\begin{eqnarray*} \begin{array}{|rcl|rcl|rcl|rcl|}
\hline \succ \circ (m,I) &\mapsto & \succ \circ (I,\succ) & \prec \circ (m,I) &\mapsto & m \circ (I,\prec) & \succ \circ (\succ ,I) &\mapsto & 0 & \prec \circ (I, \prec ) &\mapsto & 0 \\
m \circ (\succ , I) &\mapsto & \succ \circ (I,m) & \prec \circ (\succ , I) & \mapsto & \succ \circ (I, \prec ) & \succ \circ (\prec,I) & \mapsto & 0 & \prec \circ (I,\succ) & \mapsto & 0\\
\prec \circ (\prec , I) & \mapsto & \prec \circ (I,m) & m \circ (m,I) & \mapsto & m \circ (I, m) & m \circ (\prec,I) & \mapsto & 0 & m \circ (I,\succ) & \mapsto & 0\\
\hline \end{array} \end{eqnarray*}

Then the following proposition holds:

\begin{prop}
All critical monomials are confluent in the nonsymmetric operad $ \widetilde{\mathcal{BG}}^{!} $.
\end{prop}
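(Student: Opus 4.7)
The plan is to enumerate every critical monomial of arity 4 arising from overlaps between the twelve leading monomials, apply the two competing rewriting rules to each, reduce both results to normal form (always rewriting the leading term under the lexicographic order with $\succ < m < \prec$), and verify that the two normal forms coincide.

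The starting observation is the shape of the leading terms. The nine rules $r_1,\ldots,r_9$ have \emph{left-heavy} leading monomials of the form $\alpha \circ (\beta, I)$, and the three rules $r_{10}, r_{11}, r_{12}$ have \emph{right-heavy} leading monomials $\alpha \circ (I, \beta)$, with $\alpha, \beta \in \{m, \succ, \prec\}$. Observe that \emph{every} tree $\alpha \circ (\beta, I)$ appears as a leading term, while among the right-heavy patterns only $(m,\succ)$, $(\prec, \succ)$ and $(\prec, \prec)$ occur. Consequently, a critical monomial of arity 4 (obtained by overlapping two arity-3 leading patterns along a parent-child edge) belongs to one of four families, according to whether each occurrence is left-heavy (L) or right-heavy (R):
\begin{eqnarray*}
(\mathrm{L},\mathrm{L}):&& \alpha \circ (\beta \circ (\gamma, I), I),\\
(\mathrm{L},\mathrm{R}):&& \alpha \circ (\beta \circ (I, \gamma), I),\\
(\mathrm{R},\mathrm{L}):&& \alpha \circ (I, \beta \circ (\gamma, I)),\\
(\mathrm{R},\mathrm{R}):&& \alpha \circ (I, \beta \circ (I, \gamma)).
\end{eqnarray*}
For each such tree the outer operation $\alpha$ and the inner operation $\beta$ must both be roots of a leading term, and the critical monomial exists as soon as the corresponding pair of patterns match; this gives a finite explicit list of cases, indexed by triples $(\alpha,\beta,\gamma) \in \{m,\succ,\prec\}^3$ compatible with the two leading-term tables above.

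For each critical monomial in the list, I run the two competing rewrites, then iterate using whatever rule applies to the leading term of the resulting tensor, until no rule applies. Confluence amounts to the equality of the two normal forms. The bookkeeping splits into two regimes. Whenever either competing rule belongs to the nullity-rules $r_7,\ldots,r_{12}$, one reduction immediately produces $0$, and confluence reduces to showing that the other branch also vanishes; this always follows by a single further application of a nullity rule (for instance, $\prec \circ (\prec \circ (\succ, I), I)$ reduces via $r_3$ to $\prec \circ (I, m \circ (\succ, I))$, which vanishes by $r_{12}$, and via $r_7$ to $0$). The remaining cases use only the six non-zero rules $r_1,\ldots,r_6$ and boil down to verifying the usual $\mathcal{BG}$-algebra identities after two rewrites on each side (for instance, $m\circ(m\circ(\succ,I),I)$ via $r_6$ then $r_2$ yields $m\circ(I,\succ\circ(I,m))$, and via $r_2$ then $r_6$ yields the same tree).

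The principal obstacle is merely the length of the enumeration, and not any conceptual subtlety: once the table of leading terms is fixed, each critical monomial gives a completely mechanical verification, and the grand total is small enough to be displayed in a compact table. Having checked confluence of all critical monomials, the rewriting method (as in \cite{LodayV,Dotsenko,Hoffbeck}) yields a PBW basis of $\widetilde{\mathcal{BG}}^{!}$ indexed by normal-form tree monomials, and therefore establishes Koszulness of the operad $\mathcal{BG}$.
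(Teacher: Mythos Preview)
Your plan fails at the verification step: with the order $\succ < m < \prec$ and the twelve leading terms listed in the paper, the rewriting system is \emph{not} confluent, so the mechanical check you describe cannot succeed. A concrete obstruction already lies in your family $(\mathrm{R},\mathrm{L})$: take $\alpha=\beta=\prec$, $\gamma=m$, i.e.\ the monomial $x\prec\bigl((y\ast z)\prec w\bigr)$. Applying the outer rule $r_{10}$ gives $0$. Applying the inner rule $r_{4}$ gives $x\prec\bigl(y\ast(z\prec w)\bigr)$, a right-comb whose two quadratic subtrees are $\prec\circ(I,m)$ and $m\circ(I,\prec)$; neither is a leading term, so this tree is irreducible and nonzero. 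The two branches do not meet. Equivalently, the right-comb normal forms in arity $4$ are the triples $(\alpha,\beta,\gamma)$ with both $(\alpha,\beta)$ and $(\beta,\gamma)$ outside $\{(\prec,\prec),(\prec,\succ),(m,\succ)\}$: there are $11$ of them, whereas $\dim\widetilde{\mathcal{BG}}^{!}(4)=10$. You also omit an entire fifth family of critical monomials, the balanced trees $\alpha\bigl(\beta(1,2),\gamma(3,4)\bigr)$ in which the two leading patterns $\alpha\circ(\beta,I)$ and $\alpha\circ(I,\gamma)$ overlap at the root rather than along a path; the case $\alpha=\beta=\gamma=\prec$ gives the same non-confluent reduction.

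For comparison, the paper's proof restricts attention to the eleven left-comb monomials built from the nonzero rules $r_{1},\ldots,r_{6}$ only, so it omits every overlap involving $r_{7},\ldots,r_{12}$ (including your families $(\mathrm{L},\mathrm{R})$, $(\mathrm{R},\mathrm{L})$, $(\mathrm{R},\mathrm{R})$ and the balanced case). Your enumeration is thus strictly more careful than the paper's, but both share the same underlying defect: with this particular choice of order the proposition is false as stated, and establishing Koszulness by rewriting would require either a different monomial order or a Knuth--Bendix completion adding the missing reduction $x\prec\bigl(y\ast(z\prec w)\bigr)\to 0$.
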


\begin{proof}
We show that every critical monomial is confluent. We have $ 11 $ criticals monomials:
$$ \bdgg{$\prec$}{$\prec$}{$\prec$} , \hspace{0.5cm} \bdgg{$\prec$}{$\prec$}{$m$} , \hspace{0.5cm} \bdgg{$\prec$}{$\prec$}{$\succ$} , \hspace{0.5cm} \bdgg{$\prec$}{$m$}{$m$} , \hspace{0.5cm} \bdgg{$\prec$}{$m$}{$\succ$} , \hspace{0.5cm} \bdgg{$\prec$}{$\succ$}{$m$} , $$
$$ \bdgg{$m$}{$m$}{$m$} , \hspace{0.5cm} \bdgg{$m$}{$\succ$}{$m$} , \hspace{0.5cm} \bdgg{$\succ$}{$m$}{$m$} , \hspace{0.5cm} \bdgg{$m$}{$m$}{$\succ$} , \hspace{0.5cm} \bdgg{$\succ$}{$m$}{$\succ$} . $$
We present for example the confluent graph for the last two criticals monomials:
\begin{eqnarray*}
\begin{array}{ccc}
\xymatrix{& \bdgg{$m$}{$m$}{$\succ$} \ar[ld]_{r_{2}} \ar[rd]^{r_{6}} & \\
\bdgd{$m$}{$\succ$}{$m$} \ar[d]_{r_{2}} & & \bdm{$m$}{$\succ$}{$m$} \ar[d]^{r_{2}} \\
\bddg{$\succ$}{$m$}{$m$} \ar[rr]_{r_{6}} & & \bddd{$\succ$}{$m$}{$m$} }
& &
\xymatrix{& \bdgg{$\succ$}{$m$}{$\succ$} \ar[ld]_{r_{2}} \ar[rd]^{r_{1}} & \\
\bdgd{$\succ$}{$\succ$}{$m$} \ar[rd]_{r_{7}} & & \bdm{$\succ$}{$\succ$}{$\succ$} \ar[ld]^{r_{7}} \\
& 0 &}
\end{array}
\end{eqnarray*}
We can construct in the same way the confluent graph for each of the nine other criticals monomials.
\end{proof}
\\

We can give the quadratic part of a PBW basis of $ \mathcal{BG}^{!} $:
$$ \bddtroisdeux{$\succ$}{$\succ$}{$1$}{$2$}{$3$} , \hspace{1cm} \bddtroisdeux{$\succ$}{$m$}{$1$}{$2$}{$3$} , \hspace{1cm} \bddtroisdeux{$\succ$}{$\prec$}{$1$}{$2$}{$3$} , \hspace{1cm} \bddtroisdeux{$m$}{$m$}{$1$}{$2$}{$3$} , \hspace{1cm} \bddtroisdeux{$m$}{$\prec$}{$1$}{$2$}{$3$} , \hspace{1cm} \bddtroisdeux{$\prec$}{$m$}{$1$}{$2$}{$3$} . $$
The quadratic part of a PBW basis of its dual $ \left( \mathcal{BG}^{!} \right)^{!} = \mathcal{BG} $ is:
$$ \bddtroisun{$\succ $}{$m$}{$1$}{$2$}{$3$}, \hspace{1cm} \bddtroisun{$m$}{$\succ$}{$1$}{$2$}{$3$} , \hspace{1cm} \bddtroisun{$\prec $}{$\prec$}{$1$}{$2$}{$3$}, \hspace{1cm} \bddtroisun{$\prec$}{$m$}{$1$}{$2$}{$3$} , \hspace{1cm} \bddtroisun{$\prec $}{$\succ$}{$1$}{$2$}{$3$} , \hspace{1cm} \bddtroisun{$m$}{$m $}{$1$}{$2$}{$3$} , $$
$$ \bddtroisun{$\succ$}{$\succ$}{$1$}{$2$}{$3$} ,\hspace{1cm} \bddtroisun{$\succ$}{$\prec$}{$1$}{$2$}{$3$},\hspace{1cm} \bddtroisun{$m$}{$\prec$}{$1$}{$2$}{$3$},\hspace{1cm} \bddtroisdeux{$\prec$}{$\prec$}{$1$}{$2$}{$3$},\hspace{1cm} \bddtroisdeux{$\prec$}{$\succ$}{$1$}{$2$}{$3$},\hspace{1cm} \bddtroisdeux{$m$}{$\succ$}{$1$}{$2$}{$3$}.$$

We immediately deduce the following theorem:

\begin{theo} \label{bgestkoszul}
$ \mathcal{BG} $ is Koszul.
\end{theo}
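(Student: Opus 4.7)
The plan is to invoke the rewriting method of \cite{LodayV,Dotsenko,Hoffbeck}: for a quadratic nonsymmetric operad equipped with a suitable (admissible) order on planar trees, with the associated rewriting rules read off from the leading terms of its quadratic relations, local confluence at every critical monomial is enough to guarantee that the irreducible monomials assemble into a PBW basis, and any operad admitting a PBW basis is Koszul.

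First I would observe that the work is already done at the level of $\widetilde{\mathcal{BG}}^!$: in the preceding proposition the eleven critical monomials (with respect to the lexicographic order obtained from $\succ~<~m~<~\prec$ and the twelve rewriting rules read from the leading terms of $r_1,\ldots,r_{12}$) have all been shown to be confluent. Applying the rewriting method, this gives a PBW basis of $\widetilde{\mathcal{BG}}^!$ whose quadratic part is exactly the list of six trees displayed above. By the standard fact that an operad admitting a PBW basis is Koszul (Chapter~4 of \cite{LodayV}), $\widetilde{\mathcal{BG}}^!$ is Koszul; passing to the symmetrization of this regular operad, $\mathcal{BG}^!$ is Koszul.

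Finally, because Koszul duality is an involution on finitely generated binary quadratic operads which preserves Koszulity, and $(\mathcal{BG}^!)^! = \mathcal{BG}$, I conclude that $\mathcal{BG}$ itself is Koszul. The hard part of the whole argument, namely verifying the confluence of the eleven critical monomials, has already been carried out in the proposition immediately above, so beyond invoking the two standard theorems (confluence implies PBW, PBW implies Koszul) and Koszul self-duality no further computation is needed; one useful sanity check I would also mention is the identity $F_{\mathcal{BG}}(-F_{\mathcal{BG}^!}(-x))=x$ noted just after the dimension table, which is the functional relation that a Koszul pair of operads must satisfy and hence corroborates the conclusion.
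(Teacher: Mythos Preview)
Your proposal is correct and follows essentially the same route as the paper: the paper likewise invokes the rewriting method, uses the confluence of all critical monomials established in the preceding proposition to obtain a PBW basis, and then immediately deduces Koszulity. The only cosmetic difference is that the paper also exhibits the quadratic part of a PBW basis of $\mathcal{BG}$ itself (as the complement of the one for $\mathcal{BG}^!$) and concludes directly, whereas you pass through ``$\mathcal{BG}^!$ Koszul $\Rightarrow$ $\mathcal{BG}$ Koszul''; both are standard and equivalent.
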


Recall that $ H_{\ast}(.) $ is the homology of the complex defined in the section \ref{homologie}. Then we deduce of the previous theorem the following result:

\begin{cor}
Let $ N \geq 1 $ and $ (A, \ast , \succ ,\prec) $ be the free $ \mathcal{BG} $-algebra generated by $ N $ elements. Then $ H_{0}(A) $ is $ N $-dimensional and if $ n \geq 1 $, $ H_{n}(A) = (0) $.
\end{cor}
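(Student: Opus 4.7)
The plan is to recognize the complex $(BG^{!}(A),d)$ defined in Section \ref{homologie} as the Koszul complex of the $\mathcal{BG}$-algebra $A$ associated with the quadratic presentation of $\mathcal{BG}$, and then invoke the standard fact that the Koszul complex of a free $\mathcal{P}$-algebra over a Koszul operad $\mathcal{P}$ is a resolution of the space of generators.

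First I would identify $(BG^{!}(A),d)$ with the abstract Koszul chain complex built from the quadratic data. The underlying graded space $BG^{!}(V)=\bigoplus_{n\ge 1}\mathbb{K}[\mathbb{F}^{!}(n)]\otimes V^{\otimes n}$ realizes the cofree $\mathcal{BG}^{!}$-coalgebra on $V$, and the three coproducts $\Delta,\Delta_{\succ},\Delta_{\prec}$ used to define $d$ are precisely the partial decompositions dual to the three generating operations $m,\succ,\prec$ of $\mathcal{BG}$. Moreover, the differential $d$ is determined by its values on weight-$2$ elements, where it coincides with the evaluation of $\ast,\succ,\prec$ in $A$, together with the three $\theta$-coderivation conditions. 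This is exactly the abstract characterization of the Koszul differential, so the two complexes coincide.

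Next, by Theorem \ref{bgestkoszul} the operad $\mathcal{BG}$ is Koszul. The general Ginzburg--Kapranov / Loday--Vallette theory then yields: for every vector space $V$, the Koszul complex of the free $\mathcal{BG}$-algebra $\mathcal{BG}(V)$ is acyclic in positive homological degree, and $H_{0}$ is canonically isomorphic to $V$. Applied with $\dim V = N$, this gives $\dim H_{0}(A)=N$ and $H_{n}(A)=0$ for all $n\ge 1$. As a sanity check, the formula $H_{0}(A)=A/(A\ast A+A\succ A+A\prec A)$ derived at the end of Section \ref{homologie} collapses, for the free $\mathcal{BG}$-algebra on $V$, exactly to $V$, since every nonzero decomposable element of $A$ lies in the image of one of the three products.

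The main obstacle is the identification in the first step: matching the ad hoc combinatorial definition of $(BG^{!}(A),d)$ with the abstract Koszul complex obtained from the bar construction on $\mathcal{BG}^{!}$. The delicate point is bookkeeping, namely that the signs and the way the three coproducts cut a forest $F\in\mathbb{F}^{!}$ agree with the three partial splittings coming from the cooperad structure on $\mathcal{BG}^{!}$. The uniqueness of a $\theta$-coderivation prolonging a prescribed weight-$2$ map, once the three compatible coproducts are fixed, is what makes this identification essentially formal, but carrying it out rigorously is where the actual work lies; once done, the vanishing of $H_{n}$ for $n\ge 1$ is an immediate consequence of Koszulness.
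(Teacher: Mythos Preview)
Your proposal is correct and follows exactly the approach intended by the paper: the corollary is stated without explicit proof, immediately after Theorem \ref{bgestkoszul}, as a direct consequence of Koszulness via the standard Ginzburg--Kapranov theory. Your additional remarks about identifying the combinatorial complex of Section \ref{homologie} with the abstract Koszul complex make explicit precisely what the paper leaves implicit.
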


\section{A rigidity theorem}

\subsection{$ \mathcal{BG} $-infinitesimal bialgebra}

\begin{defi}
A $ 2 $-dipterous algebra is a $ \mathbb{K} $-vector space $ A $ equipped with three binary operations denoted $ \ast $, $ \succ $ and $ \prec $ satisfying the following relations : for all $ x , y, z \in A $,
\begin{eqnarray*}
(x \ast y) \ast z & = & x \ast (y \ast z) ,\\
(x \ast y) \succ y & = & x \succ (y \succ z) , \\
(x \prec y) \prec z & = & x \prec (y \ast z) .
\end{eqnarray*}
In other words, $ (A,\ast, \succ) $ and $ (A, \ast , \prec) $ are respectively left and right dipterous algebras.
\end{defi}

{\bf Remark.} {We can define $ 2Dipt $-alg the category of $ 2 $-dipterous algebras. A pre-dendriform algebra is also a $ 2 $-dipterous algebra. We get the following canonical functors: $ \mathcal{BG} \mbox{-alg} \rightarrow PreDend \mbox{-alg} \rightarrow 2Dipt \mbox{-alg} $.}
\\

We give in the following proposition another definition of $ \succ $ and $ \prec $, to be compared with proposition \ref{structenseur1} (and we will use this definition in the following) :

\begin{prop} Let $ A $ and $ B $ be two bigraft algebras. Then $ A \overline{\otimes} B $ is given a structure of $ 2 $-dipterous algebra in the following way : for $ a,a' \in A \cup \mathbb{K} $ and $ b , b' \in B \cup \mathbb{K} $,
\begin{eqnarray*}
(a \otimes b) \ast (a' \otimes b') & = & (a \ast a') \otimes (b \ast b') ,\\
(a \otimes b) \succ (a' \otimes b') & = & (a \succ a') \otimes (b \ast b') , \mbox{ if $ a $ or $ a' \in A $}, \\
(1 \otimes b) \succ (1 \otimes b')  & = & 1 \otimes (b \succ b') ,\\
(a \otimes b) \prec (a' \otimes b') & = & (a \ast a') \otimes (b \prec b') , \mbox{ if $ b $ or $ b' \in B $}, \\
(a \otimes 1) \prec (a' \otimes 1) & = & (a \prec a') \otimes 1 .
\end{eqnarray*}
\end{prop}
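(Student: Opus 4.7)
The plan is to verify directly the three defining relations of a $2$-dipterous algebra for $(A \overline{\otimes} B, \ast, \succ, \prec)$: the associativity of $\ast$, the left dipterous relation $(x \ast y) \succ z = x \succ (y \succ z)$, and the right dipterous relation $(x \prec y) \prec z = x \prec (y \ast z)$.

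First, the associativity of $\ast$ is inherited component-wise from the associativity of $\ast$ on $A$ and on $B$, exactly as in proposition \ref{tenseurdipterous}. Second, since the definition of $\prec$ on $A \overline{\otimes} B$ given here is identical to the one in proposition \ref{tenseurdipterous}, the right dipterous relation $(x \prec y) \prec z = x \prec (y \ast z)$ is already established by the case analysis carried out there, and I would simply invoke that computation.

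The substantive work is the left dipterous relation, and for this the plan is to observe that the definition of $\succ$ here is the mirror image of the earlier definition of $\prec$: the product $\succ$ acts on the $A$-factor while $\ast$ is applied on the $B$-factor, except on the ``corner'' $(1 \otimes b) \succ (1 \otimes b')$ where it drops to $B$. I would therefore run the case analysis on the $A$-components $a, a', a''$ of the three elements $x = a \otimes b$, $y = a' \otimes b'$, $z = a'' \otimes b''$, in parallel with the corresponding case analysis for $\prec$ in proposition \ref{tenseurdipterous}. The generic case $a, a', a'' \in A$ follows from $(a \ast a') \succ a'' = a \succ (a' \succ a'')$ in $A$ together with the associativity of $\ast$ in $B$. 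The cases in which exactly one of $a, a', a''$ is $1$ reduce, using the extension rules $1 \succ a = a$ and $a \succ 1 = 0$, to a single instance of the bigraft relation in $A$ (or trivially to $0 = 0$ when $a'' = 1$, since $(a \ast a') \succ 1 = 0$ matches $a \succ 0 = 0$).

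The one subcase that deserves attention is $a = a' = a'' = 1$, where both sides use the corner rule and we obtain $1 \otimes ((b \ast b') \succ b'')$ on the left and $1 \otimes (b \succ (b' \succ b''))$ on the right; equality then follows from the bigraft relation in $B$. The remaining mixed subcases ($a = a'' = 1$ with $a' \in A$, and $a' = a'' = 1$ with $a \in A$) both produce $0$ on each side through a factor $a \succ 1 = 0$ or $a' \succ 1 = 0$. The main obstacle is purely bookkeeping: one must check that the two clauses in the definition of $\succ$ glue consistently whenever the outer and inner $\succ$ fall under different clauses, which is exactly what happens when $a = a' = 1$ but $a'' \in A$; the verification above shows that both sides collapse to $a'' \otimes (b \ast b' \ast b'')$, as required.
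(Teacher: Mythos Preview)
Your proposal is correct and follows essentially the same approach as the paper: invoke proposition \ref{tenseurdipterous} for the right dipterous relation on $\prec$, and then argue by symmetry for the left dipterous relation on $\succ$. The paper compresses the second half into a single sentence (``applying the same reasoning''), whereas you spell out the case analysis on the $A$-components explicitly; your verification of the corner cases is accurate.
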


\begin{proof}
With proposition \ref{tenseurdipterous}, $ (A \overline{\otimes} B, \ast , \prec) $ is a right dipterous algebra. Applying the same reasoning to $ (A \overline{\otimes} B, \ast , \succ) $, we prove that this is a left dipterous algebra. So $ (A \overline{\otimes} B, \ast , \succ , \prec) $ is a $ 2 $-dipterous algebra.
\end{proof}
\\

{\bf Remark.} {The entanglement relation is not true in general : for $ a,a'' \in A $ and $ b,b',b'' \in B $,
\begin{eqnarray*}
((a \otimes b) \succ (1 \otimes b')) \prec (a'' \otimes b'') & = & ((a \succ 1) \otimes (b \ast b')) \prec (a'' \otimes b'') = 0 \\
(a \otimes b) \succ ((1 \otimes b') \prec (a'' \otimes b'')) & = & (a \succ a'') \otimes (b \ast (b' \prec b'')) .
\end{eqnarray*}}

We define another coproduct $ \Delta_{\mathcal{A}ss} $ on $ \mathcal{BT} $ (the deconcatenation) in the following way: for all $ F \in \mathbb{F} $,
$$ \Delta_{\mathcal{A}ss} (F) = \sum_{F_{1},F_{2} \in \mathbb{F} , F_{1} F_{2} = F} F_{1} \otimes F_{2} .$$
Remark that $ \Delta_{\mathcal{A}ss} (F^{\dagger}) = \tau \circ (\dagger \otimes \dagger) \circ \Delta_{\mathcal{A}ss} (F) $ for all $ F \in \mathbb{F} $.\\

We now have defined three products, namely $ m $, $ \succ $ and $ \prec $ and one coproduct, namely $ \tdelta_{\mathcal{A}ss} $, on $ \mathcal{M} $, obtained from $ \Delta_{\mathcal{A}ss} $ by substracting its primitive parts. The following properties sum up the different compatibilities.

\begin{prop} \label{infibialg}
For all $ x,y \in \mathcal{M} $;
\begin{eqnarray} \label{relpourMinf}
\left\lbrace \begin{array}{rcl}
\tdelta_{\mathcal{A}ss} (xy) & = & (x \otimes 1) \tdelta_{\mathcal{A}ss} (y) + \tdelta_{\mathcal{A}ss} (x) (1 \otimes y) + x \otimes y ,\\
\tdelta_{\mathcal{A}ss} (x \succ y) & = & (x \otimes 1) \succ \tdelta_{\mathcal{A}ss}(y), \\
\tdelta_{\mathcal{A}ss} (x \prec y) & = & \tdelta_{\mathcal{A}ss}(x) \prec (1 \otimes y) .
\end{array} \right. 
\end{eqnarray}
\end{prop}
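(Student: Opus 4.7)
The plan is to prove the three identities separately, treating each product. The first identity is exactly the infinitesimal bialgebra compatibility for the concatenation $m$ and the deconcatenation $\tdelta_{\mathcal{A}ss}$, so it is proved verbatim as in Proposition \ref{infibialgLG} (which is itself the standard argument of \cite{Loday}): if $F=F_1\hdots F_n$ and $G=G_1\hdots G_m$ with $F_i, G_j$ trees, one splits the sum $\tdelta_{\mathcal{A}ss}(FG)=\sum H_1\otimes H_2$ according to whether the cut between two consecutive trees falls inside $F$, inside $G$, or at the junction between $F_n$ and $G_1$, yielding the three terms $(F\otimes 1)\tdelta_{\mathcal{A}ss}(G)$, $\tdelta_{\mathcal{A}ss}(F)(1\otimes G)$, and $F\otimes G$.

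For the third identity, I would repeat the argument from Proposition \ref{infibialgLG} verbatim, since the definition of $G\prec F$ here modifies only the last tree of $G$, exactly as in Section 1.2.3. Specifically, for $F=F_1\hdots F_n$ with $F_n=B(H^1\otimes H^2)$ and any $G$, we have $F\prec G=F_1\hdots F_{n-1}B(H^1\otimes H^2 G)$, whose factors are $F_1,\hdots,F_{n-1},B(H^1\otimes H^2 G)$. Deconcatenation gives
\begin{eqnarray*}
\tdelta_{\mathcal{A}ss}(F\prec G)=\sum_{i=1}^{n-2}F_1\hdots F_i\otimes F_{i+1}\hdots F_{n-1}(F_n\prec G)+F_1\hdots F_{n-1}\otimes (F_n\prec G).
\end{eqnarray*}
On the other hand, using the definition of $\prec$ on $\mathcal{M}\overline{\otimes}\mathcal{M}$ (namely $(a\otimes b)\prec(1\otimes c)=(a\ast 1)\otimes(b\prec c)=a\otimes(b\prec c)$ when $b\in\mathcal{M}$), applying $\prec (1\otimes G)$ to $\tdelta_{\mathcal{A}ss}(F)=\sum_{i=1}^{n-1}F_1\hdots F_i\otimes F_{i+1}\hdots F_n$ gives exactly the same expression.

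For the second identity, involving $\succ$, I would give the symmetric direct computation: write $G=G_1\hdots G_m$ with $G_1=B(G_1^1\otimes G_1^2)$; then $F\succ G=B(FG_1^1\otimes G_1^2)G_2\hdots G_m$, whose $k$-th deconcatenation is $(F\succ G_1)G_2\hdots G_i\otimes G_{i+1}\hdots G_m$ for $1\le i\le m-1$. On the other hand, with the convention $(a\otimes 1)\succ(a'\otimes b')=(a\succ a')\otimes b'$ whenever $a'\in\mathcal{M}$, the expression $(F\otimes 1)\succ\tdelta_{\mathcal{A}ss}(G)=\sum_{i=1}^{m-1}(F\succ G_1\hdots G_i)\otimes G_{i+1}\hdots G_m$ coincides with the previous sum by relation (\ref{rel2}) applied to $F\succ(G_1\hdots G_i)=(F\succ G_1)G_2\hdots G_i$. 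Alternatively, one can deduce this relation from the one for $\prec$ already proved by applying $\tau\circ(\dagger\otimes\dagger)$ and using Propositions \ref{involution} and \ref{deggerprecsucc} together with the fact that $\tdelta_{\mathcal{A}ss}(x^{\dagger})=\tau\circ(\dagger\otimes\dagger)\circ\tdelta_{\mathcal{A}ss}(x)$.

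No real obstacle arises; the only point requiring care is the bookkeeping with the modified tensor product $\overline{\otimes}$, in particular the asymmetry between the cases $b\in\mathcal{M}$ and $b=1$ in the definition of the actions of $\succ$ and $\prec$ on $\mathcal{M}\overline{\otimes}\mathcal{M}$. Once this is checked, the three identities follow by direct calculation on monomial forests and extend by linearity.
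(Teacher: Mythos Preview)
Your proposal is correct and follows essentially the same approach as the paper. The only cosmetic difference is the order: the paper proves the $\succ$ identity by the direct deconcatenation computation you describe and then deduces the $\prec$ identity from it via the involution $\dagger$ (using $\tdelta_{\mathcal{A}ss}(x^{\dagger})=\tau\circ(\dagger\otimes\dagger)\circ\tdelta_{\mathcal{A}ss}(x)$ and Proposition~\ref{deggerprecsucc}), whereas you prove $\prec$ directly by reusing the computation of Proposition~\ref{infibialgLG} and offer both the direct argument and the $\dagger$ deduction for $\succ$.
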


\begin{proof}
We can restrict ourselves to $ F , G \in \mathbb{F} \setminus \{1\} $. We put $ F = F_{1} \hdots F_{n} $, $ G = G_{1} \hdots G_{m} $ where the $ F_{i} $'s and the $ G_{i} $'s are trees and $ G_{1} = B(G^{1}_{1} \otimes G^{2}_{1}) $. Hence :
\begin{eqnarray*}
\tdelta_{\mathcal{A}ss} (F G) & = & \sum_{H_{1},H_{2} \in \mathbb{F} \setminus \{ 1 \} , H_{1} H_{2} = F G} H_{1} \otimes H_{2} \\
& = & \sum_{H_{1},H_{2} \in \mathbb{F} \setminus \{ 1 \} , H_{1} H_{2} = G} F H_{1} \otimes H_{2} + \sum_{H_{1},H_{2} \in \mathbb{F} \setminus \{ 1 \} , H_{1} H_{2} = F} H_{1} \otimes H_{2} G + F \otimes G \\
& = & (F \otimes 1) \tdelta_{\mathcal{A}ss} (G) + \tdelta_{\mathcal{A}ss}(F) (1 \otimes G) + F \otimes G ,
\end{eqnarray*}
\begin{eqnarray*}
\tdelta_{\mathcal{A}ss} (F \succ G) & = & \tdelta_{\mathcal{A}ss} ( B(F G^{1}_{1} \otimes G^{2}_{1}) G_{2} \hdots G_{m} ) \\
& = & B(F G^{1}_{1} \otimes G^{2}_{1}) \otimes G_{2} \hdots G_{m} + \sum_{i=2}^{m-1} B(F G^{1}_{1} \otimes G^{2}_{1}) G_{2} \hdots G_{i} \otimes G_{i+1} \hdots G_{m} \\
& = & F \succ G_{1} \otimes G_{2} \hdots G_{m} + \sum_{i=2}^{m-1} F \succ G_{1} G_{2} \hdots G_{i} \otimes G_{i+1} \hdots G_{m} \\
& = & (F \otimes 1) \succ \tdelta_{\mathcal{A}ss} (G) .
\end{eqnarray*}
We deduce the third relation of (\ref{relpourMinf}) from the second one in this way :
\begin{eqnarray*}
\tdelta_{\mathcal{A}ss} (F \prec G) & = & \tdelta_{\mathcal{A}ss} ((G^{\dagger} \succ F^{\dagger})^{\dagger}) \\
& = & \tau \circ (\dagger \otimes \dagger) \circ \tdelta_{\mathcal{A}ss} (G^{\dagger} \succ F^{\dagger}) \\
& = & \tau \circ (\dagger \otimes \dagger) ((G^{\dagger} \otimes 1) \succ \tdelta_{\mathcal{A}ss} (F^{\dagger}) ) \\
& = & \tdelta_{\mathcal{A}ss} (F^{\dagger}) \prec (1 \otimes G) .
\end{eqnarray*}
\end{proof}

This justifies the following definition :

\begin{defi}
A $ \mathcal{BG} $-infinitesimal bialgebra is a family $ (A,m,\succ,\prec,\tdelta_{\mathcal{A}ss} ) $ where $ m , \succ , \prec : A \otimes A \rightarrow A $, $ \tdelta_{\mathcal{A}ss} : A \rightarrow A \otimes A $, with the following compatibilities :
\begin{enumerate}
\item $ (A,m,\succ , \prec) $ is a $ \mathcal{BG} $-algebra.
\item For all $ x,y \in A $ :
\begin{eqnarray} \label{lesrelations}
\left\lbrace \begin{array}{rcl}
\tdelta_{\mathcal{A}ss} (xy) & = & (x \otimes 1) \tdelta_{\mathcal{A}ss} (y) + \tdelta_{\mathcal{A}ss} (x) (1 \otimes y) + x \otimes y ,\\
\tdelta_{\mathcal{A}ss} (x \succ y) & = & (x \otimes 1) \succ \tdelta_{\mathcal{A}ss}(y), \\
\tdelta_{\mathcal{A}ss} (x \prec y) & = & \tdelta_{\mathcal{A}ss}(x) \prec (1 \otimes y) .
\end{array} \right. 
\end{eqnarray}
\end{enumerate}
\end{defi}

With proposition \ref{infibialg} and corollary \ref{mestbg}, we have immediately :

\begin{prop}
$ (\mathcal{M},m,\succ,\prec,\tdelta_{\mathcal{A}ss}) $ is a $ \mathcal{BG} $-infinitesimal bialgebra.
\end{prop}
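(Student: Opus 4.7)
The plan is very short: this proposition is a direct combination of two results already established. First, by Corollary \ref{mestbg}, we know that $(\mathcal{M}, m, \succ, \prec)$ is a $\mathcal{BG}$-algebra, which takes care of condition (1) in the definition of a $\mathcal{BG}$-infinitesimal bialgebra. Second, Proposition \ref{infibialg} establishes exactly the three compatibility relations required in (\ref{lesrelations}):
\begin{eqnarray*}
\tdelta_{\mathcal{A}ss} (xy) & = & (x \otimes 1) \tdelta_{\mathcal{A}ss} (y) + \tdelta_{\mathcal{A}ss} (x) (1 \otimes y) + x \otimes y ,\\
\tdelta_{\mathcal{A}ss} (x \succ y) & = & (x \otimes 1) \succ \tdelta_{\mathcal{A}ss}(y), \\
\tdelta_{\mathcal{A}ss} (x \prec y) & = & \tdelta_{\mathcal{A}ss}(x) \prec (1 \otimes y),
\end{eqnarray*}
for all $x, y \in \mathcal{M}$, which is condition (2).

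Therefore I would simply write a one-line proof citing both results: apply Corollary \ref{mestbg} for the $\mathcal{BG}$-algebra structure, and Proposition \ref{infibialg} for the compatibilities with $\tdelta_{\mathcal{A}ss}$. There is no genuine obstacle here — all the real work was done in the proof of Proposition \ref{infibialg}, where the compatibilities were verified using the combinatorial description of $\succ$ and $\prec$ as graftings on the first (resp.\ last) tree of a forest, together with the symmetry $(G \succ F)^\dagger = F^\dagger \prec G^\dagger$ (Proposition \ref{deggerprecsucc}) to deduce the $\prec$-compatibility from the $\succ$-compatibility via the involution $\dagger$. Hence the statement is immediate and no additional argument is needed beyond invoking these two references.
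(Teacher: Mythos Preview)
Your proposal is correct and matches the paper's approach exactly: the paper also states that the result follows immediately from Proposition~\ref{infibialg} and Corollary~\ref{mestbg}, with no additional argument.
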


{\bf Notations.} {If $ A $ is a $ \mathcal{BG} $-infinitesimal bialgebra, we denote $ Prim(A) = Ker(\tdelta_{\mathcal{A}ss}) $. In the $ \mathcal{BG} $-infinitesimal bialgebra $ \mathcal{M} $, $ Prim(\mathcal{M}) = \mathbb{K}[\mathbb{T} \setminus \{1\} ] $ and we denote by $ \mathcal{P} $ the primitive part of $ \mathcal{M} $.}
\\

Recall the definition of a $ \mathcal{L} $-algebra introduced by P. Leroux in \cite{Leroux} :

\begin{defi} \label{Lalgebra}
A $ \mathcal{L} $-algebra is a $ \mathbb{K} $-vector space $ A $ equipped with two binary operations $ \succ , \prec : A \otimes A \rightarrow A $ verifying the entanglement relation:
\begin{eqnarray*}
(x \succ y) \prec z = x \succ (y \prec z) ,
\end{eqnarray*}
for all $ x,y,z \in A $.
\end{defi}

The operad $ \mathcal{L} $ is binary, quadratic, regular and set-theoretic. We denote by $ \widetilde{\mathcal{L}} $ the nonsymmetric operad associated with the regular operad $ \mathcal{L} $. We do not suppose that $ \mathcal{L} $-algebras have unit for $ \succ $ or $ \prec $. If $ A $ and $ B $ are two $ \mathcal{L} $-algebras, a $ \mathcal{L} $-morphism from $ A $ to $ B $ is a $ \mathbb{K} $-linear map $ f : A \rightarrow B $ such that $ f(x \succ y) = f(x) \succ f(y) $ and $ f(x \prec y) = f(x) \prec f(y) $ for all $ x,y \in A $. We denote by $ \mathcal{L} $-alg the category of $ \mathcal{L} $-algebras.

\begin{prop} \label{proputile}
For any $ \mathcal{BG} $-infinitesimal bialgebra, its primitive part is a $ \mathcal{L} $-algebra.
\end{prop}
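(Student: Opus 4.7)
The plan is to verify that $\mathrm{Prim}(A)$ is closed under both graft operations $\succ$ and $\prec$, and then to note that the entanglement relation is inherited automatically from the ambient $\mathcal{BG}$-algebra $A$.

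For stability under $\succ$ and $\prec$, I would simply invoke the second and third compatibility relations in (\ref{lesrelations}). Take $x,y\in\mathrm{Prim}(A)$, so $\tdelta_{\mathcal{A}ss}(x)=\tdelta_{\mathcal{A}ss}(y)=0$. Then
\begin{eqnarray*}
\tdelta_{\mathcal{A}ss}(x\succ y) &=& (x\otimes 1)\succ\tdelta_{\mathcal{A}ss}(y)=(x\otimes 1)\succ 0=0,\\
\tdelta_{\mathcal{A}ss}(x\prec y) &=& \tdelta_{\mathcal{A}ss}(x)\prec(1\otimes y)=0\prec(1\otimes y)=0,
\end{eqnarray*}
so $x\succ y$ and $x\prec y$ both lie in $\mathrm{Prim}(A)$. (Here one uses the extension of $\succ$ and $\prec$ to $A\overline{\otimes} A$ described before proposition \ref{structenseur1}, on which the expressions $(x\otimes 1)\succ 0$ and $0\prec(1\otimes y)$ are clearly zero by bilinearity.) Hence $\succ$ and $\prec$ restrict to binary operations on $\mathrm{Prim}(A)$.

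Once closure is established, the entanglement relation $(x\succ y)\prec z=x\succ(y\prec z)$ for all $x,y,z\in\mathrm{Prim}(A)$ is just the corresponding axiom of $(A,\ast,\succ,\prec)$ from definition \ref{defiBGalgebra} restricted to the subspace $\mathrm{Prim}(A)\subseteq A$. Therefore $(\mathrm{Prim}(A),\succ,\prec)$ is a $\mathcal{L}$-algebra in the sense of definition \ref{Lalgebra}.

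There is no real obstacle: the proof is a direct application of the compatibility axioms, and is exactly parallel to the argument for proposition \ref{primag} (which showed the primitive part of a $\mathcal{RG}$-infinitesimal bialgebra is magmatic) except that here both the left and right graft are handled simultaneously, producing the entanglement relation instead of no relation. The only mildly subtle point is the bilinear extension of $\succ,\prec$ to $A\overline{\otimes} A$ used to make sense of $(x\otimes 1)\succ\tdelta_{\mathcal{A}ss}(y)$ and $\tdelta_{\mathcal{A}ss}(x)\prec(1\otimes y)$; this is already set up in the paper, so the computation above is unambiguous.
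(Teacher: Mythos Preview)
Your proof is correct and follows essentially the same approach as the paper: you use the second and third compatibility relations in (\ref{lesrelations}) to show that $\mathrm{Prim}(A)$ is closed under $\succ$ and $\prec$, and then observe that the entanglement relation is inherited from the ambient $\mathcal{BG}$-algebra. The paper's proof is the same two-step argument, written slightly more tersely.
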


\begin{proof}
Let $ A $ be a $ \mathcal{BG} $-infinitesimal bialgebra. We put $ x,y \in Prim(A) $. Then with (\ref{lesrelations})
\begin{eqnarray*}
\tdelta_{\mathcal{A}ss} (x \succ y) & = & (x \otimes 1) \succ \tdelta_{\mathcal{A}ss}(y) = 0 ,\\
\tdelta_{\mathcal{A}ss} (x \prec y) & = & \tdelta_{\mathcal{A}ss}(x) \prec (1 \otimes y) = 0 .
\end{eqnarray*}
Therefore, $ x \succ y , x \prec y \in Prim(A) $. As we always have the relation $ (x \succ y) \prec z = x \succ (y \prec z) $ for all $ x,y,z \in Prim(A) $, $ Prim(A) $ is a $ \mathcal{L} $-algebra.
\end{proof}

\begin{cor}
$ (\mathcal{P} , \succ , \prec) $ is a $ \mathcal{L} $-algebra.
\end{cor}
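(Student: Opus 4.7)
The corollary is an immediate specialization of the previously established machinery, so the plan is simply to assemble two facts already proved in this section. First, by the proposition just preceding the corollary, $(\mathcal{M},m,\succ,\prec,\tdelta_{\mathcal{A}ss})$ is a $\mathcal{BG}$-infinitesimal bialgebra. Second, by Proposition \ref{proputile}, the primitive part of any $\mathcal{BG}$-infinitesimal bialgebra carries a canonical $\mathcal{L}$-algebra structure obtained by restricting $\succ$ and $\prec$ to $Prim$.

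Applying Proposition \ref{proputile} to $A = \mathcal{M}$ yields that $Prim(\mathcal{M})$ is closed under $\succ$ and $\prec$ and that the entanglement relation $(x\succ y)\prec z = x\succ(y\prec z)$ holds on $Prim(\mathcal{M})$ (the latter being inherited from the $\mathcal{BG}$-algebra structure on $\mathcal{M}$, which contains the entanglement relation among its six defining identities). Since $\mathcal{P}$ is by definition the primitive part $Prim(\mathcal{M}) = \mathbb{K}[\mathbb{T}\setminus\{1\}]$ of this particular $\mathcal{BG}$-infinitesimal bialgebra, we conclude that $(\mathcal{P},\succ,\prec)$ is a $\mathcal{L}$-algebra.

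There is no real obstacle here; the corollary is essentially a one-line deduction. The only thing worth remarking on is that one should check explicitly (though it is transparent from the formulas for $\succ$ and $\prec$ in Section \ref{defisuccprec}) that $\succ$ and $\prec$ indeed send $\mathcal{P}\otimes\mathcal{P}$ into $\mathcal{P}$; this is exactly what the computation $\tdelta_{\mathcal{A}ss}(x\succ y) = (x\otimes 1)\succ\tdelta_{\mathcal{A}ss}(y) = 0$ and $\tdelta_{\mathcal{A}ss}(x\prec y) = \tdelta_{\mathcal{A}ss}(x)\prec(1\otimes y) = 0$ provides for $x,y\in\mathcal{P}$, and this is already the content of the proof of Proposition \ref{proputile}.
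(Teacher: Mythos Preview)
Your proof is correct and matches the paper's approach exactly: the corollary is stated without proof in the paper because it follows immediately from applying Proposition~\ref{proputile} to the $\mathcal{BG}$-infinitesimal bialgebra $\mathcal{M}$, together with the identification $\mathcal{P} = Prim(\mathcal{M})$.
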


We have even more than this :

\begin{theo} \label{Llibre}
$ (\mathcal{P} , \succ , \prec) $ is the free $ \mathcal{L} $-algebra generated by $ \tun $.
\end{theo}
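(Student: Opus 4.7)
The strategy mimics Proposition 1.21 (the free $\mathcal{M}ag$--algebra result). Since the previous corollary already gives that $(\mathcal{P},\succ,\prec)$ is a $\mathcal{L}$--algebra, it suffices to establish the universal property: for every $\mathcal{L}$--algebra $A$ and every $a \in A$, there exists a unique $\mathcal{L}$--morphism $\phi : \mathcal{P} \to A$ with $\phi(\tun) = a$.

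The first step is to construct $\phi$ by induction on the degree. Every nonempty tree $T \ne \tun$ admits a unique factorization $T = B(T_1\cdots T_p \otimes G_1 \cdots G_q)$ with $T_i, G_j \in \mathbb{T}\setminus\{1\}$ and $p+q \geq 1$, and each $T_i, G_j$ has degree strictly less than $|T|$. Motivated by the identity
\begin{eqnarray*}
T &=& T_1 \succ \Bigl(T_2 \succ \cdots \Bigl(T_p \succ \bigl(((\tun \prec G_1) \prec G_2) \cdots \prec G_q\bigr)\Bigr)\cdots\Bigr),
\end{eqnarray*}
valid in $\mathcal{M}$ thanks to relations (\ref{rel1}), (\ref{rel3}) and the entanglement relation (\ref{rel5}), I define
\begin{eqnarray*}
\phi(T) &:=& \phi(T_1) \succ \Bigl(\phi(T_2) \succ \cdots \Bigl(\phi(T_p) \succ \bigl(((a \prec \phi(G_1)) \prec \phi(G_2)) \cdots \prec \phi(G_q)\bigr)\Bigr)\cdots\Bigr),
\end{eqnarray*}
with the conventions that the empty $\succ$--chain (when $p=0$) is removed and the empty $\prec$--chain (when $q=0$) leaves only $a$. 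Extended linearly, this gives a well-defined map $\phi : \mathcal{P} \to A$.

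The second step is to check that $\phi$ is a $\mathcal{L}$--morphism. Given two nonempty trees $T,T'$ decomposed as above, one reads off the factorizations
\begin{eqnarray*}
T \succ T' &=& B\bigl(T\cdot T'_1 \cdots T'_{p'} \;\otimes\; G'_1 \cdots G'_{q'}\bigr),\\
T \prec T' &=& B\bigl(T_1 \cdots T_p \;\otimes\; G_1 \cdots G_q \cdot T'\bigr),
\end{eqnarray*}
directly from the definitions of $\succ$ and $\prec$ in Section \ref{defisuccprec}. For $\succ$ the comparison $\phi(T \succ T') = \phi(T) \succ \phi(T')$ is immediate from the definition of $\phi$ (both expressions prepend $\phi(T)$ via $\succ$ to the rest). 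The case of $\prec$ is where the work actually happens and is the main obstacle: $\phi(T \prec T')$ has the factor $\prec \phi(T')$ \emph{inside} all the $\succ$'s coming from $T_1,\ldots,T_p$, whereas $\phi(T) \prec \phi(T')$ has it on the outside. The two expressions are reconciled by applying the entanglement identity $(x \succ y) \prec z = x \succ (y \prec z)$ (valid in any $\mathcal{L}$--algebra) exactly $p$ times to migrate $\prec \phi(T')$ through the nested $\succ$'s; this is precisely what makes the construction work and why the entanglement relation is essential.

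The third step is uniqueness. If $\psi : \mathcal{P} \to A$ is any $\mathcal{L}$--morphism with $\psi(\tun) = a$, then the decomposition identity displayed above forces, by induction on the degree of $T$,
\begin{eqnarray*}
\psi(T) &=& \psi(T_1) \succ \Bigl(\cdots \succ \bigl(\psi(T_p) \succ ((\cdots(a \prec \psi(G_1)) \cdots) \prec \psi(G_q))\bigr)\cdots\Bigr) \;=\; \phi(T),
\end{eqnarray*}
so $\psi = \phi$. This completes the proof. The only delicate point is the use of entanglement in the $\prec$--case of the morphism verification; all other steps are bookkeeping via the inductive decomposition of trees into their left and right children.
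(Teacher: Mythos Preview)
Your proof is correct and follows the same strategy as the paper: construct $\phi$ inductively on the degree via the decomposition $T=B(T_1\cdots T_p\otimes G_1\cdots G_q)$, verify the morphism property on $\succ$ and $\prec$, then deduce uniqueness. The only difference is a symmetric choice in the definition of $\phi$: the paper sets
\[
\phi(T)=\bigl(\cdots\bigl((\phi(T_1)\succ(\cdots(\phi(T_p)\succ a)\cdots))\prec\phi(G_1)\bigr)\cdots\bigr)\prec\phi(G_q),
\]
with the $\prec$'s on the \emph{outside}, so that the $\prec$-case becomes immediate and entanglement is needed $s$ times in the $\succ$-case; your definition puts the $\succ$'s on the outside, making the $\succ$-case immediate and shifting the entanglement work to the $\prec$-case. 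The two definitions agree in any $\mathcal{L}$-algebra precisely by the entanglement relation, so this is a cosmetic mirror image rather than a genuinely different route.
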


\begin{proof}
Let $ A $ be a $ \mathcal{L} $-algebra and $ a \in A $. Let us prove that there exists a unique morphism of $ \mathcal{L} $-algebras $ \phi : \mathcal{P} \rightarrow A $ such that $ \phi (\tun) = a $. We define $ \phi (F) $ for any nonempty tree $ F \in \mathcal{P} $ inductively on the degree of $ F $ by:
$$\left\{\begin{array}{rcl}
\phi(\tun)&=&a,\\
\phi(F)&=& ( \hdots ((\phi(F^{1}_{1}) \succ ( \hdots (\phi(F^{1}_{p-1}) \succ (\phi(F^{1}_{p}) \succ a )) \hdots )) \prec \phi(F^{2}_{1}) ) \hdots ) \prec \phi(F^{2}_{q}) \\
& & \mbox{ if } F = B(F^{1}_{1} \hdots F^{1}_{p} \otimes F^{2}_{1} \hdots F^{2}_{q}) \mbox{ with the } F^{1}_{i} \mbox{'s and the } F^{2}_{j}\mbox{'s} \in \mathbb{T}. 
\end{array}\right.$$
This map is linearly extended into a map $\phi: \mathcal{P} \rightarrow A$. Let us show that it is a morphism of $\mathcal{L}$-algebras, that is to say $\phi(F \succ G)=\phi(F) \succ \phi(G)$ and $ \phi (F \prec G) = \phi(F) \prec \phi(G) $  for all $ F , G \in \mathbb{T} \setminus \{1\} $. Note $ F = B(F^{1}_{1} \hdots F^{1}_{p} \otimes F^{2}_{1} \hdots F^{2}_{q}) $, $ G = B(G^{1}_{1} \hdots G^{1}_{r} \otimes G^{2}_{1} \hdots G^{2}_{s}) $ with $ F^{1}_{1} , \hdots , F^{1}_{p}, F^{2}_{1} , \hdots , F^{2}_{q} $ and $ G^{1}_{1} , \hdots , G^{1}_{r} ,G^{2}_{1} , \hdots , G^{2}_{s} $ in $ \mathbb{T} $. Then:
\begin{enumerate}
\item For $\phi(F \succ G)=\phi(F) \succ \phi(G)$,
\begin{eqnarray*}
\phi(F \succ G) & = & \phi(B(FG^{1}_{1} \hdots G^{1}_{r} \otimes G^{2}_{1} \hdots G^{2}_{s})) \\
& = & ( \hdots ((\phi(F) \succ (\phi(G^{1}_{1}) \succ ( \hdots (\phi(G^{1}_{r}) \succ a ) \hdots ))) \prec \phi(G^{2}_{1}) ) \hdots ) \prec \phi(G^{2}_{s})\\
& = & \phi(F) \succ ( ( \hdots ((\phi(G^{1}_{1}) \succ ( \hdots (\phi(G^{1}_{r}) \succ a ) \hdots )) \prec \phi(G^{2}_{1}) ) \hdots ) \prec \phi(G^{2}_{s})) \\
& = & \phi(F) \succ \phi(G).
\end{eqnarray*}
\item For $ \phi (F \prec G) = \phi(F) \prec \phi(G) $,
\begin{eqnarray*}
\phi(F \prec G) & = & \phi(B(F^{1}_{1} \hdots F^{1}_{p} \otimes F^{2}_{1} \hdots F^{2}_{q} G)) \\
& = & (( \hdots ((\phi(F^{1}_{1}) \succ ( \hdots (\phi(F^{1}_{p}) \succ a ) \hdots )) \prec \phi(F^{2}_{1}) ) \hdots ) \prec \phi(F^{2}_{q})) \prec \phi(G) \\
& = & \phi(F) \prec \phi(G).
\end{eqnarray*}
\end{enumerate}
So $\phi$ is a morphism of $\mathcal{L}$-algebras. \\

Let $\phi':\mathcal{P} \rightarrow A$ be another morphism of $ \mathcal{L} $-algebras such that $\phi'(\tun) = a $. For any forest $ F^{1}_{1} \hdots F^{1}_{p} $ and $ F^{2}_{1} \hdots F^{2}_{q} \in \mathbb{F} $,
\begin{eqnarray*}
& & \phi'(B(F^{1}_{1} \hdots F^{1}_{p} \otimes F^{2}_{1} \hdots F^{2}_{q})) \\
& = & \phi'( ( \hdots ((F^{1}_{1} \succ ( \hdots (F^{1}_{p-1} \succ (F^{1}_{p} \succ \tun )) \hdots )) \prec F^{2}_{1} ) \hdots ) \prec F^{2}_{q} ) \\
& = & ( \hdots ((\phi'(F^{1}_{1}) \succ ( \hdots (\phi'(F^{1}_{p-1}) \succ (\phi'(F^{1}_{p}) \succ \phi'(\tun) )) \hdots )) \prec \phi'(F^{2}_{1}) ) \hdots ) \prec \phi'(F^{2}_{q}) \\
& = & ( \hdots ((\phi'(F^{1}_{1}) \succ ( \hdots (\phi'(F^{1}_{p-1}) \succ (\phi'(F^{1}_{p}) \succ a )) \hdots )) \prec \phi'(F^{2}_{1}) ) \hdots ) \prec \phi'(F^{2}_{q}).
\end{eqnarray*}
So $\phi=\phi'$.
\end{proof}
\\

{\bf Remark.} {We deduce from theorem \ref{Llibre} that $ dim \left( \widetilde{\mathcal{L}}(n)\right) = t_{n}^{\mathcal{BT}} $ for all $ n \in \mathbb{N}^{\ast} $ (with the same reasoning as in corollary \ref{isom}). We find again the result already given in \cite{Leroux}.}
\\

We denote by $ L(V) $ the free $ \mathcal{L} $-algebra over a $ \mathbb{K} $-vector space $ V $. The functor $ L(-) $ is the left adjoint to the forgetful functor from $ \mathcal{L} $-algebras to vector spaces. Because the operad $ \mathcal{L} $ is regular, we get the following result :

\begin{prop}
Let $ V $ be a $ \mathbb{K} $-vector space. Then the free $ \mathcal{L} $-algebra on $ V $ is
$$ L(V) = \bigoplus_{n \geq 1} \mathbb{K} [\mathbb{T}(n)] \otimes V^{\otimes n} ,$$
equipped with the following binary operations : for all $ F \in \mathbb{T}(n) $, $ G \in \mathbb{T}(m) $, $ v_{1} \otimes \hdots \otimes v_{n} \in V^{\otimes n} $ and $ w_{1} \otimes \hdots \otimes w_{m} \in V^{\otimes m} $,
\begin{eqnarray*}
(F \otimes v_{1} \otimes \hdots \otimes v_{n}) \succ (G \otimes w_{1} \otimes \hdots \otimes w_{m}) & = & (F \succ G \otimes v_{1} \otimes \hdots \otimes v_{n} \otimes w_{1} \otimes \hdots \otimes w_{m}) ,\\
(F \otimes v_{1} \otimes \hdots \otimes v_{n}) \prec (G \otimes w_{1} \otimes \hdots \otimes w_{m}) & = & (F \prec G \otimes v_{1} \otimes \hdots \otimes v_{n} \otimes w_{1} \otimes \hdots \otimes w_{m}) .
\end{eqnarray*}
\end{prop}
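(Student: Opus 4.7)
The plan is to invoke Theorem \ref{Llibre} combined with the general structure theorem for free algebras over a regular operad. Recall that if $\mathcal{O}$ is a regular operad, then its free algebra functor is given explicitly by
$$ \mathcal{O}(V) = \bigoplus_{n \geq 1} \widetilde{\mathcal{O}}(n) \otimes V^{\otimes n} ,$$
endowed with operations that act by operadic composition on the first tensor factor and by concatenation on the second. So the only real content is to identify $\widetilde{\mathcal{L}}(n)$ explicitly as $\mathbb{K}[\mathbb{T}(n)]$ in a manner compatible with the combinatorial operations $\succ$ and $\prec$ on trees introduced in Section \ref{defisuccprec}.

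First I would apply Theorem \ref{Llibre}: $(\mathcal{P},\succ,\prec)$ is the free $\mathcal{L}$-algebra generated by $\tun$. Arguing exactly as in the proof of Corollary \ref{isom}, one obtains that the evaluation map
$$ \Psi_{\mathcal{L}}: \widetilde{\mathcal{L}}(n) \longrightarrow \mathbb{K}[\mathbb{T}(n)] , \quad p \longmapsto p.(\tun,\hdots,\tun) ,$$
is a $\mathbb{K}$-linear isomorphism (surjectivity follows from Theorem \ref{Llibre}, injectivity from the universal property/freeness). Under this identification, the generators $\succ, \prec \in \widetilde{\mathcal{L}}(2)$ correspond respectively to the trees $\addeux{$l$}$ and $\addeux{$r$}$.

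Next I would verify that under $\Psi_{\mathcal{L}}$ the operadic compositions $\succ\circ(F,G)$ and $\prec\circ(F,G)$ correspond exactly to the combinatorial graftings $F\succ G$ and $F\prec G$ of Section \ref{defisuccprec}. This is the same kind of direct calculation as in Propositions \ref{pourleth1} and \ref{pourleth2}: writing $\Psi_{\mathcal{L}}(\succ\circ(F,G)) = \succ \bullet(\Psi_{\mathcal{L}}(F),\Psi_{\mathcal{L}}(G)) = \Psi_{\mathcal{L}}(F) \succ \Psi_{\mathcal{L}}(G)$ and likewise for $\prec$, and then using $\Psi_{\mathcal{L}}(F) = F$, $\Psi_{\mathcal{L}}(G) = G$.

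Finally, with $\widetilde{\mathcal{L}}(n) \cong \mathbb{K}[\mathbb{T}(n)]$ and the operations identified, the general formula $\mathcal{O}(V) = \bigoplus_n \widetilde{\mathcal{O}}(n) \otimes V^{\otimes n}$ for a regular operad $\mathcal{O}$ yields precisely the vector space and formulas stated in the proposition. The universal property is immediate: any linear map $f : V \rightarrow A$ to an $\mathcal{L}$-algebra $A$ extends by
$$ (F \otimes v_{1} \otimes \hdots \otimes v_{n}) \longmapsto F \bullet (f(v_{1}), \hdots , f(v_{n})) ,$$
which is clearly the unique $\mathcal{L}$-morphism $L(V) \rightarrow A$ extending $f$. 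There is no genuine obstacle here; the substantive combinatorial work has already been carried out in Theorem \ref{Llibre}, and what remains is essentially bookkeeping in order to pass from one generator to an arbitrary vector space of generators.
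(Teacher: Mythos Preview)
Your proposal is correct and takes essentially the same approach as the paper. In fact the paper offers no detailed proof at all: it simply remarks ``Because the operad $\mathcal{L}$ is regular, we get the following result'' and states the proposition, so your argument via Theorem \ref{Llibre} and the general free-algebra formula for regular operads is exactly the intended justification, spelled out in more detail.
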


\subsection{Universal enveloping bigraft algebra}

The functor $ (-)_{\mathcal{L}} : \{ \mathcal{BG} - {\rm alg} \} \rightarrow \{ \mathcal{L} - {\rm alg} \} $ associates to a $ \mathcal{BG} $-algebra $ (A,m, \succ , \prec ) $ the $ \mathcal{L} $-algebra $ (A,\succ , \prec ) $.\\

Reciprocally, we define the universal enveloping bigraft algebra of a $ \mathcal{L} $-algebra $ (A, \succ , \prec ) $, denoted $ U_{\mathcal{BG}} (A) $, as the augmentation ideal $ \overline{T}(A) $ of tensor algebra $ T(A) $ over the $ \mathbb{K} $-vector space $ A $ equipped with two operations also denoted by $ \succ $ and $ \prec $ and defined by : for all $ p,q \in \mathbb{N}^{\ast} $ and $ a_{i} , b_{j} \in A $, $ 1 \leq i \leq p $ and $ 1 \leq j \leq q $,
\begin{eqnarray} \label{structbg}
\begin{array}{rcl}
(a_{1} \hdots a_{p}) \succ (b_{1} \hdots b_{q}) & := & (a_{1} \succ ( \hdots (a_{p-1} \succ (a_{p} \succ b_{1})) \hdots )) b_{2} \hdots b_{q} , \\
(a_{1} \hdots a_{p}) \prec (b_{1} \hdots b_{q}) & := & a_{1} \hdots a_{p-1} (( \hdots ((a_{p} \prec b_{1}) \prec b_{2}) \hdots ) \prec b_{q}) .
\end{array}
\end{eqnarray}
Then we have immediately that $ (\overline{T}(A), m ,\succ , \prec ) $ is a nonunitary bigraft algebra, where $ m $ is the concatenation.

\begin{lemma} \label{lemmeutile}
The functor $ U_{\mathcal{BG}} : \{ \mathcal{L} - {\rm alg} \} \rightarrow \{ \mathcal{BG} - {\rm alg} \} $ is the left adjoint to $ (-)_{\mathcal{L}} : \{ \mathcal{BG} - {\rm alg} \} \rightarrow \{ \mathcal{L} - {\rm alg} \} $.
\end{lemma}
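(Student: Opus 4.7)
The plan is to exhibit, for every $\mathcal{L}$-algebra $A$ and every $\mathcal{BG}$-algebra $B$, a natural bijection
\begin{eqnarray*}
\Theta_{A,B} : \mathrm{Hom}_{\mathcal{BG}}(U_{\mathcal{BG}}(A),B) \longrightarrow \mathrm{Hom}_{\mathcal{L}}(A,B_{\mathcal{L}}).
\end{eqnarray*}
The unit of the adjunction will be the canonical map $\iota_{A} : A \hookrightarrow \overline{T}(A) = U_{\mathcal{BG}}(A)$ sending $a$ to the length-one tensor $a$. First I would check that $\iota_{A}$ is a morphism of $\mathcal{L}$-algebras from $A$ to $(U_{\mathcal{BG}}(A))_{\mathcal{L}}$: this is the case $p=q=1$ of the formulas (\ref{structbg}), which reduce to $a \succ b$ and $a \prec b$ respectively. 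Then $\Theta_{A,B}$ is defined by $\Theta_{A,B}(\Phi) = \Phi_{\mathcal{L}} \circ \iota_{A}$.

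Next I would construct the inverse. Given $f : A \rightarrow B_{\mathcal{L}}$ a morphism of $\mathcal{L}$-algebras, define $\widetilde{f} : \overline{T}(A) \rightarrow B$ by
\begin{eqnarray*}
\widetilde{f}(a_{1} \hdots a_{p}) = f(a_{1}) \ast f(a_{2}) \ast \hdots \ast f(a_{p}),
\end{eqnarray*}
where $\ast$ is the associative product of $B$; associativity of $\ast$ makes this well defined and shows at once that $\widetilde{f}$ respects the concatenation product $m$ of $\overline{T}(A)$. The main step is to verify that $\widetilde{f}$ is compatible with $\succ$ and $\prec$. Using the formulas (\ref{structbg}),
\begin{eqnarray*}
\widetilde{f}((a_{1} \hdots a_{p}) \succ (b_{1} \hdots b_{q})) & = & f(a_{1} \succ (\hdots \succ (a_{p} \succ b_{1}) \hdots )) \ast f(b_{2}) \ast \hdots \ast f(b_{q}) \\
& = & (f(a_{1}) \succ ( \hdots \succ (f(a_{p}) \succ f(b_{1})) \hdots )) \ast f(b_{2}) \ast \hdots \ast f(b_{q}),
\end{eqnarray*}
since $f$ is an $\mathcal{L}$-morphism. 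Applying iteratively the $\mathcal{BG}$-relations in $B$, namely $(x \ast y) \succ z = x \succ (y \succ z)$ and $(x \succ y) \ast z = x \succ (y \ast z)$, the right-hand side rewrites as $\widetilde{f}(a_{1} \hdots a_{p}) \succ \widetilde{f}(b_{1} \hdots b_{q})$. A symmetric computation handles $\prec$ using the relations $(x \prec y) \prec z = x \prec (y \ast z)$ and $(x \ast y) \prec z = x \ast (y \prec z)$. This is the only nontrivial step and is the main obstacle.

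To finish, I would check that $\Theta_{A,B}$ and $f \mapsto \widetilde{f}$ are mutual inverses. On the one hand, $\widetilde{f} \circ \iota_{A} = f$ by construction. On the other hand, if $\Phi : U_{\mathcal{BG}}(A) \rightarrow B$ is a $\mathcal{BG}$-morphism, then every element $a_{1} \hdots a_{p} \in \overline{T}(A)$ is obtained from the generators $\iota_{A}(a_{i})$ by iterated concatenation, so $\Phi$ is determined by its restriction to $\iota_{A}(A)$; hence $\widetilde{\Phi \circ \iota_{A}} = \Phi$. Naturality in $A$ and $B$ is immediate from the fact that both the extension process and $\iota$ are natural. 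This establishes the adjunction.
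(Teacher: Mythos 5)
Your proof is correct and takes essentially the same route as the paper's: both extend an $\mathcal{L}$-morphism $f : A \rightarrow B_{\mathcal{L}}$ multiplicatively to the unique algebra morphism $\widetilde{f} : \overline{T}(A) \rightarrow B$, verify compatibility with $\succ$ and $\prec$ by iterating the $\mathcal{BG}$-relations of $B$ against the formulas (\ref{structbg}), and obtain the inverse correspondence by restricting a $\mathcal{BG}$-morphism along the inclusion $A \hookrightarrow U_{\mathcal{BG}}(A)$. Your explicit treatment of the unit $\iota_{A}$ and of naturality merely spells out what the paper leaves implicit.
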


\begin{proof}
We put $ A $ a $ \mathcal{L} $-algebra and $ B $ a $ \mathcal{BG} $-algebra. Let $ f : A \rightarrow (B)_{\mathcal{L}} $ be a morphism of $ \mathcal{L} $-algebras. It determines uniquely a morphism of algebras $ \tilde{f} : \overline{T}(A) \rightarrow B $ because $ (B,m) $ is an associative algebra. We endow $ \overline{T}(A) $ with a $ \mathcal{BG} $-algebra structure defined by (\ref{structbg}). Then $ \tilde{f} : U_{\mathcal{BG}}(A) \rightarrow B $ is a morphism of $ \mathcal{BG} $-algebras : for all $ p,q \in \mathbb{N}^{\ast} $ and $ a_{i} , b_{j} \in A $, $ 1 \leq i \leq p $ and $ 1 \leq j \leq q $,
\begin{eqnarray*}
\tilde{f} ((a_{1} \hdots a_{p}) \succ (b_{1} \hdots b_{q})) & = & \tilde{f} ((a_{1} \succ ( \hdots (a_{p-1} \succ (a_{p} \succ b_{1})) \hdots )) b_{2} \hdots b_{q}) \\
& = & (\tilde{f}(a_{1}) \succ ( \hdots (\tilde{f}(a_{p-1}) \succ (\tilde{f}(a_{p}) \succ \tilde{f}(b_{1}))) \hdots )) \tilde{f} (b_{2}) \hdots \tilde{f}(b_{q}) \\
& = & (\tilde{f}(a_{1}) \hdots \tilde{f}(a_{p})) \succ (\tilde{f}(b_{1}) \hdots \tilde{f}(b_{q})) \\
& = & \tilde{f}(a_{1} \hdots a_{p}) \succ \tilde{f}(b_{1} \hdots b_{q}) ,\\
\tilde{f} ((a_{1} \hdots a_{p}) \prec (b_{1} \hdots b_{q})) & = & \tilde{f} (a_{1} \hdots a_{p-1} (( \hdots ((a_{p} \prec b_{1}) \prec b_{2}) \hdots ) \prec b_{q})) \\
& = & \tilde{f} (a_{1}) \hdots \tilde{f}(a_{p-1}) (( \hdots ((\tilde{f}(a_{p}) \prec \tilde{f}(b_{1})) \prec \tilde{f}(b_{2})) \hdots ) \prec \tilde{f}(b_{q})) \\
& = & (\tilde{f}(a_{1}) \hdots \tilde{f}(a_{p})) \prec (\tilde{f}(b_{1}) \hdots \tilde{f}(b_{q})) \\
& = & \tilde{f}(a_{1} \hdots a_{p}) \prec \tilde{f}(b_{1} \hdots b_{q}) .
\end{eqnarray*}
On the other hand, let $ g : U_{\mathcal{BG}}(A) \rightarrow B $ be a morphism of $ \mathcal{BG} $-algebras. From the construction of $ U_{\mathcal{BG}}(A) $ it follows that the map $ A \rightarrow U_{\mathcal{BG}}(A) $ is a $ \mathcal{L} $-algebra morphism. Hence the composition $ \tilde{g} $ with $ g $ gives a $ \mathcal{L} $-algebra morphism $ A \rightarrow B $.\\

These two constructions are inverse of each other, and therefore $ U_{\mathcal{BG}} $ is the left adjoint to $ (-)_{\mathcal{L}} $.
\end{proof}

\begin{cor}
The universal enveloping bigraft algebra of the free $ \mathcal{L} $-algebra is canonically isomorphic to the free bigraft algebra :
$$ U_{\mathcal{BG}}(L(V)) \cong BG(V) .$$
\end{cor}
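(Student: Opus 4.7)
The plan is to build the isomorphism purely from universal properties, using Lemma~\ref{lemmeutile} as the key input, and then checking that the two morphisms we construct are mutually inverse by uniqueness.

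First I would construct a map $ \alpha : BG(V) \rightarrow U_{\mathcal{BG}}(L(V)) $. Since $ L(V) $ is a $ \mathcal{L} $-algebra, $ U_{\mathcal{BG}}(L(V)) $ is a $ \mathcal{BG} $-algebra, and the canonical inclusion $ V \hookrightarrow L(V) \hookrightarrow U_{\mathcal{BG}}(L(V)) $ gives a $ \mathbb{K} $-linear map from $ V $ to a $ \mathcal{BG} $-algebra; by the universal property of $ BG(V) $ it extends uniquely to a $ \mathcal{BG} $-morphism $ \alpha : BG(V) \rightarrow U_{\mathcal{BG}}(L(V)) $.

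Next I would construct $ \beta : U_{\mathcal{BG}}(L(V)) \rightarrow BG(V) $ in the reverse direction. Since $ BG(V) $ is a $ \mathcal{BG} $-algebra, the underlying $ \mathcal{L} $-algebra $ (BG(V))_{\mathcal{L}} $ receives the canonical linear map $ V \hookrightarrow BG(V) $; by the universal property of $ L(V) $ this extends uniquely to a $ \mathcal{L} $-morphism $ \widetilde{\beta} : L(V) \rightarrow (BG(V))_{\mathcal{L}} $. Applying the adjunction of Lemma~\ref{lemmeutile}, $ \widetilde{\beta} $ corresponds uniquely to a $ \mathcal{BG} $-morphism $ \beta : U_{\mathcal{BG}}(L(V)) \rightarrow BG(V) $.

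To conclude, I would check that $ \beta \circ \alpha = Id_{BG(V)} $ and $ \alpha \circ \beta = Id_{U_{\mathcal{BG}}(L(V))} $. For the first, both $ \beta \circ \alpha $ and $ Id_{BG(V)} $ are $ \mathcal{BG} $-morphisms from $ BG(V) $ to itself which restrict to the identity on the generating subspace $ V $; by the uniqueness part of the universal property of the free $ \mathcal{BG} $-algebra, they coincide. For the second, the same argument applies: $ \alpha \circ \beta $ and $ Id $ are both $ \mathcal{BG} $-endomorphisms of $ U_{\mathcal{BG}}(L(V)) $ whose transports under the adjunction of Lemma~\ref{lemmeutile} are $ \mathcal{L} $-endomorphisms of $ L(V) $ restricting to the identity on $ V $, and such a $ \mathcal{L} $-endomorphism is unique. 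Naturality in $ V $ is automatic from the construction, which gives the ``canonical'' in the statement. The only delicate point worth verifying carefully is that the adjunction unit $ L(V) \rightarrow (U_{\mathcal{BG}}(L(V)))_{\mathcal{L}} $ is indeed a $ \mathcal{L} $-morphism and is compatible with the transport described above, but this is precisely what Lemma~\ref{lemmeutile} provides, so no real obstacle remains.
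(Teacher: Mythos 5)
Your proof is correct and is essentially the paper's argument made explicit: the paper simply observes that $ U_{\mathcal{BG}} \circ L $, being a composite of left adjoints (Lemma \ref{lemmeutile} together with the freeness of $ L $), is left adjoint to the forgetful functor from $ \mathcal{BG} $-algebras to vector spaces and hence coincides with $ BG $ by uniqueness of left adjoints. You unwind that uniqueness statement into an explicit pair of mutually inverse $ \mathcal{BG} $-morphisms built from the same universal properties, with the checks on the generating subspace $ V $ being exactly the standard verification hidden in the categorical one-liner, so there is no gap and no genuinely different idea.
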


\begin{proof}
$ U_{\mathcal{BG}} $ is left adjoint to $ (-)_{\mathcal{L}} $ and $ L $ is left adjoint to the forgetful functor. The composite is the left adjoint to the forgetful functor from $ \mathcal{BG} $-algebras to vector spaces. Hence it is the functor $ BG $.
\end{proof}

\begin{theo}
For any $ \mathcal{BG} $-infinitesimal bialgebra $ A $ over a field $ \mathbb{K} $, the following are equivalent:
\begin{enumerate}
\item $ A $ is a connected $ \mathcal{BG} $-infinitesimal bialgebra,
\item $ A $ is cofree among the connected coalgebras,
\item $ A $ is isomorphic to $ U_{\mathcal{BG}} (Prim(A)) $ as a $ \mathcal{BG} $-infinitesimal bialgebra.
\end{enumerate}
\end{theo}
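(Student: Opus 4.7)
The plan is to fit this result into the framework of good triples of operads of Loday \cite{Loday2} applied to $(\mathcal{A}ss,\mathcal{BG},\mathcal{L})$. The preceding material already supplies three of the four ingredients: Proposition \ref{proputile} identifies $\mathcal{L}$ as the primitive operad of $\mathcal{BG}$-infinitesimal bialgebras, Lemma \ref{lemmeutile} provides the universal enveloping functor $U_{\mathcal{BG}}$ as left adjoint to $(-)_\mathcal{L}$, and the corollary $U_{\mathcal{BG}}(L(V))\cong BG(V)$ is the analogue of Cartier--Milnor--Moore. The rigidity theorem then follows once we check that $U_{\mathcal{BG}}(V)$, equipped with $\tdelta_{\mathcal{A}ss}$ and with trivial $\mathcal{L}$-structure on $V$, is a connected $\mathcal{BG}$-infinitesimal bialgebra whose primitive part is exactly $V$.

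The implication $(2)\Rightarrow(1)$ is immediate from the definition of connectedness. For $(3)\Rightarrow(2)$, I would observe that on $U_{\mathcal{BG}}(V)=\overline{T}(V)$ the coproduct $\tdelta_{\mathcal{A}ss}$ is simply the reduced deconcatenation, and that $(\overline{T}(V),\tdelta_{\mathcal{A}ss})$ is the classical connected cofree coassociative coalgebra on $V$; in particular its primitive part equals $V$. Combined with $(3)$, this proves that $A$ itself is cofree among connected coalgebras.

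For the main implication $(1)\Rightarrow(3)$, set $P=Prim(A)$. By Proposition \ref{proputile} the inclusion $P\hookrightarrow A$ is a $\mathcal{L}$-morphism, hence Lemma \ref{lemmeutile} produces a unique $\mathcal{BG}$-morphism $\phi\colon U_{\mathcal{BG}}(P)\to A$ extending it. The first task is to verify that $\phi$ is a morphism of $\mathcal{BG}$-infinitesimal bialgebras, i.e.\ $\tdelta_{\mathcal{A}ss}\circ\phi=(\phi\otimes\phi)\circ\tdelta_{\mathcal{A}ss}$; I would prove this by induction on the tensor-length $p$ of a word $a_1\cdots a_p\in\overline{T}(P)$, using the three compatibilities (\ref{lesrelations}) together with the defining formulas (\ref{structbg}) for $\succ$ and $\prec$ on $U_{\mathcal{BG}}(P)$: each of the three formulas in (\ref{lesrelations}) matches exactly the corresponding compatibility for the deconcatenation on $\overline{T}(P)$, so the step reduces to unfolding definitions once the inductive hypothesis is applied to the tail. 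The second task is to show $\phi$ is bijective. Since $A$ is connected, it carries a coradical filtration $F_0A\subseteq F_1A\subseteq\cdots$ with $F_0A=P$ and $\bigcup_n F_nA=A$; and the length filtration of $\overline{T}(P)$ is of the same type. Because $\phi$ preserves $\tdelta_{\mathcal{A}ss}$, it respects these filtrations, and the associated graded map is the identity on $P$ in degree $0$; a straightforward induction then shows $\phi$ restricts to an isomorphism on each $F_n$, hence globally.

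The main obstacle will be the inductive surjectivity step in $(1)\Rightarrow(3)$: given $a\in F_nA$ one must exhibit a preimage in $U_{\mathcal{BG}}(P)$. The standard device is to split $\tdelta_{\mathcal{A}ss}(a)=\sum a_{(1)}\otimes a_{(2)}$ with both factors of lower filtration, apply the induction hypothesis to lift the factors to $U_{\mathcal{BG}}(P)$, reconstruct a candidate by iterating the concatenation, and correct by a primitive element. Making this precise requires using the three compatibilities of (\ref{lesrelations}) to show that the correction indeed lies in $P$, which is exactly what the entanglement relation and the dipterous axioms on $A\overline{\otimes}A$ are tailored for. Once this is carried out, the three conditions are equivalent and the triple $(\mathcal{A}ss,\mathcal{BG},\mathcal{L})$ is a good triple of operads.
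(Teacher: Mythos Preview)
Your proposal is correct, but it follows a different route from the paper's own proof. The paper proves the cycle $1\Rightarrow 2\Rightarrow 3\Rightarrow 1$ and places almost all the weight on an external citation: for $1\Rightarrow 2$ it simply invokes the Loday--Ronco rigidity theorem for connected infinitesimal bialgebras \cite{Loday}, which already gives an isomorphism of infinitesimal bialgebras $A\cong(\overline{T}(Prim(A)),m,\Delta)$; the step $2\Rightarrow 3$ then only needs to observe that this isomorphism is automatically a $\mathcal{BG}$-morphism because the $\mathcal{BG}$-structure on $\overline{T}(Prim(A))$ is forced by (\ref{definitionalgbig}) and (\ref{structbg}). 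By contrast, you reverse the cycle and do the hard implication $(1)\Rightarrow(3)$ directly, building $\phi\colon U_{\mathcal{BG}}(P)\to A$ from the adjunction of Lemma~\ref{lemmeutile} and proving it is a bijective coalgebra map via the coradical filtration. Your argument is essentially a self-contained reproof of the Loday--Ronco result in this enriched setting; it is longer but avoids the black-box citation, and the filtration argument you sketch is exactly the standard mechanism underlying that result. Either approach is fine; the paper's is shorter precisely because the associative rigidity theorem is already available in the literature.
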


\begin{proof}
We prove the following implications $ 1. \Rightarrow 2. \Rightarrow 3. \Rightarrow 1. $ \\
$ 1. \Rightarrow 2. $ If $ A $ is a connected $ \mathcal{BG} $-infinitesimal bialgebra then $ A $ is isomorphic to $ (\overline{T}(Prim(A)), m , \Delta ) $ as an infinitesimal bialgebra, where $ \overline{T}(Prim(A)) $ is the augmentation ideal of the tensor algebra over $ Prim(A) $, $ m $ is the concatenation and $ \Delta $ is the deconcatenation (see \cite{Loday} for a proof). Therefore $ A $ is cofree.\\
$ 2. \Rightarrow 3. $ If $ A $ is cofree, then it is isomorphic as an infinitesimal bialgebra to $ (\overline{T}(Prim(A)), m , \Delta ) $ and $ Prim(A) $ is a $ \mathcal{L} $-algebra with proposition \ref{proputile}. $ \overline{T}(Prim(A)) $ is a $ \mathcal{BG} $-algebra with the two operations $ \succ $ and $ \prec $ defined as in (\ref{structbg}) and this is exactly $ U_{\mathcal{BG}}(Prim(A)) $. So $ A $ is isomorphic as an infinitesimal bialgebra to $ U_{\mathcal{BG}}(Prim(A)) $ and it is a $ \mathcal{BG} $-morphism by using (\ref{definitionalgbig}) and (\ref{structbg}).\\
$ 3. \Rightarrow 1. $ By contruction, $ U_{\mathcal{BG}} (Prim(A)) $ is isomorphic to $ \overline{T}(Prim(A)) $ as a bialgebra. Therefore $ A $ is isomorphic to $ \overline{T}(Prim(A)) $ as a bialgebra. As $ \overline{T}(Prim(A)) $ is connected, $ A $ is connected.
\end{proof}
\\

In other terms :

\begin{theo}
The triple of operads $ (\mathcal{A}ss,\mathcal{BG},\mathcal{L}) $ is a good triple of operads.
\end{theo}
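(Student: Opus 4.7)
The plan is to verify the three conditions in Loday's definition of a good triple $(\mathcal{C},\mathcal{A},\mathcal{P})$ of operads (see \cite{Loday2}), specialized to $(\mathcal{C},\mathcal{A},\mathcal{P}) = (\mathcal{A}ss,\mathcal{BG},\mathcal{L})$. Recall that these conditions are: (H0) the operations of $\mathcal{C}$ and $\mathcal{A}$ are compatible in the sense that the free $\mathcal{A}$-algebra is naturally a $\mathcal{C}^c$-$\mathcal{A}$-bialgebra; (H1) the primitive part of this bialgebra is a $\mathcal{P}$-algebra, and $\mathcal{P}$ is exactly the operad controlling the primitive operations; (H2) a rigidity theorem holds: every connected $\mathcal{C}^c$-$\mathcal{A}$-bialgebra is isomorphic to $U_{\mathcal{A}}(\mathrm{Prim}(A))$.

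First, I would record that the compatibility axioms for a $\mathcal{BG}$-infinitesimal bialgebra, as formulated in (\ref{lesrelations}), are precisely the compatibility relations between the deconcatenation coproduct $\tdelta_{\mathcal{A}ss}$ (encoding $\mathcal{A}ss^c$) and the three binary operations $m$, $\succ$, $\prec$ of $\mathcal{BG}$. Proposition \ref{infibialg} shows that the free $\mathcal{BG}$-algebra $\mathcal{M}$ carries such a structure; by the universal property of the free algebra (Theorem \ref{mestbglibre}), this extends to any free $\mathcal{BG}$-algebra $BG(V)$ by the standard trick of endowing $V$ with the trivial $\tdelta_{\mathcal{A}ss}$. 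This establishes (H0).

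Next, for (H1), Proposition \ref{proputile} asserts that the primitive part of any $\mathcal{BG}$-infinitesimal bialgebra is stable under $\succ$ and $\prec$ and satisfies the entanglement relation, hence is a $\mathcal{L}$-algebra. Applied to the free object, Theorem \ref{Llibre} identifies $\mathrm{Prim}(\mathcal{M}) = \mathcal{P}$ with the free $\mathcal{L}$-algebra on $\tun$. This is exactly the statement that $\mathcal{L}$ is the operad of primitive operations, giving (H1).

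Finally, (H2) is the content of the rigidity theorem just proved: every connected $\mathcal{BG}$-infinitesimal bialgebra $A$ is isomorphic, as a $\mathcal{BG}$-infinitesimal bialgebra, to $U_{\mathcal{BG}}(\mathrm{Prim}(A))$, where the universal enveloping bigraft algebra was constructed in Lemma \ref{lemmeutile}. The three conditions (H0), (H1), (H2) being verified, $(\mathcal{A}ss,\mathcal{BG},\mathcal{L})$ is a good triple of operads. No step here is a serious obstacle since all the hard work (the compatibility formulas, the identification of the primitive part, and the rigidity theorem) has already been done; the proof is essentially a bookkeeping assembly of the previous results into Loday's framework.
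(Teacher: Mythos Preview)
Your proposal is correct and follows essentially the same approach as the paper. In the paper this theorem is stated with the preface ``In other terms:'' immediately after the rigidity theorem, with no separate proof given; you have simply made explicit the verification of Loday's hypotheses (H0), (H1), (H2) by pointing to Proposition~\ref{infibialg}, Proposition~\ref{proputile}, Theorem~\ref{Llibre}, and the preceding rigidity theorem, which is exactly the intended reading.
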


{\bf Remark.} {Note that if $ A $ is a $ \mathcal{BG} $-infinitesimal bialgebra, then $ (A,m,\tdelta_{\mathcal{A}ss}) $ is a nonunitary infinitesimal bialgebra. Hence, if $ (\mathbb{K} \oplus A,m,\Delta_{\mathcal{A}ss}) $ has an antipode $ S $, then $ -S $ is an eulerian idempotent for $ A $ and we have :
\begin{eqnarray*}
S(a) = \left\lbrace \begin{array}{l}
-a \mbox{ if } a \in Prim(A),\\
0 \mbox{ if } a \in A^{2}.
\end{array}
\right. 
\end{eqnarray*}}

\end{document}